\documentclass[reqno]{amsart}

\usepackage[english]{babel}
\usepackage{amsmath}
\usepackage{amsthm}
\usepackage{amsfonts}
\usepackage{amssymb}
\usepackage{anysize}
\usepackage{hyperref}
\usepackage{appendix}
\usepackage{mathtools}
\usepackage{graphicx}
\usepackage{pgfplots}
\usepackage{subcaption} 
\usepackage{enumitem}
\usepackage{dsfont}
\usepackage{stmaryrd} 
\usepackage{comment} 
\usepackage[dvipsnames]{xcolor}

\makeatletter
\@namedef{subjclassname@2020}{\textup{2020} Mathematics Subject Classification}
\makeatother

\newtheorem{definition}{Definition}[section]
\newtheorem{lemma}[definition]{Lemma}
\newtheorem{theorem}[definition]{Theorem}
\newtheorem{corollary}[definition]{Corollary}
\newtheorem{proposition}[definition]{Proposition}
\newtheorem{example}[definition]{Example}
\newtheorem{remark}[definition]{Remark}
\newtheorem{assumption}{Assumption}

\DeclareMathOperator{\divop}{div}

\DeclareMathOperator{\diam}{diam}

\DeclareMathOperator{\sgn}{sgn}

\DeclareMathOperator*{\esssup}{ess~sup}

\allowdisplaybreaks 

\numberwithin{equation}{section}

\title[]{Mean field limit of non-exchangeable interacting diffusions on co-evolutionary networks}

\author[Juli\'an Cabrera-Nyst]{Juli\'an Cabrera-Nyst}
\address{Departamento de Matem\'atica Aplicada and Research Unit ``Modeling Nature'' (MNat), Facultad de Ciencias, Universidad de Granada, 18071 Granada, Spain}
\email{jcabreranyst@ugr.es}

\author[David Poyato]{David Poyato}
\address{Departamento de Matem\'atica Aplicada and Research Unit ``Modeling Nature'' (MNat), Facultad de Ciencias, Universidad de Granada, 18071 Granada, Spain}
\email{davidpoyato@ugr.es}

\begin{document}

\date{\today}

\subjclass[2020]{35Q70; 35Q83; 60K35; 82C31; 05C63; 05C80; 92B20} 
\keywords{Adaptive networks, Large Graph Limits, McKean-Vlasov equations, Mean-field limit, Non-exchangeable multi-agent systems, Non-Markovian processes, Propagation of chaos, Random networks.}

\thanks{\textbf{Acknowledgment.} JC and DP have received support from the State Research Agency of the Spanish Ministry
of Science and FEDER-EU, grant  PID2022-137228OB-I00 (MICIU/AEI/10.13039/501100011033). JC has received support
from the predoctoral grant PREP2022-000019, funded by MICIU/AEI/10.13039/501100011033 and by ESF+.
DP has received support by Consejer\'ia de Universidad, Investigaci\'on e Innovaci\'on \& ERDF/EU Andalusia Program, grant C-EXP-265-UGR23, and by Consejer\'ia de Universidad, Investigaci\'on e Innovaci\'on (Junta de Andaluc\'ia), Grant QUAL21-011 of Modeling Nature Research Unit.}

\begin{abstract}
	Systems in which the network structure and particle states co-evolve in mutual influence are increasingly recognized as essential for modeling complex adaptive systems. However, traditional models of non-exchangeable interacting particle systems frequently assume a fixed network topology, a simplification that fails to capture the dynamical nature of many real-world phenomena.  In this paper, we rigorously establish the mean-field limit for systems of non-exchangeable interacting diffusions on co-evolutionary networks. The primary analytical challenge arises from the coupling between the network dynamics and the agents' states, which induces non-Markovian dynamics where the system's evolution depends on its entire history. Consequently, the macroscopic limit is not governed by a classical partial differential equation, but rather by a coupled system of path-dependent McKean-Vlasov SDEs for the particles' states together with a transport equation for the distribution of the weights. A further difficulty stems from the non-linear weight dynamics, which requires an adequate choice for the limiting network structure. To overcome the structural limitations of classical graphon theory, we employ the framework of \(\mathcal{K}\)-graphons (Lovász and Szegedy, 2010), also termed probability-graphons (Abraham, Delmas, and Weibel, 2025). To the best of our knowledge, this is the first example in the literature in which the natural topology of probability-graphons is used in the context of mean-field limits, providing a natural and rigorous framework that is fully compatible with non-linear network adaptivity.

\end{abstract}

\maketitle
\tableofcontents

\section{Introduction}\label{sec:introduction}
Interacting particle systems provide a fundamental mathematical framework for modeling and analyzing the collective behavior
of large ensembles of agents. Classically, these systems have been studied under the assumption of exchangeability, 
where agents are indistinguishable and their interactions are mediated by a homogeneous, all-to-all topology.
However, many real-world phenomena are fundamentally non-exchangeable, featuring distinct agent identities and
heterogeneous interaction topologies. 
While the literature of non-exchangeable multi-agent systems has grown substantially in recent years \cite{AP-26,APP-24-arxiv,BCW-23,BCN-24,CM-19,JPS-24,JSZ-24-arxiv,Lucon-20},
a ubiquitous simplification in these non-exchangeable formulations is that the
underlying interaction topology remains either static or evolves according to a predetermined, external rule. While
analytically tractable, this static assumption represents a significant limitation
when modeling complex systems characterized by co-evolutionary dynamics, where the agents and their underlying interaction
network mutually shape one another. 

This co-evolutionary interplay is particularly relevant in fields such as social dynamics
\cite{LLP-25,PKI-23}, epidemiology \cite{SS-08,TS-14}, and neuroscience \cite{BSS-20,HNP-16,KYSN-17}
(for a more detailed overview of the literature on adaptive networks, see \cite{BGKKY-23}). 
In these systems, the network structure is not fixed but evolves in response to the states of the agents,
leading to a rich interplay between the dynamics of the agents and the evolution of the network. 

Based on these motivations, and as a prototypical system featuring the above properties, we will consider the following general system of interacting diffusions on co-evolutionary networks:
\begin{equation}\label{eq:multi-agent-discrete}
\begin{aligned}
&dX_i^N(t)=\sum_{j=1}^N w_{ij}^N(t)\,K(X_i^N(t),X_j^N(t))\,dt+\sqrt{2\nu}\,dW_i^N(t),\\
&\frac{d}{dt}w_{ij}^N(t)=\frac{1}{N}\Gamma(X_i^N(t),X_j^N(t),Nw_{ij}^N(t)),\\
& X_i^N(0)=X_{i,0}^N, \quad  w_{ij}^N(0)=w_{ij,0}^N,
\end{aligned}
\end{equation}
for any \(i,j\in \{1,\ldots,N\}\). Above, \(X_i^N(t)\in \mathbb{R}^d\) represents the state of the \(i\)-th agent at time \(t\), \(w^N_{ij}(t) \in \mathbb{R}\) is the interaction weight that particle \(j\) exerts on particle \(i\), and \(W_i^N\) are independent copies of a \(d\)-dimensional standard Wiener process. 
The initial states \(X_{i,0}^N\) and initial weights \(w_{ij,0}^N\) are random variables, and are assumed to be independent of the Wiener processes. The spatial interaction between agents is
mediated by the kernel \(K\), while the plasticity function \(\Gamma\) governs the adaptive co-evolutionary dynamics of the
network weights. It is important to note that the state variables \(X_i^N\) evolve according to Stochastic Differential Equations
(SDEs), whereas the network weights \(w_{ij}^N\) are governed by Ordinary Differential Equations (ODEs) with random initial data and random inputs (the agent states).

The aim of this paper is to rigorously identify the mean-field limit of the 
system \eqref{eq:multi-agent-discrete} as \(N\to \infty\). This question has been addressed in the literature in some
particular cases \cite{A-26-arxiv,GKX-25,T-25-arxiv,Zhou-25-arxiv} which we discuss later in this introduction but, to the best of our knowledge, the general case of
non-linear plasticity function \(\Gamma\) has not been addressed yet in this
level of generality due to fundamental limitations of classical graphon theory as dense graph limits.
As we can see by choosing particular forms of the plasticity function \(\Gamma\), 
the system \eqref{eq:multi-agent-discrete} generalises a wide variety of models already considered in the literature,
including the classical exchangeable multi-agent systems and also the case of non-exchangeable multi-agent systems with static weights. We also emphasize that \eqref{eq:multi-agent-discrete} is only a simple enough prototype of system with co-evolutionary weights, but more complex versions can be treated with the same techniques developed in this work, as we discuss later in Section \ref{sec:conclusions}.

Before formulating our main result, we will review below the
mean-field limit results existing in the literature for these
particular cases, and we will highlight the differences and main challenges with our general case of co-evolutionary weights.

\subsection{Review of previous literature} \phantom{=} \newline

\(\diamond \) \textbf{Exchangeable setting:} \(\Gamma=0\) and homogeneous \(w_{ij,0}^N=\frac{1}{N}\).\\
Here, the interaction weights are uniform and static. Assuming the initial states \(X_{i,0}^N\) are independent and identically distributed (i.i.d.), the system exhibits a law of large numbers phenomenon as \(N\to \infty\) in which the empirical measure associated with the particle states,
\begin{equation} \label{eq:empirical-measure}
m^N_t=\frac{1}{N}\sum_{i=1}^{N} \delta_{X_i^N(t)},
\end{equation}
converges almost surely to a deterministic probability measure \(m_t\). Structurally, the interaction that each agent experiences is mediated entirely through this empirical measure:
\begin{equation} \label{eq:interaction-empirical-measure}
\frac{1}{N}\sum_{j=1}^{N} K(X_i^N(t),X_j^N(t))=\int_{\mathbb{R}^d} K(X_i^N(t),y) dm_t^N(y).
\end{equation}
Because of this mean-field interaction structure, all particles \(X^N_i\) behave asymptotically as independent copies of a
single effective non-linear process, and the empirical measures \(m^N_t\) converge to a limiting measure \(m_t\) identified as the time-marginal law of the solution to the following McKean-Vlasov SDE:
\begin{equation} \label{eq:McKean-Vlasov-exchangeable-case}
\begin{aligned}
&dX(t)=\int_{\mathbb{R}^d} K(X(t),y)dm_t(y)\,dt+\sqrt{2\nu}\,dW(t),  \\
&X(0)=X_0\sim m_0, \quad m_t={\rm Law}(X(t)),
\end{aligned}
\end{equation}
where \(m_0 \in \mathcal{P}(\mathbb{R}^d)\) is the common distribution of the initial data \(X_{i,0}^N\).
Furthermore, because this limiting process is Markovian, it can be recast analytically in terms of a PDE using Itô's formula to show that
the marginal law \(m_t\) evolves according to the corresponding non-linear Fokker-Planck equation:
\begin{equation} \label{eq:multi-agent-macroscopic-PDE-exchangeable}
\partial_t m_t(x)+\divop_x\left(m_t(x)\int_{\mathbb{R}^d} K(x,y)\,dm_t(y)\right)=\nu \Delta_x m_t(x).
\end{equation}
This formulation highlights the fundamental exchangeability of the system: because the interaction depends entirely on the empirical measure, the dynamics are completely unaffected by any relabeling of the agents, guaranteeing that the joint law of the system is invariant under permutations. 

The standard methodology to establish this convergence is \emph{propagation of chaos}, which leverages the system's
exchangeability to prove rigorously that the joint law of any finite number of particles converges to the product
of the marginal limits \(m_t\). Following Sznitman's classical approach \cite{S-91}, this property is typically proven
via a synchronous coupling argument, working for Lipschitz \(K\). By constructing independent copies of the limit process
\eqref{eq:McKean-Vlasov-exchangeable-case} driven by the same Wiener processes as the microscopic system, one can show that
the distance between the trajectories of the interacting particles and the independent copies of the limit process vanishes as \(N \to \infty\). For comprehensive overviews of the mean-field limit for exchangeable systems, we refer to the seminal works \cite{BH-77,Dobrushin-79,MJ-98,N-84,S-91} and the reviews \cite{CD-22,CD-22-second,Jabin-14}.

\medskip

\(\diamond\) \textbf{Non-exchangeable setting with static weights:} \(\Gamma=0\) and heterogeneous \(w_{ij,0}^N\).\\
In this regime, the interaction topology is static, {\it i.e.}, \(w^N_{ij}(t)=w_{ij,0}^N\) and generally heterogeneous. Such heterogeneity
of the interaction weights explicitly breaks the indistinguishability of the agents in the system, rendering
the particles strictly non-exchangeable. For this reason, the agents are no longer expected to become statiscally 
identical and therefore their collective
influence cannot be accurately captured by a single, globally averaged distribution. Instead, the interaction is
naturally mediated by the \emph{extended empirical measure}, defined  as 
\begin{equation}\label{eq:extended-empirical-measure}
\mu^{N,\xi}_t=\sum_{i=1}^N \mathds{1}_{I_i^N}(\xi) \delta_{X_i^N(t)}, \quad \xi \in [0,1],
\end{equation}
where \(I_i^N:=\left[\frac{i-1}{N},\frac{i}{N} \right)\). This object is a parametrized family of probability measures 
where the continuous variable \(\xi\in [0,1]\) acts as a macroscopic label. By indexing the individual measures 
with \(\xi\), the framework is in position to effectively preserve the distinct identities of the agents in the limit.
The classical empirical
measure \eqref{eq:empirical-measure} is then naturally recovered by integrating over the label space:
\[
\int_{0}^{1} \mu^{N,\xi}_t \,d\xi = \frac{1}{N}\sum_{i=1}^N \delta_{X_i^N(t)}=m_t^N.
\]
Under this heterogeneous configuration, the interaction experienced by each particle is governed jointly
by the extended empirical measure and the initial weights \((w^N_{ij,0})_{1\leq i,j\leq N}\).
To treat these discrete weights analytically, one introduces a piecewise-constant function which serves as a continuous
version of the adjacency matrix, and takes the following form: 
\begin{equation}
	\label{eq:continuous-graphon}
	w^N_0(\xi,\xi')=\sum_{i,j=1}^N Nw_{ij,0}^N \mathds{1}_{I_i^N\times I_j^N}(\xi,\xi'), \quad \xi,\xi'\in [0,1].
\end{equation}
Using this representation, the discrete interaction drift for any particle associated with the macroscopic
label \(\xi \in I_i^N\) (representing the discrete label \(i\)) can be rewritten as:
\[
\frac{1}{N}\sum_{j=1}^{N} Nw_{ij,0}^N K(X_i^N(t),X_j^N(t))=\int_0^1 \int_{\mathbb{R}^d} w_0^N(\xi,\xi') K(X_i^N(t),y) d\mu_t^{N,\xi'}(y) \,d\xi'.
\]
This integral formulation reveals that passing to the \(N \to \infty\) limit requires establishing the asymptotic
behavior of two distinct objects: the extended empirical measure \(\mu_t^{N}\) and the network structure
\(w_0^N\). Consequently, rigorous identification of the macroscopic dynamics necessitates the application
of large graph limit theory. 

The theory of graphons \cite{LS-06} is the most prominent framework used for this purpose in the literature. Under classical graphon convergence,
the corresponding mean-field limit is described as a heterogeneous, Vlasov-Fokker-Planck type PDE, where the macroscopic
interactions are weighted by a limiting graphon \cite{BCW-23,BCN-24,CM-19,DGL-16,K-VM-18,Lucon-20,PT-22-arxiv}.
Beyond classical graphons, alternative 
graph limit theories have also been leveraged to capture more complex static topologies, including graphings \cite{BS-01}, graphops \cite{BS-22} or $s$-graphons \cite{KLS-19}, and even hypergraph limits in the context of higher-order interactions. Those have been recently exploited in the literature of mean-field limits, see \cite{APP-24-arxiv,CM-19,GK-22,K-VM-18,JPS-24,KX-22,KX-25,LRW-23,ORS-20}.

In this non-exchangeable setting, the classical propagation of chaos property breaks down: due to the intrinsic heterogeneity
of the network topology, even if the particles are initially independent and identically distributed (i.i.d.),
they do not retain this property in the asymptotic limit as \(N\to \infty\). Instead, one can still prove a weaker
form of propagation of chaos, which is  the so-called \emph{propagation of independence} (see \cite{JPS-24}): the processes describing any 
finite number of particles become asymptotically  independent as the number of particles goes to infinity, but they do
not converge to independent copies of a single process.
Intead, they converge to distinct, independent limit processes that are parametrized by their macroscopic labels \(\xi\in [0,1]\).
Consequently, the (random) extended empirical measure \((\mu^{N,\xi}_t)_{\xi \in [0,1]}\) converges to a deterministic
family of probability measures \((\mu_t^\xi)_{\xi \in [0,1]}\) representing the laws of the solution to the following system of McKean-Vlasov SDEs:
\begin{equation}\label{eq:multi-agent-macroscopic-SDE-non-exchangeable-static-weights}
\begin{aligned}
	&dX(t,\xi)= \int_{0}^{1} \int_{\mathbb{R}^d} w_0(\xi,\xi') K(X(t,\xi),y) \, d\mu_t^{\xi'}(y) \, d\xi' \, dt + \sqrt{2\nu}\,dW(t,\xi), \quad t\in [0,T],\,\xi\in [0,1],\\
	&X(0,\xi)=X_0(\xi)\sim \mu_0^{\xi}, \quad \mu_t^\xi={\rm Law}(X(t,\xi)).
\end{aligned}
\end{equation}
Above, \((W(\cdot,\xi))_{\xi\in [0,1]}\) is a family of standard Wiener processes, and the macroscopic network topology is encoded by the limiting graphon \(w_0\in L^\infty([0,1]^2)\) which, under suitable assumptions on the graphs sequence \cite{LS-06}, arises as the graph limit
of the scaled initial weights \(Nw_{ij,0}^N\). Since the limiting McKean-Vlasov system of SDEs
\eqref{eq:multi-agent-macroscopic-SDE-non-exchangeable-static-weights} is
a system of Markovian Itô diffusions, their laws
\(\mu_t^\xi\) satisfy a standard forward Kolmogorov equation in distributional sense, namely
\begin{equation}\label{eq:multi-agent-macroscopic-PDE-non-exchangeable-static-weights} 
	\partial_t \mu_t^\xi(x)+\divop_x\left(\mu_t^\xi(x)\int_0^1 \int_{\mathbb{R}^d}w_0(\xi,\xi')\,K(x,y)\,d\mu_t^{\xi'}(y)\,d\xi'\right)=\nu\Delta_x \mu_t^\xi(x).
\end{equation}
In this case, the system cannot be reduced into a single, global Fokker-Planck equation by simply integrating over the label
space \(\xi\in [0,1]\) since the graphon \(w_0\) is generally heterogeneous and therefore the full family of measures \((\mu_t^\xi)_{\xi\in [0,1]}\) is required to fully describe the macroscopic dynamics.

\medskip

	\(\diamond\) \textbf{Non-exchangeable setting with co-evolutionary weights}.\\
In the general case in which we do not suppose any particular form for the
	 plasticity function \(\Gamma\), the system \eqref{eq:multi-agent-discrete} has a completely different structure
	due to the coupling between the dynamics of the states and the weights, which turns the system into a
	non-Markovian process. This can be seen by first solving the second ODE in \eqref{eq:multi-agent-discrete}
	and plugging it into the first SDE: 
\begin{equation}\label{eq:multi-agent-discrete-solved-weights}
\begin{aligned}
&dX_i^N(t)=\frac{1}{N}\sum_{j=1}^N \Phi_t(X_{i,[0,t]}^N,X_{j,[0,t]}^N,Nw_{ij,0}^N) \, K(X_i^N(t),X_j^N(t))\,dt+\sqrt{2\nu}\,dW_i^N(t),\quad \\
& X_i^N(0)=X_{i,0}^N.
\end{aligned}
\end{equation}
Above, \(\Phi_t\) represents the flow map of the evolution ODE prescribed by the plasticity function \(\Gamma\). 
Given deterministic paths \(\gamma,\tilde{\gamma} \in \mathcal{C}^d_{T}:= C([0,T],\mathbb{R}^d)\), then
\(w(t)=\Phi_t(\gamma_{[0,t]},\tilde{\gamma}_{[0,t]},w_0)\) solves the ODE  
\begin{align*}
&w'(t)=\Gamma(\gamma(t),\tilde{\gamma}(t),w(t)),\\
&w(0)=w_0.
\end{align*}
Then, 
\(Nw^N_{ij}(t)=\Phi_t(X_{i,[0,t]}^N,X_{j,[0,t]}^N,Nw_{ij,0}^N)\) is a stochastic process solving the ODE pathwise. We emphasize that the flow map \(\Phi_t\) does not only depend 
on the local values \(X_i^N(t)\) and \(X_j^N(t)\) of the agents, but actually on the full past of both trajectories
up to time \(t\). Of course, this introduces an infinite-memory effect which turns
\eqref{eq:multi-agent-discrete-solved-weights} into a non-Markovian process. For this reason, identifying the
evolution of the law of the process in terms of a Fokker-Planck type PDE is not possible, and this impedes
identifying the mean-field limit of \eqref{eq:multi-agent-discrete} as \(N\to \infty\) in the classical sense.
This inherent path-dependence dictates that the interaction experienced by each particle is now mediated
through the \emph{extended path-dependent empirical measure}, which is defined for a solution
\((X_i^N)_{1\leq i\leq N}\) of the system \eqref{eq:multi-agent-discrete} as
\begin{equation}\label{eq:extended-path-dependent-empirical-measure}
\mu^{N,\xi}_{[0,t]}=\sum_{i=1}^N \mathds{1}_{I_i^N}(\xi) \delta_{X_{i,[0,t]}^N}, \quad t \in [0,T], \quad \xi \in [0,1].
\end{equation}
Using this notation, the interaction drift experienced by the particle associated with the macroscopic label
\(\xi \in I_i^N\) (representing the discrete label \(i\)) can be recast in terms of these extended measures:
\[
\begin{aligned}
&\frac{1}{N}\sum_{j=1}^{N} \Phi_t(X_{i,[0,t]}^N,X_{j,[0,t]}^N,Nw_{ij,0}^N) K(X_i^N(t),X_j^N(t)) \\ 
&\quad = \int_0^1 \int_{\mathcal{C}_t^d} \Phi_t(X_{i,[0,t]}^N,\gamma,w_0^N(\xi,\xi')) K(X_i^N(t),\gamma(t)) \, d\mu_{[0,t]}^{N,\xi'}(\gamma) \, d\xi',
\end{aligned}
\]
where \(w^N_0\) is again defined as in \eqref{eq:continuous-graphon}. 

Identifying the mean-field limit of this generalized system entails profound analytical challenges.
Beyond the inherent non-Markovian structure of the system, the non-linear nature of the flow map
\(\Phi_t\) of the plasticity function \(\Gamma\) introduces severe topological obstructions when characterizing the limiting network structure.
Specifically, if one attempts to use compactness of the space of graphons under the standard cut-metric \cite[Theorem 5.1]{LS-07}
to establish the convergence of a subsequence of the empirical graphon
\(w_0^N \to w_0\), passing to the limit in the flow maps fails, as the linear nature of the cut-norm is incompatible with non-linear
transformations. If, instead, one enforces a stronger convergence (such as uniform or \(L^p\) convergence)
to safely pass the limit through \(\Phi_t\), this requires assuming stringent well-prepared initial weights,
a highly restrictive assumption which is not satisfied in many relevant cases.

To bypass the structural limitations of classical graphon theory, our approach employs for the first time in the context
of mean-field limits the formalism of \(\mathcal{K}\)-graphons \cite{LS-10-arxiv}, also termed probability-graphons \cite{ADW-25}.
Objects of this type have recently been considered in \cite{AP-26} for the derivation of a simpler continuum limit equation, 
although their analysis did not exploit the natural topology of the space.
Endowed with its natural topology, this space of graph limits admits a precise characterization of relatively compact sets and
inherently accommodates non-linear structural operations. Crucially, leveraging these compactness arguments entirely
circumvents the need for strictly well-prepared initial weights. By simply ensuring that the initial network
configurations reside within a relatively compact set, one can rigorously pass to the limit via standard
subsequence extraction. This profound relaxation allows the framework to treat fully random initial topologies
for the network weights,
as well as independent (though intrinsically non-identically distributed) initial data for both the particle states.

	Only a handful of works have already addressed the mean-field limit of particle systems featuring co-evolutionary
	dynamics between particle states and interaction weights, and they do it in special regimes. For an affine plasticity function in the weight variable,
	the mean-field limit was first described in \cite{GKX-25} as the solution to a fixed-point equation for the
	characteristics of a generalized Vlasov equation, tracing back to the seminal ideas of Dobrushin \cite{Dobrushin-79}
	and Neunzert \cite{N-84} for mean-field limits of deterministic particle systems via stability estimates.
	In \cite{T-25-arxiv}, these ideas were extended to accommodate more general plasticity functions, relying on
	certain discretizations of the prescribed limiting network structure. To see how our framework cover these cases,
	we refer to the noiseless case \(\nu=0\) in Section \ref{subsec:special-cases}.
    
    In \cite{A-26-arxiv}, a mean-field
	limit description via a Vlasov-type equation was derived for a broader class of plasticity rules that allows
	for mean-field interactions among the weights. In this framework, provided that the plasticity function is independent
	of the target particle's state, {\it i.e.}, \(\Gamma(x,y,w)=\Gamma(y,w)\), it is shown that one can derive a closed PDE
	for an extended probability law that incorporates the weights to the agents' states as an additional variable.
    
    Closer to our approach
	are the contributions \cite{CD-25-arxiv,Zhou-25-arxiv}, which describe the mean-field limit in terms of the law
	of a limiting path-dependent McKean-Vlasov process. In \cite{CD-25-arxiv}, the mean-field limit is obtained as
	a corollary of a broader framework which exploits the specific additive noise structure. In this case,
    the particle system is
	realized as the fixed point of a suitable map enjoying strong regularity properties which ensures that a mean-field description governed by a McKean-Vlasov process can be rigorously derived provided that suitable assumptions on the limiting network structure are considered concerning regularity in one of the label variable.
	Finally, in \cite{Zhou-25-arxiv}, the mean-field limit is established for systems governed by affine plasticity 
    functions. In their approach, the asymptotic network structure is modeled via classical graphons rather than
    probability-graphons, and the convergence argument relies on a dual sampling lemma linking the limiting graphon 
    to the initial distribution of the states. Our present work naturally recovers these insights as a special 
    case, recontextualizing the limit within the probabilistic formalism of 
    probability-graphons, which directly accommodates fully non-linear plasticity rules.


\subsection{Our main result}

To formalize our mathematical setting, we impose the following assumptions throughout the rest of the paper:

\begin{assumption}[On the interaction kernel]\label{assump:interaction-kernel}
~
\begin{enumerate}[label=(\roman*)]
\item {\bf (Bounded-Lipschitz interaction kernel)}:\\
Assume that there exist $B_K,L_K>0$ such that \(K:\mathbb{R}^d\times \mathbb{R}^d\to \mathbb{R}^d\) satisfies  
\begin{align*}
|K(x,y)|&\leq B_K,\\
|K(x_1,y_1)-K(x_2,y_2)|&\leq L_K(|x_1-x_2|+|y_1-y_2|),
\end{align*}
for all $x,x_1,x_2,y,y_1,y_2\in \mathbb{R}^d$.
\end{enumerate}
\end{assumption}

\begin{assumption}[On the plasticity function]\label{assump:plasticity-function}
~
\begin{enumerate}[label=(\roman*)]
\setcounter{enumi}{1}
\item {\bf (Bounded-Lipschitz plasticity function)}:\\
Assume that there exist $B_\Gamma,L_\Gamma>0$ such that
$\Gamma: \mathbb{R}^d\times \mathbb{R}^d\times \mathbb{R}\to \mathbb{R}$ satisfies  
\begin{align*}\label{eq:plasticity-function-hypothesis-1}
&|\Gamma(x,y,w)|\leq B_\Gamma\left(1+|w|\right),\\
&|\Gamma(x_1,y_1,w_1)-\Gamma(x_2,y_2,w_2)|\leq L_\Gamma\left(|x_1-x_2|+|y_1-y_2|+|w_1-w_2|\right),
\end{align*}
for all $x,x_1,x_2,y,y_1,y_2\in \mathbb{R}^d$ and all $w,w_1,w_2\in \mathbb{R}$.
\end{enumerate}
\end{assumption}

\begin{assumption}[On the coupling weights]\label{assump:weights}
~
\begin{enumerate}[label=(\roman*)]
\setcounter{enumi}{2}
\item {\bf (Scaling of the initial coupling weights)}:\\
Assume that there exists $W>0$ such that
\begin{equation} \label{eq:hypothesis-weight}
\sup_{N\in \mathbb{N}} \max_{1\leq i,j\leq N} N\vert w_{ij,0}^N\vert   \leq W,
\end{equation}
almost surely.
\end{enumerate}
\end{assumption}

\begin{assumption}[On the initial data]
~
\begin{enumerate}[label=(\roman*)]\label{assump:initial-data}
\setcounter{enumi}{3}
\item {\bf (Scaling of the initial data)}:\\
Assume that $X_{i,0}^N$ satisfies
\begin{equation}\label{eq:hypothesis-initial-data}
\sup_{N\in \mathbb{N}}\frac{1}{N}\sum_{i=1}^{N}\mathbb{E}|X_{i,0}^N|^{2+\delta}<\infty,
\end{equation}
for some $\delta>0$.
\end{enumerate}
\end{assumption}

Under these assumptions, and guided by the insights from the previous  subsection, we expect that the limit as \(N\to \infty\) of the system of interacting diffusions
\eqref{eq:multi-agent-discrete} is represented, in some sense to be clarified below in Theorem \ref{theo:main}, 
by the following continuum system of non-Markovian McKean-Vlasov SDEs:
\begin{align}
&d X(t,\xi)=\int_0^1\int_{\mathcal{C}^d_t} w_{\gamma}(t,\xi,\xi')  \,K(X(t,\xi),\gamma(t))\,d\mu_{[0,t]}^{\xi'}(\gamma)\,d\xi' dt+ \sqrt{2\nu}\,dW(t,\xi),\quad t\in [0,T],\,\xi\in [0,1],\nonumber\\
&\partial_s q^{\xi,\xi'}_{\gamma}(s,w)+\partial_w(\Gamma(X(s,\xi),\gamma(s),w)q^{\xi,\xi'}_{\gamma}(s,w))=0,\quad s\in [0,t],\,\xi,\xi'\in [0,1],\,\gamma\in \mathcal{C}^d_t,\label{eq:multi-agent-macroscopic-SDE}\\
&X(0,\xi)\sim \mu_0^{\xi},\quad q^{\xi,\xi'}_\gamma(0)=q^{\xi,\xi'}_0,\quad w_{\gamma}(t,\xi,\xi')=\int_{\mathbb{R}} w \, q^{\xi,\xi'}_{\gamma}(t,dw), \quad \xi,\xi'\in [0,1],\nonumber
\end{align}
where  \(\mu^{\xi}_{[0,t]}:={\rm Law}(X(\cdot,\xi)_{[0,t]})\) is the pathwise law of each agent, \((W(\cdot,\xi))_{\xi \in [0,1]}\)
is a continuum family of standard Wiener processes, and \(\mathcal{C}^d_t:=C([0,t],\mathbb{R}^d)\) is endowed with the uniform norm.

Structurally, \eqref{eq:multi-agent-macroscopic-SDE} constitutes a system of path-dependent McKean-Vlasov SDEs for the particle
states, in which the evolution of the state \(X(t,\xi)\) of an agent with label \(\xi\) explicitly depends on the family of
pathwise laws \((\mu_{[0,t]}^{\xi'})_{\xi' \in [0,1]} \subset \mathcal{P}(\mathcal{C}^d_t)\) of all other agents. This interaction
is modulated by the dynamical random coupling weights \(w_\gamma(t,\xi,\xi') \in \mathbb{R}\), which are realized as the
average of the random 
probability measures \(q^{\xi,\xi'}_\gamma(t, dw) \in \mathcal{P}(\mathbb{R})\). Each realization of this stochastic measure, in turn, 
evolves deterministically according to a continuity equation driven by the non-linear plasticity function \(\Gamma\), and departing 
from initial data \(q^{\xi,\xi'}_0 \in \mathcal{P}(\mathbb{R})\). Such initial data corresponds to a probability-graphon,
which characterizes the graph limit of the initial weighted random graphs encoding the initial network topology. 
Representing the network structure via probability measures over the weight space generalizes classical 
graphon theory \cite{LS-06}, similarly as young measures generalize functions, yielding a robust topological setting inherently
suited for weighted networks governed by non-linear co-evolutionary dynamics.

Conceptually, the measure \(q^{\xi,\xi'}_{\gamma}(t,dw)\) represents the probability law of the directed weight associated to an influencing agent \(\xi'\) and a target agent \(\xi\), conditioned to agent $\xi$ having realized the random spacial trajectory \(X(\cdot,\xi)_{[0,t]}\), and agent \(\xi'\) having realized a spatial trajectory \(\gamma \in \mathcal{C}^d_t\). 
The continuity equation governs how this evolving distribution
is dynamically transported over time by the plasticity rule \(\Gamma\), evaluated jointly along the specific interacting
paths \(X(\cdot,\xi)_{[0,t]}\) and \(\gamma\). This can be easily seen by solving the transport equation along the characteristics, which yields the explicit push-forward representation
\begin{equation} \label{eq:flow-map-formulation-weight-distribution-conditioned}
q^{\xi,\xi'}_{\gamma}(t,dw)=\Phi_t(X(\cdot,\xi)_{[0,t]},\gamma,\cdot)_{\#} q^{\xi,\xi'}_0(dw).
\end{equation}
	Taking the expectation with respect to the underlying randomness implicit through the process \(X(\cdot,\xi)_{[0,t]}\) and 
        integrating with respect to the pathwise law \(\mu^{\xi'}_{[0,t]}\) of the process $X(\cdot,\xi')_{[0,t]}$, we obtain the
	deterministic time-dependent probability-graphon \(q_t: [0,1]^2 \to \mathcal{P}(\mathbb{R})\) defined as
	\begin{equation} \label{eq:limiting-probability-graphon-flow-map-formulation-intro}
		q^{\xi,\xi'}_t= \int_{\mathcal{C}_t^d} \mathbb{E}[q^{\xi,\xi'}_{\gamma}(t)] \, d\mu^{\xi'}_{[0,t]}(\gamma) = \Phi_t(\cdot,\cdot,\cdot)_{\#}(\mu^{\xi}_{[0,t]}\otimes \mu^{\xi'}_{[0,t]}\otimes q_0^{\xi,\xi'}), \quad t \in [0,T],
	\end{equation}
	which will be shown to characterize the limiting network structure in the mean-field limit.
Stated informally, our main mean-field limit result reads as follows:

\begin{theorem}\label{theo:main}
	Assume that the interaction kernel \(K\) and the plasticity function \(\Gamma\) satisfy Assumptions
	\ref{assump:interaction-kernel} and \ref{assump:plasticity-function}. Let 
	\((X_{i}^N,w_{ij}^N)_{1\leq i,j\leq N}\) be the unique strong
	solution to the discrete system \eqref{eq:multi-agent-discrete}, starting from independent initial states and
	weights \((X_{i,0}^N,w_{ij,0}^N)_{1\leq i,j\leq N}\) that satisfy Assumptions \ref{assump:weights} and
	\ref{assump:initial-data}. Define the (random) extended empirical measure for the states and empirical probability-graphon for the weights:
	\[
	\begin{aligned}
	\mu^{N,\xi}_{[0,t]} &:= \sum_{i=1}^{N} \mathds{1}_{I_i^N}(\xi) \delta_{X_{i,[0,t]}^N}, \quad && t\in [0,T],\,\xi\in [0,1], \\ 
	q^{N,\xi,\xi'}_{t} &:= \sum_{i,j=1}^N \mathds{1}_{I_i^N\times I_j^N}(\xi,\xi') \delta_{Nw_{ij}^N(t)}, \quad && t\in [0,T],\,\xi,\xi'\in [0,1],
	\end{aligned}
	\]
    Then, up to the extraction of a subsequence and a suitable relabeling of the agents' indices,  $(\mu^{N,\xi}_{[0,t]})_{\xi\in [0,1]}$ and $(q^{N,\xi,\xi'}_t)_{\xi,\xi'\in [0,1]}$ respectively converge, in a suitable sense, to the deterministic family of probability measures
    \((\mu_{[0,t]}^{\xi})_{\xi \in [0,1]} \subset \mathcal{P}(\mathcal{C}^d_t)\) corresponding to the pathwise law of the solution $X(\cdot,\xi)_{[0,t]}$ of the McKean-Vlasov SDEs
    \(\eqref{eq:multi-agent-macroscopic-SDE}_{1}\), and to the deterministic family of
    probability-graphons \((q_t^{\xi,\xi'})_{\xi,\xi'\in [0,1]}\subset \mathcal{P}(\mathbb{R})\), obtained via the relation \eqref{eq:limiting-probability-graphon-flow-map-formulation-intro} from the family \(q^{\xi,\xi'}_{\gamma}\) solving the transport equation \(\eqref{eq:multi-agent-macroscopic-SDE}_2\).
\end{theorem}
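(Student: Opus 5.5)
The proof will go through a coupling with auxiliary limit objects built from a compactness argument in the space of probability-graphons.

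\textbf{Step 1: compactness for the initial network.} By Assumption \ref{assump:weights}, every initial empirical probability-graphon $q^{N}_0:=(q^{N,\xi,\xi'}_0)_{\xi,\xi'}$ takes values in $\mathcal{P}([-W,W])$. This is a compact set, so the family $(q^N_0)_N$ is relatively compact in the natural (cut-type) topology of probability-graphons, by the compactness theorem of Lovász--Szegedy and Abraham--Delmas--Weibel. After extracting a subsequence and relabeling the indices (a measure-preserving rearrangement of $[0,1]$), we would like $q^N_0\to q_0$ in a strong enough sense. To make the coupling below work, we will aim at the following. The decorated objects $(\xi,\xi')\mapsto \mathrm{Law}(X^N_{i,0})\otimes \mathrm{Law}(X^N_{j,0})\otimes q^{N,\xi,\xi'}_0$, viewed as $\mathcal{P}(\mathbb{R}^d\times\mathbb{R}^d\times\mathbb{R})$-valued graphons, should converge jointly, together with the label-wise initial laws $\mu^{N,\xi}_0\to\mu^\xi_0$. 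Tightness of these objects in $\mathbb{R}^d$ follows from the moment bound in Assumption \ref{assump:initial-data}, and the $2+\delta$ moment gives uniform integrability.

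\textbf{Step 2: well-posedness of the limit.} We rewrite \eqref{eq:multi-agent-macroscopic-SDE} via \eqref{eq:flow-map-formulation-weight-distribution-conditioned}, so that the drift becomes
\[
b_t(x_{[0,t]},\xi;\mu)=\int_0^1\!\!\int_{\mathcal{C}^d_t}\!\int_{\mathbb{R}}\Phi_t(x_{[0,t]},\gamma,w)\,K(x(t),\gamma(t))\,q^{\xi,\xi'}_0(dw)\,d\mu^{\xi'}_{[0,t]}(\gamma)\,d\xi'.
\]
Grönwall applied to the ODE together with Assumption \ref{assump:plasticity-function} gives that $|\Phi_t|\le (W+B_\Gamma T)e^{B_\Gamma T}$. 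It also gives that $\Phi_t$ is Lipschitz in both paths with respect to the sup norm on $[0,t]$ and Lipschitz in $w$. Hence $b$ is bounded and Lipschitz in $(x_{[0,t]},\mu)$, with respect to the path sup norm and $\sup_\xi W_1$ on path laws. A Sznitman-type Picard iteration on $\xi\mapsto\mu^\xi_{[0,t]}$ then yields existence and uniqueness of the path-dependent system, measurable in $\xi$.

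\textbf{Step 3: synchronous coupling.} For each $N$ we define $\bar X_i^N$ as the solution of the limit SDE with label drift averaged over $I_i^N$, driven by the same $W_i^N$ and started from $X^N_{i,0}$. Subtracting the two equations, using the Lipschitz bounds of $\Phi$ and $K$, and applying Grönwall, we reduce $\frac1N\sum_i\mathbb{E}\sup_{[0,T]}|X_i^N-\bar X_i^N|$ to a sum of two terms.

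The first is a fluctuation term,
\[
\frac1N\sum_i\mathbb{E}\Big|\frac1N\sum_j\big[F(\bar X_i,\bar X_j,Nw_{ij,0})-\mathbb{E}_j F\big]\Big|,
\]
which is $O(N^{-1/2})$ by independence of the $\bar X_j$ and the $w_{ij,0}$ and by boundedness of $F$.

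The second is a graph term comparing $\frac1N\sum_j\mathbb{E}_j F(\cdot,\bar X_j,Nw_{ij,0})$ with $\int_0^1\int\!\!\int F\,dq_0^{\xi,\xi'}\,d\mu^{\xi'}\,d\xi'$.

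\textbf{Main obstacle.} The graph term is where the real difficulty lies, since it requires passing to the limit in a non-linear functional of the weights. Cut-distance convergence controls only averages of test functions of $w$ over rectangles. Because $F$ is bounded and Lipschitz in $w$ and in the paths, it can be approximated uniformly by finite sums $\sum_k f_k(\text{paths})g_k(w)$ (Stone--Weierstrass on compacts, with the $2+\delta$ tails handled via the moments). Each summand is then of the form ``test function of $w$ integrated against rectangle-type weights'', and such expressions are exactly what the probability-graphon topology controls after relabeling. One subtlety is that the outer average over $i$ is taken in $L^1$ rather than against a rectangle. We would handle this with the standard cut-norm trick: test against $\operatorname{sgn}$ of the inner sum, which is a measurable function of $\xi$. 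The lingering dependence of $f_k$ on $\xi$ through the laws also needs care, and we would first try to control it by regularizing $\xi\mapsto\mu^\xi$ by step functions.

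\textbf{Conclusion.} With the coupling estimate in hand, convergence of $\mu^{N,\xi}_{[0,t]}$ follows from the law of large numbers for the independent $\bar X_i$. Convergence of $q^{N}_t$ follows by writing $Nw^N_{ij}(t)=\Phi_t(X_i,X_j,Nw_{ij,0})$. The Lipschitz continuity of $\Phi_t$ lets us replace $X_i,X_j$ by $\bar X_i,\bar X_j$, and the same decomposition as above then identifies the limit as \eqref{eq:limiting-probability-graphon-flow-map-formulation-intro}.
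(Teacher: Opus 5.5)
Your proposal has a genuine gap at the point where the coupling is supposed to reach the limit. The processes $\bar X_i^N$ you couple to are driven by the limit drift with $\mu$ frozen, but they start from $X_{i,0}^N$, whose law is $\mathrm{Law}(X_{i,0}^N)$ rather than $\mu_0^\xi$, and their drift is $b_\xi$ averaged over a block. Hence $\mathrm{Law}(\bar X^N_j)\neq\mu^{\xi'}$. Your graph term integrates $F$ against $\mathrm{Law}(\bar X_j^N)$ but compares with $\mu^{\xi'}$. Your concluding law-of-large-numbers step only gives concentration of $\mu^{N}$ around the step measure $\xi\mapsto\mathrm{Law}(\bar X^N_{i(\xi)})$. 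Both therefore require that this step measure converges to $\mu$ (e.g.\ in $d_{W_2,2}$ after relabeling), which is a stability statement for the limit dynamics that you never prove. It has two components.

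First, stability with respect to the initial fibered law. This needs $d_{W_2,2}(\mathbb{E}\mu_0^{N_k,\varphi_k},\mu_0)\to0$ for the \emph{same} $\varphi_k$ that gives $d_{\text{BL},\square}(\mathbb{E}q_0^{N_k,\varphi_k},q_0)\to0$. That is exactly the joint node-edge compactness of Theorem \ref{thm:compactness-node-edge-probability-graphons}, which your Step~1 only states as an aim; separate extractions for nodes and edges, with different relabelings, do not suffice.

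Second, stability with respect to replacing $b_\xi$ by its block averages. After a general measure-preserving relabeling, the blocks $\varphi_k^{-1}(I_i^{N_k})$ are arbitrary measurable sets of measure $1/N_k$. So Lebesgue differentiation does not give $\mathbb{E}[b\mid\mathcal{P}_k]\to b$ in $L^1(d\xi)$. This can be recovered from the cut convergence of the relabeled step data: stepping is cut-contractive and the conditional expectations are orthogonal projections, so one gets $L^2$ convergence. But that is an extra lemma the proposal does not contain.

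The paper avoids both issues by choosing a different auxiliary system: the self-consistent independent projection \eqref{eq:multi-agent-discrete-independent-projection-solved-weights}. Its drift integrates $\Phi_tK$ against $q^N_{ij,0}=\mathrm{Law}(Nw^N_{ij,0})$ and against the auxiliary laws $\bar\mu^{N,j}$ themselves. Propagation of independence is then the same $O(N^{-1/2})$ coupling as your Step~3, but with no graph term. Moreover, the fibered law of this system is exactly the solution of the continuum system with step data $(\mathbb{E}\mu_0^N,\mathbb{E}q_0^N)$. The comparison with $\mu$ thus becomes continuum versus continuum, and Proposition \ref{prop:stability-estimate-multi-agent-macroscopic-solved-weights} handles it by a synchronous coupling fiber by fiber, with no block averaging. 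Your Stone--Weierstrass separation of variables is essentially what is used there.

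In that argument, the dependence on $\xi'$ through the laws that you were worried about needs no step regularization. The function $\psi(\xi')=\int G_k\,d\bar\mu^{\xi'}$ is simply the bounded test function in the $\infty\to1$ operator form \eqref{eq:cut-distance-prob-graphons-operator-norm} of the cut distance, which also absorbs your outer $L^1$ average in $\xi$.

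Finally, apply the compactness to the deterministic $\mathbb{E}q_0^N$ (fibers $\mathrm{Law}(Nw^N_{ij,0})$), and put the realization randomness of the weights into your fluctuation term. If you apply it to the random empirical graphon instead, the subsequence and relabeling depend on $\omega$.
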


A precise, and rigorous statement of this result can be found in Theorem \ref{theo:main-complete}.
The proof of this theorem relies on a suitable {\it propagation of independence} for the multi-agent system \eqref{eq:multi-agent-discrete}, refined {\it stability estimates} of the solutions to the system of path-dependent McKean-Vlasov SDEs \eqref{eq:multi-agent-macroscopic-SDE} with respect to appropriate 
metrics on the joint space of nodes (using fibered measures for the initial distribution of the states) and edges (using probability-graphons for the initial distribution of the weights), and a new {\it compactness resut} of such joint space of nodes and edges, which we call {\it node-edge probability-graphons}. We refer to Section \ref{sec:preliminaries} for more details about the precise functional setting.  

We remark that our framework
allows us to accommodate general independent, non-identically distributed initial states, alongside 
dense sequences of random initial weights. This represents a novel aspect of our work even in the context of static weights, as it derives rigorously the mean-field limit of \eqref{eq:multi-agent-discrete} for a strictly broader class of initial data than those previously considered in the literature. Additionally, since we combine stability estimates and compactness arguments, we can rigorously pass to the limit circumventing the need for any restrictive well-preparedness assumptions on the initial states and weights, contrarily to previous results.

The rest of the paper is organized as follows. 

In Section \ref{sec:preliminaries}, we introduce the functional
setting required to address the mean-field limit. First, we introduce the {\it fibered probability measures} to treat the laws of the agents' states as node decorations and, second, we motivate the need of introducing {\it probability-graphons} to treat edge decorations. We make a comparative study of the various topologies that can be considered upon these spaces, and we set our choice of working distances as those imposing a balanced set of restrictions to ensure relative compactness and stability in law for the macroscopic system \eqref{eq:multi-agent-macroscopic-SDE}. Since node and edge operate simultaneously in the multi-agent system, we define the {\it node-edge probability-graphons} as the product space of fibered probability measures and probability-graphons endowed with a suitable metric structure that is compatible with simultaneous rearrangements of the macroscopic labels on nodes and edges. As mentioned above, the results developed in this section are fundamental as they allow us to obtain the compactness necessary to pass to the limit in the discrete system \eqref{eq:multi-agent-discrete} without imposing any restrictive well-preparedness assumptions on the initial data. In turns, this framework enables us to consider general independent initial data for both the states and weights, which need not be identically distributed. Besides our use, we believe that this section could be of independent interest in the community of large graph limits.

The rest of the sections focus on the proof of Theorem \ref{theo:main}. By introducing an intermediate
system with independent components,  we demonstrate in Section \ref{sec:propagation-of-independence} that the independence assumed for the discrete initial data
propagates for later times asymptotically as \(N \to \infty\).
In Section \ref{sec:well-posedness} we establish the well-posedness of the macroscopic system \eqref{eq:multi-agent-macroscopic-SDE}. 
In Section \ref{sec:stability-estimate}, we derive a stability estimate 
for the macroscopic system with respect to the initial fibered law and probability-graphon.
In Section \ref{sec:main-theorem}, we state and rigorously prove our main result, Theorem \ref{theo:main-complete}.
Finally, in Section \ref{sec:conclusions}, we highlight the flexibility of our main result by showcasing a wide variety of special cases treated in the literature. We also demonstrate how our techniques easily extend to systems with more complex interaction structures, such as those with multiplicative noise in the agent's equation ({\it e.g.}, modulated by an additional co-evolutionary network), or also bounded additive noise in the weights' equations.
In Appendix \ref{appendix:SDEs} we summarize some fundamental concepts of stochastic processes and path-dependent SDEs, which we will use in this paper.

\medskip

\textbf{Notation:}\\
Throughout the paper, we employ the following notation. For any \(N\in\mathbb{N}\), we denote \(\llbracket 1,N\rrbracket=\{1,\ldots,N\}\). The space of continuous functions \(C([0,t];\mathbb{R}^d) \) endowed with the uniform norm is denoted by \(\mathcal{C}^d_t\). Fixing a finite time horizon \(T>0\), for any \(\gamma,\bar{\gamma} \in \mathcal{C}^d_T\), we define the uniform norm up to time \(t\) as
\[
\Vert \gamma- \bar{\gamma} \Vert_{\ast, t}:= \max_{s\in [0,t]} \vert \gamma(s)-\bar{\gamma}(s)\vert, \quad t\in [0,T].
\]
The restriction of a path \(\gamma\) to the interval \([0,t]\) is denoted by \(\gamma_{[0,t]}\). 

Given a Polish space \(\mathcal{X}\), let \(\mathcal{M}_+(\mathcal{X})\) denote the space of finite, non-negative measures on \((\mathcal{X},\mathcal{B}(\mathcal{X}))\), where \(\mathcal{B}(\mathcal{X})\) is the Borel \(\sigma\)-algebra of \(\mathcal{X}\). We denote by \(\mathcal{P}(\mathcal{X})\) the subspace of Borel probability measures on \(\mathcal{X}\), and by \(\mathcal{P}_p(\mathcal{X})\) the subset of probability measures with finite \(p\)-th order moments. Given \(\mu \in \mathcal{P}(\mathcal{X})\), another Polish space \(\mathcal{Y}\), and a measurable map \(f:\mathcal{X}\to \mathcal{Y}\), the push-forward measure \(f_{\#}\mu \in \mathcal{P}(\mathcal{Y})\) is defined as 
\(f_{\#}\mu(B)=\mu(f^{-1}(B))\) for every \( B\in \mathcal{B}(\mathcal{Y}).\)
Equivalently, it satisfies
\[
\int_{\mathcal{X}} \varphi(f(x))\,d\mu(x)=\int_{\mathcal{Y}} \varphi(y)\,d(f_{\#}\mu)(y), 
\]
for every continuous and bounded function \(\varphi \in C_b(\mathcal{Y})\). The space \(\mathcal{P}(\mathcal{X})\) is endowed 
with the narrow topology, which is characterized sequentially as follows: a sequence \((\mu_n)_{n\in \mathbb{N}}\subset \mathcal{P}(\mathcal{X})\) converges narrowly to \(\mu \in \mathcal{P}(\mathcal{X})\) if 
\[\lim_{n\to \infty} \int_{\mathcal{X}} \varphi(x)\,d\mu_n(x)=\int_{\mathcal{X}} \varphi(x)\,d\mu(x), \quad \forall \varphi \in C_b(\mathcal{X}).\] 
The narrow topology is metrized by the bounded-Lipschitz distance \(d_{\mathrm{BL}}\) (see, for instance \cite{Dudley-02}), which for any two probability measures \(\mu, \nu \in \mathcal{P}(\mathcal{X})\) is defined as 
\[d_{\mathrm{BL}}(\mu,\nu):=\sup_{\Vert \varphi\Vert_{\mathrm{BL}} \leq 1} \int_{\mathcal{X}} \varphi(x)\,d(\mu-\nu)(x),\]
where \(\mathrm{BL}(\mathcal{X})\) is the space of bounded-Lipschitz functions on \(\mathcal{X}\) endowed with the norm \(\Vert \varphi \Vert_{\mathrm{BL}}=\max\{\Vert \varphi \Vert_{\infty}, [\varphi]_{\textrm{Lip}}\}\). We will denote by \(\mathrm{BL}_1(\mathcal{X})\) the space of functions \(\varphi \in \mathrm{BL}(\mathcal{X})\) such that \(\Vert \varphi \Vert_{\mathrm{BL}}\leq 1\).

The space \(\mathcal{P}_p(\mathcal{X})\) is endowed with the \(p\)-Wasserstein distance, defined as
\[W_p(\mu,\nu):=\left( \inf_{\pi \in \Pi(\mu,\nu)} \int_{\mathcal{X}\times \mathcal{X}} d(x,y)^p \,d\pi(x,y) \right)^{1/p},\]
where \(\Pi(\mu,\nu)\) is the set of transport plans between \(\mu\) and \(\nu\); that is, the set of probability measures on \(\mathcal{X}\times \mathcal{X}\) with marginals \(\mu\) and \(\nu\), respectively. The above infimum is strictly achieved, and any minimizer is called an optimal transport plan between \(\mu\) and \(\nu\). The space of optimal transport plans between \(\mu\) and \(\nu\) is denoted by \(\Pi_o(\mu,\nu)\). For a comprehensive treatment of optimal transport theory and the analytical properties of Wasserstein spaces, we refer to the monographs \cite{AGS-05,Santambrogio-15,Villani-09}.

Given a continuous stochastic process \(X:[0,T]\times \Omega\to \mathbb{R}^d\) adapted to some filtered probability space \((\Omega, \mathcal{F}, (\mathcal{F}_t)_{t \in [0,T]}, \mathbb{P})\), note that the map \(t\mapsto X(t,\omega)\) is continuous for \(\mathbb{P}\)-a.e. \(\omega \in \Omega\). Consequently, we can identify (and we shall often do) the stochastic process with a random variable taking values in the space of continuous functions, \(X_{[0,t]}:\Omega \to \mathcal{C}_t^d\). The law of \(X_{[0,t]}\) viewed in this functional space is referred to as its \emph{pathwise law}, denoted by \(\mathrm{Law}(X_{[0,t]})=X_{[0,t]\#}\mathbb{P}\in \mathcal{P}(\mathcal{C}_t^d)\).
For a comprehensive introduction to the foundational theory of stochastic processes, we refer the reader to \cite{Oksendal-03} and also Appendix \ref{appendix:SDEs}. 

Finally, for any \(1\leq p\leq \infty\) we denote by \(L^p(\Omega,\mathcal{X})\) the set of measurable maps (or equivalently random variables) \(f:\Omega \to \mathcal{X}\) such that 
\(\mathbb{E}[d_\mathcal{X}^p(f,x_0)]<\infty\), where \(x_0 \in \mathcal{X}\) is any fixed reference point. The set \(L^p(\Omega,\mathcal{X})\) is a metric space endowed with the distance
\[
d_{L^p(\Omega,\mathcal{X})}(f,g) := \left(\int_{\Omega} d_{\mathcal{X}}^p(f(\omega),g(\omega)) \,d\mathbb{P}(\omega)\right)^{1/p}\equiv\left(\mathbb{E}\left[ d_{\mathcal{X}}^p(f,g) \right]\right)^{1/p}.
\]
For a stochastic process regarded as a random variable \(X:\Omega\to \mathcal{C}_T^d\) valued in the Banach space \(\mathcal{X}=\mathcal{C}_T^d\), the above metric is induced by a norm, more precisely,
\[\Vert X \Vert_{L^p(\Omega,\mathcal{C}_T^d)}:=\left(\int_{\Omega}\max_{t\in [0,T]}|X(t,\omega)|^p\,d\mathbb{P}(\omega)\right)^{1/p}\equiv\left( \mathbb{E} \Vert X \Vert^p_{\ast,T} \right)^{1/p}.\]

\section{Preliminaries} \label{sec:preliminaries}
In this section, we introduce the functional setting required to address the convergence of \eqref{eq:multi-agent-discrete} to \eqref{eq:multi-agent-macroscopic-SDE}.
As explained in the introduction, we 
must handle two distinct components in our mean-field limit: the particle states and the coupling weights.

Regarding the particle states, the extended empirical measures
\((\mu^{N,\xi}_{[0,T]})_{\xi \in [0,1]}\) will be shown to converge
towards the family of pathwise laws
\((\mu^{\xi}_{[0,T]})_{\xi \in [0,1]} \subset \mathcal{P}(\mathcal{C}_T^d)\) of the processes solving the limiting equation 
\eqref{eq:multi-agent-macroscopic-SDE}. Therefore, the first subsection explores the topological properties of the space of such families of probability measures, which can be identified with the space of \emph{fibered probability measures} \cite{PP-23}.

For the network structure, the empirical probability-graphons \((q^{N,\xi,\xi'}_t)_{\xi,\xi' \in [0,1]}\) will be shown to 
converge to a family \((q^{\xi,\xi'}_t)_{\xi,\xi' \in [0,1]}\subset \mathcal{P}(\mathbb{R})\) that will capture the limiting 
network structure. We rely on the theory of \emph{probability-graphons} \cite{ADW-25}, which we review in
the second subsection.

Finally, in the third subsection, viewing the particle states as the nodes of the network
and the weights as the edges, we integrate both frameworks to introduce the space of \emph{node-edge probability-graphons}.
We equip this space with a suitable topology that, upon identifying objects up to common measure-preserving
transformations on the nodes and edges, provides a joint compactness criterion crucial for the mean-field limit result.
This criterion extends the compactness results for probability-graphons obtained in \cite[Section 5]{ADW-25} to the node-edge setting and is of independent interest.

\subsection{Fibered probability measures}
The family of pathwise laws of the processes solving the limiting equation \eqref{eq:multi-agent-macroscopic-SDE} will require
a suitable measurability property, which is captured by the following definition. We refer to \cite{PP-23} for further details.
\subsubsection{(Labeled) fibered probability measures}
\begin{definition}[Borel family of probability measures]
Given a measurable space \((\Omega,\mathcal{A})\) and a Polish space \(\mathcal{X}\), we say that a family \((\mu^{\omega})_{\omega \in \Omega}\subset \mathcal{P}(\mathcal{X})\) is a Borel family of probability measures if the map
\[\omega \in \Omega\mapsto \mu^{\omega}(B) \in [0,1]\] is measurable for every Borel set \(B\subset \mathcal{X}\).
\end{definition}
The following result gives several characterization of Borel families of probability measures. 
\begin{proposition}[{\cite[Proposition 2.14]{PP-23}}] \label{prop:characterization-random-prob-measures}
	Let \((\Omega,\mathcal{A})\) be a measurable space and \(\mathcal{X}\) be a Polish space. Given a family \((\mu^{\omega})_{\omega \in \Omega}\subset \mathcal{P}(\mathcal{X})\), the following are equivalent:
	\begin{enumerate} [label=(\roman*)] 
		\item \((\mu^{\omega})_{\omega \in \Omega}\) is a Borel family of probability measures.
		\item The map \[\omega \in \Omega\mapsto \int_{\mathcal{X}} \varphi(x)d\mu^{\omega}(x)\in \mathbb{R}\] is measurable for every bounded and measurable \(\varphi:\mathcal{X}\to \mathbb{R}\).
		\item The map \(\omega \in \Omega\mapsto \mu^{\omega}\in \mathcal{P}(\mathcal{X})\) is a random probability measure, {\it i.e.}, it is measurable when \(\mathcal{P}(\mathcal{X})\) is endowed with the narrow topology.
	\end{enumerate}
\end{proposition}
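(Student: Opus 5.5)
The equivalences will follow the cycle (i)$\Rightarrow$(ii)$\Rightarrow$(iii)$\Rightarrow$(i), using only standard measure-theoretic tools.

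\emph{From (i) to (ii).} By (i), $\omega\mapsto\int_{\mathcal{X}}\mathds{1}_B\,d\mu^\omega=\mu^\omega(B)$ is measurable for every Borel set $B\subset\mathcal{X}$. By linearity the same holds for simple functions. For a bounded measurable $\varphi$, choose simple functions $\varphi_n\to\varphi$ uniformly. Then $\int\varphi_n\,d\mu^\omega\to\int\varphi\,d\mu^\omega$ for every $\omega$, and a pointwise limit of measurable maps is measurable.

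\emph{From (ii) to (iii).} Since $\mathcal{X}$ is Polish, so is $\mathcal{P}(\mathcal{X})$ with the narrow topology. Its Borel $\sigma$-algebra is generated by the evaluation maps $\mu\mapsto\int\varphi\,d\mu$ with $\varphi\in C_b(\mathcal{X})$. To see this, each such map is continuous, hence Borel. Conversely, narrow open sets are countable unions of finite intersections of sets of the form $\{\mu:\ |\int\varphi_k\,d\mu-\int\varphi_k\,d\nu|<r\}$, because the narrow topology is separable and metrizable. In fact one may use a countable convergence-determining family $\{\varphi_k\}\subset \mathrm{BL}(\mathcal{X})$, for instance built from a countable dense set via $d_{\mathrm{BL}}$. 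Hence the map $\omega\mapsto\mu^\omega$ is measurable as soon as every composition $\omega\mapsto\int\varphi\,d\mu^\omega$ is measurable, and (ii) gives exactly this for $\varphi\in C_b$.

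\emph{From (iii) to (i).} The compositions $\omega\mapsto\int\varphi\,d\mu^\omega$ are measurable for every $\varphi\in C_b(\mathcal{X})$, since they are continuous functions of a measurable map. For a closed set $F$, the functions $\varphi_n(x)=\max(0,1-n\,d(x,F))$ decrease to $\mathds{1}_F$, so $\omega\mapsto\mu^\omega(F)$ is measurable by monotone convergence. The class $\{B:\ \omega\mapsto\mu^\omega(B)\text{ measurable}\}$ is a Dynkin system: it is closed under complements, because each $\mu^\omega$ is a probability measure, and under countable disjoint unions. It contains the $\pi$-system of closed sets, so by the $\pi$–$\lambda$ theorem it contains all Borel sets.

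\emph{Main obstacle.} The only delicate point is in (ii)$\Rightarrow$(iii): one must justify that the Borel $\sigma$-algebra of the narrow topology coincides with the $\sigma$-algebra generated by the integrals against $C_b$ functions. This uses separability of $\mathcal{P}(\mathcal{X})$, so that open sets are countable unions of basic balls. I would cite this from standard references (e.g. \cite{Dudley-02}) rather than reprove it.
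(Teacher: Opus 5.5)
Your proof is correct. The cycle (i)$\Rightarrow$(ii)$\Rightarrow$(iii)$\Rightarrow$(i) uses three ingredients: uniform approximation by simple functions; the identification of the Borel $\sigma$-algebra of the narrow topology with the one generated by the maps $\mu\mapsto\int\varphi\,d\mu$, $\varphi\in C_b(\mathcal{X})$, which holds because second countability lets you write every open set as a countable union of basic open sets; and a Dynkin argument over closed sets. This is the standard proof. The paper does not prove the statement itself and only cites \cite[Proposition 2.14]{PP-23}. Your \(\pi\)--\(\lambda\) step is the same device the paper uses in the appendix to pass from cylinder sets to all Borel subsets of \(\mathcal{C}^d_T\).
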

    In other words, the notion of a Borel family of probability measures is equivalent to that of a Young measure, a random probability measure or a Markov transition kernel.
    
    Another equivalent way to capture the measurability property of a Borel family of probability measures in a more compact form
is through
the following notion.
\begin{definition}[Fibered probability measures] \label{def:fibered-probability-measures}
	Let \(\Omega\) and \(\mathcal{X}\) be Polish spaces, we 
	define the space of fibered probability measures on \(\mathcal{X}\times \Omega\) with base
	measure \(\nu\in \mathcal{P}(\Omega)\) as 	
	\[
		\mathcal{P}_{\nu}(\mathcal{X}\times \Omega)=\{\mu \in \mathcal{P}(\mathcal{X}\times \Omega): \pi_{\Omega \#}\mu=\nu\},
	\]
	where \(\pi_{\Omega}:\mathcal{X}\times \Omega\to \Omega\) is the projection on the second component,
	\(\pi_{\Omega}(x,\omega)=\omega\).
\end{definition}
The classical disintegration theorem \cite[Theorem 5.3.1]{AGS-05} allows us to identify every fibered probability measure
with its Borel family of disintegrated measures.
\begin{theorem}[Disintegration theorem] \label{theorem:disintegration}
	Let \(\Omega\)  and \(\mathcal{X}\) be Polish spaces, and let \(\nu \in \mathcal{P}(\Omega)\). Consider any
	\(\mu \in \mathcal{P}_{\nu}(\mathcal{X}\times \Omega)\). Then, there exists a unique (up to \(\nu\)-a.e. equality)
	Borel family \((\mu^{\omega})_{\omega \in \Omega}\subset \mathcal{P}(\mathcal{X})\) such that 
     	\[
     		\int_{\mathcal{X}\times \Omega} \varphi(x,\omega)d\mu(x,\omega)=\int_{\Omega} \int_{\mathcal{X}} \varphi(x,\omega)d\mu^{\omega}(x)d\nu(\omega),
     	\]
	for every bounded and measurable function \(\varphi:\mathcal{X}\times \Omega\to \mathbb{R}\). Conversely, given any Borel
	family \((\mu^{\omega})_{\omega \in \Omega}\subset \mathcal{P}(\mathcal{X})\), we can associate a unique fibered probability
	measure \(\mu \in \mathcal{P}_{\nu}(\mathcal{X}\times \Omega)\) so that the above formula holds true, and it will be denoted
	by \(\mu(x,\omega)=\mu^{\omega}(x)\otimes d\nu(\omega)\). 
\end{theorem}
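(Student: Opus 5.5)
The plan is to prove both directions of the disintegration theorem for fibered probability measures, treating existence and uniqueness of the kernel separately.

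\textbf{Existence.} Since $\mathcal{X}$ is Polish, it is Borel isomorphic to a Borel subset of $[0,1]$. It therefore suffices to construct conditional distribution functions. Fix $\mu\in\mathcal{P}_\nu(\mathcal{X}\times\Omega)$. For each $B$ in a countable $\pi$-system $\mathcal{D}$ generating $\mathcal{B}(\mathcal{X})$ (for instance finite intersections of balls with rational radii centered at a countable dense set), the measure $A\mapsto\mu(B\times A)$ is absolutely continuous with respect to $\nu=\pi_{\Omega\#}\mu$. We take a Radon--Nikodym density $f_B(\omega)\in[0,1]$.

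The main obstacle is turning these densities, each defined only $\nu$-a.e., into genuine probability measures for every $\omega$ outside a single null set. Countable additivity must hold simultaneously for uncountably many sequences. I would handle this with the standard tightness route. Because $\mathcal{X}$ is Polish, $\pi_{\mathcal{X}\#}\mu$ is tight, so there are compacts $K_n\uparrow$ with $\mu((\mathcal{X}\setminus K_n)\times\Omega)\le 1/n$. Then $f_{K_n}\to 1$ $\nu$-a.e. by monotone convergence.

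Next, work on the countable algebra generated by $\mathcal{D}$ and $\{K_n\}$. Off a countable union of null sets, the set function $B\mapsto f_B(\omega)$ is finitely additive and has total mass $1$. It is also inner regular by compacts along the $K_n$ and the compact approximations available in the algebra. Inner regularity by compacts then gives countable additivity, exactly as in the proof of the Kolmogorov extension theorem, and Carathéodory extends the set function to a probability $\mu^\omega$. On the exceptional null set we set $\mu^\omega:=\delta_{x_0}$.

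Measurability of $\omega\mapsto\mu^\omega(B)$ holds on the generating algebra by construction. It extends to all Borel $B$ by a monotone class (Dynkin) argument, so the family is a Borel family in the sense of Proposition \ref{prop:characterization-random-prob-measures}. The integral formula holds for $\varphi=\mathds{1}_{B\times A}$ by construction. A $\pi$–$\lambda$ argument on product sets extends it to indicators of Borel sets of $\mathcal{X}\times\Omega$, and simple-function approximation then gives it for all bounded measurable $\varphi$.

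\textbf{Uniqueness.} If $(\mu^\omega)$ and $(\tilde\mu^\omega)$ both satisfy the formula, take $\varphi=\mathds{1}_B(x)\mathds{1}_A(\omega)$. This gives $\int_A(\mu^\omega(B)-\tilde\mu^\omega(B))\,d\nu=0$ for all Borel $A$, so $\mu^\omega(B)=\tilde\mu^\omega(B)$ for $\nu$-a.e. $\omega$. Intersecting these full-measure sets over the countable $\pi$-system $\mathcal{D}$ yields a single null set off which the two measures agree on $\mathcal{D}$, hence everywhere by Dynkin's theorem.

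\textbf{Converse.} Given a Borel family, define $\mu(E):=\int_\Omega\mu^\omega(E_\omega)\,d\nu(\omega)$, where $E_\omega$ is the section of $E$ at $\omega$. The integrand is measurable: this holds for rectangles $B\times A$ and extends to all Borel $E$ by monotone class. Countable additivity follows from monotone convergence, and $\pi_{\Omega\#}\mu=\nu$ holds trivially. The integral formula follows as above, and uniqueness of $\mu$ holds because two measures agreeing on rectangles coincide.
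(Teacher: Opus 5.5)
The paper does not prove this statement. It imports it from \cite[Theorem 5.3.1]{AGS-05}, which is formulated for a Borel map between Radon separable metric spaces; the present statement is the special case of the projection $\pi_\Omega$. Your proposal instead builds a regular conditional probability from scratch: Radon--Nikodym densities $f_B$ on a countable algebra, removal of countably many null sets, $\sigma$-additivity from inner regularity with respect to compact sets, Carath\'eodory extension, and then $\pi$--$\lambda$ arguments for measurability, the integral identity, uniqueness and the converse. This route is correct. It buys independence from the reference and uses only separability and tightness of $\pi_{\mathcal{X}\#}\mu$, never any topology on $\Omega$. The citation buys brevity and the more general fibered statement.

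Your opening reduction via a Borel isomorphism with a subset of $[0,1]$ is never used. Either drop it or follow it. If you follow it, the crux becomes monotonicity and right-continuity of conditional distribution functions at rational thresholds off one null set. You would then also check that $\mu^\omega$ gives full mass to the Borel image of $\mathcal{X}$ for $\nu$-a.e.\ $\omega$, and compact classes are not needed at all.

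The step you should actually write out is the claim that the algebra supplies compact inner approximants, since that is the whole content of existence.

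Take the balls in $\mathcal{D}$ open. Then every set of the algebra generated by $\mathcal{D}$ and $\{K_n\}$ is a finite union of atoms $F\cap U$ with $F$ closed and $U$ open. Cover the compact set $K_m\setminus U$ by finitely many balls of radius $1/m$ from your family, with centers within $1/(2m)$ of $K_m\setminus U$, and call their union $V_m$. Then $C_m:=F\cap K_m\setminus V_m$ is compact, belongs to the algebra, and lies in $F\cap U$. Moreover,
$(F\cap U)\setminus C_m\subset(\mathcal{X}\setminus K_m)\cup\{y\in U:\ d(y,U^c)<3/(2m)\}$,
so its $\pi_{\mathcal{X}\#}\mu$-measure tends to $0$.

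Replace $C_m$ by $C_1\cup\dots\cup C_m$ so the approximants increase. Then record, for each of the countably many sets $A$ in the algebra, the a.e.\ identity $f_A=\lim_m f_{C_{A,m}}$. Off a single null set this gives exactly the inner regularity that your compact-class argument requires.
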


\begin{remark}[On the choice of the label space \(\Omega\) and the reference measure \(\nu\)]
	While the previous definitions hold for a general Polish space \(\Omega\), from this point forward we identify
	the label space \(\Omega\)  with the unit interval \([0,1]\) equipped with the Borel \(\sigma\)-algebra
	and the Lebesgue measure \(\nu=d\xi_{[0,1]}\). This choice is standard in the theory of graphons
	\cite{Lovasz-12}, where the unit interval serves as the canonical domain to parametrize a continuum of nodes.
	Accordingly, we replace the abstract variable \(\omega\) with the macroscopic label \(\xi\) in our subsequent analysis.
	The family of disintegrations of a fibered probability measure \(\mu \in \mathcal{P}_{\nu}(\mathcal{X}\times [0,1])\)
	will thus be denoted by \((\mu^{\xi})_{\xi \in [0,1]}\).
\end{remark}
The space \(\mathcal{P}_{\nu}(\mathcal{X}\times [0,1])\) can be endowed with several topologies. Since it is a closed subset of
\(\mathcal{P}(\mathcal{X}\times [0,1])\) under the narrow topology, it naturally inherits this topology, which is metrized by
the bounded-Lipschitz distance. For any two fibered probability measures 
\(\mu,\bar{\mu}\in \mathcal{P}_{\nu}(\mathcal{X}\times [0,1])\), this distance can be expressed in terms
of the disintegrated measures as follows:
\[
	d_{\text{BL}}(\mu,\bar{\mu})=\sup_{\varphi \in \text{BL}_1(\mathcal{X}\times [0,1])}\int_{0}^{1}\int_{\mathcal{X}} \varphi(x,\xi)d(\mu^{\xi}-\bar{\mu}^{\xi})(x)d\xi.
\]
Regarding a fibered probability measure \(\mu \in \mathcal{P}_{\nu}(\mathcal{X}\times [0,1])\) as a random probability measure
\(\xi \in [0,1] \mapsto \mu^{\xi} \in \mathcal{P}(\mathcal{X})\), this suggests considering the following family of
fibered distances.
 \begin{definition}
	Given any two fibered probability measures \(\mu,\bar{\mu} \in \mathcal{P}_{\nu}(\mathcal{X}\times [0,1])\), we define the following fibered distances between \(\mu\) and \(\bar{\mu}\): 
  	\[
  		\begin{aligned}
  		&d_{\text{BL},p}(\mu,\bar{\mu}):=\left( \int_{0}^{1} d_{\text{BL}}(\mu^{\xi},\bar{\mu}^{\xi})^p d\xi \right)^{1/p}, \quad 1\leq p<\infty, \\ 
  		&d_{\text{BL},\infty}(\mu,\bar{\mu}):= \esssup_{\xi \in [0,1]} d_{\text{BL}}(\mu^{\xi},\bar{\mu}^{\xi}).
  		\end{aligned}  
  	\]
\end{definition}
Since the bounded-Lipschitz distance between any two probability measures is bounded by 2, the distances \(d_{\text{BL},p}\)
are always finite, and for \(1\leq p <\infty\), they induce the same topology on \(\mathcal{P}_{\nu}(\mathcal{X}\times [0,1])\).
It is straightforward to verify that the narrow topology is coarser than the topology induced by these fibered distances. In
particular, we have the following hierarchy of distances:
\[
	d_{\text{BL}}(\mu,\bar{\mu})\leq d_{\text{BL},p}(\mu,\bar{\mu})\leq d_{\text{BL},q}(\mu,\bar{\mu}) \leq d_{\text{BL},\infty}(\mu,\bar{\mu}), \quad 1\leq p\leq q<\infty.
\]
For a proof of the first inequality, see \cite[Proposition 2.19]{APP-24-arxiv}. Furthermore, the Rademacher sequence
(see \cite[Example 3.15]{PP-23}) provides a standard example demonstrating that the narrow topology on
\(\mathcal{P}_{\nu}(\mathcal{X}\times [0,1])\) is strictly coarser than the topology induced by the fibered
distances \(d_{\text{BL},p}\).

Another interesting distance arises from the cut norm, which is widely used in the theory of graph limits \cite{Lovasz-12}.
The next subsection will reveal the utility of this choice for probability-graphons, as it naturally extends the classical 
compactness theorems of graphons to the probability-graphon framework \cite[Section 5]{ADW-25}. Motivated by this,
we define a cut distance for fibered probability measures as follows.
\begin{definition}[Bounded-Lipschitz cut distance]
	Given any two fibered probability measures \(\mu,\bar{\mu} \in \mathcal{P}_{\nu}(\mathcal{X}\times [0,1])\), we define
	the bounded-Lipschitz cut distance as 
	\begin{equation} \label{eq:bounded-Lipschitz-cut-fibered-probability-measures}
		d_{\text{BL},\square}(\mu,\bar{\mu}):= \sup_{S\subset [0,1]} d_{\text{BL}} \left( \int_{S} \mu^{\xi} d\xi, \int_{S} \bar{\mu}^{\xi} d\xi \right), 
	\end{equation}
	where the supremum is taken over all measurable sets \(S\subset [0,1]\), and 
	\(\int_{S}\mu^{\xi}d\xi \in \mathcal{M}_+(\mathcal{X})\) denotes the finite positive measure defined
	by 
	\[
		\int_{\mathcal{X}} \varphi(x)d\left( \int_{S} \mu^{\xi} d\xi \right)(x)=\int_{S} \int_{\mathcal{X}} \varphi(x)d\mu^{\xi}(x)d\xi
	\]
	for every bounded measurable function \(\varphi:\mathcal{X}\to \mathbb{R}\). 
\end{definition}

The previous definition is a natural generalization of the classical cut norm on the space \(L^1([0,1];\mathbb{R})\),
which is given by
\[
	\Vert f \Vert_{\square}=\sup_{S\subset [0,1]} \left\vert \int_{S} f(\xi)d\xi \right\vert.
\] 
It is a standard fact that, since we are in the unit interval, this norm is equivalent to the \(L^1\) norm, satisfying
\[
	\Vert f \Vert_{\square}\leq \Vert f\Vert_{L^1}\leq 2 \Vert f \Vert_{\square}.
\] 
Thus, by interchanging the supremum over \(\Phi \in \text{BL}_1(\mathcal{X})\) with the supremum over measurable sets
\(S\subset [0,1]\) in \eqref{eq:bounded-Lipschitz-cut-fibered-probability-measures}, and applying the above equivalence,
we obtain
\begin{equation} \label{eq:equivalence-cut-distance-Pettis-norm}
	d_{\text{BL},\square}(\mu,\bar{\mu})\leq  \sup_{\Phi \in \text{BL}_1(\mathcal{X})} \Vert \Phi_{\mu}-\Phi_{\bar{\mu}} \Vert_{L^1} = \sup_{\Phi \in \text{BL}_1(\mathcal{X})} \int_{0}^{1} \left\vert  \int_{\mathcal{X}} \Phi(x)d(\mu^{\xi}-\bar{\mu}^{\xi})(x) \right\vert d\xi \leq 2 d_{\text{BL},\square}(\mu,\bar{\mu}),
\end{equation} 
where, for any \(\Phi \in \text{BL}_1(\mathcal{X})\), we define the function \(\Phi_{\mu}:[0,1]\to \mathbb{R}\) as
\[
	\Phi_{\mu}(\xi):= \int_{\mathcal{X}} \Phi(x)d\mu^{\xi}(x), \quad \xi \in [0,1].
\]
Furthermore, since the supremum of an integral is bounded above by the integral of the supremum,
it follows that for any \(\mu,\bar{\mu} \in \mathcal{P}_{\nu}(\mathcal{X}\times [0,1])\),
\[
	d_{\text{BL},\square}(\mu,\bar{\mu})\leq d_{\text{BL},1}(\mu,\bar{\mu}).
\]
This inequality implies that the topology induced by the bounded-Lipschitz cut distance is a priori coarser
than the one induced by \(d_{\text{BL},1}\). While a coarser topology is favorable for establishing compactness results, 
as is the case for classical graphons, we demonstrate in Corollary \ref{corollary:d-BL-cut-equivalent-d-BL-1} that these
two distances actually induce equivalent topologies on \(\mathcal{P}_{\nu}(\mathcal{X}\times [0,1])\).

\begin{remark}[Relating \(d_{\text{BL},1}\) and \(d_{\text{BL},\square}\) to the Bochner and Pettis norms]
	Using the theory of integration of vector-valued maps, we can see that the distances \(d_{\text{BL},1}\) and
	\(d_{\text{BL},\square}\) are induced, respectively, by the classical Bochner and Pettis norms in the space
	\(L^1([0,1];\text{BL}(\mathcal{X})^*)\). To see this, recall that the space of probability measures \(\mathcal{P}(\mathcal{X})\)
	can be seen as a subset of the dual space \(\text{BL}(\mathcal{X})^*\) via the embedding
	\(\mu \in \mathcal{P}(\mathcal{X})\mapsto F_{\mu}\in \text{BL}(\mathcal{X})^*\) given by  
	\[
	F_{\mu}(\varphi)= \int_{\mathcal{X}} \varphi(x)d\mu(x), \quad \varphi \in \text{BL}(\mathcal{X}).
	\]    
	Thus, a random probability measure \(\mu:[0,1]\to \mathcal{P}(\mathcal{X})\) can be viewed as a vector-valued map with values
	in \(\text{BL}(\mathcal{X})^*\). Since its range is contained in the separable space
	\(\mathcal{P}(\mathcal{X})\subset \text{BL}(\mathcal{X})^*\), the Pettis measurability theorem \cite[Theorem 2.2]{DU-77}
	ensures that the map is strongly measurable. Moreover, random probability measures are Bochner integrable since they take
	values in the bounded unit ball of \(\text{BL}(\mathcal{X})^*\). From this perspective, we can view
	\(\mathcal{P}_{\nu}(\mathcal{X}\times [0,1])\) as a subset of \(L^1([0,1];\text{BL}(\mathcal{X})^*)\). Recalling the
	definitions of the Bochner \(L^1\) and Pettis norms for a measurable map \(f:[0,1]\to \text{BL}(\mathcal{X})^*\),
	\[
		\Vert f \Vert_{L^1}=\int_{0}^{1} \Vert f(\xi) \Vert_{\text{BL}(\mathcal{X})^*} d\xi, \quad \Vert f \Vert_{\text{Pettis}}=\sup_{\Phi \in \text{BL}_1} \int_{0}^{1} \left\vert \langle f(\xi),\Phi \rangle \right\vert d\xi,
	\]
	we have that for every \(\mu,\bar{\mu} \in \mathcal{P}_{\nu}(\mathcal{X}\times [0,1])\),
	\[
		d_{\text{BL},1}(\mu,\bar{\mu})= \Vert \mu-\bar{\mu} \Vert_{L^1}, \quad \quad \sup_{\Phi \in \text{BL}_1 } \Vert \Phi_{\mu}-\Phi_{\bar{\mu}} \Vert_{L^1} = \Vert \mu-\bar{\mu} \Vert_{\text{Pettis}}.
	\]
	In a general infinite-dimensional Banach space \(X\), the Pettis norm is strictly weaker than the Bochner norm in
	\(L^1([0,1];X^*)\) (see \cite[Example 1]{DG-93}). However, we will see that as a consequence of Lemma
	\ref{lemma:equivalence-bounded-Lipschitz-cut-and-BL,p}, these two norms induce two distances in the space
	of fibered probability measures that are topologically equivalent.
\end{remark}

Since fibered probability measures share a fixed marginal on the label space, the tightness of a subset
\(\mathcal{K}\subset \mathcal{P}_{\nu}(\mathcal{X}\times [0,1])\) depends entirely on the tightness of its marginals on
the state space. To formalize this, we introduce the averaging map 
\(M: \mathcal{P}_{\nu}(\mathcal{X}\times [0,1]) \to \mathcal{P}(\mathcal{X})\), which associates every fibered measure
\(\mu\) with its first marginal, denoted by \(M_{\mu} := \pi_{\mathcal{X} \#}\mu\).
In terms of its disintegration, this averaged measure is explicitly given by
\[
	M_{\mu} = \int_{0}^{1} \mu^{\xi}d\xi.
\]
\begin{proposition} \label{prop:equivalence-tightness}
	Let \(\mathcal{K} \subset \mathcal{P}_{\nu}(\mathcal{X}\times [0,1])\). The following statements are equivalent:
	\begin{enumerate} [label=(\roman*)]
		\item The set \(\mathcal{K}\) is tight in \(\mathcal{P}(\mathcal{X}\times [0,1])\).
		\item The set of averaged measures \(M_{\mathcal{K}}:=\{M_{\mu} : \mu \in \mathcal{K}\} \subset \mathcal{P}(\mathcal{X})\) is tight.
		\item For every \(\varepsilon >0\), there exists a compact set \(K_{\varepsilon} \subset \mathcal{X}\) such that
		\[
			\nu(\{\xi \in [0,1]: \mu^{\xi}(\mathcal{X}\setminus K_{\varepsilon}) \geq \varepsilon\}) \leq \varepsilon, \quad \forall \mu \in \mathcal{K}.
		\]
	\end{enumerate}
\end{proposition}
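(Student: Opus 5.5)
I would prove the cycle of implications (i)\(\Rightarrow\)(ii)\(\Rightarrow\)(iii)\(\Rightarrow\)(ii)\(\Rightarrow\)(i), using only Prokhorov-type definitions of tightness and the disintegration formula of Theorem \ref{theorem:disintegration}.

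\emph{(i)\(\Rightarrow\)(ii).} The projection \(\pi_{\mathcal{X}}:\mathcal{X}\times[0,1]\to\mathcal{X}\) is continuous, so it maps compact sets to compact sets. Fix \(\varepsilon>0\) and let \(C_\varepsilon\subset \mathcal{X}\times[0,1]\) be a compact set with \(\mu(C_\varepsilon^c)\le\varepsilon\) for every \(\mu\in\mathcal{K}\). Then \(K_\varepsilon:=\pi_{\mathcal{X}}(C_\varepsilon)\) is compact. Since \(C_\varepsilon\subset K_\varepsilon\times[0,1]\), we get
\[
M_\mu(\mathcal{X}\setminus K_\varepsilon)=\mu\bigl((\mathcal{X}\setminus K_\varepsilon)\times[0,1]\bigr)\le \mu(C_\varepsilon^c)\le\varepsilon .
\]

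\emph{(ii)\(\Rightarrow\)(i).} Given a compact \(K_\varepsilon\) with \(M_\mu(\mathcal{X}\setminus K_\varepsilon)\le\varepsilon\) for all \(\mu\in\mathcal{K}\), the set \(K_\varepsilon\times[0,1]\) is compact because \([0,1]\) is compact. Its complement has \(\mu\)-mass equal to \(M_\mu(\mathcal{X}\setminus K_\varepsilon)\le\varepsilon\). So the fixed base marginal \(\nu\) costs nothing, and this is the essential point of the statement.

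\emph{(ii)\(\Rightarrow\)(iii).} I would use Markov's inequality on the label space. Apply (ii) with \(\varepsilon^2\) in place of \(\varepsilon\) to obtain a compact \(K\) such that
\[
\int_0^1\mu^\xi(\mathcal{X}\setminus K)\,d\xi=M_\mu(\mathcal{X}\setminus K)\le\varepsilon^2 .
\]
Here the disintegration identity is applied to the bounded measurable test function \(\mathds{1}_{\mathcal{X}\setminus K}\). Markov's inequality then gives \(\nu(\{\xi:\mu^\xi(\mathcal{X}\setminus K)\ge\varepsilon\})\le\varepsilon^2/\varepsilon=\varepsilon\), uniformly in \(\mu\in\mathcal{K}\). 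Taking \(K_\varepsilon:=K\) proves (iii).

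\emph{(iii)\(\Rightarrow\)(ii).} Split the integral over the bad set \(B_\mu:=\{\xi:\mu^\xi(\mathcal{X}\setminus K_\varepsilon)\ge\varepsilon\}\) and its complement. On \(B_\mu\) the integrand is at most \(1\), and \(\nu(B_\mu)\le\varepsilon\). Off \(B_\mu\) the integrand is below \(\varepsilon\). Hence
\[
M_\mu(\mathcal{X}\setminus K_\varepsilon)=\int_0^1\mu^\xi(\mathcal{X}\setminus K_\varepsilon)\,d\xi\le \nu(B_\mu)+\varepsilon\le 2\varepsilon ,
\]
uniformly in \(\mu\in\mathcal{K}\). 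Since \(\varepsilon\) is arbitrary, this is tightness of \(M_{\mathcal{K}}\).

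I do not expect a real obstacle. The only points needing care are the measurability of \(\xi\mapsto\mu^\xi(\mathcal{X}\setminus K)\), which follows from Proposition \ref{prop:characterization-random-prob-measures}, and the quantifier bookkeeping (choosing \(\varepsilon^2\)) in the Markov step.
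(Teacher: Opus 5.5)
Your proof is correct and follows the paper's argument essentially step for step. Both use projection of a compact set and \(K_\varepsilon\times[0,1]\) for (i)\(\Leftrightarrow\)(ii), Markov's inequality with \(\varepsilon^2\) for (ii)\(\Rightarrow\)(iii), and splitting off the bad set \(B_\mu\) to obtain the \(2\varepsilon\) bound for (iii)\(\Rightarrow\)(ii).
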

\begin{proof}
	\(\diamond\) \textsc{Step 1}: Equivalence between \((i)\) and \((ii)\).\\ 
	If \(M_{\mathcal{K}}\) is tight, then given any \(\varepsilon>0\) there exists a compact set \(K_{\varepsilon}\subset \mathcal{X}\)
	such that \(M_{\mu}(\mathcal{X}\setminus K_{\varepsilon})\leq \varepsilon\) for every \(\mu \in \mathcal{K}\). Thus, for every
	\(\mu \in \mathcal{K}\),
	\[
		\mu((\mathcal{X}\setminus K_{\varepsilon})\times [0,1])=\int_{0}^{1} \mu^{\xi}(\mathcal{X}\setminus K_{\varepsilon})d\xi = M_{\mu}(\mathcal{X}\setminus K_{\varepsilon})\leq \varepsilon,
	\]
	and therefore \(\mathcal{K}\) is tight as a subset of \(\mathcal{P}(\mathcal{X}\times [0,1])\). Conversely, if \(\mathcal{K}\) is
	tight as a subset of \(\mathcal{P}(\mathcal{X}\times [0,1])\), then given any \(\varepsilon>0\) there exists a compact set
	\(K_{\varepsilon}\subset \mathcal{X}\times [0,1]\) such that 
	\(\mu((\mathcal{X}\times [0,1])\setminus K_{\varepsilon})\leq \varepsilon\) for every \(\mu \in \mathcal{K}\).
	Since the projection of a compact set is compact, the set \(\pi_{\mathcal{X}}(K_{\varepsilon})\subset \mathcal{X}\) is compact,
	and since \((\mathcal{X}\backslash \pi_{\mathcal{X}}(K_{\varepsilon}))\times [0,1]\subset (\mathcal{X}\times [0,1])\backslash K_{\varepsilon}\), for every \(\mu \in \mathcal{K}\) we have that
	\[
		M_{\mu}(\mathcal{X}\setminus \pi_{\mathcal{X}}(K_{\varepsilon}))\leq  \mu((\mathcal{X}\times [0,1])\setminus K_{\varepsilon})\leq \varepsilon,
	\]
	thus, \(M_{\mathcal{K}}\) is tight in \(\mathcal{P}(\mathcal{X})\).

	\medskip

	\(\diamond\) \textsc{Step 2}: Equivalence between \((ii)\) and \((iii)\).\\
	Suppose that \(M_{\mathcal{K}}\) is tight. Then, for every \(\varepsilon >0\), there exists a compact set 
	\(K_{\varepsilon} \subset \mathcal{X}\) such that \(M_{\mu}(\mathcal{X}\setminus K_{\varepsilon}) < \varepsilon^2\)
	for every \(\mu \in \mathcal{K}\). Thus, we have that 
	\[
		\varepsilon^2\geq \int_{0}^{1} \mu^{\xi} (\mathcal{X}\setminus K_{\varepsilon}) d\xi \geq \int_{A_{\mu}} \mu^{\xi} (\mathcal{X}\setminus K_{\varepsilon}) d\xi \geq \nu(A_{\mu}) \varepsilon,
	\]
	where \(A_{\mu}:=\{\xi \in [0,1]: \mu^{\xi}(\mathcal{X}\setminus K_{\varepsilon}) \geq \varepsilon\}\). This shows that
	\(\nu(A_{\mu}) \leq \varepsilon\). Conversely, suppose that for every \(\varepsilon >0\), there exists a compact set
	\(K_{\varepsilon} \subset \mathcal{X}\) such that
	\[
		\nu(\{\xi \in [0,1]: \mu^{\xi}(\mathcal{X}\setminus K_{\varepsilon}) \geq \varepsilon\}) \leq \varepsilon, \quad \forall \mu \in \mathcal{K}.
	\]
	Then,
	\[
		M_{\mu}(\mathcal{X}\setminus K_{\varepsilon}) = \int_{0}^{1} \mu^{\xi}(\mathcal{X}\setminus K_{\varepsilon}) d\xi \leq \varepsilon + \nu(\{\xi \in [0,1]: \mu^{\xi}(\mathcal{X}\setminus K_{\varepsilon}) \geq \varepsilon\}) \leq 2\varepsilon\, \quad \forall \mu \in \mathcal{K},
	\]
	for every \(\mu \in \mathcal{K}\), which shows that \(M_{\mathcal{K}}\) is tight. 
\end{proof}
The following lemma shows that, on tight subsets of \(\mathcal{P}_{\nu}(\mathcal{X}\times [0,1])\), 
the distance \(d_{\text{BL},1}\) can be controlled by \(d_{\text{BL},\square}\) up to an arbitrarily
small additive error \(\varepsilon\), at the cost of an \(\varepsilon\)-dependent multiplicative constant
that blows up as \(\varepsilon\) goes to zero. 
\begin{lemma} \label{lemma:equivalence-bounded-Lipschitz-cut-and-BL,p}
	Let \(\mathcal{A}\) and \(\mathcal{B}\) be tight subsets of \(\mathcal{P}_{\nu}(\mathcal{X}\times [0,1])\). 
	Then, for every \(\varepsilon>0\), there exists a constant \(\mathcal{N}_{\varepsilon} > 0\) such that for all \(\mu \in \mathcal{A}\)
	and \(\bar{\mu} \in \mathcal{B}\), 
	\[
		d_{\text{BL},1}(\mu,\bar{\mu})\leq 2\mathcal{N}_{\varepsilon} d_{\text{BL},\square}(\mu,\bar{\mu}) + \varepsilon.
	\]
\end{lemma}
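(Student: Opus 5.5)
The plan is to reduce the supremum over the infinite-dimensional unit ball \(\text{BL}_1(\mathcal{X})\), taken fiberwise in \(\xi\), to a maximum over finitely many test functions. Once that is done, the supremum can be moved outside the \(\xi\)-integral at the cost of a multiplicative factor, and the result is compared with \(d_{\text{BL},\square}\) through \eqref{eq:equivalence-cut-distance-Pettis-norm}. Tightness supplies a compact set on which Arzelà–Ascoli makes \(\text{BL}_1\) totally bounded; the mass outside that set is handled using Proposition \ref{prop:equivalence-tightness}.

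\emph{Step 1 (choice of a compact set).} Fix \(\varepsilon>0\). The union \(\mathcal{A}\cup\mathcal{B}\) is tight, so by Proposition \ref{prop:equivalence-tightness}\,(ii) there is a compact \(K\subset\mathcal{X}\) with \(M_\mu(\mathcal{X}\setminus K)\le \varepsilon\) for every \(\mu\in\mathcal{A}\cup\mathcal{B}\).

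\emph{Step 2 (finite net).} The restrictions to \(K\) of functions in \(\text{BL}_1(\mathcal{X})\) form an equibounded, equi-Lipschitz family in \(C(K)\). By Arzelà–Ascoli this family is totally bounded in the sup norm on \(K\), so we can pick finitely many \(\varphi_1,\dots,\varphi_{\mathcal{N}_\varepsilon}\in \text{BL}_1(\mathcal{X})\) with the following property: every \(\varphi\in\text{BL}_1(\mathcal{X})\) admits some \(k\) with \(\sup_K|\varphi-\varphi_k|\le\varepsilon\). We take the net elements directly among the functions of \(\text{BL}_1(\mathcal{X})\), so that no extension from \(K\) to \(\mathcal{X}\) is needed. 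Outside \(K\) we only use the trivial bound \(|\varphi-\varphi_k|\le 2\).

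\emph{Step 3 (fiberwise estimate).} For a.e. \(\xi\), every \(\varphi\in\text{BL}_1(\mathcal{X})\), and the corresponding \(k\),
\[
\Big|\int_{\mathcal{X}}\varphi\, d(\mu^\xi-\bar\mu^\xi)\Big|\le \Big|\int_{\mathcal{X}}\varphi_k\, d(\mu^\xi-\bar\mu^\xi)\Big| + 2\varepsilon + 2\big(\mu^\xi(\mathcal{X}\setminus K)+\bar\mu^\xi(\mathcal{X}\setminus K)\big).
\]
Taking the supremum over \(\varphi\) and bounding the maximum over \(k\) by the sum over \(k\) gives
\[
d_{\text{BL}}(\mu^\xi,\bar\mu^\xi)\le \sum_{k=1}^{\mathcal{N}_\varepsilon}|\Phi_{k,\mu}(\xi)-\Phi_{k,\bar\mu}(\xi)| + 2\varepsilon + 2\big(\mu^\xi(\mathcal{X}\setminus K)+\bar\mu^\xi(\mathcal{X}\setminus K)\big),
\]
where \(\Phi_{k,\mu}(\xi)=\int_{\mathcal{X}}\varphi_k\,d\mu^\xi\).

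\emph{Step 4 (integration in \(\xi\)).} Integrating over \([0,1]\) turns the last term into \(2(M_\mu+M_{\bar\mu})(\mathcal{X}\setminus K)\le 4\varepsilon\). Each summand satisfies \(\|\Phi_{k,\mu}-\Phi_{k,\bar\mu}\|_{L^1}\le 2\, d_{\text{BL},\square}(\mu,\bar\mu)\) by \eqref{eq:equivalence-cut-distance-Pettis-norm}. Altogether,
\[
d_{\text{BL},1}(\mu,\bar\mu)\le 2\mathcal{N}_\varepsilon\, d_{\text{BL},\square}(\mu,\bar\mu)+6\varepsilon .
\]
Running the argument with \(\varepsilon/6\) in place of \(\varepsilon\) yields the claimed inequality.

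The key step is Step 2, which supplies a net that is uniform both in \(\xi\) and over \(\mu\in\mathcal{A}\), \(\bar\mu\in\mathcal{B}\). Its uniformity rests on the compact set \(K\) being chosen independently of the measures, which is exactly what tightness provides. The only remaining technical point is the measurability of \(\xi\mapsto d_{\text{BL}}(\mu^\xi,\bar\mu^\xi)\). This follows from Proposition \ref{prop:characterization-random-prob-measures}\,(iii), since \(d_{\text{BL}}\) is continuous on \(\mathcal{P}(\mathcal{X})\times\mathcal{P}(\mathcal{X})\).
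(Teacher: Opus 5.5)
Your proof is correct and follows the paper's argument essentially step for step. Tightness gives a common compact set, Arzelà--Ascoli gives a finite net of \(\text{BL}_1(\mathcal{X})\) test functions on it, the fiberwise supremum is bounded by the sum over the net plus tail terms, and each summand is controlled by \(2d_{\text{BL},\square}(\mu,\bar{\mu})\) via \eqref{eq:equivalence-cut-distance-Pettis-norm}; the only difference is cosmetic, since you absorb the \(6\varepsilon\) error by rescaling at the end whereas the paper chooses \(\varepsilon/8\) and \(\varepsilon/4\) up front.
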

\begin{proof}
	Since the sets \(\mathcal{A}\) and \(\mathcal{B}\) are tight, given any \(\varepsilon>0\), there exists a compact
	set \(K_{\varepsilon}\subset \mathcal{X}\) such that 
	\[
		\int_{0}^{1} \mu^{\xi}(\mathcal{X}\setminus K_{\varepsilon}) d\xi < \frac{\varepsilon}{8}, \quad  \forall \mu \in \mathcal{A}\cup \mathcal{B}.
	\]
	Consider the family of restricted test functions
	\(\mathcal{F}_{K_\varepsilon} = \{ \Phi\vert_{K_\varepsilon} : \Phi \in \text{BL}_1(\mathcal{X}) \}\). Since
	\(\mathcal{F}_{K_\varepsilon}\) is a uniformly bounded and equicontinuous family of functions, the Arzelà-Ascoli
	theorem guarantees that it is relatively compact in the uniform topology of \(C(K_\varepsilon)\). Specifically,
	we can extract a finite \((\varepsilon/4)\)-net 
	\(\{\psi_1,\dots,\psi_{\mathcal{N}_{\varepsilon}}\} \subset \mathcal{F}_{K_\varepsilon}\). Because each \(\psi_i\) belongs to
	\(\mathcal{F}_{K_\varepsilon}\), there must exist functions \(\Phi_i \in \text{BL}_1(\mathcal{X})\) such that
	\(\Phi_i\vert_{K_\varepsilon} = \psi_i\). This guarantees that for any \(\Phi \in \text{BL}_1(\mathcal{X})\),
	there is an index \(i \in \llbracket 1,\mathcal{N}_{\varepsilon}\rrbracket\) such that
	\[
		\sup_{x \in K_\varepsilon} \vert \Phi(x) - \Phi_i(x)\vert \leq \varepsilon/4.
	\]
	Now, for any \(\mu \in \mathcal{A}\), \(\bar{\mu}\in \mathcal{B}\), any \(\xi \in [0,1]\), and any test function
	\(\Phi \in \text{BL}_1(\mathcal{X})\), we compare the integral of \(\Phi\) with its \((\varepsilon/4)\)-close
	approximation \(\Phi_i\) over the full space \(\mathcal{X}\). By splitting the difference over
	\(K_\varepsilon\) and its complement, and noting that \(\Vert\Phi - \Phi_i\Vert_\infty \leq 2\), we get
	\[
		\begin{aligned}
			\left\vert \int_{\mathcal{X}} \Phi \, d(\mu^{\xi}-\bar{\mu}^{\xi}) \right\vert &\leq \left\vert \int_{\mathcal{X}} \Phi_i \,d(\mu^{\xi}-\bar{\mu}^{\xi}) \right\vert + \left\vert \int_{K_\varepsilon} (\Phi-\Phi_i) \,d(\mu^{\xi}-\bar{\mu}^{\xi}) \right\vert + \left\vert \int_{\mathcal{X} \setminus K_\varepsilon} (\Phi-\Phi_i) \,d(\mu^{\xi}-\bar{\mu}^{\xi}) \right\vert \\
			&\leq \left\vert \int_{\mathcal{X}} \Phi_i \,d(\mu^{\xi}-\bar{\mu}^{\xi}) \right\vert + \frac{\varepsilon}{4}\left(\mu^\xi(K_\varepsilon) + \bar{\mu}^\xi(K_\varepsilon)\right) + 2\left(\mu^\xi(\mathcal{X} \setminus K_\varepsilon) + \bar{\mu}^\xi(\mathcal{X} \setminus K_\varepsilon)\right).
		\end{aligned}
	\]
	Since \(\mu^\xi\) and \(\bar{\mu}^\xi\) are probability measures, their maximum possible mass on \(K_\varepsilon\) is 1.
	Furthermore, because taking the supremum over all \(\Phi\) simply means finding the worst-case match among our finite net,
	we can bound the supremum for a fixed \(\xi\) by taking the maximum over all \(i\):
	\[
		\sup_{\Phi \in \text{BL}_1(\mathcal{X})} \left\vert \int_{\mathcal{X}} \Phi \,d(\mu^{\xi}-\bar{\mu}^{\xi}) \right\vert \leq \max_{i \in \llbracket 1,\mathcal{N}_{\varepsilon} \rrbracket} \left\vert \int_{\mathcal{X}} \Phi_i \,d(\mu^{\xi}-\bar{\mu}^{\xi}) \right\vert + \frac{\varepsilon}{2} + 2\mu^\xi(\mathcal{X} \setminus K_\varepsilon) + 2\bar{\mu}^\xi(\mathcal{X} \setminus K_\varepsilon).
	\]
	We now integrate this estimate over \(\xi \in [0,1]\), and due to the tightness condition, we get  
	\[
		d_{\text{BL},1}(\mu,\bar{\mu}) \leq \int_{0}^{1} \max_{i \in \llbracket 1,\mathcal{N}_{\varepsilon} \rrbracket} \left\vert \int_{\mathcal{X}} \Phi_i(x) \,d(\mu^{\xi}-\bar{\mu}^{\xi})(x) \right\vert d\xi + \varepsilon. 
	\]
	Finally, we bound the maximum by the sum of all elements and use the equivalence \eqref{eq:equivalence-cut-distance-Pettis-norm}
	to bound the integral of each term by \(2d_{\text{BL},\square}(\mu,\bar{\mu})\)
	\[
	\begin{aligned}
		\int_{0}^{1} \max_{i \in \llbracket 1,\mathcal{N}_{\varepsilon} \rrbracket} \left\vert \int_{\mathcal{X}} \Phi_i \,d(\mu^{\xi}-\bar{\mu}^{\xi}) \right\vert d\xi &\leq \sum_{i=1}^{\mathcal{N}_{\varepsilon}} \int_{0}^{1} \left\vert \int_{\mathcal{X}} \Phi_i(x) \,d(\mu^{\xi}-\bar{\mu}^{\xi})(x) \right\vert d\xi \\
		&\leq \sum_{i=1}^{\mathcal{N}_{\varepsilon}} \sup_{\Phi \in \text{BL}_1(\mathcal{X})} \int_{0}^{1} \left\vert \int_{\mathcal{X}} \Phi(x) \,d(\mu^{\xi}-\bar{\mu}^{\xi})(x) \right\vert d\xi \\
		&\leq 2\mathcal{N}_{\varepsilon} d_{\text{BL},\square}(\mu,\bar{\mu}),
	\end{aligned}
	\]
	which completes the proof.
\end{proof}

\begin{corollary} \label{corollary:d-BL-cut-equivalent-d-BL-1}
	The distances \(d_{\text{BL},\square}\) and \(d_{\text{BL},1}\) induce the same topology on
	\(\mathcal{P}_{\nu}(\mathcal{X}\times [0,1])\).
\end{corollary}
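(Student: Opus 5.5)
Since both distances are metrics on \(\mathcal{P}_{\nu}(\mathcal{X}\times [0,1])\), it suffices to compare convergent sequences: a sequence \(\mu_n\to\mu\) in one metric should converge to the same \(\mu\) in the other. One direction is already in hand. The inequality \(d_{\text{BL},\square}(\mu,\bar{\mu})\leq d_{\text{BL},1}(\mu,\bar{\mu})\), shown just before Lemma \ref{lemma:equivalence-bounded-Lipschitz-cut-and-BL,p}, gives immediately that \(d_{\text{BL},1}\)-convergence implies \(d_{\text{BL},\square}\)-convergence. So the topology of \(d_{\text{BL},\square}\) is coarser than that of \(d_{\text{BL},1}\).

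For the converse, I will take \(\mu_n\to\mu\) in \(d_{\text{BL},\square}\) and apply Lemma \ref{lemma:equivalence-bounded-Lipschitz-cut-and-BL,p} with \(\mathcal{A}=\{\mu_n:n\in\mathbb{N}\}\) and \(\mathcal{B}=\{\mu\}\). The singleton \(\mathcal{B}\) is tight because \(\mathcal{X}\) is Polish. Granting that \(\mathcal{A}\) is tight, the lemma yields, for each \(\varepsilon>0\), a constant \(\mathcal{N}_\varepsilon\) such that
\[
d_{\text{BL},1}(\mu_n,\mu)\leq 2\mathcal{N}_\varepsilon\, d_{\text{BL},\square}(\mu_n,\mu)+\varepsilon .
\]
Letting \(n\to\infty\) gives \(\limsup_n d_{\text{BL},1}(\mu_n,\mu)\leq\varepsilon\), and since \(\varepsilon\) is arbitrary the claim follows.

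The main step is therefore to show that \(\mathcal{A}\) is tight. By Proposition \ref{prop:equivalence-tightness} this is equivalent to tightness of the averaged measures \(M_{\mu_n}\) in \(\mathcal{P}(\mathcal{X})\). Taking \(S=[0,1]\) in the definition of the cut distance \eqref{eq:bounded-Lipschitz-cut-fibered-probability-measures} gives
\[
d_{\text{BL}}(M_{\mu_n},M_\mu)\leq d_{\text{BL},\square}(\mu_n,\mu)\to 0,
\]
so \(M_{\mu_n}\to M_\mu\) narrowly. A narrowly convergent sequence of probability measures on a Polish space, together with its limit, forms a compact set, hence is tight by the converse of Prokhorov's theorem, which holds on Polish spaces. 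Thus \(\{M_{\mu_n}\}\) is tight, and therefore so is \(\mathcal{A}\).

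With this, the two metrics have the same convergent sequences and the same limits. For metric topologies this means they have the same closed sets, so they induce the same topology.
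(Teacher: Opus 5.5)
Your proof is correct and follows the paper's argument essentially step for step. You use $d_{\text{BL},\square}\le d_{\text{BL},1}$ for one direction; for the other, you test with $S=[0,1]$ to get narrow convergence of the averaged measures, invoke Prokhorov and Proposition~\ref{prop:equivalence-tightness} for tightness, and then apply Lemma~\ref{lemma:equivalence-bounded-Lipschitz-cut-and-BL,p}, take the $\limsup$, and let $\varepsilon\to 0$. Your explicit choice $\mathcal{A}=\{\mu_n\}$, $\mathcal{B}=\{\mu\}$, with the singleton tight because $\mathcal{X}$ is Polish, makes precise a point the paper leaves implicit.
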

\begin{proof}
	Since \(d_{\text{BL},\square}\) is bounded above by \(d_{\text{BL},1}\), it suffices to show that if a sequence
	\((\mu_n)_{n\in \mathbb{N}} \subset \mathcal{P}_{\nu}(\mathcal{X}\times [0,1])\) converges to \(\mu\) in the 
	bounded-Lipschitz cut distance, it also converges in the \(d_{\text{BL},1}\) distance. By taking \(S=[0,1]\) in 
	\eqref{eq:bounded-Lipschitz-cut-fibered-probability-measures}, we observe that for any
	\(\mu,\bar{\mu} \in \mathcal{P}_{\nu}(\mathcal{X}\times [0,1])\),
	\[
		d_{\text{BL}}(M_{\mu},M_{\bar{\mu}}) \leq d_{\text{BL},\square}(\mu,\bar{\mu}).
	\] 
	Consequently, the convergence \(\mu_n \to \mu\) in \(d_{\text{BL},\square}\) implies the narrow convergence of the averaged
	measures \(M_{\mu_n} \to M_{\mu}\) in \(\mathcal{P}(\mathcal{X})\). Thus, by Prokhorov's theorem, 
	the set \(\{M_{\mu_n} : n \in \mathbb{N}\}\) is
	tight in \(\mathcal{P}(\mathcal{X})\), and by Lemma \ref{lemma:equivalence-bounded-Lipschitz-cut-and-BL,p}, it follows that
	for every \(\varepsilon > 0\), there exists a constant \(\mathcal{N}_{\varepsilon}\) such that
	\[
		d_{\text{BL},1}(\mu_n,\mu) \leq 2\mathcal{N}_{\varepsilon} d_{\text{BL},\square}(\mu_n, \mu) + \varepsilon
	\]
	for all \(n \in \mathbb{N}\). Taking the limit as \(n \to \infty\) yields
	\[
		\limsup_{n\to \infty} d_{\text{BL},1}(\mu_n,\mu) \leq \varepsilon.
	\]
	Since \(\varepsilon > 0\) is arbitrary, we conclude that \(\lim_{n \to \infty} d_{\text{BL},1}(\mu_n,\mu) = 0\).
\end{proof}
\begin{remark}
	We have established the following relation between the different topologies on the space of fibered probability measures
	\(\mathcal{P}_{\nu}(\mathcal{X}\times [0,1])\):
	\begin{equation} \label{eq:topologies-fibered-measures}
		\tau_{\text{narrow}} \subsetneq \tau_{d_{\text{BL},\square}} = \tau_{d_{\text{BL},1}} \subsetneq \tau_{d_{\text{BL},\infty}}.
	\end{equation}
	Since \(d_{\text{BL},\square}\) and \(d_{\text{BL},1}\) are topologically equivalent, from now on we will work with
	the \(d_{\text{BL},1}\) distance, which is easier to handle.
	
	Like in the classical theory of real-valued functions,
	it is important to note that the equivalence between these two distances is a special feature of the geometry of the unit
	interval. Once we pass to the unit square in the next subsection, we will see that they are no longer equivalent on 
	\(\mathcal{P}_{\nu\otimes \nu}(\mathcal{X}\times [0,1]^2)\).
	In this space (which naturally identifies with the space of probability-graphons introduced later), 
	the bounded-Lipschitz cut distance regains its critical importance as the correct metric for obtaining
	compactness results, just as in the classical theory of graphons.
\end{remark}
\subsubsection{ (Unlabeled) fibered probability measures.}
A cornerstone result in graph limit theory is that the space of graphons becomes compact only after passing
to a quotient space that identifies graphons up to measure-preserving transformations. This quotient formalizes
the idea that the labeling of nodes is irrelevant. Because the parameter \(\xi \in [0,1]\) in our framework
similarly represents an abstract labeling index, obtaining robust compactness properties necessitates
an analogous quotient structure for fibered probability measures. Consequently, we refer to
\(d_{\text{BL},p}\) as the \emph{labeled} bounded-Lipschitz \(p\) distance, and introduce its \emph{unlabeled} equivalent
as follows.
\begin{definition}
	For any two fibered probability measures \(\mu,\bar{\mu} \in \mathcal{P}_{\nu}(\mathcal{X}\times [0,1])\), we define
	the \emph{unlabeled} version of the distance \(d_{\text{BL},p}\) as 
	\begin{equation} \label{eq:unlabeled-delta-BL-p}
		\delta_{\text{BL},p}(\mu,\bar{\mu}):= \inf_{\varphi \in S_{[0,1]}} d_{\text{BL},p}(\mu,\bar{\mu}^{\varphi}),
	\end{equation}
	where \(S_{[0,1]}\) is the set of bijective measure-preserving maps from \([0,1]\) to itself, and \(\bar{\mu}^{\varphi}\)
	is the fibered probability measure rearranged by \(\varphi\), that is, \(\bar{\mu}^{\varphi,\xi}=\bar{\mu}^{\varphi(\xi)}\).
\end{definition}
By construction, the unlabeled distance \(\delta_{\text{BL},p}\) is only a pseudometric on 
\(\mathcal{P}_{\nu}(\mathcal{X}\times [0,1])\), as it can vanish for distinct fibered measures.
For instance, the distance between any \(\mu \in \mathcal{P}_{\nu}(\mathcal{X}\times [0,1])\) and
its rearrangement \(\mu^{\varphi}\) is zero for all \(\varphi \in S_{[0,1]}\). However, it induces a true metric on the
quotient space obtained by identifying weakly isomorphic fibered probability measures.
\begin{definition}[Weak isomorphism]
	Two fibered probability measures \(\mu,\bar{\mu} \in \mathcal{P}_{\nu}(\mathcal{X}\times [0,1])\) are said to be
	\emph{weakly isomorphic} if there exist two measure-preserving maps (not necessarily bijective) 
	\(\varphi,\psi \in \bar{S}_{[0,1]}\) such that \(\mu^{\varphi}=\bar{\mu}^{\psi}\).
\end{definition}
We denote by \(\widetilde{\mathcal{P}}_{\nu}(\mathcal{X}\times [0,1])\) the corresponding quotient space
modulo weak isomorphism. Adapting the arguments of \cite[Theorem 8.13]{Lovasz-12} to our setting, the following
result ensures that for any \(p\in [1,\infty)\), the infimum in \eqref{eq:unlabeled-delta-BL-p} can be replaced by
a minimum over pairs of measure-preserving maps, and consequently, that the unlabeled distances \(\delta_{\text{BL},p}\)
induce strict metrics on the quotient space \(\widetilde{\mathcal{P}}_{\nu}(\mathcal{X}\times [0,1])\). 
\begin{proposition} \label{prop:minima-versus-infima-unlabeled-distance}
	Consider a distance \(d\) in \(\mathcal{P}_{\nu}(\mathcal{X}\times [0,1])\) which is:
	\begin{enumerate}[label=(\roman*)]
		\item Invariant under rearrangements: for every \(\mu,\bar{\mu} \in \mathcal{P}_{\nu}(\mathcal{X}\times [0,1])\) and every \(\varphi \in S_{[0,1]}\), we have that \(d(\mu,\bar{\mu})=d(\mu^{\varphi},\bar{\mu}^{\varphi})\). 
		\item Smooth: for every \((\mu_n)_{n\in\mathbb{N}}\), \(\mu\) in \(\mathcal{P}_{\nu}(\mathcal{X}\times [0,1])\), if \(\mu_n^\xi\to \mu^{\xi}\) narrowly for a.e. \(\xi \in [0,1]\), then \(d(\mu_n,\mu)\to 0\).   
	\end{enumerate}
	Then, the unlabeled version of \(d\) can be equivalently reformulated as
	\[
		\delta(\mu,\bar{\mu})=\inf_{\varphi \in S_{[0,1]}} d(\mu,\bar{\mu}^{\varphi})=\min_{\varphi, \psi \in \bar{S}_{[0,1]}} d(\mu^{\varphi},\bar{\mu}^{\psi}).
	\]
\end{proposition}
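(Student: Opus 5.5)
The argument follows the classical graphon proof \cite[Theorem 8.13]{Lovasz-12}. It has two inequalities plus attainment of the minimum.

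\emph{Easy inequality.} Bijective measure-preserving maps are in particular measure-preserving, so taking $\psi$ bijective and $\varphi=\mathrm{id}$ gives
\[
\inf_{\varphi, \psi \in \bar S_{[0,1]}} d(\mu^\varphi,\bar\mu^\psi)\le \inf_{\psi\in S_{[0,1]}} d(\mu,\bar\mu^\psi).
\]

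\emph{Reverse inequality.} Fix $\varphi,\psi\in\bar S_{[0,1]}$. I would show that $d(\mu^\varphi,\bar\mu^\psi)\ge \inf_{\sigma\in S_{[0,1]}} d(\mu,\bar\mu^\sigma)$ in three steps.
\begin{enumerate}[label=(\alph*)]
\item The pair $(\varphi,\psi)$ defines a coupling $\pi=(\varphi,\psi)_{\#}\lambda$ of Lebesgue measure with itself. Then $d(\mu^\varphi,\bar\mu^\psi)$ depends only on the measures pulled back along this coupling. Here the rearrangement invariance (i) should be extended from bijections to measure-preserving maps; this follows from the standard fact that any measure-preserving $\varphi$ can be written as a limit of bijections after composing with a suitable bijection.
\item Next, approximate $\mu$ and $\bar\mu$ by step fibered measures. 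For $\mu$, average the disintegration over the dyadic intervals of level $n$:
\[
\mu_n^\xi:=2^n\int_{J}\mu^\eta\,d\eta \quad\text{for } \xi\in J,
\]
and define $\bar\mu_n$ in the same way from $\bar\mu$. By the Lebesgue differentiation theorem applied to countably many bounded-Lipschitz test functions (separability of $\mathrm{BL}$ on compacts together with tightness), $\mu_n^\xi\to\mu^\xi$ narrowly for a.e.\ $\xi$. Smoothness (ii) then gives $d(\mu_n,\mu)\to0$. The same argument applies to the rearranged versions $\mu_n^\varphi$, because $\mu_n^\varphi$ is also a.e.\ convergent along $\varphi$, as $\varphi$ preserves null sets.
\item For step measures, a coupling of the two partitions can be realised up to arbitrarily small error by a bijective $\sigma$. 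I would choose $\sigma$ mapping subintervals of sizes $\pi(J_i\times J_k)$ in $J_i$ onto subintervals of $J_k$. This yields
\[
d(\mu_n,\bar\mu_n^\sigma)=d(\mu_n^\varphi,\bar\mu_n^\psi),
\]
and hence the reverse inequality after letting $n\to\infty$ using the triangle inequality.
\end{enumerate}

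\emph{Attainment.} Take a minimizing sequence $(\varphi_k,\psi_k)$ and pass to couplings $\pi_k$ on $[0,1]^2$. These are tight, so extract a narrowly convergent subsequence $\pi_k\to\pi$. The limit coupling $\pi$ is again realised, via a standard Borel isomorphism of $([0,1]^2,\pi)$ with $([0,1],\lambda)$, by a pair $(\varphi,\psi)$ of measure-preserving maps. The main obstacle is lower semicontinuity: showing $d(\mu^\varphi,\bar\mu^\psi)\le\liminf_k d(\mu^{\varphi_k},\bar\mu^{\psi_k})$ when only the couplings converge narrowly. Narrow convergence of couplings does not give a.e.\ convergence of the fibers, so smoothness cannot be applied directly. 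I would first prove the claim for step approximants $\mu_n,\bar\mu_n$, where the value of $d$ depends continuously on finitely many cell masses $\pi(J_i\times J_k)$. One caveat is that narrow convergence yields convergence of cell masses only when the boundaries have $\pi$-measure zero. This holds here because the marginals are Lebesgue, so boundaries of dyadic rectangles are $\pi$-null. The approximation error must then be controlled uniformly in $k$. The bound $d(\mu_n^{\varphi_k},\mu^{\varphi_k})=d(\mu_n,\mu)$ from invariance provides exactly this uniformity, and a $3\varepsilon$ argument concludes.
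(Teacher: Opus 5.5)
Your proposal is correct and follows the route the paper indicates, which is the adaptation of \cite[Theorem 8.13]{Lovasz-12} given there without further detail: dyadic step approximation plus smoothness, realisation of step couplings by bijections, and attainment via compactness of couplings with a step approximation that is uniform along the minimizing sequence. Two of your assertions need a short justification from (ii) for an abstract $d$: continuity of $d(\mu_n^{\varphi},\bar\mu_n^{\psi})$ in the cell masses (realise nearby masses by bijections with nearby interval endpoints, so the step fibers eventually agree a.e.), and invariance under non-bijective $\varphi\in\bar S_{[0,1]}$, where a.e.\ convergence $\sigma_n\to\varphi$ alone does not give $\mu^{\sigma_n(\xi)}\to\mu^{\varphi(\xi)}$ because $\xi\mapsto\mu^\xi$ is merely measurable (use Lusin's theorem, or sidestep the issue by matching step partitions with an explicit bijection and taking minimizing sequences $(\mathrm{id},\sigma_k)$ with $\sigma_k\in S_{[0,1]}$).
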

\begin{corollary}
	The unlabeled distance \(\delta_{\text{BL},p}\) is a distance on the quotient space
	\(\widetilde{\mathcal{P}}_{\nu}(\mathcal{X}\times [0,1])\) for every \(p\in [1,\infty)\).
\end{corollary}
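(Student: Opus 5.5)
The corollary follows by checking that \(d=d_{\text{BL},p}\) satisfies the two hypotheses of Proposition \ref{prop:minima-versus-infima-unlabeled-distance}, and then verifying the metric axioms on the quotient \(\widetilde{\mathcal{P}}_{\nu}(\mathcal{X}\times [0,1])\).

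\emph{Invariance under rearrangements.} Let \(\varphi\in S_{[0,1]}\). The function \(\xi\mapsto d_{\text{BL}}(\mu^{\varphi(\xi)},\bar{\mu}^{\varphi(\xi)})^p\) is the composition of a measurable function with a measure-preserving bijection. A change of variables therefore gives \(d_{\text{BL},p}(\mu^\varphi,\bar{\mu}^\varphi)=d_{\text{BL},p}(\mu,\bar{\mu})\).

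\emph{Smoothness.} Suppose \(\mu_n^\xi\to\mu^\xi\) narrowly for a.e. \(\xi\). Since \(d_{\text{BL}}\) metrizes the narrow topology, \(d_{\text{BL}}(\mu_n^\xi,\mu^\xi)\to 0\) for a.e. \(\xi\). These quantities are bounded by \(2\), so dominated convergence yields \(d_{\text{BL},p}(\mu_n,\mu)\to 0\). This is where \(p<\infty\) is essential; it also explains why \(\delta_{\text{BL},\infty}\) is excluded. The measurability of \(\xi\mapsto d_{\text{BL}}(\mu^\xi,\bar{\mu}^\xi)\) follows from Proposition \ref{prop:characterization-random-prob-measures}(iii) together with the continuity of \(d_{\text{BL}}\).

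With both hypotheses in hand, the proposition gives
\[
\delta_{\text{BL},p}(\mu,\bar{\mu})=\min_{\varphi,\psi\in\bar S_{[0,1]}} d_{\text{BL},p}(\mu^\varphi,\bar{\mu}^\psi).
\]
We now check the axioms.

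\emph{Well-definedness on the quotient.} Suppose \(\mu^{\alpha}=\mu'^{\beta}\) for some \(\alpha,\beta\in\bar S_{[0,1]}\). For \(\varphi\in S_{[0,1]}\), the function \(\xi\mapsto d_{\text{BL}}(\mu^{\xi},\bar{\mu}^{\varphi(\xi)})\) pushed along \(\alpha\) has the same integral. Hence \(d_{\text{BL},p}(\mu^\alpha,\bar{\mu}^{\varphi\circ\alpha})=d_{\text{BL},p}(\mu,\bar{\mu}^{\varphi})\). The cleaner route is to pass through the min-formula and observe that compositions of measure-preserving maps are measure-preserving. With this, \(\delta_{\text{BL},p}(\mu,\bar\mu)\le\delta_{\text{BL},p}(\mu',\bar\mu)\), and by symmetry equality holds.

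\emph{Symmetry and the triangle inequality.} Symmetry follows from invariance under rearrangement applied with \(\varphi^{-1}\). For the triangle inequality, the infimum formulation gives
\[
d_{\text{BL},p}(\mu,\bar{\mu}^{\varphi\circ\psi})\le d_{\text{BL},p}(\mu,\hat\mu^{\varphi})+d_{\text{BL},p}(\hat\mu^{\varphi},\bar{\mu}^{\varphi\circ\psi}),
\]
and the second term equals \(d_{\text{BL},p}(\hat\mu,\bar\mu^\psi)\) by invariance.

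\emph{Definiteness.} This is the main step, and it is exactly where the minimum matters. If \(\delta_{\text{BL},p}(\mu,\bar\mu)=0\), the attained minimum provides \(\varphi,\psi\in\bar S_{[0,1]}\) with \(d_{\text{BL},p}(\mu^\varphi,\bar\mu^\psi)=0\). This means \(\mu^{\varphi(\xi)}=\bar\mu^{\psi(\xi)}\) for a.e. \(\xi\). By uniqueness in the disintegration Theorem \ref{theorem:disintegration}, \(\mu^\varphi=\bar\mu^\psi\) as fibered measures, so \(\mu\) and \(\bar\mu\) are weakly isomorphic.
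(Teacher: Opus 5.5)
Your proposal is correct and follows the paper's route: the paper deduces the corollary by noting that \(d_{\text{BL},p}\) is rearrangement-invariant and smooth, with smoothness failing for \(p=\infty\), so that Proposition \ref{prop:minima-versus-infima-unlabeled-distance} gives the min-formula and hence definiteness on the quotient, and you spell out the metric axioms the paper leaves implicit. One small slip in your triangle inequality: with the convention \(\bar\mu^{\varphi,\xi}=\bar\mu^{\varphi(\xi)}\) one has \((\bar\mu^{\psi})^{\varphi}=\bar\mu^{\psi\circ\varphi}\), so you should compare with \(\bar\mu^{\psi\circ\varphi}\) rather than \(\bar\mu^{\varphi\circ\psi}\); this is harmless, since \(\psi\circ\varphi\in S_{[0,1]}\) as well.
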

\begin{remark}
	The distance \(d_{\text{BL},\infty}\) is not smooth, thus, the previous result does not apply 
	to the unlabeled version of this distance, and neither to the unlabeled version of the distance
	\(d_{\text{BL}}\) because it fails the invariance condition.
\end{remark}
\begin{remark}
	From this point on, we will work with the distance \(d_{\text{BL},1}\) and its unlabeled version \(\delta_{\text{BL},1}\), 
	although the results that we will obtain also hold for \(\delta_{\text{BL},p}\) for any \(p\in [1,\infty)\). 
	Recall that these distances are topologically equivalent, so the choice of \(p\) does not affect the topology of the space. 
\end{remark}
As we will see, a major advantage of working in the quotient space
\((\widetilde{\mathcal{P}}_{\nu}(\mathcal{X}\times [0,1]), \delta_{\text{BL},1})\) lies in the characterization
of its relatively compact sets. Following the classical approach in graph limit theory, the key step is to obtain
a weak regularity lemma. This result enables us to approximate any fibered probability measure within a tight set by a 
finite step fibered measure, where the number of steps depends only on the desired approximation error and not on the
specific measure.

\begin{definition}[Stepping operator]
	For any fibered probability measure \(\mu \in \mathcal{P}_{\nu}(\mathcal{X}\times [0,1])\) and any measurable partition \(\mathcal{P}=\{S_1,\ldots,S_k\}\) of \([0,1]\), we define the step function \(\mu_{\mathcal{P}}\) as 
	\[
	\mu_{\mathcal{P}}^{\xi}= \sum_{i=1}^{k}\mathds{1}_{S_i}(\xi) \mu_{S_i} , \quad \textrm{for } \textrm{a.e. } \xi \in [0,1],
	\]
	where \(\mu_{S_i}=\frac{1}{\nu(S_i)}\int_{S_i} \mu^{\xi} d\xi\in \mathcal{P}(\mathcal{X})\) is the average probability
 	measure over the set \(S_i\). If \(\nu(S_i)=0\), we set \(\mu_{S_i}\) to be an arbitrary probability measure in \(\mathcal{P}(\mathcal{X})\).
\end{definition}
\begin{lemma}
	The stepping operator is non-expansive with respect to the distance \(d_{\text{BL},1}\), {\it i.e.}, for every
	\(\mu, \bar{\mu} \in \mathcal{P}_{\nu}(\mathcal{X}\times [0,1])\) and every partition \(\mathcal{P}\) of \([0,1]\),
	we have
	\[
	d_{\text{BL},1}(\mu_{\mathcal{P}},\bar{\mu}_{\mathcal{P}}) \leq d_{\text{BL},1}(\mu,\bar{\mu}).
	\]
\end{lemma}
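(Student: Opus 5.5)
The plan is to reduce the claim to a cell-by-cell estimate on the partition, where it follows from the joint convexity of the bounded-Lipschitz distance under averaging. Since both step measures are constant on each cell, the left-hand side splits as
\[
d_{\text{BL},1}(\mu_{\mathcal{P}},\bar{\mu}_{\mathcal{P}})=\int_0^1 d_{\text{BL}}(\mu^{\xi}_{\mathcal{P}},\bar{\mu}^{\xi}_{\mathcal{P}})\,d\xi=\sum_{i=1}^{k}\nu(S_i)\, d_{\text{BL}}(\mu_{S_i},\bar{\mu}_{S_i}).
\]
Cells with \(\nu(S_i)=0\) contribute nothing, so the arbitrary choice of \(\mu_{S_i}\) there is irrelevant. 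It therefore suffices to show, for every cell with \(\nu(S_i)>0\),
\[
\nu(S_i)\,d_{\text{BL}}(\mu_{S_i},\bar{\mu}_{S_i})\leq \int_{S_i} d_{\text{BL}}(\mu^{\xi},\bar{\mu}^{\xi})\,d\xi,
\]
and then sum over \(i\), using that the cells partition \([0,1]\).

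For the cell estimate, fix \(\Phi\in \text{BL}_1(\mathcal{X})\). By the definition of the averaged measures via the disintegration,
\[
\int_{\mathcal{X}}\Phi\,d(\mu_{S_i}-\bar{\mu}_{S_i})=\frac{1}{\nu(S_i)}\int_{S_i}\int_{\mathcal{X}}\Phi\,d(\mu^{\xi}-\bar{\mu}^{\xi})\,d\xi\leq \frac{1}{\nu(S_i)}\int_{S_i} d_{\text{BL}}(\mu^{\xi},\bar{\mu}^{\xi})\,d\xi.
\]
The right-hand side does not depend on \(\Phi\), so taking the supremum over \(\Phi\in\text{BL}_1(\mathcal{X})\) on the left gives exactly the cell estimate.

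The only delicate point is that the integrands are measurable, so that these integrals make sense. The map \(\xi\mapsto\int\Phi\,d\mu^{\xi}\) is measurable by Proposition \ref{prop:characterization-random-prob-measures}. For the supremum, I would argue that \(\xi\mapsto d_{\text{BL}}(\mu^{\xi},\bar{\mu}^{\xi})\) is measurable because \(\xi\mapsto(\mu^{\xi},\bar{\mu}^{\xi})\) is measurable into \(\mathcal{P}(\mathcal{X})^2\) with the narrow topology, again by Proposition \ref{prop:characterization-random-prob-measures}, and \(d_{\text{BL}}\) is continuous on \(\mathcal{P}(\mathcal{X})^2\). This measurability is already implicit in the definition of \(d_{\text{BL},1}\), so I do not expect any real obstacle.
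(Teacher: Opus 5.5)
Your proposal is correct and follows the same route as the paper: a cell-by-cell estimate on the partition via convexity of $d_{\text{BL}}$ under averaging, then summing over the cells. The only differences are that you verify the convexity inequality explicitly by testing against $\Phi\in\text{BL}_1(\mathcal{X})$ and you check the measurability of $\xi\mapsto d_{\text{BL}}(\mu^{\xi},\bar{\mu}^{\xi})$, both of which the paper takes for granted.
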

\begin{proof}
	Let \(\mathcal{P}=\{S_1,\ldots,S_k\}\) be a finite measurable partition of \([0,1]\). Without loss of generality, we may assume that \(\nu(S_j)>0\) for all \(j\). For a.e. \(\xi \in S_j\), the convexity of the bounded Lipschitz distance yields
	\[
	d_{\text{BL}}(\mu_{\mathcal{P}}^{\xi},\bar{\mu}_{\mathcal{P}}^{\xi}) = d_{\text{BL}}(\mu_{S_j},\bar{\mu}_{S_j}) \leq \frac{1}{\nu(S_j)} \int_{S_j} d_{\text{BL}}(\mu^{\xi'},\bar{\mu}^{\xi'}) d\xi'.
	\]
	Taking the integral with respect to \(\xi\) over \(S_j\), and noting that the right-hand side is constant with respect to \(\xi\), we obtain:
	\[
	\int_{S_j} d_{\text{BL}}(\mu_{\mathcal{P}}^{\xi},\bar{\mu}_{\mathcal{P}}^{\xi}) d\xi \leq \int_{S_j} d_{\text{BL}}(\mu^{\xi'},\bar{\mu}^{\xi'}) d\xi'.
	\]
	Finally, summing over all partition sets \(j=1,\ldots,k\), we conclude
	\[
	d_{\text{BL},1}(\mu_{\mathcal{P}},\bar{\mu}_{\mathcal{P}}) = \sum_{j=1}^{k} \int_{S_j} d_{\text{BL}}(\mu_{\mathcal{P}}^{\xi},\bar{\mu}_{\mathcal{P}}^{\xi}) d\xi \leq \sum_{j=1}^{k} \int_{S_j} d_{\text{BL}}(\mu^{\xi'},\bar{\mu}^{\xi'}) d\xi' = d_{\text{BL},1}(\mu,\bar{\mu}),
	\]
	which completes the proof.
\end{proof}

\begin{lemma}[{Weak regularity lemma for \(\mathcal{P}_{\nu}(\mathcal{X}\times [0,1])\)}] 
	\label{lemma:regularity-fibered-probability-measures}
	Let \(\mathcal{K}\subset \mathcal{P}_{\nu}(\mathcal{X}\times [0,1])\) be a tight set of fibered probability measures.
	Then, for every \(\varepsilon > 0\), there exists an integer \(m \in \mathbb{N}\) such that for any measure 
	\(\mu \in \mathcal{K}\) and any  partition \(\mathcal{Q}\) of \([0,1]\), there exists a refinement
	\(\mathcal{P}\) of \(\mathcal{Q}\) satisfying
	\[
		\vert \mathcal{P} \vert \leq m \vert \mathcal{Q}\vert , \quad \textrm{and} \quad d_{\text{BL},1} (\mu, \mu_{\mathcal{P}}) < \varepsilon.
	\]
\end{lemma}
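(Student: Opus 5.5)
We reduce the weak regularity lemma to a finite number of scalar functions on $[0,1]$ and quantize them, following the same compactness device as in Lemma \ref{lemma:equivalence-bounded-Lipschitz-cut-and-BL,p}.

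\emph{Step 1: finite reduction via tightness.} Fix $\varepsilon>0$. By tightness of $\mathcal{K}$ (Proposition \ref{prop:equivalence-tightness}), choose a compact $K_\varepsilon\subset\mathcal{X}$ with $M_\mu(\mathcal{X}\setminus K_\varepsilon)<\varepsilon/16$ for all $\mu\in\mathcal{K}$. By Arzelà--Ascoli, take a finite $(\varepsilon/8)$-net $\Phi_1,\dots,\Phi_n\in \text{BL}_1(\mathcal{X})$ of the restrictions to $K_\varepsilon$. We repeat the splitting argument of Lemma \ref{lemma:equivalence-bounded-Lipschitz-cut-and-BL,p}, with $\bar\mu=\mu_{\mathcal{P}}$. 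This requires that $\mu_{\mathcal{P}}$ also put little mass outside $K_\varepsilon$. That holds automatically, since $M_{\mu_{\mathcal{P}}}=M_\mu$ for any partition. We thus obtain
\[
d_{\text{BL},1}(\mu,\mu_{\mathcal{P}})\le \int_0^1\max_{1\le i\le n}\bigl|\Phi_{i,\mu}(\xi)-(\Phi_{i,\mu})_{\mathcal{P}}(\xi)\bigr|\,d\xi+\frac{\varepsilon}{2},
\]
where $\Phi_{i,\mu}(\xi)=\int\Phi_i\,d\mu^\xi\in[-1,1]$. The key identity here is $\int\Phi_i\,d\mu^\xi_{\mathcal{P}}=(\Phi_{i,\mu})_{\mathcal{P}}(\xi)$, the conditional average of $\Phi_{i,\mu}$ on the cell containing $\xi$. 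This follows directly from the definition of $\mu_{S_j}$.

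\emph{Step 2: quantization.} It now suffices to build a refinement $\mathcal{P}$ of $\mathcal{Q}$ such that each of the $n$ functions $f_i:=\Phi_{i,\mu}$ oscillates by at most $\eta:=\varepsilon/4$ on every cell. Then on each cell $S$ we have $|f_i-(f_i)_S|\le\eta$ pointwise, so the integral above is at most $\varepsilon/4$. To build it, split $[-1,1]$ into $r=\lceil 2/\eta\rceil$ intervals $J_1,\dots,J_r$. Define $\mathcal{P}$ as the common refinement of $\mathcal{Q}$ with the level sets $\{\xi: f_i(\xi)\in J_{k_i}\ \forall i\}$, indexed by $(k_1,\dots,k_n)\in\llbracket 1,r\rrbracket^n$. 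Each cell of $\mathcal{Q}$ splits into at most $r^n$ pieces, so $|\mathcal{P}|\le m|\mathcal{Q}|$ with $m:=r^n$. This $m$ depends only on $\varepsilon$ and $\mathcal{K}$, through $K_\varepsilon$ and the net size $n$, and not on $\mu$ or $\mathcal{Q}$. Measurability of the cells follows from the measurability of $f_i$ (Proposition \ref{prop:characterization-random-prob-measures}). Empty or null cells are discarded, or handled by the convention for $\nu(S_i)=0$. Combining with Step 1 gives $d_{\text{BL},1}(\mu,\mu_{\mathcal{P}})\le 3\varepsilon/4<\varepsilon$.

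\emph{Main obstacle.} The delicate point is Step 1: the approximation of an arbitrary $\Phi\in\text{BL}_1$ by the net must be uniform in $\mu$, and it must control the tail contribution of both $\mu$ and $\mu_{\mathcal{P}}$. The first requires choosing the net on $K_\varepsilon$ before fixing $\mu$. The second is resolved by the invariance $M_{\mu_{\mathcal{P}}}=M_\mu$. The tail term contributes at most $2\bigl(M_\mu(\mathcal{X}\setminus K_\varepsilon)+M_{\mu_{\mathcal{P}}}(\mathcal{X}\setminus K_\varepsilon)\bigr)\le\varepsilon/4$, and the net error at most $\varepsilon/4$, so constants need only be adjusted accordingly. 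Note that no energy-increment argument is needed here, unlike for graphons: the one-dimensional label space allows direct quantization, with a crude but $\mu$-independent bound on $m$.
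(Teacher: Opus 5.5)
Your proof is correct. It shares the paper's core mechanism: the fiber map \(\xi\mapsto\mu^\xi\) is quantized directly into finitely many classes, with no energy increment. The difference is that you quantize on the dual side rather than the primal side.

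The paper first treats compact \(\mathcal{X}\). It covers the compact space \((\mathcal{P}(\mathcal{X}),d_{\text{BL}})\) by finitely many \(\varepsilon\)-balls and partitions \([0,1]\) according to which ball contains \(\mu^\xi\). It then takes the step measure \(\bar\mu\) whose values are the centres. From \(\bar\mu_{\mathcal{P}}=\bar\mu\) and the non-expansiveness of the stepping operator it gets \(d_{\text{BL},1}(\mu,\mu_{\mathcal{P}})\le 2\,d_{\text{BL},1}(\mu,\bar\mu)\). The non-compact case is reduced to this one by replacing each fiber with its normalized restriction to \(K_\varepsilon\), or a fixed measure on the exceptional set \(A_\mu\), again via non-expansiveness.

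You instead:
\begin{itemize}
\item reduce \(d_{\text{BL}}\) to finitely many test functions by Arzel\`a--Ascoli on \(K_\varepsilon\), as in Lemma \ref{lemma:equivalence-bounded-Lipschitz-cut-and-BL,p};
\item control the tails of \(\mu_{\mathcal{P}}\) through the invariance \(M_{\mu_{\mathcal{P}}}=M_\mu\);
\item quantize the \(n\) scalar functions \(\Phi_{i,\mu}\) directly.
\end{itemize}

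Your version is more self-contained: it needs neither the compact/non-compact split, nor the renormalization of fibers, nor the non-expansiveness lemma. The paper's version is more modular and typically gives a much smaller \(m\). Its \(m\) is the covering number of \((\mathcal{P}(K_\varepsilon),d_{\text{BL}})\), whereas your \(r^n\) is exponential in the size of a sup-norm net of \(\text{BL}_1(K_\varepsilon)\). This difference is immaterial here, since the compactness results only use that \(m\) is independent of \(\mu\).
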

\begin{proof}
	The proof is a straightforward adaptation of the weak regularity lemma for probability-graphons
	\cite[Proposition 4.13]{ADW-25}. The key difference is that we are working with fibered probability measures 
	in the unit interval, allowing us to leverage the simpler 1D geometry of the domain \([0,1]\) instead of \([0,1]^2\).

	\(\diamond\) \textsc{Step 1}: Suppose first that \(\mathcal{X}\) is compact. \\ 
	In this case, the space \((\mathcal{P}(\mathcal{X}),d_{\text{BL}})\) is compact. Thus, for every \(\varepsilon>0\), 
	there exists an integer \(n_\varepsilon \in \mathbb{N}\) and a finite set of measures
	\(\{\mu_1,\ldots,\mu_{n_\varepsilon}\} \subset \mathcal{P}(\mathcal{X})\) such that
	\(\mathcal{P}(\mathcal{X})=\bigcup_{i=1}^{n_\varepsilon} B_{\varepsilon}(\mu_i)\),
	where \(B_{\varepsilon}(\mu_i)\) denotes the open ball of radius \(\varepsilon\) centered at \(\mu_i\)
	in the \(d_{\text{BL}}\) distance. We can construct a finite measurable partition
	\(\{A_1,\ldots,A_{n_\varepsilon}\}\) of \(\mathcal{P}(\mathcal{X})\) by setting
	\(A_i := B_{\varepsilon}(\mu_i)\setminus \bigcup_{j<i}B_{\varepsilon}(\mu_j)\). For any 
	\(\mu \in \mathcal{P}_{\nu}(\mathcal{X}\times [0,1])\), this induces a measurable partition
	\(\mathcal{P}=\{B_1,\ldots,B_{n_\varepsilon}\}\) of \([0,1]\) given by \(B_i := \{\xi \in [0,1]: \mu^{\xi} \in A_i\}\).
	Next, we define the \(\{\mu_1,\ldots,\mu_{n_\varepsilon}\}\)-valued fibered probability measure \(\bar{\mu}\) as
	\[
		\bar{\mu}^{\xi}=\sum_{i=1}^{n(\varepsilon)} \mathds{1}_{B_i}(\xi) \mu_i, \quad \xi \in [0,1],
	\]
	which satisfies that \(d_{\text{BL}}(\mu^{\xi},\bar{\mu}^{\xi})< \varepsilon\) for all \(\xi \in [0,1]\). 
	Consequently, \(d_{\text{BL},1}(\mu,\bar{\mu}) < \varepsilon\). Because the stepping operator is non-expansive with respect
	to \(d_{\text{BL},1}\), and noting that \(\bar{\mu}_{\mathcal{P}} = \bar{\mu}\), the triangle inequality yields
	\[
		d_{\text{BL},1}(\mu,\mu_{\mathcal{P}}) \leq d_{\text{BL},1}(\mu,\bar{\mu}_{\mathcal{P}}) + d_{\text{BL},1}(\bar{\mu}_{\mathcal{P}},\mu_{\mathcal{P}}) \leq 2\varepsilon.
	\]
	If a partition \(\mathcal{Q}\) of \([0,1]\) was given in advance, we simply consider the common refinement of \(\mathcal{P}\)
	and \(\mathcal{Q}\) to conclude the proof in the compact case.

	\medskip

	\(\diamond\) {\sc Step 2}: Assume now that \(\mathcal{X}\) is not necessarily compact.\\
	Given \(\varepsilon>0\), since \(\mathcal{K}\) is tight, by Proposition \ref{prop:equivalence-tightness} there exists
	a compact set \(K_{\varepsilon} \subset \mathcal{X}\) such that
	\[
		\nu(A_{\mu})\leq\varepsilon \quad \textrm{ and  } \int_{0}^{1}\mu^{\xi}(\mathcal{X}\setminus K_{\varepsilon}) d\xi \leq \varepsilon, \quad \forall \mu \in \mathcal{K},
	\]
	where \(A_{\mu}=\{\xi \in [0,1]: \mu^{\xi}(\mathcal{X}\setminus K_{\varepsilon}) \geq \varepsilon\}.\)
	We construct an auxiliary fibered probability measure \(\bar{\mu}\) by projecting the mass onto \(K_{\varepsilon}\):
	\[
		\bar{\mu}^{\xi} = 
		\begin{cases} 
			\eta, & \text{if } \xi \in A_{\mu}, \\
			\frac{1}{\mu^{\xi}(K_{\varepsilon})} \mu^{\xi}\vert_{K_{\varepsilon}}, & \text{if } \xi \notin A_{\mu},
		\end{cases}
	\]
	where \(\eta \in \mathcal{P}(K_{\varepsilon})\) is an arbitrary fixed probability measure,
	and \(\mu^{\xi}\vert_{K_{\varepsilon}}\) denotes the explicit restriction of the measure \(\mu^{\xi}\) to the compact
	set \(K_{\varepsilon}\), defined for any Borel set \(B\) as \((\mu^{\xi}\vert_{K_{\varepsilon}})(B) := \mu^{\xi}(B \cap K_{\varepsilon})\). 
	A direct computation shows that
	\[
		d_{\text{BL},1}(\mu,\bar{\mu}) = \int_{A_{\mu}} d_{\text{BL}}(\mu^{\xi},\eta) d\xi + \int_{A_{\mu}^c} d_{\text{BL}}\left(\mu^{\xi},\frac{1}{\mu^{\xi}(K_{\varepsilon})} \mu^{\xi}\vert_{K_{\varepsilon}}\right) d\xi \leq 2\nu(A_{\mu}) + 2\int_{A^c_{\mu}} \mu^{\xi}(\mathcal{X}\setminus K_{\varepsilon}) d\xi \leq 4\varepsilon.
	\]
	Then, for every finite measurable partition \(\mathcal{P}\) of \([0,1]\), using the non-expansive property \(d_{\text{BL},1}(\bar{\mu}_{\mathcal{P}},\mu_{\mathcal{P}}) \leq d_{\text{BL},1}(\bar{\mu},\mu) \leq 4\varepsilon\), we obtain
	\[
		d_{\text{BL},1}(\mu,\mu_{\mathcal{P}}) \leq d_{\text{BL},1}(\mu,\bar{\mu}) + d_{\text{BL},1}(\bar{\mu},\bar{\mu}_{\mathcal{P}}) + d_{\text{BL},1}(\bar{\mu}_{\mathcal{P}},\mu_{\mathcal{P}}) \leq 8\varepsilon + d_{\text{BL},1}(\bar{\mu},\bar{\mu}_{\mathcal{P}}).
	\]
	Since \(\bar{\mu}\) is a fibered probability measure whose fibers are supported on the compact set \(K_{\varepsilon}\),
	we can apply the result of \textsc{Step 1} to bound \(d_{\text{BL},1}(\bar{\mu},\bar{\mu}_{\mathcal{P}})\) and conclude
	the proof.
\end{proof}
Building on the preceding lemma, we take the first step toward characterizing the relatively compact sets in
\((\widetilde{\mathcal{P}}_{\nu}(\mathcal{X}\times [0,1]),\delta_{\text{BL},1})\). The proof of the upcoming
proposition is based on the classical approach used to establish the compactness of the graphon space \cite[Theorem 5.1]{LS-07},
and the generalization to probability-graphons in \cite[Lemma 8.1]{ADW-25}, which adapts the martingale convergence argument
to the infinite dimensional setting of probability measures.
\begin{proposition} \label{prop:tightness-in-average-implies-relative-compactness}
	If \(\mathcal{K}\subset \widetilde{\mathcal{P}}_{\nu}(\mathcal{X}\times [0,1])\) is tight, then it is relatively compact in \((\widetilde{\mathcal{P}}_{\nu}(\mathcal{X}\times [0,1]),\delta_{\text{BL},1})\).
\end{proposition}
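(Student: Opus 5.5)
We follow the Lovász–Szegedy martingale argument, in the form adapted to probability-graphons in \cite[Lemma 8.1]{ADW-25}, but on the simpler one-dimensional label space. Take a sequence \((\mu_n)_{n\in\mathbb{N}}\) of representatives of elements of \(\mathcal{K}\); it suffices to find a subsequence and a limit \(\mu\in\mathcal{P}_{\nu}(\mathcal{X}\times[0,1])\) with \(\delta_{\text{BL},1}(\mu_n,\mu)\to 0\).

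\textbf{Step 1: nested step approximations.} Fix \(\varepsilon_k=2^{-k}\). We apply Lemma \ref{lemma:regularity-fibered-probability-measures} iteratively. For each \(n\) it gives partitions \(\mathcal{P}_{n,1}\preceq\mathcal{P}_{n,2}\preceq\cdots\) of \([0,1]\), each refining the previous one. Their sizes satisfy \(|\mathcal{P}_{n,k}|\le m_k\), where \(m_k\) depends only on \(k\) and not on \(n\); we may pad to exactly \(m_k\) classes using null sets. They also satisfy \(d_{\text{BL},1}(\mu_n,(\mu_n)_{\mathcal{P}_{n,k}})<\varepsilon_k\). Next we apply a measure-preserving bijection \(\varphi_n\in S_{[0,1]}\), chosen up to null sets, which rearranges every \(\mathcal{P}_{n,k}\) into intervals. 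Because the partitions are nested, one \(\varphi_n\) works for all \(k\) at once, and \(\delta_{\text{BL},1}\) does not see this rearrangement. After it, \((\mu_n)_{\mathcal{P}_{n,k}}\) is encoded by finitely many numbers: the interval lengths \(\lambda_{n,k,i}\in[0,1]\) and the averaged measures \(\mu_{n,k,i}\in\mathcal{P}(\mathcal{X})\), for \(i\le m_k\). Each \(\mu_{n,k,i}\) is weighted by \(\lambda_{n,k,i}\) and bounded by \(\lambda_{n,k,i}^{-1}M_{\mu_n}\) on the sets where \(\lambda_{n,k,i}\) is bounded below. By Prokhorov, tightness of \(M_{\mathcal{K}}\) (Proposition \ref{prop:equivalence-tightness}) therefore gives tightness of \(\{\lambda_{n,k,i}\mu_{n,k,i}\}_n\) as sub-probability measures.

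\textbf{Step 2: diagonal extraction.} A diagonal argument over \(k\) gives a subsequence along which, for every \(k\) and \(i\), \(\lambda_{n,k,i}\to\lambda_{k,i}\) and \(\lambda_{n,k,i}\mu_{n,k,i}\to\rho_{k,i}\) narrowly. This defines limiting step fibered measures \(U_k\): they are constant equal to \(\rho_{k,i}/\lambda_{k,i}\) on consecutive intervals of length \(\lambda_{k,i}\), and classes with \(\lambda_{k,i}=0\) are discarded. Nestedness passes to the limit. Each interval of level \(k\) is the union of its children at level \(k+1\), and \(\rho_{k,i}\) equals the sum of the children's \(\rho\)'s. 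Hence \(U_k=(U_{k+1})_{\mathcal{P}_k}\), so \((U_k)_k\) is a martingale with respect to the interval filtration. Moreover \(d_{\text{BL},1}((\mu_n)_{\mathcal{P}_{n,k}},U_k)\to 0\) as \(n\to\infty\) for each fixed \(k\). This holds because the interval endpoints converge, so only a vanishing set of labels is misassigned, and the fiber measures converge narrowly.

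\textbf{Step 3: martingale convergence (the main obstacle).} We need \(U_k\to\mu\) in \(d_{\text{BL},1}\) for some fibered \(\mu\). The difficulty is that the \(U_k\) take values in \(\mathcal{P}(\mathcal{X})\), which is infinite-dimensional. The planned route is as follows. Fix a countable convergence-determining family \(\{\Phi_j\}\subset\text{BL}_1(\mathcal{X})\). For each \(j\), the scalar maps \(\xi\mapsto\int\Phi_j\,dU_k^{\xi}\) form a bounded martingale, so they converge a.e. and in \(L^1\). Separately, the averaged measures \(M_{U_k}\) are limits of \(M_{\mu_n}\), so they stay in the closure of a tight set. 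Proposition \ref{prop:equivalence-tightness}(iii) then shows that for a.e. \(\xi\) the family \(\{U_k^{\xi}\}_k\) is tight. A supermartingale or Fatou argument on \(U_k^{\xi}(\mathcal{X}\setminus K_\varepsilon)\) makes this work: that quantity is itself a martingale, so it converges a.e. with small integral. Tightness together with convergence on \(\{\Phi_j\}\) yields narrow limits \(\mu^{\xi}\) for a.e. \(\xi\). These define a Borel family, hence a fibered \(\mu\) by Theorem \ref{theorem:disintegration}. Dominated convergence then gives \(d_{\text{BL},1}(U_k,\mu)\to0\).

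\textbf{Step 4: conclusion.} We write
\[\delta_{\text{BL},1}(\mu_n,\mu)\le d_{\text{BL},1}(\mu_n,(\mu_n)_{\mathcal{P}_{n,k}})+d_{\text{BL},1}((\mu_n)_{\mathcal{P}_{n,k}},U_k)+d_{\text{BL},1}(U_k,\mu).\]
We first choose \(k\) large so that the first and third terms are small; the first is uniform in \(n\) by Step 1. We then let \(n\to\infty\), which kills the middle term by Step 2.
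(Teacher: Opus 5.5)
Your proposal is essentially the paper's proof: weak regularity with nested partitions of $n$-independent size, rearrangement into intervals, diagonal extraction of block lengths and block averages, martingale convergence of the limiting step measures, and the same three-term estimate. The martingale step differs slightly: you get a.e.\ fiberwise tightness of $(U_k^{\xi})_k$ from the bounded martingales $\xi\mapsto U_k^{\xi}(\mathcal{X}\setminus K_\varepsilon)$ (take the compacts increasing so the a.e.\ limits decrease to $0$), whereas the paper obtains $\mu$ by Prokhorov on the joint measures in $\mathcal{P}(\mathcal{X}\times[0,1])$ and identifies the scalar martingale limits by testing against $g(\xi)f(x)$.

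One correction. A single bijection $\varphi_n\in S_{[0,1]}$ sending every $\mathcal{P}_{n,k}$ to intervals need not exist when the nested partitions fail to separate points, for example pull-backs of dyadic partitions by a non-injective measure-preserving map; this is why the paper also imposes $\diam(\mathcal{P}_{n,k})\le 2^{-k}$. You can either add that dyadic refinement or rearrange level by level, which suffices because your final estimate is only for $\delta_{\text{BL},1}$ and the martingale structure concerns only the limits $U_k$.
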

\begin{proof}
	Let \((\mu_n)_{n\in\mathbb{N}}\) be a sequence in \(\mathcal{K}\). We will prove that there exists a subsequence
	\((\mu_{n_k})_{k\in \mathbb{N}}\) and \(\mu \in \mathcal{P}_{\nu}(\mathcal{X}\times [0,1])\) such that
	\[
		\lim_{k\to \infty}\delta_{\text{BL},1}(\mu_{n_k},\mu) = 0.
	\]

	\(\diamond\) \textsc{Step 1}: Application of the weak regularity lemma. \\ 
	By Lemma \ref{lemma:regularity-fibered-probability-measures}, for every \(k\in \mathbb{N}\) there exists a partition \(\mathcal{P}_{n,k}\) of \([0,1]\) such that:
	\begin{enumerate}[label=(\roman*)]
		\item \(d_{\text{BL},1}(\mu_n,\mu_{n,k}) \leq 1/k\), 
		\item \(\diam(\mathcal{P}_{n,k})\leq 2^{-k}\) and \(|\mathcal{P}_{n,k}| = m_k\), for some integer \(m_k \in \mathbb{N}\) independent of \(n\),
		\item \(\mathcal{P}_{n,k+1}\) is a refinement of \(\mathcal{P}_{n,k}\).
	\end{enumerate}
	Here, \(\mu_{n,k}\) denotes the stepped fibered probability measure associated with \(\mu_n\) and the partition \(\mathcal{P}_{n,k}\), {\it i.e.}, \(\mu_{n,k} = (\mu_n)_{\mathcal{P}_{n,k}}\).

\medskip

	\(\diamond\) \textsc{Step 2}: Interval partitions. \\
	Without loss of generality, we may assume that the partitions \(\mathcal{P}_{n,k}\) consist of intervals. 
	This follows from a standard reordering argument, see \cite[Lemma 8.2]{ADW-25}.

\medskip

	\(\diamond\) \textsc{Step 3}: Convergence of the step measures. \\
	There exists a subsequence \((\mu_{n_{\ell}})_{\ell\in \mathbb{N}}\) of \((\mu_n)_{n\in \mathbb{N}}\) such that for every
	\(k\in \mathbb{N}\), the sequence \((\mu_{n_{\ell},k})_{\ell\in \mathbb{N}}\) converges weakly pointwise almost everywhere
	to some stepped fibered probability measure \(\bar{\mu}_k\) with \(m_k\) steps. 
	To construct this, let us write \(\mathcal{P}_{n,k}=\{S_{n,k,i}: 1\leq i\leq m_k\}\). Fix \(i \in \{1, \dots, m_k\}\). 
	Since the sequence of interval lengths \((\nu(S_{n,k,i}))_{n\in \mathbb{N}}\) takes values in the compact set \([0,1]\),
	we can extract a subsequence \((n_{\ell})_{\ell\in \mathbb{N}}\) such that \(\nu(S_{n_{\ell},k,i})\) converges to some
	\(s_{k,i} \in [0,1]\). Through a standard diagonal argument, we may assume this convergence holds simultaneously for all
	\(k\) and all \(i\). Let \(\mathcal{P}_k=\{S_{k,i}: 1\leq i\leq m_k\}\) be a partition of \([0,1]\) consisting of
	consecutive intervals of length \(s_{k,i}\) (if \(s_{k,i}=0\), we simply set \(S_{k,i}=\emptyset\)).
	Because \(\mathcal{P}_{n,k+1}\) is a refinement of \(\mathcal{P}_{n,k}\) for every \(n\), it follows that \(\mathcal{P}_{k+1}\)
	is a refinement of \(\mathcal{P}_k\). Consider any interval where \(s_{k,i} > 0\). For \(\ell\) sufficiently large,
 	\(\nu(S_{n_{\ell},k,i})\) is uniformly bounded away from zero (say, \(\nu(S_{n_{\ell},k,i}) \geq c > 0\)).
	Due to the tightness of \((\mu_n)_{n\in \mathbb{N}}\), for every \(\varepsilon >0\) there exists a compact set
	\(K_{\varepsilon} \subset \mathcal{X}\) such that
	\[
		\int_{0}^{1}\mu^{\xi}_n(\mathcal{X}\setminus K_{\varepsilon}) d\xi < \varepsilon c, \quad \forall n\in \mathbb{N}.
	\]
	Consequently,
	\[
		\frac{1}{\nu(S_{n_{\ell},k,i})} \int_{S_{n_{\ell},k,i}} \mu^{\xi}_{n_{\ell}}(\mathcal{X}\setminus K_{\varepsilon}) d\xi < \varepsilon
	\]
	for all sufficiently large \(\ell\), proving that the sequence of averaged probability measures on \(S_{n_{\ell},k,i}\) is tight. By Prokhorov's theorem, we can extract a further subsequence (not relabeled) such that these averages converge weakly to some probability measure \(\mu_{k,i} \in \mathcal{P}(\mathcal{X})\) as \(\ell \to \infty\). A final diagonal argument ensures this convergence holds for all \(k\) and \(i\). Defining the limit stepped fibered measure \(\bar{\mu}_k\) as
	\[
		\bar{\mu}_k^{\xi} = \sum_{i=1}^{m_k} \mathds{1}_{S_{k,i}}(\xi) \mu_{k,i}, \quad \text{for } \xi \in [0,1],
	\]
	concludes this step.

	\medskip

	\(\diamond \) \textsc{Step 4}: Martingale convergence to the limit measure.\\
	We now show that a subsequence of \((\bar{\mu}_k)_{k\in \mathbb{N}}\) converges weakly pointwise almost everywhere to some
	fibered measure \(\mu\). First, we verify that the sequence \((\bar{\mu}_k)_{k\in\mathbb{N}}\) is tight.
	Given \(\varepsilon >0\), let \(K_{\varepsilon} \subset \mathcal{X}\) be the compact set associated with the tightness of
	\(\mathcal{K}\). The average measure of \(\bar{\mu}_k\) is \(M_{\bar{\mu}_k} = \sum_{i=1}^{m_k} \nu(S_{k,i}) \mu_{k,i}\).
	Furthermore, \(M_{\mu_{n,k}} = \int_{0}^{1} \mu_n^{\xi} d\xi = M_{\mu_n}\). Because \(\mu_{n_{\ell},k} \to \bar{\mu}_k\)
	weakly pointwise a.e., \(M_{\mu_{n_{\ell},k}}\) converges weakly to \(M_{\bar{\mu}_k}\). By the Portmanteau theorem,
	\[
		M_{\bar{\mu}_k}(\mathcal{X}\setminus K_{\varepsilon}) \leq \liminf_{\ell \to \infty} M_{\mu_{n_{\ell},k}}(\mathcal{X}\setminus K_{\varepsilon}) = \liminf_{\ell \to \infty} M_{\mu_{n_{\ell}}}(\mathcal{X}\setminus K_{\varepsilon}) < \varepsilon.
	\]
	This proves the sequence is tight. Consequently, the joint measures
	\(\bar{\mu}_k(x,\xi) = \bar{\mu}_k^{\xi}(x) \otimes d\xi\) are tight in \(\mathcal{P}(\mathcal{X}\times [0,1])\).
	By Prokhorov's theorem, there exists a subsequence
	\((\bar{\mu}_{k_j})_{j\in \mathbb{N}}\) converging weakly to some \(\mu \in \mathcal{P}(\mathcal{X}\times [0,1])\).
	Since \(\mathcal{P}_{\nu}(\mathcal{X}\times [0,1])\) is closed under narrow convergence, 
	\(\mu \in \mathcal{P}_{\nu}(\mathcal{X}\times [0,1])\). To prove that \(\bar{\mu}^{\xi}_{k_j} \to \mu^{\xi}\) weakly
	for almost every \(\xi\), we leverage the martingale property \(\bar{\mu}_k = (\bar{\mu}_{m})_{\mathcal{P}_k}\) for all
	\(m \geq k\). Let \(Y\) be a random variable distributed uniformly on \([0,1]\). Choose a countable convergence-determining
	class \(\{f_i\}_{i\in\mathbb{N}} \subset C_b(\mathcal{X})\) for the narrow topology of 
	\(\mathcal{P}(\mathcal{X})\). For any such \(f\), the sequence of random variables
	\(\xi \mapsto \bar{\mu}^{\xi}_{k_j}(f) := \int_{\mathcal{X}} f(x) d\bar{\mu}^{\xi}_{k_j}(x)\) forms a bounded martingale
	with respect to the filtration generated by the partitions \(\mathcal{P}_{k_j}\). By the martingale convergence theorem,
	it converges almost surely to a random variable \(Z_f\). For any \(g\in C_b([0,1])\), we have
	\[
		\begin{aligned}
		\int_{0}^{1} g(\xi) \int_{\mathcal{X}} f(x) d\mu^{\xi}(x) d\xi &= \lim_{j\to \infty} \int_{\mathcal{X}\times [0,1]} g(\xi) f(x) d(\bar{\mu}^{\xi}_{k_j}(x) \otimes d\xi) \\   
		&= \lim_{j\to \infty} \mathbb{E}[g(Y)\bar{\mu}^{Y}_{k_j}(f)] = \int_{0}^{1} g(\xi) Z_f(\xi) d\xi.
		\end{aligned}
	\]
	Therefore, \(\int_{\mathcal{X}} f(x) d\mu^{\xi}(x) = Z_f(\xi) = \lim_{j\to \infty} \int_{\mathcal{X}} f(x) d\bar{\mu}^{\xi}_{k_j}(x)\)
	for a.e. \(\xi \in [0,1]\). Taking the countable union of null sets over all \(f_i\) concludes that
	\(\bar{\mu}^{\xi}_{k_j}\) converges weakly to \(\mu^{\xi}\) for almost every \(\xi \in [0,1]\).

	\medskip

	\(\diamond \) \textsc{Step 5:} Final estimate. \\ 
	Take any \(\varepsilon >0\), by point \((i)\) in \textsc{Step 1}, and \textsc{Step 3}, we can ensure that for
	\(\ell\) sufficiently large we have that \(\delta_{\text{BL},1}(\mu_{n_{\ell}},\mu_{n_{\ell},k}) < \varepsilon\)
	and \(\delta_{\text{BL},1}(\mu_{n_{\ell},k},\bar{\mu}_k) < \varepsilon\) for every \(k\in \mathbb{N}\).
 	Moreover, by \textsc{Step 4}, by taking \(j\) sufficiently large we have that
	\(\delta_{\text{BL},1}(\bar{\mu}_{k_j},\mu) < \varepsilon\).
  	Thus, combining the previous estimates with the triangular inequality, we have that for every \(\ell\) sufficiently large,
	\[
		\delta_{\text{BL},1}(\mu_{n_{\ell}},\mu) \leq \delta_{\text{BL},1}(\mu_{n_{\ell}},\mu_{n_{\ell},k_j}) + \delta_{\text{BL},1}(\mu_{n_{\ell},k_j},\bar{\mu}_{k_j}) + \delta_{\text{BL},1}(\bar{\mu}_{k_j},\mu) < 3\varepsilon,
	\]
	which concludes the proof.
\end{proof} 
\begin{theorem} \label{thm:compactness-fibered-probability-measures}
	The space \((\widetilde{\mathcal{P}}_{\nu}(\mathcal{X}\times [0,1]),\delta_{\text{BL},1})\) is Polish. Moreover we have:
	\begin{enumerate}
		\item[(i)] A subset \(\mathcal{K}\subset (\widetilde{\mathcal{P}}_{\nu}(\mathcal{X}\times [0,1]),\delta_{\text{BL},1})\) is relatively compact if and only if it is tight.
		\item[(ii)] The space \((\widetilde{\mathcal{P}}_{\nu}(\mathcal{X}\times [0,1]),\delta_{\text{BL},1})\) is compact if and only if \(\mathcal{X}\) is compact.
	\end{enumerate}
\end{theorem}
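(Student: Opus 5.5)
Three things must be shown: that the quotient space is Polish, that relative compactness is equivalent to tightness, and that the whole space is compact exactly when \(\mathcal{X}\) is. For \emph{(i)}, sufficiency of tightness is Proposition \ref{prop:tightness-in-average-implies-relative-compactness}. For necessity, I would take \(\mathcal{K}\) relatively compact in \(\delta_{\text{BL},1}\) and show that \(M_{\mathcal{K}}\) is tight; by Proposition \ref{prop:equivalence-tightness} this is enough. The averaging map is invariant under rearrangements, since \(M_{\bar\mu^\varphi}=M_{\bar\mu}\) for \(\varphi\in S_{[0,1]}\). It is also \(1\)-Lipschitz:
\[
d_{\text{BL}}(M_\mu,M_{\bar\mu})\le d_{\text{BL},1}(\mu,\bar\mu^\varphi)\quad\text{for all }\varphi\in S_{[0,1]},
\]
so taking the infimum gives \(d_{\text{BL}}(M_\mu,M_{\bar\mu})\le \delta_{\text{BL},1}(\mu,\bar\mu)\). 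Hence \(M\) descends to a continuous map on the quotient, and \(M_{\mathcal{K}}\) is relatively compact in \(\mathcal{P}(\mathcal{X})\). Prokhorov's theorem on the Polish space \(\mathcal{X}\) then gives tightness of \(M_{\mathcal{K}}\).

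For the Polish property, separability comes first. Step measures on dyadic interval partitions whose values lie in a countable dense subset of \(\mathcal{P}(\mathcal{X})\) are dense in \(d_{\text{BL},1}\), hence also in \(\delta_{\text{BL},1}\). To see this, approximate the Borel map \(\xi\mapsto\mu^\xi\) in \(L^1([0,1];(\mathcal{P}(\mathcal{X}),d_{\text{BL}}))\) by simple functions. Then use Lemma \ref{lemma:regularity-fibered-probability-measures} together with the interval reordering of Step 2 in the proof of Proposition \ref{prop:tightness-in-average-implies-relative-compactness}, and perturb the interval lengths to dyadic ones at small \(L^1\) cost.

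Completeness is the step I expect to be the main obstacle. The plan is to take a \(\delta_{\text{BL},1}\)-Cauchy sequence \((\mu_n)\) and prove two things. First, \(\{M_{\mu_n}\}\) is \(d_{\text{BL}}\)-Cauchy by the Lipschitz bound above, so it converges in the complete space \((\mathcal{P}(\mathcal{X}),d_{\text{BL}})\) and is therefore tight. Second, \(\{\mu_n\}\) is then tight by Proposition \ref{prop:equivalence-tightness}, so Proposition \ref{prop:tightness-in-average-implies-relative-compactness} yields a convergent subsequence, and a Cauchy sequence with a convergent subsequence converges. Passing to the quotient requires that \(\delta_{\text{BL},1}\) be a genuine metric there, which is the corollary of Proposition \ref{prop:minima-versus-infima-unlabeled-distance}. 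For this, smoothness of \(d_{\text{BL},1}\) follows from dominated convergence, since \(d_{\text{BL}}\le 2\), and invariance under rearrangements is immediate.

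For \emph{(ii)}, suppose first that \(\mathcal{X}\) is compact. Then every subset is tight, so by \emph{(i)} the whole space is relatively compact, and being complete it is compact. Conversely, suppose the quotient space is compact. The constant fibered measures \(\mu^\xi\equiv\delta_x\), \(x\in\mathcal{X}\), form a tight family by \emph{(i)}. Their averages are \(\{\delta_x:x\in\mathcal{X}\}\), so this family is tight. For \(\varepsilon<1\), take the compact \(K_\varepsilon\) given by tightness: if some \(x\notin K_\varepsilon\), then \(\delta_x(\mathcal{X}\setminus K_\varepsilon)=1>\varepsilon\), a contradiction. Hence \(\mathcal{X}=K_\varepsilon\) is compact.
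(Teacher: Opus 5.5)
Your proposal is correct and follows the paper's proof essentially step for step. Sufficiency in (i) comes from Proposition~\ref{prop:tightness-in-average-implies-relative-compactness}; necessity in (i) and completeness both use the $1$-Lipschitz, rearrangement-invariant averaging map $\mu\mapsto M_\mu$ together with Prokhorov's theorem and Proposition~\ref{prop:equivalence-tightness}; and compactness of $\mathcal{X}$ gives tightness of the whole space.

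The differences are minor. For the converse in (ii), you apply (i) directly to the Dirac family $\{\delta_x\otimes d\xi\}_{x\in\mathcal{X}}$, whereas the paper argues that the continuous surjection $M$ makes $\mathcal{P}(\mathcal{X})$ compact. For separability, you sketch a density argument with dyadic step measures, whereas the paper simply cites \cite[Proposition 4.6]{ADW-25}.
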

\begin{proof}
	\(\diamond\) \textsc{Step 1}: Characterization of relative compactness.\\ 
	If \(\mathcal{K}\) is tight, then Proposition \ref{prop:tightness-in-average-implies-relative-compactness} ensures
	that \(\mathcal{K}\) is relatively compact. Conversely, suppose that \(\mathcal{K}\) is relatively compact. Consider
	the  map \(M:(\widetilde{\mathcal{P}}_{\nu}(\mathcal{X}\times [0,1]),\delta_{\text{BL},1}) \to (\mathcal{P}(\mathcal{X}),d_{\text{BL}})\) given by \(\mu \mapsto M_{\mu}\). 
	This map is well-defined because if \(\mu\) and \(\bar{\mu}\) are weakly isomorphic, their average measures coincide
 	({\it i.e.}, \(M_{\mu}=M_{\bar{\mu}}\)). Moreover, the inequality \(d_{\text{BL}}(M_{\mu},M_{\bar{\mu}}) \leq d_{\text{BL},1}(\mu,\bar{\mu})\) holds for all \(\mu,\bar{\mu} \in \mathcal{P}_{\nu}(\mathcal{X}\times [0,1])\), which implies
	\(d_{\text{BL}}(M_{\mu},M_{\bar{\mu}}) \leq \delta_{\text{BL},1}(\mu,\bar{\mu})\) by taking the infimum over relabelings. 
	This Lipschitz continuity ensures that the image \(M_{\mathcal{K}}=\{M_{\mu}: \mu \in \mathcal{K}\}\) is relatively compact
	in \((\mathcal{P}(\mathcal{X}),d_{\text{BL}})\). By Prokhorov's theorem, \(M_{\mathcal{K}}\) is tight, and thus, by Proposition
	\ref{prop:equivalence-tightness}, \(\mathcal{K}\) itself is tight.

	\medskip

	\(\diamond\) \textsc{Step 2}: Compactness of the total space.\\ 
	If \(\mathcal{X}\) is compact, then \(\widetilde{\mathcal{P}}_{\nu}(\mathcal{X}\times [0,1])\) is trivially tight,
	and therefore, compact. Conversely, if \(\widetilde{\mathcal{P}}_{\nu}(\mathcal{X}\times [0,1])\) is compact, its image under the
	continuous map \(M\) must be compact in \((\mathcal{P}(\mathcal{X}),d_{\text{BL}})\). Since \(M\) is surjective, we conclude
	that \(\mathcal{P}(\mathcal{X})\) is compact. By Prokhorov's theorem, this implies that \(\mathcal{X}\) is compact.

	\medskip

	\(\diamond\) \textsc{Step 3}: Polish property.\\ 
	The separability of the space follows by the same arguments used in \cite[Proposition 4.6]{ADW-25}. 
	To establish completeness, let \((\mu_n)_{n\in \mathbb{N}}\) be a Cauchy sequence in 
	\((\widetilde{\mathcal{P}}_{\nu}(\mathcal{X}\times [0,1]),\delta_{\text{BL},1})\). Because the map \(M\) is \(1\)-Lipschitz,
	the sequence of averaged measures \((M_{\mu_n})_{n\in \mathbb{N}}\) is a Cauchy sequence in
	\((\mathcal{P}(\mathcal{X}),d_{\text{BL}})\). Since \(\mathcal{P}(\mathcal{X})\) is complete, there exists a
	limit measure \(m\in \mathcal{P}(\mathcal{X})\) such that \(d_{\text{BL}}(M_{\mu_n},m)\to 0\). The convergence
	of \((M_{\mu_n})_{n\in\mathbb{N}}\) implies that this set is relatively compact, and hence tight in
	\(\mathcal{P}(\mathcal{X})\). By Proposition \ref{prop:equivalence-tightness}, the original sequence 
	\(\{\mu_n: n\in \mathbb{N}\}\) is uniformly tight. Applying the result of \textsc{Step 1}, the sequence is relatively
	compact, meaning there exists a subsequence \((\mu_{n_k})_{k\in \mathbb{N}}\) and a limit 
	\(\mu \in \mathcal{P}_{\nu}(\mathcal{X}\times [0,1])\) such that \(\delta_{\text{BL},1}(\mu_{n_k},\mu)\to 0\).
	Because \((\mu_n)_{n\in \mathbb{N}}\) is a Cauchy sequence with a convergent subsequence, the entire 
	sequence must converge to the same limit, {\it i.e.}, \(\delta_{\text{BL},1}(\mu_n,\mu)\to 0\). This proves completeness,
	and thus the space is Polish.
\end{proof}
To highlight the advantage of working in the unlabeled space
\((\widetilde{\mathcal{P}}_{\nu}(\mathcal{X}\times [0,1]), \delta_{\text{BL},1})\), the following proposition
characterizes relative compactness in the original space \((\mathcal{P}_{\nu}(\mathcal{X}\times [0,1]), d_{\text{BL},1})\).
The proof adapts classical techniques from \cite[Theorem 4.7.28]{Bogachev-07}, following the adaptation of these
methods to fibered Wasserstein spaces presented in \cite[Theorem 3.3]{BP-25-arxiv}.

\begin{proposition}[{Relative compactness in \((\mathcal{P}_{\nu}(\mathcal{X}\times [0,1]),d_{\text{BL},1})\)}]
	A subset \(\mathcal{K}\subset (\mathcal{P}_{\nu}(\mathcal{X}\times [0,1]),d_{\text{BL},1})\) is relatively compact if and only if the following properties hold:
	\begin{enumerate}[label=(\roman*)]
		\item \(\mathcal{K}\) is tight. 
		\item For every \(\varepsilon > 0\), there exists \(m_{\varepsilon}\in \mathbb{N}\) and a partition \(\mathcal{P}_{\varepsilon}=\{A_1,\ldots,A_{m_{\varepsilon}}\}\) of \([0,1]\) such that 
		\begin{equation} \label{eq:strong-regularity-condition}
			\sup_{\mu \in \mathcal{K}} d_{\text{BL},1}(\mu,\mu_{\mathcal{P}_{\varepsilon}}) < \varepsilon.
		\end{equation}
	\end{enumerate}
\end{proposition}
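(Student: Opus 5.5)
We prove the two implications separately, following the Kolmogorov–Riesz/Fréchet-type argument of \cite[Theorem 4.7.28]{Bogachev-07}. The stepping operator \(\mu\mapsto\mu_{\mathcal{P}}\) plays the role of the averaging (mollification) operator, and tightness supplies the compactness in the state variable.

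\emph{Sufficiency.} Assume (i) and (ii). It suffices to show that \(\mathcal{K}\) is totally bounded in \((\mathcal{P}_{\nu}(\mathcal{X}\times[0,1]),d_{\text{BL},1})\) and that every \(d_{\text{BL},1}\)-Cauchy sequence in \(\mathcal{K}\) has a limit in \(\mathcal{P}_{\nu}\).

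For total boundedness, fix \(\varepsilon>0\) and take the partition \(\mathcal{P}_\varepsilon=\{A_1,\dots,A_{m_\varepsilon}\}\) from (ii). We may discard the sets with \(\nu(A_i)=0\). The map \(\mu\mapsto(\mu_{A_1},\dots,\mu_{A_{m_\varepsilon}})\in\mathcal{P}(\mathcal{X})^{m_\varepsilon}\) sends \(\mathcal{K}\) into a tight set. Indeed, \(\mu_{A_i}(\mathcal{X}\setminus K)\le \nu(A_i)^{-1}M_\mu(\mathcal{X}\setminus K)\), and Proposition \ref{prop:equivalence-tightness} gives uniform control of \(M_\mu\). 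Prokhorov's theorem then yields a finite \(\varepsilon\)-net \(\{(\eta^{(r)}_1,\dots,\eta^{(r)}_{m_\varepsilon})\}_r\) for \(d_{\text{BL}}\) on each coordinate. The corresponding step measures \(\sum_i \mathds{1}_{A_i}\eta^{(r)}_i\) satisfy
\[
d_{\text{BL},1}\Big(\mu_{\mathcal{P}_\varepsilon},\sum_i\mathds{1}_{A_i}\eta^{(r)}_i\Big)=\sum_i\nu(A_i)\,d_{\text{BL}}(\mu_{A_i},\eta^{(r)}_i)\le\varepsilon .
\]
Combined with (ii), this gives a finite \(2\varepsilon\)-net of \(\mathcal{K}\).

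For completeness, \(d_{\text{BL},1}\) is the \(L^1\)-distance of maps \([0,1]\to(\mathcal{P}(\mathcal{X}),d_{\text{BL}})\), and \((\mathcal{P}(\mathcal{X}),d_{\text{BL}})\) is complete. A Cauchy sequence therefore has a subsequence converging \(\nu\)-a.e., by the Riesz–Fischer argument applied to a fast Cauchy subsequence, to a measurable limit family \(\xi\mapsto\mu^\xi\). Dominated convergence (using \(d_{\text{BL}}\le2\)) then gives convergence in \(d_{\text{BL},1}\). Alternatively, Theorem \ref{thm:compactness-fibered-probability-measures} already suggests this is routine.

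\emph{Necessity.} Assume \(\mathcal{K}\) is relatively compact. Since \(d_{\text{BL}}(M_\mu,M_{\bar\mu})\le d_{\text{BL},1}(\mu,\bar\mu)\), the set \(M_{\mathcal{K}}\) is relatively compact in \(\mathcal{P}(\mathcal{X})\). Prokhorov's theorem and Proposition \ref{prop:equivalence-tightness} then give (i).

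For (ii), take a finite \(\varepsilon/3\)-net \(\mu_1,\dots,\mu_r\) of \(\mathcal{K}\). The key step is that for each single \(\mu_j\) there is a finite partition \(\mathcal{Q}_j\) with \(d_{\text{BL},1}(\mu_j,(\mu_j)_{\mathcal{Q}})<\varepsilon/3\) for every refinement \(\mathcal{Q}\) of \(\mathcal{Q}_j\).

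This does not follow from the weak regularity lemma, because that lemma lets the partition depend on \(\mu\) and gives no stability under refinement. I would prove it by approximating \(\xi\mapsto\mu_j^\xi\) in \(L^1([0,1];\mathcal{P}(\mathcal{X}))\) by a simple function \(\sigma=\sum_l\mathds{1}_{B_l}\eta_l\) up to \(\varepsilon/9\), which is possible since the range is separable. Take \(\mathcal{Q}_j=\{B_l\}\). For any refinement \(\mathcal{Q}\) we have \(\sigma_{\mathcal{Q}}=\sigma\), and the non-expansiveness lemma gives
\[
d_{\text{BL},1}(\mu_j,(\mu_j)_{\mathcal{Q}})\le d_{\text{BL},1}(\mu_j,\sigma)+d_{\text{BL},1}(\sigma,\sigma_{\mathcal{Q}})+d_{\text{BL},1}(\sigma_{\mathcal{Q}},(\mu_j)_{\mathcal{Q}})\le 2\varepsilon/9 .
\]

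Let \(\mathcal{P}_\varepsilon\) be the common refinement of \(\mathcal{Q}_1,\dots,\mathcal{Q}_r\). For \(\mu\in\mathcal{K}\), choose \(j\) with \(d_{\text{BL},1}(\mu,\mu_j)<\varepsilon/3\). Using non-expansiveness once more,
\[
d_{\text{BL},1}(\mu,\mu_{\mathcal{P}_\varepsilon})\le d_{\text{BL},1}(\mu,\mu_j)+d_{\text{BL},1}(\mu_j,(\mu_j)_{\mathcal{P}_\varepsilon})+d_{\text{BL},1}((\mu_j)_{\mathcal{P}_\varepsilon},\mu_{\mathcal{P}_\varepsilon})<\varepsilon .
\]

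The main obstacle is this uniform-in-refinement approximation of a single fibered measure. Here I would rely on the existence of simple-function approximants for Bochner-measurable maps (cf. the Pettis measurability remark), together with the non-expansiveness of the stepping operator.
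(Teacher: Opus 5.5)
Your proof is correct, and its overall architecture is the paper's. For sufficiency you build step-measure nets on the fixed partition $\mathcal{P}_\varepsilon$ from Prokhorov nets of the block averages $\mu_{A_i}$. For necessity you get tightness from the $1$-Lipschitz averaging map $\mu\mapsto M_\mu$, and (ii) from a finite net of $\mathcal{K}$ plus a common refinement.

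The substantive difference is how you control a single net element under refinement, and your version is the correct one. The paper takes $\mathcal{P}_i$ from the weak regularity lemma and asserts $d_{\text{BL},1}(\mu_i,(\mu_i)_{\mathcal{P}_\varepsilon})\le d_{\text{BL},1}(\mu_i,(\mu_i)_{\mathcal{P}_i})$ merely because $\mathcal{P}_\varepsilon$ refines $\mathcal{P}_i$. That monotonicity is not true in general, because block averaging is not a best approximation for an $L^1$-type distance.

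Here is a counterexample. Take $\mathcal{X}=\{0,1\}\subset\mathbb{R}$ and $\mu^\xi=(1-s(\xi))\delta_0+s(\xi)\delta_1$. Then $d_{\text{BL},1}(\mu,\mu_{\mathcal{P}})=\|s-s_{\mathcal{P}}\|_{L^1}$, where $s_{\mathcal{P}}$ is the blockwise average of $s$. Let $s=1$ on $[0,\delta/2)$, $s=0$ on $[1/2,1/2+\delta/2)$, and $s=1/2$ elsewhere. The trivial partition gives error $\delta/2$, whereas its refinement $\{[0,1/2),[1/2,1]\}$ gives $\delta(1-\delta)$, which is larger once $\delta<1/2$.

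Your three-term triangle inequality, using $\sigma_{\mathcal{Q}}=\sigma$ and non-expansiveness of the stepping operator, is exactly the right repair. It costs a factor $2$, so the paper's conclusion survives with $4\varepsilon$ in place of $3\varepsilon$.

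Two smaller remarks. First, your claim that this step ``does not follow from the weak regularity lemma'' is too pessimistic. Inserting $\sigma=(\mu_j)_{\mathcal{P}_j}$ into your own estimate gives $d_{\text{BL},1}(\mu_j,(\mu_j)_{\mathcal{Q}})\le 2\,d_{\text{BL},1}(\mu_j,(\mu_j)_{\mathcal{P}_j})$ for every refinement $\mathcal{Q}$ of $\mathcal{P}_j$. So the Bochner simple-function approximation is optional, though harmless.

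Second, you prove completeness of the labeled space $(\mathcal{P}_\nu(\mathcal{X}\times[0,1]),d_{\text{BL},1})$ by a Riesz--Fischer argument. This relies on completeness of $(\mathcal{P}(\mathcal{X}),d_{\text{BL}})$, which holds when the metric on $\mathcal{X}$ is complete. The paper only asserts this completeness, so your argument fills a step it leaves implicit.
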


\begin{proof}
	Suppose that \(\mathcal{K}\) is relatively compact in \((\mathcal{P}_{\nu}(\mathcal{X}\times [0,1]),d_{\text{BL},1})\). It follows
	that it is also relatively compact in the quotient space
	\((\widetilde{\mathcal{P}}_{\nu}(\mathcal{X}\times [0,1]),\delta_{\text{BL},1})\).
	By Theorem \ref{thm:compactness-fibered-probability-measures}, \(\mathcal{K}\) must be tight. To prove the strong
	regularity condition, fix \(\varepsilon >0\). Because \(\mathcal{K}\) is relatively compact,
	it is totally bounded, hence, there exists a finite \(\varepsilon\)-net
	\(\{\mu_1,\ldots,\mu_{\mathcal{N}_{\varepsilon}}\}\) for
	\(\mathcal{K}\) with respect to \(d_{\text{BL},1}\). By Lemma \ref{lemma:regularity-fibered-probability-measures},
	for each \(i\in \llbracket 1,\mathcal{N}_{\varepsilon}\rrbracket\) there exists a partition \(\mathcal{P}_i\) of \([0,1]\) such that
	\(d_{\text{BL},1}(\mu_i,(\mu_i)_{\mathcal{P}_i}) < \varepsilon\). Let \(\mathcal{P}_{\varepsilon}\) be the common
	refinement of the partitions \(\mathcal{P}_1,\ldots,\mathcal{P}_{\mathcal{N}_{\varepsilon}}\). For any \(\mu \in \mathcal{K}\), there exists some 
	\(i\in \llbracket 1,\mathcal{N}_{\varepsilon}\rrbracket\) such that \(d_{\text{BL},1}(\mu,\mu_i) < \varepsilon\). Using the triangle inequality, we can bound
	the distance to its step measure over \(\mathcal{P}_{\varepsilon}\) as follows:
	\[
		d_{\text{BL},1}(\mu,\mu_{\mathcal{P}_{\varepsilon}}) \leq d_{\text{BL},1}(\mu,\mu_i) + d_{\text{BL},1}(\mu_i,(\mu_i)_{\mathcal{P}_{\varepsilon}}) + d_{\text{BL},1}((\mu_i)_{\mathcal{P}_{\varepsilon}},\mu_{\mathcal{P}_{\varepsilon}}).
	\]
	Because \(\mathcal{P}_{\varepsilon}\) is a refinement of \(\mathcal{P}_i\), \(d_{\text{BL},1}(\mu_i,(\mu_i)_{\mathcal{P}_{\varepsilon}}) \leq d_{\text{BL},1}(\mu_i,(\mu_i)_{\mathcal{P}_i}) < \varepsilon\). 
	Furthermore, by the non-expansive property of the stepping operator, 
	\(d_{\text{BL},1}((\mu_i)_{\mathcal{P}_{\varepsilon}},\mu_{\mathcal{P}_{\varepsilon}}) \leq d_{\text{BL},1}(\mu_i,\mu) < \varepsilon\).
	Therefore, \(d_{\text{BL},1}(\mu,\mu_{\mathcal{P}_{\varepsilon}}) < 3\varepsilon\), which establishes
	\eqref{eq:strong-regularity-condition}.

	Conversely, assume that \(\mathcal{K}\) satisfies conditions (i) and (ii). Because the space is complete, it suffices to show
	that \(\mathcal{K}\) is totally bounded. Fix \(\varepsilon >0\). By condition (ii), there exists a partition
	\(\mathcal{P}_{\varepsilon}=\{A_1, \dots, A_{m_{\varepsilon}}\}\) such that 
	\(d_{\text{BL},1}(\mu,\mu_{\mathcal{P}_{\varepsilon}}) < \varepsilon\) for every \(\mu \in \mathcal{K}\). The
	tightness of \(\mathcal{K}\) ensures that for each partition block \(i\in \llbracket 1, m_{\varepsilon}\rrbracket\), the set of
	averaged measures
	\[
		\mathcal{A}_i = \left\{ \mu_{A_i} = \frac{1}{\nu(A_i)}\int_{A_i} \mu^{\xi}d\xi : \mu \in \mathcal{K} \right\} \subset \mathcal{P}(\mathcal{X})
	\]
	is tight. By Prokhorov's theorem, each \(\mathcal{A}_i\) is relatively compact in \((\mathcal{P}(\mathcal{X}),d_{\text{BL}})\). 
	Thus, we can extract a finite \(\varepsilon\)-net \(\{\eta_{i,1},\ldots,\eta_{i,k_i}\}\) for each \(\mathcal{A}_i\).
	We now construct a global finite net for \(\mathcal{K}\). Let \(\mathcal{N}\) be the finite set of all step fibered
	measures \(\tilde{\mu}\) over the partition \(\mathcal{P}_{\varepsilon}\) whose value on the block \(A_i\) belongs to the
	net \(\{\eta_{i,1},\ldots,\eta_{i,k_i}\}\). For any \(\mu \in \mathcal{K}\), its step measure
	\(\mu_{\mathcal{P}_{\varepsilon}}\) has fiber values \(\mu_{A_i}\) on each \(A_i\). By definition of the nets, we
	can choose an index \(j_i\) such that \(d_{\text{BL}}(\mu_{A_i}, \eta_{i,j_i}) < \varepsilon\) for each \(i\). Let
	\(\tilde{\mu} \in \mathcal{N}\) be the step measure constructed from these specific \(\eta_{i,j_i}\). The distance
	between the two step measures is 
	\[
		d_{\text{BL},1}(\mu_{\mathcal{P}_{\varepsilon}},\tilde{\mu}) = \sum_{i=1}^{m_{\varepsilon}} \nu(A_i) d_{\text{BL}}(\mu_{A_i}, \eta_{i,j_i}) < \sum_{i=1}^{m_{\varepsilon}} \nu(A_i) \varepsilon = \varepsilon.
	\]
	Finally, by the triangle inequality, the distance from the original measure \(\mu\) to the net element \(\tilde{\mu}\) is
	\[
		d_{\text{BL},1}(\mu,\tilde{\mu}) \leq d_{\text{BL},1}(\mu,\mu_{\mathcal{P}_{\varepsilon}}) + d_{\text{BL},1}(\mu_{\mathcal{P}_{\varepsilon}},\tilde{\mu}) < \varepsilon + \varepsilon = 2\varepsilon.
	\]
	This proves that \(\mathcal{N}\) serves as a finite \(2\varepsilon\)-net for \(\mathcal{K}\). Hence, \(\mathcal{K}\)
	is totally bounded, and consequently, relatively compact.
\end{proof}

\begin{remark}
	The previous characterization reveals a fundamental difference between the relatively compact sets
	in the unlabeled space \((\widetilde{\mathcal{P}}_{\nu}(\mathcal{X}\times [0,1]),\delta_{\text{BL},1})\) and those 
	in the original space \((\mathcal{P}_{\nu}(\mathcal{X}\times [0,1]),d_{\text{BL},1})\). While tightness is sufficient
	for relative compactness in the unlabeled quotient space, the original labeled space strictly requires an additional
	strong regularity condition. By Lemma \ref{lemma:regularity-fibered-probability-measures}, any tight family satisfies
	a \emph{weak} regularity property: every fibered measure can be approximated by a step measure with a bounded number
	of steps, but the optimal partition may vary from measure to measure. On the other hand, the \emph{strong} regularity
	condition \eqref{eq:strong-regularity-condition} requires the existence of a single, universal partition that
	simultaneously approximates all measures in the family, which is a much more stringent requirement.
\end{remark}

\begin{example}[{A sequence relatively compact for \(\delta_{\text{BL},1}\) but not for \(d_{\text{BL},1}\)}]
	Let \(\mathcal{X} = [-1, 1]\). Consider the Rademacher sequence of fibered probability measures
	\((\mu_n)_{n\in \mathbb{N}}\) defined by \(\mu_n^{\xi} = \delta_{u_n(\xi)}\) for a.e. \(\xi \in [0,1]\),
	where \(u_n(\xi)=\sgn(\sin(2^n \pi \xi))\). This sequence is tight since its average measures are constant: 
	\(M_{\mu_n} = \frac{1}{2}\delta_{-1} + \frac{1}{2}\delta_1\) for all \(n\in \mathbb{N}\). Therefore, by Theorem
	\ref{thm:compactness-fibered-probability-measures}, it is relatively compact in
	\((\widetilde{\mathcal{P}}_{\nu}(\mathcal{X}\times [0,1]),\delta_{\text{BL},1})\).
	Indeed, since exactly half of the domain maps to \(\delta_{1}\) and the other half to \(\delta_{-1}\), for every
	\(n \in \mathbb{N}\) there exist measure-preserving bijections \(\varphi_n \in S_{[0,1]}\) such that 
	\[
		\mu_n^{\varphi_n(\xi)} = \delta_{1}\mathds{1}_{[0,1/2]}(\xi) + \delta_{-1}\mathds{1}_{(1/2,1]}(\xi).
	\]
	Thus, in the quotient space \(\widetilde{\mathcal{P}}_{\nu}(\mathcal{X}\times [0,1])\), the sequence is constant
	and trivially converges.
	However, this sequence is not relatively compact in the labeled space \((\mathcal{P}_{\nu}(\mathcal{X}\times [0,1]), d_{\text{BL},1})\). 
	The sequence \((\mu_n)_{n\in \mathbb{N}}\) converges narrowly ({\it i.e.}, in \(d_{\text{BL}}\)) to the constant fibered measure
	\(\mu\) defined by \(\mu^{\xi} = \frac{1}{2}\delta_{-1} + \frac{1}{2}\delta_1\) for all \(\xi \in [0,1]\). If the sequence
	were relatively compact in \(d_{\text{BL},1}\), since \(d_{\text{BL}} \leq d_{\text{BL},1}\), it would have a subsequence
	converging to \(\mu\) in \(d_{\text{BL},1}\). Yet, a direct computation shows that for every \(n \in \mathbb{N}\),
	\(d_{\text{BL},1}(\mu_n,\mu) = 1.\)
\end{example}

\subsubsection{ (Labeled) fibered Wasserstein spaces}
When the fibers of a probability measure have finite \(p\)-th moments, we can introduce a new class of metrics by first
evaluating the \(p\)-Wasserstein distance between corresponding fibers, and subsequently taking the \(L^q\) norm of these
distances over the label space \([0,1]\).
\begin{definition}
	We define the fibered Wasserstein space of order \(p,q \in [1,\infty)\) as 
	\[
		\mathcal{P}_{p,q,\nu}(\mathcal{X}\times [0,1]) := \left\{\mu \in \mathcal{P}_{\nu}(\mathcal{X}\times [0,1]) : \int_{0}^{1}W_p(\mu^{\xi},\delta_{x_0})^q d\xi < \infty\right\},
	\]
	where \(x_0\) is an arbitrary fixed point in \(\mathcal{X}\). For any \(\mu,\bar{\mu} \in \mathcal{P}_{p,q,\nu}(\mathcal{X}\times [0,1])\), we define the distance 
	\[
		d_{W_p,q}(\mu,\bar{\mu}) := \left( \int_{0}^{1} W_p(\mu^{\xi},\bar{\mu}^{\xi})^q d\xi \right)^{1/q}.
	\]
\end{definition}

One can also define the space of fibered probability measures with finite \(p\)-th moments as
\[
	\mathcal{P}_{p,\nu}(\mathcal{X}\times [0,1]) := \mathcal{P}_{\nu}(\mathcal{X}\times [0,1]) \cap \mathcal{P}_p(\mathcal{X}\times [0,1]).
\]
It is straightforward to verify that \(\mathcal{P}_{p,\nu}(\mathcal{X}\times [0,1])= \mathcal{P}_{p,p,\nu}(\mathcal{X}\times [0,1]) \). Indeed, equipping the product space \(\mathcal{X}\times [0,1]\) with the standard product metric, a measure \(\mu\) belongs to \(\mathcal{P}_{p,\nu}(\mathcal{X}\times [0,1])\) if and only if its \(p\)-th moment with respect to a reference point \((x_0, 0)\) is finite:
\[
	\int_{0}^{1} \int_{\mathcal{X}} \Big( d_{\mathcal{X}}(x, x_0)^p + \vert \xi\vert^p \Big) d\mu^{\xi}(x) d\xi < \infty.
\]
Because the base space \([0,1]\) is bounded, the distance term \(\vert \xi \vert ^p \leq 1\) is trivially integrable for any probability measure. Therefore, the finiteness condition reduces to 
\[
	\int_{0}^{1} \int_{\mathcal{X}} d_{\mathcal{X}}(x, x_0)^p d\mu^{\xi}(x) d\xi = \int_{0}^{1} W_p(\mu^{\xi},\delta_{x_0})^p d\xi < \infty,
\]
which is precisely the condition for \(\mu\) to belong to \(\mathcal{P}_{p,p,\nu}(\mathcal{X}\times [0,1])\). 

The relation between the classical \(p\)-Wasserstein distance and the fibered Wasserstein distance \(d_{W_p,p}\) is established in the following lemma. 

\begin{lemma} \label{lemma:Wassertein-weaker-than-fibered-Wassertein}
	For any \(\mu,\bar{\mu} \in \mathcal{P}_{p,p,\nu}(\mathcal{X}\times [0,1])\), the following inequality holds:
	\[
		W_p(\mu,\bar{\mu}) \leq d_{W_p,p}(\mu,\bar{\mu}).
	\]
\end{lemma}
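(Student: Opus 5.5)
The plan is to exhibit an explicit transport plan between \(\mu\) and \(\bar{\mu}\) on \(\mathcal{X}\times[0,1]\) that glues together fiberwise optimal plans. Since \(W_p(\mu,\bar{\mu})\) is an infimum over couplings, any admissible coupling gives an upper bound. We will choose one that never moves mass in the label variable, so its cost reduces exactly to \(\int_0^1 W_p(\mu^\xi,\bar{\mu}^\xi)^p\,d\xi\).

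Concretely, for a.e. \(\xi\in[0,1]\) pick an optimal plan \(\pi^\xi\in\Pi_o(\mu^\xi,\bar{\mu}^\xi)\). This is possible because \(\mu^\xi,\bar{\mu}^\xi\in\mathcal{P}_p(\mathcal{X})\) for a.e. \(\xi\), by the finiteness of \(\int_0^1 W_p(\mu^\xi,\delta_{x_0})^p d\xi\). We then define \(\pi\in\mathcal{P}((\mathcal{X}\times[0,1])^2)\) by
\[
\int \varphi(x,\xi,y,\eta)\,d\pi := \int_0^1\int_{\mathcal{X}\times\mathcal{X}} \varphi(x,\xi,y,\xi)\,d\pi^\xi(x,y)\,d\xi,
\]
that is, the pushforward of \(\pi^\xi(x,y)\otimes d\xi\) under \((x,y,\xi)\mapsto((x,\xi),(y,\xi))\). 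Testing with functions of \((x,\xi)\) alone, and using that the marginals of \(\pi^\xi\) are \(\mu^\xi\) and \(\bar{\mu}^\xi\), the disintegration formula of Theorem \ref{theorem:disintegration} shows that the marginals of \(\pi\) are \(\mu\) and \(\bar{\mu}\). With the product metric on \(\mathcal{X}\times[0,1]\), the cost is
\[
\int d\big((x,\xi),(y,\xi)\big)^p d\pi = \int_0^1\int d_{\mathcal{X}}(x,y)^p\,d\pi^\xi\,d\xi=\int_0^1 W_p(\mu^\xi,\bar{\mu}^\xi)^p d\xi,
\]
because the label coordinates coincide. Taking \(p\)-th roots gives the claim. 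The same computation works for any reasonable product metric (e.g. \(\ell^p\) or max type) that reduces to \(d_{\mathcal{X}}\) on points with equal labels.

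The main obstacle is measurability. The formula defining \(\pi\) only makes sense if \(\xi\mapsto\pi^\xi\) is a Borel family of probability measures on \(\mathcal{X}\times\mathcal{X}\). To get this, I would invoke a measurable selection of optimal plans: the set-valued map \(\xi\mapsto\Pi_o(\mu^\xi,\bar{\mu}^\xi)\) has nonempty compact values and closed graph as a function of \((\mu^\xi,\bar{\mu}^\xi)\), by stability of optimal plans in \(\mathcal{P}_p\). Composing with the measurable map \(\xi\mapsto(\mu^\xi,\bar{\mu}^\xi)\) from Proposition \ref{prop:characterization-random-prob-measures} and applying the Kuratowski–Ryll-Nardzewski theorem then yields a measurable selection; this is the standard argument in Villani's Corollary 5.22.

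If one prefers to avoid measurable selection, there is a fallback. First prove the inequality for step fibered measures, where there are finitely many distinct fibers and no selection issue arises. Then pass to the limit using stepping approximations together with lower semicontinuity of \(W_p\) under narrow convergence and convergence of \(d_{W_p,p}\). I expect the selection route to be cleaner.
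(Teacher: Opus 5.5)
Your proposal is correct and is essentially the paper's proof: you glue measurably selected fiberwise optimal plans \(\pi^\xi\) along the diagonal \(\zeta=\xi\) in the label variable. The label part of the product cost then vanishes, so the resulting coupling has cost exactly \(d_{W_p,p}(\mu,\bar{\mu})^p\). Your justification of the measurable selection via Villani's Corollary 5.22 spells out a step the paper only cites as standard, and the step-measure fallback is not needed.
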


\begin{proof}
	For almost every \(\xi \in [0,1]\), let \(\pi^\xi \in \Pi_{o}(\mu^\xi, \bar{\mu}^\xi)\) be an optimal transport
	plan between the fibers \(\mu^\xi\) and \(\bar{\mu}^\xi\). By standard measurable selection theorems,
	the family of optimal plans \(\{\pi^\xi\}_{\xi \in [0,1]}\) can be chosen such that the map
	\(\xi \mapsto \pi^\xi\) is measurable.
	
	We now construct the following coupling \(\pi \in \mathcal{P}\big((\mathcal{X}\times [0,1])^2\big)\) between \(\mu\) and
	\(\bar{\mu}\)  
	\[
		\pi(x,\xi,y,\zeta) := \pi^\xi(x,y)\otimes \delta_{\xi}(d\zeta)\otimes d\xi.
	\]
	It is easy to check that \(\pi \in \Pi(\mu,\bar{\mu})\). Since the classical \(p\)-Wasserstein distance
	is defined as the infimum over all valid couplings, its \(p\)-th power is bounded above by the cost
	associated with our specific coupling \(\pi\):
	\[
		\begin{aligned}
			W_p^p(\mu,\bar{\mu}) &\leq \int_{(\mathcal{X}\times [0,1])^2} \Big( d_{\mathcal{X}}(x,y)^p + \vert \xi - \zeta \vert^p \Big) d\pi(x,\xi,y,\zeta) 
			= \int_{0}^{1} \int_{\mathcal{X}\times\mathcal{X}} \Big( d_{\mathcal{X}}(x,y)^p + 0 \Big) d\pi^\xi(x,y) d\xi \\
			&= \int_{0}^{1} W_p^p(\mu^\xi, \bar{\mu}^\xi) d\xi = d_{W_p,p}^p(\mu,\bar{\mu}).
		\end{aligned}
	\]
	Taking the \(1/p\)-th root on both sides concludes the proof.
\end{proof}

Following the classical characterization of convergence in Wasserstein spaces as narrow convergence plus convergence of moments or 
uniform integrability of the moments (see, {\it e.g.}, \cite[Theorem 6.9]{Villani-09}), convergence with respect to
\(d_{W_p,q}\) can be characterized by convergence under \(d_{\text{BL},1}\) alongside a suitable uniform integrability
condition on the moments of the fibers.
\begin{proposition} \label{prop:characterization-convergence-d_Wp_q}
	Let \((\mu_n)_{n\in \mathbb{N}}\) and \(\mu\) be in \(\mathcal{P}_{p,q,\nu}(\mathcal{X}\times [0,1])\). Then, the following are equivalent:
	\begin{enumerate}[label=(\roman*)]
		\item \(d_{W_p,q}(\mu_n,\mu)\to 0\).
		\item \(d_{\text{BL},1}(\mu_n,\mu)\to 0\) and \(\int_{0}^{1} \vert W_p(\mu_n^{\xi},\delta_{x_0}) - W_p(\mu^{\xi},\delta_{x_0}) \vert^q d\xi \to 0\).
		\item \(d_{\text{BL},1}(\mu_n,\mu)\to 0\) and
		\[
		\lim_{R\to \infty} \sup_{n\in \mathbb{N}} \int_{0}^{1} \left( \int_{\mathcal{X}\setminus B_R(x_0)} d(x,x_0)^p d\mu_n^{\xi}(x)\right)^{q/p} d\xi = 0.
		\]
	\end{enumerate}
\end{proposition}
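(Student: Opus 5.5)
The natural strategy is to reduce everything to the real-valued functions of the label
\[
f_n(\xi):=W_p(\mu_n^{\xi},\delta_{x_0}),\qquad f(\xi):=W_p(\mu^{\xi},\delta_{x_0}),\qquad g_n(\xi):=W_p(\mu_n^{\xi},\mu^{\xi}).
\]
We then combine three ingredients: the classical fiberwise characterization of Wasserstein convergence \cite[Theorem 6.9]{Villani-09}, the convergence-in-measure consequences of \(d_{\text{BL},1}\) convergence, and Vitali's theorem in \(L^q([0,1])\). Throughout, \(d_{\text{BL}}\le W_1\le W_p\) on \(\mathcal{P}_p(\mathcal{X})\), and the triangle inequality gives \(|f_n-f|\le g_n\) pointwise.

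\textbf{(i)\(\Rightarrow\)(ii).} We have \(d_{\text{BL},1}(\mu_n,\mu)\le\int_0^1 g_n\,d\xi\le \|g_n\|_{L^q}\to0\). Moreover \(\int_0^1|f_n-f|^q\,d\xi\le \|g_n\|_{L^q}^q\to 0\).

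\textbf{(ii)\(\Rightarrow\)(iii).} Since \(f_n\to f\) in \(L^q\), the family \(\{f_n^q\}\) is uniformly integrable on \([0,1]\). Write \(h_{n,R}(\xi):=\int_{\mathcal{X}\setminus B_R(x_0)}d(x,x_0)^p\,d\mu_n^\xi\). For fixed \(M>0\), split \([0,1]\) into \(\{f_n\le M\}\) and \(\{f_n>M\}\). On the second set we use \(h_{n,R}^{q/p}\le f_n^q\), whose contribution is small uniformly in \(n\) as \(M\to\infty\). The first set is the delicate part, because the fibers there have bounded moments but need not have uniformly small tails. Here I would use that \(d_{\text{BL},1}\)-convergence implies \(d_{\text{BL}}(\mu_n^\xi,\mu^\xi)\to0\) in measure, and that \(f_n\to f\) in measure. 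Passing to subsequences (the statement is a \(\lim_R\sup_n\) condition, and a standard subsequence-of-subsequence argument for each \(\varepsilon\) suffices, treating finitely many \(n\) by the single-measure tail property of each \(\mu_n\)), we get a.e. narrow convergence together with convergence of \(p\)-th moments fiberwise. By \cite[Theorem 6.9]{Villani-09}, this yields \(W_p(\mu_n^\xi,\mu^\xi)\to0\) for a.e. \(\xi\). Hence \(g_n\to0\) a.e., and \(g_n^q\le 2^{q-1}(f_n^q+f^q)\) is uniformly integrable, so Vitali gives \(\|g_n\|_{L^q}\to0\). This is in fact (i), and (iii) then follows from (i). So I would actually organize the argument as (ii)\(\Rightarrow\)(i)\(\Rightarrow\)(iii).

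\textbf{(i)\(\Rightarrow\)(iii).} Use the elementary bound \(h_{n,R}(\xi)^{1/p}\le C\big(g_n(\xi)+(\int_{\mathcal{X}\setminus B_{R/2}}d(x,x_0)^p\,d\mu^\xi)^{1/p}\big)\). This is obtained by coupling via an optimal plan and splitting according to whether the \(\mu^\xi\)-partner lies inside \(B_{R/2}\). Raising to the power \(q\) and integrating gives \(\sup_{n\ge n_0}\) small: the first term is controlled by \(\|g_n\|_{L^q}\), and the second by dominated convergence in \(R\) applied to the single measure \(\mu\). Finitely many \(n<n_0\) are handled individually.

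\textbf{(iii)\(\Rightarrow\)(ii) or (i).} Again extract, for any subsequence, a further subsequence with \(\mu_n^\xi\to\mu^\xi\) narrowly a.e.; this is possible since \(d_{\text{BL}}(\mu_n^\xi,\mu^\xi)\to0\) in \(L^1\). Condition (iii) gives uniform integrability of \(f_n^q\). It does not immediately give fiberwise uniform integrability of moments, and this is the main obstacle. I would try the decomposition \(f_n^q\lesssim R^q+h_{n,R}^{q/p}\), which yields uniform integrability of \(f_n^q\) in \(\xi\). For the fiberwise moment convergence, I would bound
\[
\int_0^1 \min\big(g_n,\ \text{tail}\big)^q\,d\xi
\]
via the truncation \(W_p^p(\mu_n^\xi,\mu^\xi)\lesssim W_p^p\) of the truncated measures plus \(h_{n,R}+h_R\). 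On \(B_R\), narrow convergence implies \(W_p\)-convergence of truncated-distance versions. Integrating in \(\xi\) with Vitali then gives \(\limsup\|g_n\|_{L^q}\lesssim \sup_n\|h_{n,R}^{1/p}\|_{L^q}\), which vanishes as \(R\to\infty\). Since every subsequence has a further subsequence converging to the same limit, (i) holds for the whole sequence.
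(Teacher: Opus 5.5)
Your proposal is correct. For (i)$\Leftrightarrow$(ii) it matches the paper's Step~1: fiberwise inequalities in one direction; in the other, sub-subsequences with a.e.\ narrow convergence and a.e.\ convergence of moments, the classical characterization, and Vitali.

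The real difference is how you connect (iii) to the rest. Write $h_r(\eta):=\int_{\{d(x,x_0)\ge r\}}d(x,x_0)^p\,d\eta(x)$.

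\textbf{The direction into (iii).} The paper proves (ii)$\Rightarrow$(iii) by contradiction. It extracts $n_k$ and $R_k\to\infty$, dominates the tail functions by the uniformly integrable $W_p(\mu_{n_k}^\xi,\delta_{x_0})^q$, gets a.e.\ convergence to $0$ from fiberwise $W_p$-convergence, and reaches a contradiction via Vitali. You prove (i)$\Rightarrow$(iii) directly through
\[
h_{R}(\mu_n^\xi)\le (2^{p-1}+2^p)\,W_p(\mu_n^\xi,\mu^\xi)^p+2^{p-1}\,h_{R/2}(\mu^\xi).
\]
This bound is valid: split an optimal plan according to whether $d(y,x_0)\ge R/2$; in the complementary case $d(x,x_0)\le 2d(x,y)$. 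It is quantitative. It also makes explicit something the paper leaves implicit: the extracted indices must tend to infinity, and any finitely many remaining indices are handled by the tail of each single $\mu_n$.

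\textbf{The converse direction.} The paper avoids couplings entirely. It writes $W_p(\eta,\delta_{x_0})\le M_R(\eta)+m_R(\eta)$ with $M_R(\eta)=(\int\min(d(x,x_0)^p,R^p)\,d\eta)^{1/p}$. Since that integrand is bounded and continuous, a.e.\ narrow convergence controls $M_R$ directly. This gives (iii)$\Rightarrow$(ii), and Step~1 then yields (i). You go straight to (iii)$\Rightarrow$(i) by truncation. That also works, but the sketch needs three fixes.

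\begin{itemize}
\item \emph{Which truncation.} If you use the retraction $T$ sending $\{d(\cdot,x_0)>R\}$ to $x_0$, you do get $W_p(\eta,T_\#\eta)^p\le h_R(\eta)$. However, $T$ is discontinuous on the sphere $\{d(\cdot,x_0)=R\}$. Narrow convergence of $T_\#\mu_n^\xi$ therefore only holds for the co-countable set of radii with $M_\mu(\{d(\cdot,x_0)=R\})=0$. Alternatively, use the bounded metric $d_R=\min(d,R)$: its Wasserstein distance $W_{p,d_R}$ metrizes narrow convergence, and
\[
W_p^p(\mu,\nu)\le W_{p,d_R}^p(\mu,\nu)+2^p\bigl(h_{R/2}(\mu)+h_{R/2}(\nu)\bigr).
\]
Here $W_{p,d_R}(\mu_n^\xi,\mu^\xi)$ is bounded by $R$ and tends to $0$ a.e., so plain dominated convergence suffices.
\item \emph{Missing term.} Your final bound must also keep the tail of the limit, $\bigl\|h_{R/2}(\mu^{\cdot})^{1/p}\bigr\|_{L^q}$. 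It vanishes as $R\to\infty$ by dominated convergence because $\mu\in\mathcal{P}_{p,q,\nu}$.
\item \emph{Stray expression.} The expression $\int_0^1\min(g_n,\text{tail})^q\,d\xi$ is not meaningful and should be dropped.
\end{itemize}
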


\begin{proof}
	\(\diamond\) \textsc{Step 1}: Equivalence of (i) and (ii).\\
	Assume that \(d_{W_p,q}(\mu_n,\mu)\to 0\). Observe that for a.e. \(\xi \in [0,1]\), the following inequalities hold:
	\[
	\begin{aligned}
		&d_{\text{BL}}(\mu_n^\xi,\mu^{\xi})\leq W_{p}(\mu_n^{\xi},\mu^{\xi}), \\
		&\left\vert W_p(\mu_n^{\xi},\delta_{x_0}) -W_p(\mu^{\xi},\delta_{x_0}) \right\vert \leq W_p(\mu_n^{\xi},\mu^{\xi}).
	\end{aligned}
	\]
	Taking the \(L^q\) norm over \(\xi\) in both inequalities immediately yields the two limits in condition (ii). Conversely, suppose that
	(ii) holds. Given any subsequence \((\mu_{n_k})_{k\in \mathbb{N}}\) of \((\mu_n)_{n\in \mathbb{N}}\), the conditions in (ii)
	allow us to extract a further subsequence (not relabeled) such that \(d_{\text{BL}}(\mu_{n_k}^{\xi},\mu^{\xi})\to 0\) and
	\(W_p(\mu_{n_k}^{\xi},\delta_{x_0})\to W_p(\mu^{\xi},\delta_{x_0})\) for a.e. \(\xi \in [0,1]\).
	By the classical characterization
	of convergence in the \(p\)-Wasserstein space, we conclude that \(W_p(\mu_{n_k}^{\xi},\mu^{\xi}) \to 0\) for a.e.
	\(\xi \in [0,1]\). Since \(W_p(\mu_{n_k}^{\xi},\mu^{\xi})^q \leq 2^{q-1} \big(W_p(\mu_{n_k}^{\xi},\delta_{x_0})^q + W_p(\mu^{\xi},\delta_{x_0})^q \big)\),
	by the dominated convergence theorem we have that
	\[
	\lim_{k \to \infty} d_{W_p,q}(\mu_{n_k},\mu) = \lim_{k \to \infty} \int_{0}^{1} W_p(\mu_{n_k}^{\xi},\mu^{\xi})^q d\xi = 0.
	\]
	Since every subsequence contains a further subsequence converging to zero, the entire sequence converges,
	yielding \(d_{W_p,q}(\mu_n,\mu)\to 0\).

	\medskip

	\(\diamond\) \textsc{Step 2}: (ii) \(\Rightarrow\) (iii). \\ 
	We proceed by contradiction. Suppose that (iii) fails. Then, there exist some \(\varepsilon >0\) and subsequences
	\((\mu_{n_k})_{k\in \mathbb{N}}\) and \((R_k)_{k\in \mathbb{N}}\) with \(R_k \to \infty\) such that
	\[
		\int_{0}^{1} \left( \int_{\mathcal{X}\setminus B_{R_k}(x_0)} d(x,x_0)^p d\mu_{n_k}^{\xi}(x)\right)^{q/p} d\xi \geq \varepsilon, \quad \forall k\in \mathbb{N}.
	\]
	Define the sequences of functions
	\[
		f_k(\xi):=\left( \int_{\mathcal{X}\setminus B_{R_k}(x_0)} d(x,x_0)^p d\mu_{n_k}^{\xi}(x)\right)^{q/p}\leq W^q_p(\mu_{n_k}^{\xi},\delta_{x_0}):=g_k(\xi).
	\]
	Since \((g_k)_{k\in \mathbb{N}}\) converges to \(g(\xi):=W^q_p(\mu^{\xi},\delta_{x_0})\) in \(L^1\), it is uniformly
	integrable. Thus, also the dominated sequence \((f_k)_{k\in \mathbb{N}}\) is uniformly integrable. 
	Furthermore, because \(d_{\text{BL},1}(\mu_{n_k}, \mu) \to 0\) and \(\Vert g_{n_k}-g\Vert_{L^1}\to 0\), 
	we can extract a  further subsequence (not relabeled) such that 
	\(d_{\text{BL}}(\mu_{n_k}^{\xi},\mu^{\xi}) \to 0\) and \(g_{n_k}(\xi)\to g(\xi)\) simultaneously for a.e. 
	\(\xi \in [0,1]\). Standard \(p\)-Wasserstein convergence implies that 
	\[
		\lim_{k \to \infty} f_{n_k}(\xi) = 0 \quad \text{for } \text{a.e. } \xi \in [0,1].
	\]
	Since \((f_{n_k})_{k\in \mathbb{N}}\) is uniformly integrable, Vitali's convergence theorem ensures that
	\[
		\lim_{k \to \infty} \int_{0}^{1} f_{n_k}(\xi) d\xi = 0,
	\]
	which contradicts our assumption that the integral is bounded below by \(\varepsilon\).

	\medskip

	\(\diamond\) \textsc{Step 3}: \((iii)\) \(\Rightarrow\) \((ii)\).\\
	Integrating the following inequality with respect to a probability measure \(\eta\) on \(\mathcal{P}_p(\mathcal{X})\)  
	\[
		d(x,x_0)^p\leq \min(d(x,x_0)^p, R^p) + d(x,x_0)^p \mathds{1}_{\mathcal{X}\setminus B_R(x_0)}(x),
	\]
	and using the inequality \((a+b)^{1/p} \leq a^{1/p}+b^{1/p}\) for \(a,b\geq 0\), we have that
	\[
		W_p(\eta,\delta_{x_0}) \leq \left( \int_{\mathcal{X}} \min(d(x,x_0)^p, R^p) d\eta(x) \right)^{1/p} + \left( \int_{\mathcal{X}\setminus B_{R}(x_0)} d(x,x_0)^p d\eta(x) \right)^{1/p} := M_R(\eta) + m_R(\eta).
	\]
	Since \(W_p(\eta,\delta_{x_0})\geq M_R(\eta)\), the previous inequality also implies that
	\[
		\vert W_p(\eta,\delta_{x_0})-M_R(\eta)\vert \leq m_R(\eta).
	\]
	Thus, applying the triangle inequality, we deduce that
	\[
		\begin{aligned}
			\vert W_p(\mu_n^{\xi},\delta_{x_0}) - W_p(\mu^{\xi},\delta_{x_0}) \vert &\leq \vert W_p(\mu_n^{\xi},\delta_{x_0}) - M_R(\mu_n^{\xi}) \vert + \vert M_R(\mu_n^{\xi}) - M_R(\mu^{\xi}) \vert + \vert M_R(\mu^{\xi}) - W_p(\mu^{\xi},\delta_{x_0}) \vert \\
			&\leq m_R(\mu_n^{\xi}) + \vert M_R(\mu_n^{\xi}) - M_R(\mu^{\xi}) \vert + m_R(\mu^{\xi}).
		\end{aligned}
	\]
	Taking the \(L^q\) norm over \(\xi\) yields
	\[
		\Vert W_p(\mu_n^{\cdot},\delta_{x_0}) - W_p(\mu^{\cdot},\delta_{x_0}) \Vert_{L^q} \leq   \Vert m_R(\mu^{\cdot}_n) \Vert_{L^q} + \Vert M_R(\mu^{\cdot}_n) - M_R(\mu^{\cdot}) \Vert_{L^q} + \Vert m_R(\mu^{\cdot}) \Vert_{L^q}.
	\]
	We claim that the middle term vanishes as \(n \to \infty\). Take any subsequence \((\mu_{n_k})_{k\in \mathbb{N}}\).
	Since \(d_{\text{BL},1}(\mu_{n_k},\mu)\to 0\), there exists a further subsequence (not relabeled) such that
	\(d_{\text{BL}}(\mu_{n_k}^{\xi},\mu^{\xi}) \to 0\) for a.e. \(\xi \in [0,1]\). Since the map
	\(x\mapsto \min(d(x,x_0)^p,R^p)\) is bounded and continuous, narrow convergence implies that 
	\(\left\vert M_R(\mu_{n_{k}}^\xi) - M_R(\mu^\xi) \right\vert \to 0\) almost everywhere. Since this difference is
	uniformly bounded by \(2R\), the dominated convergence theorem ensures \(\Vert M_R(\mu^{\cdot}_{n_k}) - M_R(\mu^{\cdot}) \Vert_{L^q} \to 0\).
	Since this is valid for any subsequence, the full sequence limit is established. As a result, we arrive at the bound:
	\[
		\limsup_{n\to\infty} \Vert W_p(\mu_n^{\cdot},\delta_{x_0}) - W_p(\mu^{\cdot},\delta_{x_0}) \Vert_{L^q} \leq  \sup_{n\in \mathbb{N}} \Vert m_R(\mu^{\cdot}_n) \Vert_{L^q} +  \Vert m_R(\mu^{\cdot}) \Vert_{L^q}.
	\]
	Finally, taking the limit as \(R\to \infty\) and invoking the uniform bound on the tail of the \(p\)-th moments in
	condition (iii), both terms on the right-hand side vanish, concluding the proof.
\end{proof}

\subsubsection{(Unlabeled) fibered Wasserstein spaces}

We are now ready to introduce the unlabeled version of the space \(\mathcal{P}_{p,q,\nu}(\mathcal{X}\times [0,1])\).
In this setting, relatively compact sets can be characterized by combining our compactness results for the quotient
space \((\widetilde{\mathcal{P}}_{\nu}(\mathcal{X}\times [0,1]),\delta_{\text{BL},1})\) with the preceding characterization
of convergence in the fibered Wasserstein metric.

\begin{definition}
	The unlabeled fibered Wasserstein distance \(\delta_{W_p,q}\) between any two fibered probability measures
	\(\mu,\bar{\mu} \in \mathcal{P}_{p,q,\nu}(\mathcal{X}\times [0,1])\) is defined as
	\[
		\delta_{W_p,q}(\mu,\bar{\mu}) := \inf_{\varphi \in S_{[0,1]}} d_{W_p,q}(\mu,\bar{\mu}^{\varphi}).
	\]
\end{definition}
Similar to the bounded-Lipschitz case, \(\delta_{W_p,q}\) is merely a pseudo-metric on
\(\mathcal{P}_{p,q,\nu}(\mathcal{X}\times [0,1])\), and we can rewrite it as a minimum:
\[
	\delta_{W_p,q}(\mu,\bar{\mu}) = \min_{\varphi,\psi \in \bar{S}_{[0,1]}} d_{W_p,q}(\mu^{\varphi},\bar{\mu}^{\psi}).
\]
Therefore, \(\delta_{W_p,q}\) defines a valid distance on the quotient space 
\(\widetilde{\mathcal{P}}_{p,q,\nu}(\mathcal{X}\times [0,1])\) where weakly isomorphic fibered measures are identified.
This naturally yields the inclusion \(\widetilde{\mathcal{P}}_{p,q,\nu}(\mathcal{X}\times [0,1]) \subset \widetilde{\mathcal{P}}_{\nu}(\mathcal{X}\times [0,1])\). Furthermore, the metric domination \(d_{\text{BL}} \leq W_p\) guarantees that
\[ 
	\delta_{\text{BL},1}(\mu,\bar{\mu}) \leq \delta_{W_p,q}(\mu,\bar{\mu}).
\]
This implies that the topology induced by \(\delta_{W_p,q}\) is finer than the topology induced by \(\delta_{\text{BL},1}\).
Building on Proposition \ref{prop:characterization-convergence-d_Wp_q}, we can now characterize convergence
under \(\delta_{W_p,q}\) entirely in terms of the weaker unlabeled distance \(\delta_{\text{BL},1}\),
alongside the uniform integrability of the moments of the fibers.
\begin{proposition} \label{prop:characterization-convergence-delta_W_pq}
	Let \((\mu_n)_{n\in \mathbb{N}}\) and \(\mu\) be in \(\widetilde{\mathcal{P}}_{p,q,\nu}(\mathcal{X}\times [0,1])\). Then, the following are equivalent:
	\begin{enumerate}[label=(\roman*)]
		\item \(\delta_{W_p,q}(\mu_n,\mu)\to 0\).
		\item \(\delta_{\text{BL},1}(\mu_n,\mu)\to 0\) and \(\displaystyle \lim_{R\to \infty} \sup_{n\in \mathbb{N}} \int_{0}^{1} \left( \int_{\mathcal{X}\setminus B_R(x_0)} d(x,x_0)^p d\mu_n^{\xi}(x)\right)^{q/p} d\xi = 0\).
	\end{enumerate}
\end{proposition}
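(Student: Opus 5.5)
The plan is to reduce everything to the labeled characterization in Proposition \ref{prop:characterization-convergence-d_Wp_q} by fixing, for each $n$, a nearly optimal relabeling. The only extra fact needed is that the moment-tail functional
\[
T_R(\mu):=\int_0^1\left(\int_{\mathcal{X}\setminus B_R(x_0)} d(x,x_0)^p\,d\mu^{\xi}(x)\right)^{q/p}d\xi
\]
is invariant under rearrangements. Indeed, for $\varphi\in \bar S_{[0,1]}$, the map $\xi\mapsto \int_{\mathcal{X}\setminus B_R(x_0)} d(x,x_0)^p\,d\mu^{\varphi(\xi)}(x)$ is the composition of a measurable function with a measure-preserving map. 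Hence its $L^{q/p}$-type integral is unchanged: $T_R(\mu^\varphi)=T_R(\mu)$. The same argument shows that membership in $\mathcal{P}_{p,q,\nu}(\mathcal{X}\times[0,1])$ is preserved. In particular, $T_R$ is well defined on the quotient $\widetilde{\mathcal{P}}_{p,q,\nu}(\mathcal{X}\times[0,1])$, since weakly isomorphic measures $\mu^\varphi=\bar\mu^\psi$ give $T_R(\mu)=T_R(\mu^\varphi)=T_R(\bar\mu^\psi)=T_R(\bar\mu)$. So condition (ii) does not depend on the chosen representatives. Throughout, I fix representatives $\mu_n,\mu\in\mathcal{P}_{p,q,\nu}(\mathcal{X}\times[0,1])$.

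\emph{(i) $\Rightarrow$ (ii).} The first half follows at once from $\delta_{\text{BL},1}\le\delta_{W_p,q}$, which was noted just before the statement. For the tail condition, I use the definition of $\delta_{W_p,q}$ as an infimum over $S_{[0,1]}$ to pick $\varphi_n\in S_{[0,1]}$ with
\[
d_{W_p,q}(\mu,\mu_n^{\varphi_n})\le \delta_{W_p,q}(\mu,\mu_n)+\tfrac1n\to 0.
\]
I use the symmetry of $\delta_{W_p,q}$ here, or equivalently rearrange $\mu$ by $\varphi_n^{-1}$, which is again a measure-preserving bijection. Then $d_{W_p,q}(\mu_n^{\varphi_n},\mu)\to0$ in the labeled space. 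The implication (i) $\Rightarrow$ (iii) of Proposition \ref{prop:characterization-convergence-d_Wp_q} gives $\lim_{R\to\infty}\sup_n T_R(\mu_n^{\varphi_n})=0$, and by invariance $T_R(\mu_n^{\varphi_n})=T_R(\mu_n)$.

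\emph{(ii) $\Rightarrow$ (i).} In the same way, I choose $\varphi_n\in S_{[0,1]}$ with $d_{\text{BL},1}(\mu_n^{\varphi_n},\mu)\le\delta_{\text{BL},1}(\mu_n,\mu)+\tfrac1n\to0$. The sequence $(\mu_n^{\varphi_n})$ lies in $\mathcal{P}_{p,q,\nu}(\mathcal{X}\times[0,1])$, converges to $\mu$ in $d_{\text{BL},1}$, and by invariance satisfies the uniform tail condition of item (iii) of Proposition \ref{prop:characterization-convergence-d_Wp_q}. That proposition then yields $d_{W_p,q}(\mu_n^{\varphi_n},\mu)\to0$. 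Finally, $\delta_{W_p,q}(\mu_n,\mu)\le d_{W_p,q}(\mu_n^{\varphi_n},\mu)\to0$.

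The only genuinely delicate point is bookkeeping rather than analysis. One must check that the rearranged representatives stay in the right space, that $T_R$ and both unlabeled distances are independent of the chosen representatives in the quotient, and that the infima are over bijections, so that the near-optimal $\varphi_n$ can be inverted and composed freely. Once this invariance is established, the analytic content is entirely supplied by the labeled characterization in Proposition \ref{prop:characterization-convergence-d_Wp_q}.
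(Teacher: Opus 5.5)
Your proposal is correct and follows the same route as the paper: pick (near-)optimal bijective relabelings $\varphi_n$, apply the labeled characterization of Proposition~\ref{prop:characterization-convergence-d_Wp_q} to the sequence $(\mu_n^{\varphi_n})$, and use that the moment-tail integral is invariant under measure-preserving rearrangements. You additionally spell out bookkeeping that the paper leaves implicit: the $1/n$-slack choice of $\varphi_n$, symmetry via $\varphi_n^{-1}$, and independence of the tail functional from the chosen representative in the quotient.
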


\begin{proof}
	By definition, \(\delta_{W_p,q}(\mu_n,\mu) \to 0\) if and only if there exists a sequence of bijective 
	measure-preserving maps \((\varphi_n)_{n\in \mathbb{N}}\subset S_{[0,1]}\) such that
	\(d_{W_p,q}(\mu_n^{\varphi_n},\mu)\to 0\). 
	Applying Proposition \ref{prop:characterization-convergence-d_Wp_q}-(iii), this convergence occurs if and only if
	\(d_{\text{BL},1}(\mu_n^{\varphi_n},\mu)\to 0\) and 
	\[
		\lim_{R\to \infty} \sup_{n\in \mathbb{N}} \int_{0}^{1} \left( \int_{\mathcal{X}\setminus B_R(x_0)} d(x,x_0)^p d\mu_n^{\varphi_n(\xi)}(x)\right)^{q/p} d\xi = 0.
	\]
	The existence of a sequence \((\varphi_n)_{n\in\mathbb{N}}\) satisfying \(d_{\text{BL},1}(\mu_n^{\varphi_n},\mu)\to 0\)
	is precisely equivalent to \(\delta_{\text{BL},1}(\mu_n,\mu)\to 0\), and because each \(\varphi_n\) is a measure-preserving map on \([0,1]\), we have
	\[
		\int_{0}^{1} \left( \int_{\mathcal{X}\setminus B_R(x_0)} d(x,x_0)^p d\mu_n^{\varphi_n(\xi)}(x)\right)^{q/p} d\xi = \int_{0}^{1} \left( \int_{\mathcal{X}\setminus B_R(x_0)} d(x,x_0)^p d\mu_n^{\xi}(x)\right)^{q/p} d\xi,
	\] 
	which gives us the desired characterization.
\end{proof}

\begin{theorem}
	The space \((\widetilde{\mathcal{P}}_{p,q,\nu}(\mathcal{X}\times [0,1]),\delta_{W_p,q})\) is Polish. Moreover we have:
	\begin{enumerate} [label=(\roman*)]
		\item A subset \(\mathcal{K}\subset (\widetilde{\mathcal{P}}_{p,q,\nu}(\mathcal{X}\times [0,1]),\delta_{W_p,q})\) is relatively compact if and only if it is tight and  
		\begin{equation} \label{eq:uniform-integrability-fibered-measures}
		 	\lim_{R\to \infty} \sup_{\mu \in \mathcal{K}} \int_{0}^{1} \left( \int_{\mathcal{X}\setminus B_R(x_0)} d(x,x_0)^p d\mu^{\xi}(x)\right)^{q/p} d\xi = 0.
		\end{equation}
		\item The space \((\widetilde{\mathcal{P}}_{p,q,\nu}(\mathcal{X}\times [0,1]),\delta_{W_p,q})\) is compact if and only if \(\mathcal{X}\) is compact.
	\end{enumerate}
\end{theorem}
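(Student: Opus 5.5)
The plan is to mirror the proof of Theorem \ref{thm:compactness-fibered-probability-measures}, reducing each claim to the bounded-Lipschitz quotient space through Proposition \ref{prop:characterization-convergence-delta_W_pq}. That proposition says $\delta_{W_p,q}$-convergence is exactly $\delta_{\text{BL},1}$-convergence together with the uniform integrability condition \eqref{eq:uniform-integrability-fibered-measures} along the sequence.

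\emph{Relative compactness, sufficiency.} Let $\mathcal{K}$ be tight and satisfy \eqref{eq:uniform-integrability-fibered-measures}, and take a sequence $(\mu_n)\subset\mathcal{K}$. By Theorem \ref{thm:compactness-fibered-probability-measures}, there is a subsequence and some $\mu\in\mathcal{P}_\nu(\mathcal{X}\times[0,1])$ with $\delta_{\text{BL},1}(\mu_{n_k},\mu)\to0$. We then need $\mu\in\mathcal{P}_{p,q,\nu}$. Choose relabelings $\varphi_k$ with $d_{\text{BL},1}(\mu_{n_k}^{\varphi_k},\mu)\to0$ and pass to a further subsequence converging fiberwise narrowly a.e. Lower semicontinuity of $\eta\mapsto W_p(\eta,\delta_{x_0})$ under narrow convergence and Fatou's lemma give
\[
\int_0^1 W_p(\mu^\xi,\delta_{x_0})^q\,d\xi\le\liminf_k\int_0^1 W_p(\mu_{n_k}^\xi,\delta_{x_0})^q\,d\xi .
\]
The right-hand side is bounded: condition \eqref{eq:uniform-integrability-fibered-measures} with a fixed $R$ bounds the $q$-moment uniformly, since $W_p^q\lesssim R^q+(\text{tail})^{q/p}$. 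Proposition \ref{prop:characterization-convergence-delta_W_pq} then yields $\delta_{W_p,q}(\mu_{n_k},\mu)\to0$.

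\emph{Relative compactness, necessity.} Tightness follows from $\delta_{\text{BL},1}\le\delta_{W_p,q}$ together with Theorem \ref{thm:compactness-fibered-probability-measures}. For \eqref{eq:uniform-integrability-fibered-measures} I would argue by contradiction, as in Step 2 of Proposition \ref{prop:characterization-convergence-d_Wp_q}. If it fails, there are $\varepsilon>0$, $R_k\to\infty$ and $\mu_k\in\mathcal{K}$ whose tail integral is at least $\varepsilon$. By relative compactness, a subsequence converges in $\delta_{W_p,q}$. Proposition \ref{prop:characterization-convergence-delta_W_pq} then gives uniform integrability along that subsequence, which contradicts the lower bound since $R_k\to\infty$. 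The main subtlety is that the supremum in Proposition \ref{prop:characterization-convergence-delta_W_pq} runs over the sequence, so the contradiction must be extracted along the subsequence. This works because the tail functional is nonincreasing in $R$.

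\emph{Polishness.} For completeness, take a $\delta_{W_p,q}$-Cauchy sequence. It is $\delta_{\text{BL},1}$-Cauchy, hence tight by Theorem \ref{thm:compactness-fibered-probability-measures}. A Cauchy sequence is totally bounded, so by a standard argument its tail moments are uniformly small: approximate by finitely many terms and use $W_p(\eta,\delta_{x_0})$-type estimates on $\int_{\mathcal{X}\setminus B_R}d^p\,d\eta$ via couplings, where the tail of $\eta$ is controlled by $W_p(\eta,\bar\eta)$ plus the tail of $\bar\eta$ on $B_{R/2}^c$. Then the previous step gives a convergent subsequence, and hence convergence of the whole sequence. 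I expect this coupling-based tail transfer to be the main technical step, because it is needed here and not merely in Proposition \ref{prop:characterization-convergence-delta_W_pq}. For separability, step measures over dyadic partitions with fibers in a countable dense subset of $(\mathcal{P}_p(\mathcal{X}),W_p)$ are dense: first apply the stepping argument with the Wasserstein convexity $W_p^p(\mu_S,\bar\mu_S)\le\frac1{\nu(S)}\int_S W_p^p$, then use Lemma \ref{lemma:regularity-fibered-probability-measures} after truncating moments.

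\emph{Compactness of the space.} If $\mathcal{X}$ is compact, every set is tight and \eqref{eq:uniform-integrability-fibered-measures} holds trivially for $R>\diam\mathcal{X}$. Conversely, the averaging map $M$ is continuous into $(\mathcal{P}(\mathcal{X}),d_{\text{BL}})$ and surjective, because constant fibers $\delta_x$ already give all Dirac masses. Compactness of the space would therefore force $\{\delta_x\}$ to be tight, so $\mathcal{X}$ is compact, exactly as in Theorem \ref{thm:compactness-fibered-probability-measures}.
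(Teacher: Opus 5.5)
Your proposal is correct and follows the paper's proof essentially step by step. You reduce to the $\delta_{\text{BL},1}$ quotient through Proposition~\ref{prop:characterization-convergence-delta_W_pq}, obtain \eqref{eq:uniform-integrability-fibered-measures} by contradiction, and treat Cauchy sequences via tightness plus uniform integrability. Along the way you supply details the paper only asserts: the Fatou argument placing the limit in $\mathcal{P}_{p,q,\nu}$, the monotonicity in $R$ needed to run the contradiction along a subsequence, and the coupling bound $\big(\int_{\mathcal{X}\setminus B_R(x_0)} d(x,x_0)^p\,d\eta\big)^{1/p}\le 2W_p(\eta,\bar\eta)+\big(\int_{\mathcal{X}\setminus B_{R/2}(x_0)} d(x,x_0)^p\,d\bar\eta\big)^{1/p}$, which gives uniform integrability of a Cauchy sequence.

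One small correction: on $\mathcal{P}_{p,q,\nu}$ the averaging map $M$ is not surjective onto $\mathcal{P}(\mathcal{X})$ when $\mathcal{X}$ is unbounded, since its image only contains measures with finite $\min(p,q)$-th moment. Your argument for (ii) only uses that the image contains all Dirac masses, which is true and suffices. The paper instead embeds $\mathcal{P}_p(\mathcal{X})$ as a closed subset via constant fibers.
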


\begin{proof}
	\(\diamond\) \textsc{Step 1}: Proof of \((i)\).\\ 
	Assume that \(\mathcal{K}\subset (\widetilde{\mathcal{P}}_{p,q,\nu}(\mathcal{X}\times [0,1]),\delta_{W_p,q})\) is tight
	and satisfies \eqref{eq:uniform-integrability-fibered-measures}. Take any sequence \((\mu_n)_{n\in \mathbb{N}}\subset \mathcal{K}\).
	Since \(\mathcal{K}\) is tight, it is relatively compact in \((\widetilde{\mathcal{P}}_{\nu}(\mathcal{X}\times [0,1]),\delta_{\text{BL},1})\). 
	Thus, there exists a subsequence \((\mu_{n_k})_{k\in \mathbb{N}}\) converging to some limit
	\(\mu \in \widetilde{\mathcal{P}}_{\nu}(\mathcal{X}\times [0,1])\) in the \(\delta_{\text{BL},1}\) distance.
	Since \((\mu_n)_{n\in\mathbb{N}}\) satisfies the uniform integrability condition
	\eqref{eq:uniform-integrability-fibered-measures}, the limit measure \(\mu\) must belong to
	\(\widetilde{\mathcal{P}}_{p,q,\nu}(\mathcal{X}\times [0,1])\). Furthermore, the characterization
	of convergence established in Proposition \ref{prop:characterization-convergence-delta_W_pq} guarantees that \(\delta_{W_p,q}(\mu_{n_k},\mu)\to 0\). 
	Conversely, assume that \(\mathcal{K}\) is relatively compact in \((\widetilde{\mathcal{P}}_{p,q,\nu}(\mathcal{X}\times [0,1]),\delta_{W_p,q})\). 
	Because the topology induced by \(\delta_{W_p,q}\) is finer than that of \(\delta_{\text{BL},1}\), \(\mathcal{K}\) is
	also relatively compact in \((\widetilde{\mathcal{P}}_{\nu}(\mathcal{X}\times [0,1]),\delta_{\text{BL},1})\),
	which implies that \(\mathcal{K}\) is tight. We argue by contradiction to show that
	\eqref{eq:uniform-integrability-fibered-measures} holds. If it does not, there exists some \(\varepsilon>0\)
	and sequences \((R_n)_{n\in\mathbb{N}}\) and \((\mu_n)_{n\in\mathbb{N}}\subset \mathcal{K}\) such that
	\(R_n \to \infty\) and
	\[
		\int_{0}^{1} \left( \int_{\mathcal{X}\setminus B_{R_n}(x_0)} d(x,x_0)^p d\mu_n^{\xi}(x)\right)^{q/p} d\xi \geq \varepsilon, \quad \forall n\in \mathbb{N}.
	\]
	Due to the relative compactness of \(\mathcal{K}\) in
	\((\widetilde{\mathcal{P}}_{p,q,\nu}(\mathcal{X}\times [0,1]),\delta_{W_p,q})\), there exists a subsequence
	(not relabeled) such that \(\delta_{W_p,q}(\mu_n,\mu)\to 0\) for some
	\(\mu \in \widetilde{\mathcal{P}}_{p,q,\nu}(\mathcal{X}\times [0,1])\). However, applying the characterization
	of convergence in \(\delta_{W_p,q}\) in Proposition \ref{prop:characterization-convergence-delta_W_pq} to this
	convergent subsequence directly contradicts our assumption.
	
	\medskip

	\(\diamond\) \textsc{Step 2}: Proof of \((ii)\).\\ 
	If \(\mathcal{X}\) is compact, then the uniform integrability condition
	\eqref{eq:uniform-integrability-fibered-measures} and the tightness are automatically satisfied for the entire
	space \(\widetilde{\mathcal{P}}_{p,q,\nu}(\mathcal{X}\times [0,1])\). Thus, by point (i), the space is compact. 
	Conversely, suppose that \((\widetilde{\mathcal{P}}_{p,q,\nu}(\mathcal{X}\times [0,1]),\delta_{W_p,q})\) is compact.
	The Wasserstein space \(\mathcal{P}_p(\mathcal{X})\) can be viewed as a closed subset of
	\(\widetilde{\mathcal{P}}_{p,q,\nu}(\mathcal{X}\times [0,1])\) via the natural embedding \(\eta \in \mathcal{P}_p(\mathcal{X}) \mapsto \eta \otimes d\xi \in \widetilde{\mathcal{P}}_{p,q,\nu}(\mathcal{X}\times [0,1])\).
	Since closed subsets of compact spaces are compact, \((\mathcal{P}_p(\mathcal{X}),W_p)\) is compact, which 
	implies that \(\mathcal{X}\) is compact.

	\medskip 

	\(\diamond\) \textsc{Step 3}: Proof of the Polish property.\\
	Separability follows from the fact that step fibered measures taking values in a countable dense subset of finitely supported measures on \(\mathcal{X}\) form a dense countable set, much like in the \(\delta_{\text{BL},1}\) case.	
	For completeness, let \((\mu_n)_{n\in\mathbb{N}}\) be a Cauchy sequence in 
	\((\widetilde{\mathcal{P}}_{p,q,\nu}(\mathcal{X}\times [0,1]),\delta_{W_p,q})\). Since 
	\(\delta_{\text{BL},1} \leq \delta_{W_p,q}\), the sequence is also Cauchy in \(\delta_{\text{BL},1}\).
	Since \((\widetilde{\mathcal{P}}_{\nu}(\mathcal{X}\times [0,1]),\delta_{\text{BL},1})\) is complete, 
	there exists a limit \(\mu \in \widetilde{\mathcal{P}}_{\nu}(\mathcal{X}\times [0,1])\) such that
	\(\delta_{\text{BL},1}(\mu_n,\mu) \to 0\). Furthermore, the Cauchy property in \(\delta_{W_p,q}\)
	implies that the sequence is bounded in \(p\)-th moments and satisfies the uniform integrability condition
	\eqref{eq:uniform-integrability-fibered-measures}. This ensures \(\mu \in \widetilde{\mathcal{P}}_{p,q,\nu}(\mathcal{X}\times [0,1])\).
	By Proposition \ref{prop:characterization-convergence-delta_W_pq}, the convergence
	\(\delta_{\text{BL},1}(\mu_n,\mu) \to 0\) together with the uniform integrability forces \(\delta_{W_p,q}(\mu_n,\mu) \to 0\). Thus, the space is complete, and therefore Polish.
\end{proof}
Similar to the classical case, a bound on higher-order moments guarantees both tightness and uniform integrability of
the fiber-wise moments, which together suffice for relative compactness in
\((\widetilde{\mathcal{P}}_{p,q,\nu}(\mathcal{X}\times [0,1]),\delta_{W_p,q})\).

\begin{proposition} \label{prop:higher-order-moments-implies-compactness}
	Let \(p,q \geq 1\). If a subset \(\mathcal{K}\subset (\widetilde{\mathcal{P}}_{p,q,\nu}(\mathcal{X}\times [0,1]),\delta_{W_p,q})\) satisfies that for some constants \(\varepsilon>0\) and \(M\geq 0\),
	\[
	\sup_{\mu \in \mathcal{K}} \int_{0}^{1} \left( \int_{\mathcal{X}} d(x,x_0)^{p+\varepsilon} d\mu^{\xi}(x)\right)^{q/p} d\xi \leq M,
	\]
	then \(\mathcal{K}\) is relatively compact in \((\widetilde{\mathcal{P}}_{p,q,\nu}(\mathcal{X}\times [0,1]),\delta_{W_p,q})\).
\end{proposition}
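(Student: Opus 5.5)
By the characterization of relative compactness in \((\widetilde{\mathcal{P}}_{p,q,\nu}(\mathcal{X}\times [0,1]),\delta_{W_p,q})\) established in the preceding theorem, it suffices to check two things: that \(\mathcal{K}\) is tight, and that the uniform integrability condition \eqref{eq:uniform-integrability-fibered-measures} holds. Both will follow from the standard Markov-type splitting \(d(x,x_0)^p\leq R^{-\varepsilon}d(x,x_0)^{p+\varepsilon}\) on \(\mathcal{X}\setminus B_R(x_0)\), applied fiberwise.

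For the uniform integrability, fix \(R>0\) and \(\mu\in\mathcal{K}\). For a.e. \(\xi\),
\[
\int_{\mathcal{X}\setminus B_R(x_0)} d(x,x_0)^p\,d\mu^{\xi}(x)\leq R^{-\varepsilon}\int_{\mathcal{X}} d(x,x_0)^{p+\varepsilon}\,d\mu^{\xi}(x).
\]
Raising this to the power \(q/p\) and integrating in \(\xi\) gives
\[
\int_0^1\Big(\int_{\mathcal{X}\setminus B_R(x_0)} d(x,x_0)^p\,d\mu^{\xi}\Big)^{q/p}d\xi\leq R^{-\varepsilon q/p}M.
\]
This bound is uniform over \(\mathcal{K}\) and tends to \(0\) as \(R\to\infty\), so \eqref{eq:uniform-integrability-fibered-measures} holds.

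For tightness, I will use Proposition \ref{prop:equivalence-tightness}(ii) and show that the averaged measures \(M_\mu=\int_0^1\mu^\xi d\xi\) form a tight family. First, the hypothesis gives a uniform bound on a low-order moment of \(M_\mu\). Set \(r:=\min\{1,q/p\}\), which lies in \((0,1]\), and put \(m_\xi:=\int d(x,x_0)^{p+\varepsilon}d\mu^\xi\). The bound \(m_\xi^{r}\leq 1+m_\xi^{q/p}\) yields \(\int_0^1 m_\xi^{r}d\xi\leq 1+M\). We cannot directly bound \(\int_0^1 m_\xi\,d\xi\) when \(q<p\), since Jensen's inequality goes the wrong way, so we work with mass rather than moments. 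Markov's inequality on each fiber gives \(\mu^\xi(\mathcal{X}\setminus B_R(x_0))\leq \min\{1, m_\xi R^{-(p+\varepsilon)}\}\leq (m_\xi R^{-(p+\varepsilon)})^{r}\), using \(\min\{1,t\}\leq t^r\). Integrating in \(\xi\),
\[
M_\mu(\mathcal{X}\setminus B_R(x_0))\leq R^{-(p+\varepsilon)r}(1+M),
\]
uniformly over \(\mathcal{K}\). Hence the \(M_\mu\) put uniformly small mass outside balls \(B_R(x_0)\).

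The remaining obstacle is that in a general Polish space closed balls need not be compact, so a uniform moment bound does not by itself give tightness. If \(\mathcal{X}\) is proper (for instance \(\mathbb{R}^d\)), closed balls are compact and we are done. In the general case, which includes \(\mathcal{X}=\mathcal{C}^d_T\) as needed later, tightness must be added as a hypothesis, or supplied by a moment bound for a function with compact sublevel sets. I would therefore state this caveat and give the proof for proper \(\mathcal{X}\), or assume tightness separately. With tightness in hand, the theorem applies directly.
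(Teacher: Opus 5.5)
Your argument is the paper's argument. The same splitting $d(x,x_0)^p\le R^{-\varepsilon}d(x,x_0)^{p+\varepsilon}$ gives the uniform integrability condition, and the same fiberwise Markov bound controls $M_\mu(\mathcal{X}\setminus B_R(x_0))$. Your exponent $r=\min\{1,q/p\}$, combined with $\min\{1,t\}\le t^r$, just merges the paper's two cases ($q\ge p$ via Jensen, $q<p$ via $\mu^\xi(\cdot)\le\mu^\xi(\cdot)^{q/p}$) into one.

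Your caveat is correct, and it exposes a genuine gap in the paper's own proof. The paper passes directly from $\sup_{\mu\in\mathcal{K}}M_\mu(\mathcal{X}\setminus B_R(x_0))\to 0$ to ``$\mathcal{K}$ is tight''. That step needs closed balls to be compact.

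For a general Polish $\mathcal{X}$ the proposition as stated is false. In $\mathcal{C}^d_T$, let $\gamma_n$ have first coordinate a tent of height $1$, with pairwise disjoint supports, and all other coordinates zero. Set $\mu_n^\xi=\delta_{\gamma_n}$ for every $\xi$. The moment hypothesis then holds with $M=1$. Yet $\delta_{W_p,q}(\mu_n,\mu_m)=\Vert\gamma_n-\gamma_m\Vert_{\ast,T}=1$ for $n\neq m$, because rearrangements do not act on $\xi$-independent fibered measures. Hence no subsequence converges.

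This gap does not affect the paper's use of the result. The proposition is only invoked for the initial data on $\mathbb{R}^d$ (Step 1 of the proof of the main theorem), and there your proof is complete. Contrary to your remark, $\mathcal{C}^d_T$ is not needed here. On path space (Step 5), narrow convergence is obtained directly by coupling, and the $(2+\delta)$-moment bound is used only for uniform integrability.
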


\begin{proof}
	By the characterization of relatively compact sets in \(\delta_{W_p,q}\), it suffices to show that
	\(\mathcal{K}\) satisfies the uniform integrability condition \eqref{eq:uniform-integrability-fibered-measures}
	and is tight. For the uniform integrability, take \(R>0\). For any \(\mu \in \mathcal{K}\), since \(d(x,x_0) \geq R\) on the set \(\mathcal{X}\setminus B_R(x_0)\), we have \(d(x,x_0)^p \leq R^{-\varepsilon} d(x,x_0)^{p+\varepsilon}\). Thus,
	\[
		\begin{aligned}
			\int_{0}^{1} \left( \int_{\mathcal{X}\setminus B_R(x_0)} d(x,x_0)^p d\mu^{\xi}(x)\right)^{q/p} d\xi &\leq \int_{0}^{1} \left( \int_{\mathcal{X}\setminus B_R(x_0)} \frac{d(x,x_0)^{p+\varepsilon}}{R^{\varepsilon}} d\mu^{\xi}(x)\right)^{q/p} d\xi \\   
			&\leq \frac{1}{R^{\varepsilon q/p}} \int_{0}^{1} \left( \int_{\mathcal{X}} d(x,x_0)^{p+\varepsilon} d\mu^{\xi}(x)\right)^{q/p} d\xi \leq \frac{M}{R^{\varepsilon q/p}}.
		\end{aligned}
	\]
	As \(R \to \infty\), this bound vanishes uniformly for all \(\mu \in \mathcal{K}\), proving that the uniform
	integrability condition holds. To prove uniform tightness, we bound the tail of the average measures,
	\(M_{\mu}(\mathcal{X}\setminus B_R(x_0)) = \int_0^1 \mu^\xi(\mathcal{X}\setminus B_R(x_0)) d\xi\). 
	By Markov's inequality, the tail of each fiber satisfies 
	\[
		\mu^{\xi}(\mathcal{X}\setminus B_R(x_0)) \leq \frac{1}{R^{p+\varepsilon}} \int_{\mathcal{X}\setminus B_R(x_0)} d(x,x_0)^{p+\varepsilon} d\mu^{\xi}(x) := \frac{\varphi(\xi)}{R^{p+\varepsilon}}.
	\]
	To bound the integral over \(\xi\), we consider two cases depending on the ratio \(q/p\).
	If \(q/p \geq 1\), by Jensen's inequality applied to the convex function \(t \mapsto t^{q/p}\), we have
	\[
		\int_{0}^{1} \mu^{\xi}(\mathcal{X}\setminus B_R(x_0)) d\xi \leq \left( \int_{0}^{1} \left(\frac{\varphi(\xi)}{R^{p+\varepsilon}}\right)^{q/p} d\xi \right)^{p/q} \leq \frac{M^{p/q}}{R^{p+\varepsilon}}.
	\]
	If \(q/p < 1\), since \(\mu^{\xi}\) is a probability measure, \(\mu^{\xi}(\mathcal{X}\setminus B_R(x_0)) \leq \mu^{\xi}(\mathcal{X}\setminus B_R(x_0))^{q/p}\). Integrating this yields
	\[
		\int_{0}^{1} \mu^{\xi}(\mathcal{X}\setminus B_R(x_0)) d\xi \leq \int_{0}^{1} \mu^{\xi}(\mathcal{X}\setminus B_R(x_0))^{q/p} d\xi \leq \int_{0}^{1} \left(\frac{\varphi(\xi)}{R^{p+\varepsilon}}\right)^{q/p} d\xi \leq \frac{M}{R^{(p+\varepsilon)q/p}}.
	\]
	In both cases, \(\sup_{\mu \in \mathcal{K}} M_{\mu}(\mathcal{X}\setminus B_R(x_0)) \to 0\) as \(R \to \infty\).
	This ensures that \(\mathcal{K}\) is tight, which concludes the proof.
\end{proof}

\subsection{Graphons vs probability-graphons\label{sec:limits-of-dense-weighted-graphs}} 

The study of graph limits has been a central topic in graph theory and combinatorics for many years. 
Since the seminal work of Lovász and Szegedy \cite{LS-06}, \emph{graphons} have been used to model limits of
dense graph sequences, providing a powerful framework to bridge discrete graph theory and continuous analysis.
In this section, we review some properties of the graphon space. We will see that since the natural metric
in this space, the cut distance, is not well-behaved under non-linear transformations, it becomes necessary to work
in the space of \emph{probability-graphons}. The notion of convergence in this enriched space is strong enough
to handle non-linearities, which is strictly required in our context due to the generally non-linear nature of
the plasticity function \(\Gamma\).

\subsubsection{The space of graphons and the cut distance} 

\begin{definition}
	Given a constant \(W \geq 0\), we define the set of graphons with uniformly bounded weights in \([-W,W]\) as 
	\[
		\mathcal{G}_W := \left\{w \in L^{\infty}([0,1]^2) : \|w\|_{L^{\infty}} \leq W\right\}.
	\]
	For any two graphons \(w,\bar{w} \in \mathcal{G}_W\), the labeled cut distance is defined as 
	\[
		d_{\square}(w,\bar{w}) := \sup_{S,T\subset [0,1]} \left\vert \iint_{S\times T} (w(\xi,\xi')-\bar{w}(\xi,\xi')) d\xi d\xi' \right\vert,
	\]
	where the supremum is taken over all measurable sets \(S,T\subset [0,1]\). The unlabeled cut distance is defined as 
	\[
		\delta_{\square}(w,\bar{w}) := \inf_{\varphi \in S_{[0,1]}} d_{\square}(w,\bar{w}^{\varphi}),
	\]
	where \(\bar{w}^{\varphi}\) represents the rearrangement of \(\bar{w}\) by \(\varphi\), that is,
	\(\bar{w}^{\varphi}(\xi,\xi') = \bar{w}(\varphi(\xi),\varphi(\xi'))\).
\end{definition}

\begin{remark}
	Although graphons were originally defined as symmetric measurable functions, the classical proof of compactness
	\cite[Theorem 5.1]{LS-07} remains unchanged when this requirement is relaxed. Therefore, we omit the symmetry
	assumption to accommodate the limits of large directed graphs.
\end{remark}
It is analytically convenient to reformulate the labeled cut distance in terms of an operator norm.
To do so, we associate to each graphon \(w\in \mathcal{G}_W\) the bounded linear operator
\(T^w: L^{\infty}([0,1])\to L^1([0,1])\) defined for any \(\psi \in L^{\infty}([0,1])\) by
\[
	T^w[\psi](\xi) = \int_0^1 w(\xi,\xi')\psi(\xi')d\xi',\quad \text{for a.e. } \xi\in [0,1].
\]
It turns out that the operator norm induced by \(T^w\), given by
\begin{equation} \label{eq:operator-norm-cut-distance}
	\Vert T \Vert_{\infty \to 1}= \sup_{ \Vert \psi \Vert_{\infty} \leq 1} \int_{0}^{1} \left\vert \int_{0}^{1}\psi(\xi') w(\xi,\xi')d\xi' \right\vert d\xi,
\end{equation}
is equivalent to the labeled cut distance. Since we can approximate every bounded measurable function by characteristic
functions on measurable sets, the following bounds hold:
\begin{equation} \label{eq:equivalence-cut-distance-operator-norm}
	d_{\square}(w,\bar{w})\leq \Vert T^w-T^{\bar{w}} \Vert_{\infty \to 1} \leq 4d_{\square}(w,\bar{w}).
\end{equation}
Because any two graphons that are rearrangements of each other have an unlabeled cut distance of zero,
\(\delta_{\square}\) is only a pseudometric on \(\mathcal{G}_W\). It becomes a true metric on the quotient space
\(\widetilde{\mathcal{G}}_W\), obtained by identifying two graphons \(w,\bar{w}\in \mathcal{G}_W\) if
\(\delta_{\square}(w,\bar{w})=0\), which precisely identifies graphons that are weakly isomorphic.
A fundamental result in the theory of dense graph limits states that
\((\widetilde{\mathcal{G}}_{W},\delta_{\square})\) is a compact metric space \cite[Theorem 5.1]{LS-07}.
Moreover, let \(G_W\) denote the set of all weighted graphs \(G=(V,E,w)\) with vertex set \(V=\llbracket 1,N \rrbracket\)
and weights in \([-W,W]\). To every graph \(G \in G_W\), we can associate a step graphon \(w_G \in \mathcal{G}_W\)
defined as 
\[
	w_G(\xi,\xi') = \sum_{i,j=1}^N w_{ij} \mathds{1}_{I_i^N\times I_j^N} (\xi,\xi'), \quad \xi,\xi'\in [0,1],
\]
where \(I_i^N:=[\frac{i-1}{N},\frac{i}{N})\). Denoting the natural embedding by \(i:G_W\to \mathcal{G}_W\) via \(G\mapsto i(G):=w_{G}\),
a direct consequence of the weak regularity lemma for graphons \cite[Lemma 3.1]{LS-07} is that \(i(G_W)\) is dense
in \(\mathcal{G}_W\). In other words, the space of finite weighted graphs is dense in the space of graphons with 
respect to the cut distance. 

\begin{remark}[The limit is non-trivial only for dense graph sequences]
	As the name of the section indicates, graphons are used to model limits of dense graph sequences. A sequence of weighted graphs \((G^N=(V^N,E^N,w^N))_{N\in\mathbb{N}}\) with \(V^N =\llbracket 1,N \rrbracket\) is called dense if its weighted edge density stays bounded away from zero as \(N\to \infty\), {\it i.e.}, if
	\[
		\liminf_{N\to \infty} \frac{1}{N^2} \sum_{i,j=1}^{N}\vert w^N_{ij}\vert > 0.
	\]
	If the sequence consists of simple graphs (where \(w^N_{ij}\in \{0,1\}\)), this condition is equivalent to the
	number of edges growing proportionally to \(N^2\). If the sequence is not dense, its limit in the cut distance
	is trivially the zero graphon. This follows because the associated graphon \(w_{G^N}\) satisfies
	\[
		\Vert w_{G^N} \Vert_{L^1} = \frac{1}{N^2}\sum_{i,j=1}^{N} \vert w^N_{ij}\vert.
	\]
	Therefore, if the sequence is sparse, \(\Vert w_{G^N} \Vert_{L^1}\to 0\), which immediately implies \(d_{\square}(w_{G^N},0)\to 0\). 
\end{remark}
Convergence in the unlabeled cut distance is equivalent to \emph{left-convergence}, offering a more combinatorial perspective
through homomorphism densities. If \(F\) is a simple graph and \(w\) is a graphon, the homomorphism density of \(F\) in \(w\),
denoted by \(t(F,w)\), represents the probability that a random map from \(V(F)\) to \([0,1]\) preserves the adjacency structure.
It is defined as:
\begin{equation} \label{eq:homomorphism-density-graphons}
	t(F,w) = \int_{[0,1]^{\vert V(F)\vert}} \prod_{(i,j)\in E(F)} w(\xi_i,\xi_j) d\xi_1\ldots d\xi_{\vert V(F)\vert}.
\end{equation}
For a weighted graph \(G\), its homomorphism density is simply defined as \(t(F,G) := t(F,w_G)\). If \(G\) is a simple
graph, \(t(F,G)\) precisely computes the proportion of graph homomorphisms from \(F\) to \(G\) among all possible maps
from \(V(F)\) to \(V(G)\). 

A sequence of graphons \((w^N)_{N\in\mathbb{N}}\subset \mathcal{G}_W\) is said to be left-convergent to a graphon
\(w\in \mathcal{G}_W\) if \(t(F,w^N)\to t(F,w)\) for every simple graph \(F\). A celebrated result 
\cite[Theorem 11.5]{Lovasz-12} establishes that this is strictly equivalent to convergence in the
unlabeled cut distance:
\begin{equation} \label{eq:homomorphism-density-equivalent-cut-distance}
	\delta_{\square}(w^N,w)\to 0 \iff t(F,w^N)\to t(F,w) \quad \forall \; F \text{ simple graph.}
\end{equation}

Although graphons might seem to be an ideal modeling choice for the limiting network structure in mean-field limits
of non-exchangeable systems, convergence with respect to the cut distance lacks a property that is crucial in our context.
Specifically, the cut distance is not well-behaved under non-linear transformations of the graphon, as was proven
in \cite[Proposition 5.1]{HJ-25}. 

\begin{proposition}
	Let \(\Phi \in L^{\infty}([-W,W])\). The map 
	\[
		(\mathcal{G}_W, d_{\square}) \to (\mathcal{G}_{\| \Phi \|_{L^{\infty}}}, d_{\square})
	\]
	defined by \(w \mapsto \Phi_w\), where \(\Phi_w(\xi,\xi')=\Phi(w(\xi,\xi'))\), is continuous if and only if
	\(\Phi\) is an affine linear function.
\end{proposition}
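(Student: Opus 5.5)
The plan is to prove the two implications separately. The affine direction is a one-line computation. For the converse, I will test continuity along a sequence of quasi-random graphons that converge in $d_{\square}$ to a constant graphon, and read off that $\Phi$ must satisfy the affine identity.

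\emph{Affine $\Phi$ gives continuity.} Throughout I fix a bounded Borel representative of $\Phi$, which is needed for $\Phi_w$ to be well defined for every $w$. If $\Phi(s)=as+b$, then $\Phi_w-\Phi_{\bar w}=a(w-\bar w)$ pointwise. Hence $d_{\square}(\Phi_w,\Phi_{\bar w})=|a|\,d_{\square}(w,\bar w)$, so the map is even Lipschitz.

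\emph{Continuity forces $\Phi$ to be affine.} Fix $u,v\in[-W,W]$ and $t\in[0,1]$, and set $c:=tu+(1-t)v$. I will build step graphons $w_N$ with values in $\{u,v\}$ such that $d_{\square}(w_N,c)\to0$ and, at the same time, $d_{\square}(\Phi_{w_N},\,t\Phi(u)+(1-t)\Phi(v))\to0$. Here constants denote constant graphons. Since $\Phi_c$ is the constant graphon $\Phi(c)$, continuity at $c$ gives $d_{\square}(\Phi_{w_N},\Phi(c))\to0$. A constant graphon has $d_{\square}$-distance to another constant equal to the absolute difference of the constants, so uniqueness of limits yields
\[
\Phi(tu+(1-t)v)=t\Phi(u)+(1-t)\Phi(v)\qquad\text{for all } u,v\in[-W,W],\ t\in[0,1].
\]
A function satisfying this identity for all $t$ is affine on $[-W,W]$, which concludes the proof.

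\emph{Construction of $w_N$.} This is the only nontrivial step. I will take $w_N=w_{G^N}$, where $G^N$ has independent entries $w^N_{ij}$ equal to $u$ with probability $t$ and to $v$ otherwise. The centered matrix $Y_{ij}:=w^N_{ij}-c$ has bounded, independent, mean-zero entries. For fixed index sets $S,T\subset\llbracket1,N\rrbracket$, Hoeffding's inequality gives
\[
\mathbb{P}\Big(\big|\textstyle\sum_{i\in S,j\in T}Y_{ij}\big|>\varepsilon N^2\Big)\le 2e^{-\varepsilon^2N^2/(2(u-v)^2)}.
\]
A union bound over the $4^N$ pairs $(S,T)$ makes this summable in $N$. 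By Borel--Cantelli, almost surely $\max_{S,T}N^{-2}|\sum_{S\times T}Y_{ij}|\to0$. For step graphons, the supremum in $d_{\square}$ over measurable $S,T\subset[0,1]$ reduces, up to a factor of $4$, to unions of whole cells. This is because the integrand is bilinear in $(\mathds{1}_S,\mathds{1}_T)$ on the cells, so the supremum is attained at extreme points. Hence $d_{\square}(w_N,c)\to0$ almost surely.

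Applying the same argument to the entries $\Phi(w^N_{ij})$, which are again independent and take the values $\Phi(u)$ or $\Phi(v)$, gives $d_{\square}(\Phi_{w_N},t\Phi(u)+(1-t)\Phi(v))\to0$ almost surely. Both events hold on a common probability-one event, so a deterministic realization $(w_N)_N$ with both properties exists.

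The main obstacle I expect is the random-matrix cut-norm estimate together with the reduction of the supremum to cell-unions. If the union bound turns out to be delicate, a fallback is the standard fact that $W$-random graphs converge in cut distance to $W$ almost surely, applied here with a constant $W$.
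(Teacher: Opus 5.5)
Your proof is correct, and it is more self-contained than what the paper offers. The paper does not prove this proposition: it cites \cite[Proposition 5.1]{HJ-25}. Its only in-text argument is Example~\ref{ex:non-linear-transformation-graphon}, where Erd\H{o}s--R\'enyi graphs $G(N,1/2)$ and $\Phi(x)=x^2$ exhibit the failure of continuity, with the almost-sure cut convergence of $G(N,p)$ quoted as a black box.

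Your argument takes that same mechanism and turns it into a proof of the full equivalence. A $\{u,v\}$-valued random step graphon of density $t$ converges in $d_{\square}$ to the constant $tu+(1-t)v$, while its image under $\Phi$ converges to $t\Phi(u)+(1-t)\Phi(v)$. So continuity at constant graphons alone forces $\Phi$ to be affine. Your Hoeffding-plus-union-bound estimate proves the needed labeled cut convergence directly instead of quoting it. Using independent, non-symmetric entries is legitimate, since the paper drops the symmetry requirement. Compared with the citation, this gives a short, elementary proof in exactly the paper's setting.

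Two minor remarks.
\begin{itemize}
\item The reduction to unions of cells holds with constant $1$, not $4$. The map $(x,y)\mapsto\bigl|\sum_{i,j}Y_{ij}x_iy_j\bigr|$ is separately convex on $[0,1/N]^N\times[0,1/N]^N$, so it attains its maximum at a vertex pair.
\item Fixing a Borel representative is essential, not cosmetic. Your conclusion is that this representative is affine at every point, which is the correct reading of the statement. Changing an affine $\Phi$ at a single point $x_0$ already breaks continuity at the constant graphon $x_0$: the constant graphons $x_0\pm 1/N$ converge to it, but their images do not converge to $\Phi(x_0)$.
\end{itemize}
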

This is a serious issue since in many applications, the graphon is not only used to model the limit of a sequence of graphs,
but also to model the evolution of the system, and therefore we need to consider non-linear transformations of the graphon,
as is the case in our context where the plasticity function \(\Gamma\) is in general non-linear.

\begin{example}[Graphons and non-linear transformations] \label{ex:non-linear-transformation-graphon}
	Consider a sequence of Erdős–Rényi random graphs \(G^N=G(N,p)\) on \(N\) vertices with connection probability \(p=1/2\).
	It is a classical result (see, {\it e.g.}, \cite[Theorem 4.4.2]{Zhao-23}) that the associated step graphons \(w_{G^N}\)
	converge almost surely in the cut distance to the constant graphon \(w \equiv 1/2\). 
	
	Now, consider the non-linear transformation \(\Phi(x)=x^2\). Since the entries of the adjacency matrices of \(G^N\) are
	strictly zeros and ones, squaring them leaves the graphs entirely unchanged. Thus, the sequence of transformed graphons
	\(\Phi(w_{G^N}) = w_{G^N}\) still converges almost surely to the constant graphon \(1/2\). Conversely, applying the
	non-linear transformation directly to the limit graphon yields \(\Phi(w) \equiv 1/4\), highlighting the failure
	of continuity.
\end{example}
It turns out that the solution to this problem is to consider the space of \emph{probability-graphons},
which has a strengthened notion of convergence that is well-behaved under non-linear transformations.

\subsubsection{The space of  probability-graphons and the cut distance}

Probability-graphons were first introduced by Lovász and Szegedy in \cite{LS-10-arxiv} under the name of
\(\mathcal{K}\)-graphons, as they were interested in the limits of edge-decorated graphs taking values in a compact,
separable topological space \(\mathcal{K}\). They recognized that by strengthening the notion of left-convergence with
test graphs \(F\) whose edges are decorated by bounded continuous functions in \(C_b(\mathcal{K};\mathbb{R})\),
the appropriate limiting object becomes a measurable map \(q:[0,1]^2\to \mathcal{P}(\mathcal{K})\). For such an object,
the homomorphism density of a \(C_b(\mathcal{K};\mathbb{R})\)-decorated graph \(F\) in \(q\) is defined as
\begin{equation} \label{eq:homomorphism-density-probability-graphons}
	t(F,q) = \int_{[0,1]^{\vert V(F)\vert}} \prod_{(i,j)\in E(F)} \int_{\mathcal{K}} F_{ij}(w) dq^{\xi_i,\xi_j}(w) d\xi_1\ldots d\xi_{\vert V(F)\vert}.
\end{equation}  
Accordingly, a sequence of \(\mathcal{K}\)-graphons \((q^N)_{N\in\mathbb{N}}\) is said to be left-convergent
to a \(\mathcal{K}\)-graphon \(q\) if \(t(F,q^N)\to t(F,q)\) for every \(C_b(\mathcal{K};\mathbb{R})\)-decorated test graph \(F\).
\begin{remark}
	This notion of left-convergence is significantly stronger than classical left-convergence for graphons, as the test
	class is vastly expanded to include continuous edge decorations mapping from \(\mathcal{K}\) to \(\mathbb{R}\).
	Naturally, if we restrict the test graphs \(F\) such that every edge is decorated by the identity function
	\(F_{ij}(w) = w\) (assuming \(\mathcal{K} \subset \mathbb{R}\) is a compact interval), the inner integral
	simply computes the expected edge weight. In doing so, we perfectly recover the standard notion of left-convergence
	for classical graphons. 
\end{remark}

While Lovász and Szegedy established this generalized left-convergence in \cite{LS-10-arxiv}, they did not equip the
space of \(\mathcal{K}\)-graphons with a cut-like metric. Consequently, they did not provide a metric characterization of 
homomorphism convergence similar to the equivalence in \eqref{eq:homomorphism-density-equivalent-cut-distance}. The first
rigorous definition of a cut-like metric for \(\mathcal{K}\)-graphons along with the proof that convergence in this metric
is equivalent to left-convergence was established in \cite[Theorem 2.12]{APST-23-arxiv} for the specific case where
\(\mathcal{K}\) is the interval \([-1,1]\). This framework was recently generalized in \cite{ADW-25} to probability-graphons
over any general Polish space \(\mathcal{X}\). Furthermore, the dual concept of right-convergence has also been successfully
extended to the space of probability-graphons in \cite{Zucal-24-arxiv}, where it was proven that left and right convergence
remain fundamentally equivalent in this enriched setting.

From now on, we adopt the term \emph{probability-graphon}, as it describes more accurately the mathematical nature of the
object representing the limiting network structure. Since there are numerous ways to metrize the narrow topology on the
space of probability measures \(\mathcal{P}(\mathcal{Y})\), there are correspondingly many ways to define a cut-like metric
on the space of probability-graphons. However, as shown in \cite[Section 5.2]{ADW-25}, as long as the underlying metric on
the measure space satisfies certain natural properties, all resulting cut distances induce the exact same topology.
Therefore, we choose to work with the bounded-Lipschitz distance, which is the most analytically convenient for our framework.
\begin{definition}[Probability-graphons and bounded-Lipschitz cut distance]
	The space of probability-graphons over a Polish space \(\mathcal{Y}\) is defined as 
	\[
		\mathcal{PG}(\mathcal{Y}):=\{q:[0,1]^2\to \mathcal{P}(\mathcal{Y}) \text{ measurable}\},
	\]  
	where \(\mathcal{P}(\mathcal{Y})\) is equipped with the narrow topology, and as usual we identify 
	two probability-graphons if they are equal almost everywhere. For any two probability-graphons 
	\(q,\bar{q} \in \mathcal{PG}(\mathcal{Y})\), we define the labeled bounded-Lipschitz cut distance as 
	\begin{equation} \label{eq:labeled-bounded-lipschitz-cut-distance-prob-graphons}
		d_{\text{BL},\square}(q,\bar{q}):= \sup_{S,T\subset [0,1]} d_{\text{BL}}\left( \iint_{S\times T} q^{\xi,\xi'}d\xi d\xi', \iint_{S\times T} \bar{q}^{\xi,\xi'}d\xi d\xi' \right). 
	\end{equation}
\end{definition}
\begin{remark}[Probability-graphons as fibered probability measures]
	Recalling the general definition of a fibered probability measure in Definition
	\ref{def:fibered-probability-measures}, a probability-graphon \(q\in \mathcal{PG}(\mathcal{Y})\) can
 	be naturally identified with a fibered probability measure in \(\mathcal{P}_{\nu\otimes \nu}(\mathcal{Y}\times [0,1]^2)\)
	via its family of disintegrations, where \(\nu\) denotes the Lebesgue measure on \([0,1]\). To ensure consistency with the
	preceding sections, we will adopt this fibered measure perspective and notation throughout the remainder of the paper:
	\[
		\mathcal{PG}(\mathcal{Y}) \equiv \mathcal{P}_{\nu\otimes \nu}(\mathcal{Y}\times [0,1]^2).
	\]
\end{remark}
To every probability-graphon \(q \in \mathcal{P}_{\nu\otimes \nu}(\mathcal{Y}\times [0,1]^2)\) and every bounded
measurable function \(\Phi:\mathcal{Y}\to \mathbb{R}\), we can associate the classical graphon
\(\Phi_q \in \mathcal{G}_{\Vert  \Phi \Vert_{L^\infty}}\) defined as
\[
	\Phi_q(\xi,\xi') = \int_{\mathcal{X}} \Phi(y)dq^{\xi,\xi'}(y),\quad \text{for a.e. } \xi,\xi'\in [0,1].
\]
Transformations of this type, which map probability-graphons to classical graphons, are highly useful for
establishing a structural relationship between the two spaces. In particular, in
\eqref{eq:labeled-bounded-lipschitz-cut-distance-prob-graphons}, we can interchange the supremum over the measurable
sets \(S,T\subset [0,1]\) with the supremum over the test functions \(\Phi \in \text{BL}_1(\mathcal{X})\)
coming from the distance \(d_{\text{BL}}\). This yields the following equivalent formulation for the labeled
cut distance:
\begin{equation} \label{eq:cut-distance-prob-graphons-reformulation}
	d_{\text{BL},\square}(q,\bar{q}) = \sup_{ \Phi \in \text{BL}_1(\mathcal{X})} d_{\square}(\Phi_q,\Phi_{\bar{q}}).
\end{equation}
Using the equivalence \eqref{eq:equivalence-cut-distance-operator-norm} between the labeled cut distance and
the operator norm associated with the adjacency operator of the graphon, we obtain the bounds:
\[
	d_{\text{BL},\square}(q,\bar{q}) \leq \sup_{\Phi \in \text{BL}_1(\mathcal{X})} \Vert T^{\Phi_q}-T^{\Phi_{\bar{q}}} \Vert_{\infty\to 1} \leq 4 d_{\text{BL},\square}(q,\bar{q}).
\]
For our analytical purposes, the intermediate expression in the previous chain of inequalities provides
the most convenient functional formulation to identify the bounded-Lipschitz cut distance between two probability-graphons:
\begin{equation} \label{eq:cut-distance-prob-graphons-operator-norm}
	\sup_{\Phi \in \text{BL}_1(\mathcal{X})} \Vert T^{\Phi_q}-T^{\Phi_{\bar{q}}} \Vert_{\infty\to 1} = \sup_{\Phi \in \text{BL}_1(\mathcal{Y})} \sup_{\Vert \psi \Vert_{\infty}\leq 1} \int_{0}^{1} \left\vert \int_{0}^{1} \psi(\xi') \int_{\mathcal{Y}} \Phi(y) d(q^{\xi,\xi'}-\bar{q}^{\xi,\xi'})(y)d\xi' \right\vert d\xi.
\end{equation}

\begin{remark}[\(d_{\text{BL},\square}\) is stronger than \(d_{\square}\) on the space of graphons]
	The space of classical graphons \(\mathcal{G}_W\) can be naturally embedded into the space of probability-graphons \(\mathcal{P}_{\nu\otimes \nu}([-W,W]\times [0,1]^2)\) by identifying each graphon \(w\in \mathcal{G}_W\) with the Dirac probability-graphon \(\delta_w\), defined fiber-wise as \(\delta_w^{\xi,\xi'} = \delta_{w(\xi,\xi')}\). 
	For any two graphons \(w,\bar{w} \in \mathcal{G}_W\), applying \eqref{eq:cut-distance-prob-graphons-reformulation} yields:
	\[
		d_{\text{BL},\square}(\delta_w,\delta_{\bar{w}}) = \sup_{\Phi \in \text{BL}_1([-W,W])} d_{\square}(\Phi_w,\Phi_{\bar{w}}).
	\]
	By taking \(\Phi:[-W,W]\to \mathbb{R}\) to be the identity function \(\Phi(w)=w\)
	(assuming \(W\leq 1\) so that \(\Phi \in \text{BL}_1([-W,W])\); if \(W>1\), we simply scale it by \(1/W\)),
	we obtain the bound:
	\[
		d_{\square}(w,\bar{w})\leq \max\{1,W\} d_{\text{BL},\square}(\delta_w,\delta_{\bar{w}}).
	\]
	Consequently, if a sequence of graphons \((w_n)_{n\in\mathbb{N}}\subset \mathcal{G}_W\) satisfies
	\[
		d_{\text{BL},\square}(\delta_{w_n},q) = \sup_{\Phi \in \text{BL}_1([-W,W])} d_{\square}(\Phi_{w_n},\Phi_q)\to 0,
	\]
	for some probability-graphon \(q\in \mathcal{P}_{\nu\otimes \nu}([-W,W]\times [0,1]^2)\), then the sequence
	\((w_n)_{n\in\mathbb{N}}\) converges in the classical cut distance to the expected graphon
	\(\mathbb{E}_q\in \mathcal{G}_W\), {\it i.e.}, \(d_{\square}(w_n,\mathbb{E}_q)\to 0\), where:
	\[
		\mathbb{E}_q(\xi,\xi'):=\int_{-W}^{W} w \, dq^{\xi,\xi'}(w), \quad \text{for a.e. } \xi,\xi'\in [0,1].
	\]
	Importantly, this convergence is not limited to the first moment. For any \(k\in \mathbb{N}\), the map 
	\(w \mapsto w^k\) is Lipschitz on the compact interval \([-W,W]\), so upon suitable scaling, it belongs
	to the test class \(\text{BL}_1([-W,W])\). Thus, the sequence of power graphons
	\((w_n^k)_{n\in\mathbb{N}}\) necessarily converges
	in the cut distance to the \(k\)-th moment graphon derived from \(q\), {\it i.e.},
	\(d_{\square}(w_n^k,\mathbb{E}_q^k)\to 0\), where:
	\[
		\mathbb{E}_q^k(\xi,\xi'):= \int_{-W}^{W} w^k \, dq^{\xi,\xi'}(w), \quad \text{for a.e. } \xi,\xi'\in [0,1].
	\]
	In general, however, the limit \(q\) will not be a graphon, {\it i.e.}, it will not be of the form \(\delta_{w}\)
	for some graphon \(w\), but a more general probability-graphon, as illustrated in the following example.
\end{remark}

\begin{example}[Continuation of Example \ref{ex:non-linear-transformation-graphon}]
	Consider again the sequence of Erdős–Rényi random graphs \(G^N=G(N,1/2)\) with associated graphons \(w_{G^N}\),
	which almost surely converge in the classical cut distance to the constant graphon \(w \equiv 1/2\). If we now
	consider the sequence of probability-graphons defined as \(q^N=\delta_{w_{G^N}}\), this sequence converges
	almost surely in the labeled bounded-Lipschitz cut distance \(d_{\text{BL},\square}\) to the constant probability-graphon
	\(q\) defined as 
	\[
		q^{\xi,\xi'} = \frac{1}{2}\delta_0 + \frac{1}{2}\delta_1, \quad \text{for a.e. } \xi,\xi'\in [0,1].
	\]
	To verify this, we use the equivalent formulation of the labeled cut distance given
	in \eqref{eq:cut-distance-prob-graphons-reformulation}. For every test function \(\Phi \in \text{BL}_1([-1,1])\),
	we have that
	\[
		d_{\square}(\Phi_{q^N},\Phi_q) = d_{\square}(\Phi_{w_{G^N}},\Phi_q).
	\]
	Since the graphons \(w_{G^N}\) strictly take values in \(\{0,1\}\), we can evaluate \(\Phi\) precisely as
	\[
		\Phi_{w_{G^N}} = (\Phi(1)-\Phi(0))w_{G^N} + \Phi(0).
	\]
	Similarly, for the proposed limit \(q\), the associated classical graphon \(\Phi_q\) evaluates to
	\[
		\Phi_q(\xi,\xi') = \frac{1}{2}\Phi(0) + \frac{1}{2}\Phi(1) = (\Phi(1)-\Phi(0))\frac{1}{2} + \Phi(0).
	\]
	Subtracting these expressions and taking the classical cut distance, we find:
	\[
		d_{\square}(\Phi_{q^N},\Phi_q) = \vert\Phi(1)-\Phi(0)\vert d_{\square}(w_{G^N},1/2) \leq d_{\square}(w_{G^N},1/2),
	\]
	where the inequality holds because \(\Phi\) is \(1\)-Lipschitz. Since \(d_{\square}(w_{G^N},1/2) \to 0\) almost surely,
	taking the supremum over \(\Phi \in \text{BL}_1([-1,1])\) confirms that \(d_{\text{BL},\square}(q^N,q)\to 0\) almost
	surely. Furthermore, this example illustrates the strict inclusion of the topologies,
	\(\tau_{d_{\text{BL},\square}} \subsetneq \tau_{d_{\text{BL},1}}\). While the sequence converges to \(q\) 
	in the labeled cut distance, it does not converge in the \(d_{\text{BL},1}\) distance. Indeed, for any point
	\((\xi,\xi')\in [0,1]^2\), the fiber \(q^{N,\xi,\xi'}\) is either \(\delta_0\) or \(\delta_1\), whereas
	\(q^{\xi,\xi'} = \frac{1}{2}\delta_0 + \frac{1}{2}\delta_1\). The bounded-Lipschitz distance between these
	measures is strictly positive and constant almost everywhere, meaning \(d_{\text{BL},1}(q^N, q) \not\to 0\).
	
	In conclusion, while non-linear transformations behave poorly with respect to the standard cut distance for
	graphons (as demonstrated in Example \ref{ex:non-linear-transformation-graphon}), transitioning to the enriched
	space of probability-graphons ensures that the sequence converges to a limit that correctly captures non-linear
	transformations. Indeed, if we reconsider the non-linear transformation \(\Phi(w)=w^2\), the associated graphon
	derived from the limit probability-graphon \(q\) is:
	\[
		\Phi_q(\xi,\xi') = \int_{-1}^{1} w^2 \, dq^{\xi,\xi'}(w) = \frac{1}{2}(0)^2 + \frac{1}{2}(1)^2 = \frac{1}{2},\quad \text{for a.e. } \xi,\xi'\in [0,1],
	\]
	which perfectly matches the limit of the squared sequence of graphons, {\it i.e.}, \(d_{\square}(w^2_{G^N},\Phi_q)\to 0\).
\end{example}

Before transitioning to the unlabeled space of probability-graphons, we first clarify the relationships among the various
topologies on \(\mathcal{P}_{\nu \otimes \nu}(\mathcal{Y}\times [0,1]^2)\). Recall that for the space of fibered probability
measures \(\mathcal{P}_{\nu}(\mathcal{X}\times [0,1])\) we established the topological relations in
\eqref{eq:topologies-fibered-measures}.
Now, in this two-dimensional case, the equivalence between \(d_{\text{BL},\square}\) and \(d_{\text{BL},1}\) breaks down, yielding
the following strict chain of inclusions:
\begin{equation} \label{eq:inclusion-topologies-prob-graphons}
	\tau_{\text{narrow}} \subsetneq \tau_{d_{\text{BL},\square}} \subsetneq \tau_{d_{\text{BL},1}} \subsetneq \tau_{d_{\text{BL},\infty}}. 
\end{equation}
As in the case of \(\mathcal{P}_{\nu}(\mathcal{X}\times [0,1])\), and with analogous definitions for the distances
\(d_{\text{BL},1}\) and \(d_{\text{BL},\infty}\) in \(\mathcal{P}_{\nu\otimes\nu}(\mathcal{Y}\times[0,1]^2)\),
we have the following inequalities
\[
	d_{\text{BL}} \leq d_{\text{BL},1} \leq d_{\text{BL},\infty},
\]
where we recall that \(d_{\text{BL}}\) is the standard bounded-Lipschitz distance in \(\mathcal{P}(\mathcal{Y}\times [0,1]^2)\).
For any two probability-graphons \(q,\bar{q} \in \mathcal{P}_{\nu \otimes \nu}(\mathcal{Y}\times [0,1]^2)\), this distance 
takes the form
\[
	d_{\text{BL}}(q,\bar{q}) = \sup_{\Phi \in \text{BL}_1(\mathcal{Y}\times [0,1]^2)} \left\vert \int_{[0,1]^2} \int_{\mathcal{Y}} \Phi(y,\xi,\xi') \,d(q^{\xi,\xi'}-\bar{q}^{\xi,\xi'})(y)\,d\xi \,d\xi' \right\vert.
\]
Furthermore, the following relation between the cut distance and the \(L^1\) distance holds
\[
	d_{\text{BL},\square} \leq d_{\text{BL},1}.
\]
The only remaining relation to establish is the one between \(d_{\text{BL}}\) and \(d_{\text{BL},\square}\). While
no general inequality holds between these two distances, their induced topologies can be rigorously compared via
the following estimate.
\begin{proposition}
	Let \(q, \bar{q} \in \mathcal{P}_{\nu\otimes \nu}(\mathcal{Y}\times [0,1]^2)\).
	Then, for every \(m \in \mathbb{N}\),
	\[
		d_{\text{BL}}(q,\bar{q})\leq 4m^2 d_{\text{BL},\square}(q,\bar{q}) + \frac{2\sqrt{2}}{m}.
	\]
\end{proposition}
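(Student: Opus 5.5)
We discretize the label variables. Fix \(m\in\mathbb{N}\) and consider the uniform partition of \([0,1]^2\) into the \(m^2\) squares \(Q_{kl}=J_k\times J_l\), where \(J_k=[\frac{k-1}{m},\frac{k}{m})\). Each square has diameter \(\sqrt{2}/m\). Given a test function \(\Phi\in \text{BL}_1(\mathcal{Y}\times[0,1]^2)\), we approximate it by the piecewise-in-label function
\[
\Phi_m(y,\xi,\xi'):=\sum_{k,l=1}^m \mathds{1}_{Q_{kl}}(\xi,\xi')\,\Phi(y,c_k,c_l),
\]
where \((c_k,c_l)\) is any point of \(Q_{kl}\) (say its center).

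\textbf{Step 1: the discretization error.} Since \(\Phi\) is \(1\)-Lipschitz in all variables, we have \(|\Phi(y,\xi,\xi')-\Phi_m(y,\xi,\xi')|\leq \sqrt{2}/m\) for all \(y\) and all \((\xi,\xi')\). Integrating against \(q^{\xi,\xi'}\) and \(\bar q^{\xi,\xi'}\), each of which is a probability measure fiberwise, the replacement of \(\Phi\) by \(\Phi_m\) costs at most \(\sqrt{2}/m\) for each measure. This gives the additive term \(2\sqrt{2}/m\).

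\textbf{Step 2: the stepped part.} For the stepped part we compute
\[
\int_{[0,1]^2}\int_{\mathcal{Y}}\Phi_m\,d(q^{\xi,\xi'}-\bar q^{\xi,\xi'})\,d\xi\,d\xi' = \sum_{k,l}\int_{\mathcal{Y}} \Phi(y,c_k,c_l)\, d\Big(\iint_{J_k\times J_l} q^{\xi,\xi'}-\iint_{J_k\times J_l}\bar q^{\xi,\xi'}\Big)(y).
\]
For each fixed \((k,l)\), the function \(y\mapsto\Phi(y,c_k,c_l)\) belongs to \(\text{BL}_1(\mathcal{Y})\). Each summand is therefore bounded by the bounded-Lipschitz distance between the two averaged measures over the rectangle \(J_k\times J_l\). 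This is at most \(d_{\text{BL},\square}(q,\bar q)\) by taking \(S=J_k\), \(T=J_l\) in \eqref{eq:labeled-bounded-lipschitz-cut-distance-prob-graphons}.

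One subtlety is that these are finite non-negative measures of mass \(1/m^2\), not probability measures. Here \(d_{\text{BL}}\) should be understood as the same supremum formula over \(\text{BL}_1\), which is exactly how the cut distance is defined. Summing over the \(m^2\) squares gives a bound of \(m^2 d_{\text{BL},\square}\). This is already better than the claimed \(4m^2\), so the constant 4 leaves room for slack. That slack would be needed, for instance, if one instead passes through the operator-norm formulation \eqref{eq:cut-distance-prob-graphons-operator-norm} or uses the absolute-value version of the supremum.

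\textbf{Step 3: conclusion.} Taking the supremum over \(\Phi\) yields the stated inequality. The main point to check carefully, and the only real obstacle, is that the supremum defining \(d_{\text{BL},\square}\) controls \(\big|\int \psi\, d(\cdot)\big|\) for each \(\psi\in\text{BL}_1(\mathcal{Y})\) applied to the sub-probability rectangle averages. This holds because \(\text{BL}_1\) is symmetric under \(\psi\mapsto-\psi\). The remaining ingredient is measurability of \(\Phi_m\), which is immediate since it is a finite sum of products of indicators and continuous functions.
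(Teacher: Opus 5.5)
Your proposal is correct and follows the paper's proof: the same $m\times m$ grid discretization in the label variables gives the additive error $2\sqrt{2}/m$, and each of the $m^2$ squares contributes at most a multiple of $d_{\text{BL},\square}(q,\bar q)$. The only difference is that you bound each square's contribution directly from the definition of $d_{\text{BL},\square}$ with $S=J_k$, $T=J_l$. This gives the sharper constant $m^2$. The paper instead passes through the operator-norm formulation \eqref{eq:cut-distance-prob-graphons-operator-norm}, which is where its factor $4$ comes from.
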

\begin{proof}
 	Let \(m\in \mathbb{N}\) and partition the interval \([0,1]\) into
	\(m\) sub-intervals \(I^m_j = [\frac{j-1}{m}, \frac{j}{m})\) for \(j = 1, \dots, m\). This partitions the square \([0,1]^2\) into \(m^2\) grid squares of side length \(1/m\). Choose arbitrary points \(\xi_j \in I^m_j\) and \(\xi'_k \in I^m_k\). For any test function \(\Phi \in \text{BL}_1(\mathcal{Y}\times [0,1]^2)\), define its step-function approximation on the grid as
	\[
		\tilde{\Phi}(y,\xi,\xi') = \sum_{j=1}^{m} \sum_{k=1}^{m} \Phi(y,\xi_j,\xi'_k) \mathds{1}_{I^m_j}(\xi) \mathds{1}_{I^m_k}(\xi').
	\]
	Because \(\Phi\) is \(1\)-Lipschitz in all variables, the approximation error is bounded by the maximum Euclidean distance within any square \(I^m_j \times I^m_k\). Since the spatial variable \(y\) is fixed, this is exactly the length of the diagonal of the grid square:
	\[
		\Vert \Phi - \tilde{\Phi} \Vert_{\infty} \leq \frac{\sqrt{2}}{m}.
	\]
	Thus, the integration error is bounded by
	\[
		\left\vert \int_{[0,1]^2} \int_{\mathcal{Y}} (\Phi(y,\xi,\xi') - \tilde{\Phi}(y,\xi,\xi')) \,d(q^{\xi,\xi'}-\bar{q}^{\xi,\xi'})(y)\,d\xi \,d\xi' \right\vert \leq 2 \frac{\sqrt{2}}{m}.
	\]
	Next, we evaluate the integral of the discretized function \(\tilde{\Phi}\). Letting \(\Phi_{j,k}(y) := \Phi(y,\xi_j,\xi'_k)\), we note that \(\Phi_{j,k} \in \text{BL}_1(\mathcal{Y})\), which yields
	\[
		\begin{aligned}
		&\left\vert \int_{[0,1]^2} \int_{\mathcal{Y}} \tilde{\Phi}(y,\xi,\xi') \,d(q^{\xi,\xi'}-\bar{q}^{\xi,\xi'})(y)\,d\xi \,d\xi' \right\vert \\
        &\qquad\leq \sum_{j=1}^{m} \sum_{k=1}^{m} \int_{0}^{1} \mathds{1}_{I^m_j}(\xi) \left\vert \int_{0}^{1} \mathds{1}_{I^m_k}(\xi') \int_{\mathcal{Y}} \Phi_{j,k}(y) \,d(q^{\xi,\xi'}-\bar{q}^{\xi,\xi'})(y) \,d\xi' \right\vert d\xi \\
		&\qquad\leq \sum_{j=1}^{m} \sum_{k=1}^{m} \int_{0}^{1} \left\vert \int_{0}^{1} \mathds{1}_{I^m_k}(\xi') \int_{\mathcal{Y}} \Phi_{j,k}(y) \,d(q^{\xi,\xi'}-\bar{q}^{\xi,\xi'})(y) \,d\xi' \right\vert d\xi 
		\leq 4 m^2 d_{\text{BL},\square}(q,\bar{q}),
		\end{aligned}
	\]
	recognizing that the inner integral is bounded by the operator norm formulation \eqref{eq:cut-distance-prob-graphons-operator-norm}, since \(\Vert \mathds{1}_{I^m_k} \Vert_{\infty} \leq 1\) and \(\Phi_{j,k} \in \text{BL}_1(\mathcal{Y})\). 
	Combining this with the uniform approximation error yields
	\[
		\left\vert \int_{[0,1]^2} \int_{\mathcal{Y}} \Phi(y,\xi,\xi') \,d(q^{\xi,\xi'}-\bar{q}^{\xi,\xi'})(y)\,d\xi \,d\xi' \right\vert \leq 4m^2 d_{\text{BL},\square}(q,\bar{q}) + \frac{2\sqrt{2}}{m}.
	\]
	Since \(\Phi \in \text{BL}_1(\mathcal{Y}\times [0,1]^2)\) was arbitrary, taking the supremum over all such test functions gives the desired inequality.   
\end{proof}
\begin{corollary} \label{cor:d-BL-cut-stronger-narrow-topology}
	The topology induced by \(d_{\text{BL},\square}\) is finer than the narrow topology on \(\mathcal{P}_{\nu\otimes \nu}(\mathcal{Y}\times [0,1]^2)\).
\end{corollary}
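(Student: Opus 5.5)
The corollary follows directly from the inequality in the preceding proposition,
\[
	d_{\text{BL}}(q,\bar{q})\leq 4m^2 d_{\text{BL},\square}(q,\bar{q}) + \frac{2\sqrt{2}}{m}, \qquad m\in\mathbb{N},
\]
together with the fact that \(d_{\text{BL}}\) metrizes the narrow topology on \(\mathcal{P}(\mathcal{Y}\times[0,1]^2)\), and hence on its closed subset \(\mathcal{P}_{\nu\otimes\nu}(\mathcal{Y}\times[0,1]^2)\). Since both topologies are metrizable, it suffices to compare convergent sequences. The claim is that \(d_{\text{BL},\square}(q_n,q)\to 0\) implies \(d_{\text{BL}}(q_n,q)\to 0\). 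Then the identity map \((\mathcal{P}_{\nu\otimes\nu},d_{\text{BL},\square})\to(\mathcal{P}_{\nu\otimes\nu},d_{\text{BL}})\) is sequentially continuous, hence continuous, and so \(\tau_{\text{narrow}}\subset\tau_{d_{\text{BL},\square}}\).

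Fix \(\varepsilon>0\) and choose \(m\in\mathbb{N}\) with \(2\sqrt{2}/m<\varepsilon/2\). With \(m\) now fixed, the factor \(4m^2\) is a constant, so for all \(n\) large enough that \(d_{\text{BL},\square}(q_n,q)<\varepsilon/(8m^2)\) we get
\[
d_{\text{BL}}(q_n,q)<\tfrac{\varepsilon}{2}+\tfrac{\varepsilon}{2}=\varepsilon.
\]
Equivalently, \(\limsup_{n\to\infty} d_{\text{BL}}(q_n,q)\leq 2\sqrt{2}/m\) for every \(m\), and letting \(m\to\infty\) gives the limit zero.

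One could instead work with open sets directly. Every \(d_{\text{BL}}\)-ball around \(q\) contains a \(d_{\text{BL},\square}\)-ball around \(q\) of radius \(\varepsilon/(8m^2)\), with the same choice of \(m\). This shows directly that narrow-open sets are \(d_{\text{BL},\square}\)-open.

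The only point needing care is the order of quantifiers. The proposition does not give a uniform Lipschitz bound between the two metrics, since the constant \(4m^2\) blows up as the additive error \(2\sqrt{2}/m\) shrinks. One must therefore fix \(m\) from \(\varepsilon\) first and only afterwards use the smallness of \(d_{\text{BL},\square}\). Strictness of the inclusion, already asserted in \eqref{eq:inclusion-topologies-prob-graphons}, is not part of the statement and needs no argument here.
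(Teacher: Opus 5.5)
Your argument is correct and follows the paper's route: the paper states this corollary as an immediate consequence of the preceding estimate \(d_{\text{BL}}(q,\bar{q})\leq 4m^2 d_{\text{BL},\square}(q,\bar{q})+2\sqrt{2}/m\), combined with the fact that \(d_{\text{BL}}\) metrizes the narrow topology. Your handling of the quantifiers, fixing \(m\) from \(\varepsilon\) first and only then using the smallness of \(d_{\text{BL},\square}\), is exactly the step the paper leaves implicit.
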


\begin{remark}
	The only missing relation among the topologies in \(\mathcal{P}_{\nu\otimes\nu}(\mathcal{Y}\times [0,1]^2)\)
	is the strict inclusion \(\tau_{\text{narrow}}\subsetneq\tau_{d_{\text{BL},\square}}\). This can be demonstrated using
	the 2D Rademacher sequence. Consider the sequence of probability-graphons defined as
	\(q^{N,\xi,\xi'} = \delta_{u_N(\xi,\xi')}\), where
	\[
		u_N(\xi,\xi') = \sgn(\sin(2^N\pi \xi)\sin(2^N\pi \xi')).
	\] 
	With the same technique used in the 1D case, we can show that \(d_{\text{BL}}(q^N,q)\to 0\), where \(q\)
	is the constant probability-graphon defined as \(q^{\xi,\xi'} = \frac{1}{2}\delta_{-1} + \frac{1}{2}\delta_1\).
	However, the labeled bounded-Lipschitz cut distance does not converge to zero. To see this, testing in
	\eqref{eq:cut-distance-prob-graphons-reformulation} against the identity map \(\Phi(y)=y \in \text{BL}_1([-1,1])\) yields
	\(d_{\text{BL},\square}(q^N, q) \geq d_\square(u_N, 0)\). Since the supremum in the cut distance allows us to choose
	\(N\)-dependent measurable sets, we can perfectly align them with the positive oscillations by setting
	\(S_N = \{\xi \in [0,1] : \sin(2^N\pi \xi) > 0\}\) and \(T_N = \{\xi' \in [0,1] : \sin(2^N\pi \xi') > 0\}\). Both sets have measure \(1/2\), and \(u_N \equiv 1\) on \(S_N \times T_N\). Integrating over this product set gives exactly \(1/4\), establishing that \(d_{\text{BL},\square}(q^N,q) \geq 1/4\) for all \(N\).
\end{remark}
Having clarified the topological inclusions in \(\mathcal{P}_{\nu\otimes \nu}(\mathcal{Y}\times [0,1]^2)\) established in
\eqref{eq:inclusion-topologies-prob-graphons}, we can now proceed to introduce the unlabeled space of probability-graphons. In this
setting, the unlabeled bounded-Lipschitz cut distance, defined as 
\[
	\delta_{\text{BL},\square}(q,\bar{q}) := \inf_{\varphi \in S_{[0,1]}} d_{\text{BL},\square}(q,\bar{q}^{\varphi}),
\]
provides the correct metric structure to obtain compactness properties. As is standard, 
this is merely a pseudometric on \(\mathcal{P}_{\nu\otimes\nu}(\mathcal{Y}\times [0,1]^2)\). We denote by
\(\widetilde{\mathcal{P}}_{\nu\otimes\nu}(\mathcal{Y}\times [0,1]^2)\) the quotient space of probability-graphons, 
obtained by identifying \(q,\bar{q} \in \mathcal{P}_{\nu\otimes \nu}(\mathcal{Y}\times [0,1]^2)\) whenever
\(\delta_{\text{BL},\square}(q,\bar{q})=0\). This equivalence relation precisely identifies two probability-graphons
if they are weakly isomorphic, {\it i.e.}, if there exist measure-preserving maps \(\varphi,\psi \in \bar{S}_{[0,1]}\) such that
\(q^{\varphi} = \bar{q}^{\psi}\). This identification is justified by the fact that the unlabeled cut distance
can be equivalently reformulated as a minimum over these maps \cite[Proposition 3.18]{ADW-25}:
\[
	\delta_{\text{BL},\square}(q,\bar{q}) = \min_{\varphi,\psi \in \bar{S}_{[0,1]}} d_{\text{BL},\square}(q^{\varphi},\bar{q}^{\psi}).
\]  
While many classical results for graphons have been successfully generalized to this setting in \cite{ADW-25}, we will
focus on those that lead to the characterization of relatively compact sets. The primary tool in this regard is
the weak regularity lemma for tight subsets of \(\mathcal{P}_{\nu\otimes\nu}(\mathcal{Y}\times [0,1]^2)\).

\begin{lemma}[{\cite[Proposition 4.13]{ADW-25}}] 
	\label{lemma:regularity-probability-graphons}
	Let \(\mathcal{K}\subset \mathcal{P}_{\nu\otimes\nu}(\mathcal{Y}\times [0,1]^2)\) be a tight set of probability-graphons.
	Then, for every \(\varepsilon > 0\), there exists an integer \(m \in \mathbb{N}\) such that for any probability-graphon \(q \in \mathcal{K}\) and any partition \(\mathcal{Q}\) of \([0,1]\), there exists a refinement \(\mathcal{P}\) of \(\mathcal{Q}\) satisfying
	\[
		\vert \mathcal{P} \vert \leq m \vert \mathcal{Q}\vert , \quad \text{and} \quad d_{\text{BL},\square} (q, q_{\mathcal{P}}) < \varepsilon.
	\]
\end{lemma}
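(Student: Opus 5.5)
We would follow the scheme of Lemma \ref{lemma:regularity-fibered-probability-measures}: reduce to a compact state space by tightness, then prove a weak regularity statement for the cut distance by combining a finite net of test functions with the classical weak regularity lemma for bounded graphons. First we observe that the stepping operator \(q\mapsto q_{\mathcal{P}}\) (averaging \(q^{\xi,\xi'}\) over blocks \(S_i\times S_j\)) is non-expansive for \(d_{\text{BL},\square}\). Indeed, by \eqref{eq:cut-distance-prob-graphons-reformulation}, \(d_{\text{BL},\square}(q,\bar q)=\sup_{\Phi}d_\square(\Phi_q,\Phi_{\bar q})\), and \(\Phi_{q_{\mathcal{P}}}=(\Phi_q)_{\mathcal{P}}\) is the classical stepped graphon, for which stepping is a contraction in \(d_\square\). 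Hence \(d_{\text{BL},\square}(q,q_{\mathcal{P}})\le 2d_{\text{BL},\square}(q,\bar q)+d_{\text{BL},\square}(\bar q,\bar q_{\mathcal{P}})\), so it suffices to treat a modified family \(\bar q\).

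\textsc{Reduction to compact support.} Given \(\varepsilon\), tightness of \(\mathcal{K}\) (via the 2D analogue of Proposition \ref{prop:equivalence-tightness}) gives a compact \(K_\varepsilon\subset\mathcal{Y}\) with \(\iint q^{\xi,\xi'}(\mathcal{Y}\setminus K_\varepsilon)\,d\xi d\xi'\le\varepsilon\) for all \(q\in\mathcal{K}\). We define \(\bar q\) by conditioning each fiber on \(K_\varepsilon\) when \(q^{\xi,\xi'}(\mathcal{Y}\setminus K_\varepsilon)<\sqrt\varepsilon\), and by a fixed \(\eta\in\mathcal{P}(K_\varepsilon)\) otherwise. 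Exactly as in Step 2 of Lemma \ref{lemma:regularity-fibered-probability-measures}, this gives \(d_{\text{BL},\square}(q,\bar q)\le d_{\text{BL},1}(q,\bar q)\lesssim\sqrt\varepsilon\).

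\textsc{Compact case.} Assume all fibers live on a compact \(K\). By Arzelà--Ascoli, pick an \(\varepsilon\)-net \(\Phi_1,\dots,\Phi_n\) of \(\mathrm{BL}_1(K)\) in the uniform norm, with \(n=n(\varepsilon,K)\). For every \(\Phi\) there is an index \(i\) with \(\|\Phi_q-(\Phi_i)_q\|_\infty\le\varepsilon\), and the same bound holds for the stepped graphons. Therefore
\[
d_{\text{BL},\square}(q,q_{\mathcal{P}})\le 2\varepsilon+\max_{i\le n} d_\square\bigl((\Phi_i)_q,((\Phi_i)_q)_{\mathcal{P}}\bigr).
\]
Each \((\Phi_i)_q\) is a graphon bounded by \(1\). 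We then apply the simultaneous (multi-graphon) version of the Frieze--Kannan weak regularity lemma for non-symmetric graphons (\cite[Lemma 3.1]{LS-07}, energy-increment argument). It yields a refinement \(\mathcal{P}\) of any given \(\mathcal{Q}\), with \(|\mathcal{P}|\le|\mathcal{Q}|\,2^{O(n/\varepsilon^2)}\), such that \(d_\square(f_i,(f_i)_{\mathcal{P}})\le\varepsilon\) for all \(i\) at once. To get this, we run the energy increment on \(\sum_i\|(f_i)_{\mathcal{P}}\|_2^2\le n\). While some \(f_i\) fails, a cut pair \(S,T\) witnessing the failure increases the energy by at least \(\varepsilon^2\), and we refine by \(S,T\) (at most a factor \(4\) per step). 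This terminates after at most \(n/\varepsilon^2\) steps. The bound \(m:=4^{n/\varepsilon^2}\) depends only on \(\varepsilon\) and \(\mathcal{K}\).

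\textsc{Main obstacle.} The delicate point is uniformity: \(m\) must not depend on \(q\) or on \(\mathcal{Q}\). This is why the net \(\{\Phi_i\}\) must be finite and fixed by \(K_\varepsilon\) alone, and why the energy increment must be done jointly over the \(n\) graphons rather than graphon by graphon. We also need to check that the truncation \(\bar q\) is measurable in \((\xi,\xi')\), which follows from Proposition \ref{prop:characterization-random-prob-measures}, and that stepping commutes with \(\Phi\mapsto\Phi_q\). Combining the two cases with the non-expansiveness inequality gives \(d_{\text{BL},\square}(q,q_{\mathcal{P}})\lesssim\sqrt\varepsilon\), and rescaling \(\varepsilon\) concludes the proof.
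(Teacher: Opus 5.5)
Your proof is correct. The paper does not prove this lemma: it is quoted from \cite[Proposition 4.13]{ADW-25}. The only weak regularity argument written out in the paper is the one-dimensional Lemma~\ref{lemma:regularity-fibered-probability-measures}, so that is the natural comparison.

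You reuse its Step~2: conditioning the fibers on $K_\varepsilon$ below a Markov-type threshold, together with non-expansiveness of stepping. You justify the latter correctly through $\Phi_{q_{\mathcal{P}}}=(\Phi_q)_{\mathcal{P}}$ and \eqref{eq:cut-distance-prob-graphons-reformulation}.

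Your compact step, however, is genuinely different, and necessarily so. In one dimension the paper pulls back a finite $d_{\text{BL}}$-cover of $\mathcal{P}(K)$ to a partition of $[0,1]$, and even obtains $d_{\text{BL},1}$-control. On $[0,1]^2$ the pulled-back partition is a partition of the square, not of the form $\mathcal{P}\times\mathcal{P}$. Moreover, bounded-step approximation in $d_{\text{BL},1}$ is false in general (Erd\H{o}s--R\'enyi graphs). So a genuine cut-norm regularity input is unavoidable.

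You supply that input by dualising. A finite net of $\mathrm{BL}_1(K_\varepsilon)$ reduces the problem to $n$ real kernels bounded by $1$. You then run a joint Frieze--Kannan energy increment on $\sum_i\|(f_i)_{\mathcal{P}}\|_2^2\le n$. Refining every block by both $S$ and $T$ lets one partition serve both coordinates of the non-symmetric kernels, and the process terminates after at most $n/\varepsilon^2$ steps.

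This gives a self-contained proof with the explicit bound $m=4^{n/\varepsilon^2}$. Here $\mathcal{K}$ enters only through $K_\varepsilon$ and the covering number $n$ of $\mathrm{BL}_1(K_\varepsilon)$. The cost, compared with the 1D argument, is the energy-increment machinery and a bound exponential in that covering number.
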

As in the one-dimensional case, this weak regularity lemma allows us to characterize the relatively compact subsets of
the quotient space \((\widetilde{\mathcal{P}}_{\nu\otimes\nu}(\mathcal{Y}\times [0,1]^2),\delta_{\text{BL},\square})\). 
This is formalized in the following theorem, which synthesizes the results of Theorem 5.1, Proposition 5.2, and Theorem 5.10
in \cite{ADW-25}.

\begin{theorem} \label{thm:compactness-probability-graphons}
	The space \((\widetilde{\mathcal{P}}_{\nu\otimes\nu}(\mathcal{Y}\times [0,1]^2),\delta_{\text{BL}, \square})\)
	is a Polish space. Moreover, the following properties hold: 
	\begin{enumerate}[label=(\roman*)]
		\item A subset \(\mathcal{K}\subset (\widetilde{\mathcal{P}}_{\nu\otimes\nu}(\mathcal{Y}\times [0,1]^2),\delta_{\text{BL},\square})\) is relatively compact if and only if it is tight. 
		\item The space \((\widetilde{\mathcal{P}}_{\nu\otimes\nu}(\mathcal{Y}\times [0,1]^2),\delta_{\text{BL},\square})\) is compact if and only if \(\mathcal{Y}\) is compact.
	\end{enumerate}  
\end{theorem}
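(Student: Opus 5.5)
The plan is to mirror the one-dimensional argument already carried out for fibered probability measures in Proposition \ref{prop:tightness-in-average-implies-relative-compactness} and Theorem \ref{thm:compactness-fibered-probability-measures}. The two-dimensional weak regularity Lemma \ref{lemma:regularity-probability-graphons} replaces Lemma \ref{lemma:regularity-fibered-probability-measures}, and the cut distance $d_{\text{BL},\square}$ replaces $d_{\text{BL},1}$. Throughout, tightness of a family of probability-graphons means tightness of the averaged measures $M_q=\iint q^{\xi,\xi'}\,d\xi\,d\xi'\in\mathcal{P}(\mathcal{Y})$. The equivalence with tightness in $\mathcal{P}(\mathcal{Y}\times[0,1]^2)$ is proved exactly as in Proposition \ref{prop:equivalence-tightness}.

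\emph{Sufficiency of tightness in (i).} Take a sequence $(q_n)$ in a tight set. By Lemma \ref{lemma:regularity-probability-graphons}, applied iteratively with $\varepsilon=1/k$ and also refining a dyadic partition, we obtain nested partitions $\mathcal{P}_{n,k}$ with $m_k$ classes, independent of $n$, such that $d_{\text{BL},\square}(q_n,(q_n)_{\mathcal{P}_{n,k}})\le 1/k$. After a measure-preserving relabeling (the reordering of \cite[Lemma 8.2]{ADW-25}, which $\delta_{\text{BL},\square}$ does not see), these partitions consist of intervals. A diagonal extraction makes the interval lengths converge. The block averages $\frac{1}{\nu(S_i)\nu(S_j)}\iint_{S_i\times S_j}q_n$ on blocks of non-vanishing limiting size are tight, since their mass outside a compact set is at most $\varepsilon/(\nu(S_i)\nu(S_j))$ times the global tail. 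By Prokhorov they converge, which yields limiting step probability-graphons $\bar q_k$ satisfying the martingale relation $\bar q_k=(\bar q_{k'})_{\mathcal{P}_k}$ for $k'\ge k$. Testing against a countable convergence-determining family $\{f_i\}\subset C_b(\mathcal{Y})$, each map $(\xi,\xi')\mapsto\int f_i\,d\bar q_k^{\xi,\xi'}$ is a bounded martingale for the product filtration generated by $\mathcal{P}_k\times\mathcal{P}_k$. Martingale convergence, combined with Prokhorov applied to the tight sequence $(\bar q_k)$ in $\mathcal{P}(\mathcal{Y}\times[0,1]^2)$, produces a limit $q$ with $\bar q_k^{\xi,\xi'}\to q^{\xi,\xi'}$ narrowly for a.e.\ $(\xi,\xi')$. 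By dominated convergence this gives $d_{\text{BL},1}(\bar q_k,q)\to0$, hence $d_{\text{BL},\square}(\bar q_k,q)\to0$.

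The remaining three-term triangle inequality is where I expect the main obstacle. We need $d_{\text{BL},\square}((q_{n_\ell})_{\mathcal{P}_{n_\ell,k}},\bar q_k)\to0$ as $\ell\to\infty$, which requires comparing step functions on slightly mismatched interval partitions. I would handle this by bounding the discrepancy by the measure of the symmetric difference of the interval blocks, which vanishes because the lengths converge, plus $\sum_{i,j}\nu(S_i)\nu(S_j)\,d_{\text{BL}}(\text{block averages})\to0$. For this, I would use the operator-norm form \eqref{eq:cut-distance-prob-graphons-operator-norm} together with the uniform bound $\Vert\Phi\Vert_\infty\le1$.

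\emph{Necessity, compactness and Polishness.} For necessity in (i), the map $q\mapsto M_q$ is $1$-Lipschitz from $\delta_{\text{BL},\square}$ to $d_{\text{BL}}$ (take $S=T=[0,1]$) and is invariant under relabeling. A relatively compact set therefore has relatively compact, hence tight, averages. Part (ii) follows as in Theorem \ref{thm:compactness-fibered-probability-measures}: if $\mathcal{Y}$ is compact, tightness is automatic. Conversely, constant probability-graphons $q^{\xi,\xi'}\equiv\eta$ embed $\mathcal{P}(\mathcal{Y})$ isometrically and surject under $M$, so compactness forces $\mathcal{P}(\mathcal{Y})$, and hence $\mathcal{Y}$, to be compact. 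For the Polish property, separability comes from step probability-graphons with rational interval endpoints and values in a countable dense subset of $\mathcal{P}(\mathcal{Y})$, by the weak regularity lemma. Completeness follows because a Cauchy sequence has Cauchy averages, hence tight averages, hence a convergent subsequence by (i), and so it converges. The fact that $\delta_{\text{BL},\square}$ is a genuine metric on the quotient is taken from \cite[Proposition 3.18]{ADW-25}, as recalled above.
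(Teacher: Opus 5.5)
Your proposal is correct and follows essentially the route the paper takes. The paper does not reprove this statement but imports it from \cite[Theorem 5.1, Proposition 5.2, Theorem 5.10]{ADW-25}, whose proof is the Lov\'asz--Szegedy argument you reconstruct (weak regularity, rearrangement into intervals, convergence of block averages, a martingale limit, and the averaging map $q\mapsto M_q$ for necessity, compactness and completeness), and the paper replays that same argument in one dimension in Proposition~\ref{prop:tightness-in-average-implies-relative-compactness} and Theorem~\ref{thm:compactness-fibered-probability-measures}; also, the step you flag as the main obstacle is routine, because for fixed $k$ the stepped graphons $(q_{n_\ell})_{\mathcal{P}_{n_\ell,k}}$ converge fiberwise to $\bar q_k$ for a.e.\ $(\xi,\xi')$, so dominated convergence already gives convergence in $d_{\text{BL},1}\ge d_{\text{BL},\square}$.
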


\begin{remark}[The limit is non-trivial only for dense graph sequences] \label{remark:dense-graphs}
	As in the case of classical graphons, the space of probability-graphons is designed to model the limits of dense
	graph sequences. In the space \(\mathcal{Y}\), we can assume the existence of a distinguished point
	\(0\in \mathcal{Y}\), which represents the absence of a connection between two nodes. 
	A sequence of edge-decorated weighted graphs \((G^N=(V^N,E^N,w^N))_{N\in\mathbb{N}}\) with decorations 
	taking values in \(\mathcal{Y}\) is considered dense if:   
	\[
		\liminf_{N\to \infty} \frac{1}{N^2}\sum_{i,j=1}^{N} d_{\mathcal{Y}}(w^N_{ij},0) > 0,
	\]
	meaning the average distance of the edge weights to the no connection state remains bounded away from zero.
	When \(\mathcal{Y}=[-W,W]\), this definition reduces perfectly to the classical density condition for weighted graphs.
	
	If the sequence of graphs is sparse ({\it i.e.}, not dense), the limit in the labeled bounded-Lipschitz cut distance is 
	always the trivial zero probability-graphon, defined fiber-wise as \(q_0^{\xi,\xi'}=\delta_0\) for a.e.
	\(\xi,\xi'\in [0,1]\). This is straightforward to verify, as we have the following bound:  
	\[
		d_{\text{BL},\square}(q_{G^N},q_0) \leq d_{\text{BL},1}(q_{G^N},q_0) \leq \frac{1}{N^2}\sum_{i,j=1}^{N} d_{\mathcal{Y}}(w^N_{ij},0),
	\]
	which converges to zero if the sequence is not dense.
\end{remark}
\subsection{Node-edge probability-graphons}
When modeling graphs where the node features and edge weights take values in Polish spaces
\(\mathcal{X}\) and \(\mathcal{Y}\) respectively, these elements must be reordered jointly
to preserve the structural identity of the network. Consequently, their asymptotic limits
cannot be treated as separate entities. To properly couple the edge limits, modeled by probability-graphons,
and the node limits, modeled by fibered probability measures, we introduce the unified space of node-edge
probability-graphons in this section. While this joint modeling has been established for classical
graphons \cite{Levie-23}, we extend it here to the framework of probability-graphons, formalizing 
the approach suggested in \cite[Remark 1.7]{ADW-25}.

\begin{definition} 
	The space of node-edge probability-graphons with node features in \(\mathcal{X}\) and edge weights in
	\(\mathcal{Y}\) is defined as
	\[
		\mathcal{N}(\mathcal{X})\mathcal{E}(\mathcal{Y}) := \mathcal{P}_{\nu}(\mathcal{X}\times[0,1]) \times \mathcal{P}_{\nu\otimes\nu}(\mathcal{Y}\times[0,1]^2).
	\]
	We also define the corresponding subset with node features in a fibered Wasserstein space of order \(p,q\in [1,\infty)\): 
	\[
		\mathcal{N}_{p,q}(\mathcal{X})\mathcal{E}(\mathcal{Y}) := \mathcal{P}_{p,q,\nu}(\mathcal{X}\times[0,1]) \times \mathcal{P}_{\nu\otimes\nu}(\mathcal{Y}\times[0,1]^2).
	\] 
	For any pair \((\mu,q),(\bar{\mu},\bar{q})\in \mathcal{N}(\mathcal{X})\mathcal{E}(\mathcal{Y})\), we define the coupled labeled and unlabeled distances as follows: 
	\[
		\begin{aligned}
			d_{(\text{BL},1),(\text{BL},\square)}((\mu,q),(\bar{\mu},\bar{q})) &= d_{\text{BL},1}(\mu,\bar{\mu}) + d_{\text{BL},\square}(q,\bar{q}), \\ 
			\delta_{(\text{BL},1),(\text{BL},\square)}((\mu,q),(\bar{\mu},\bar{q})) &= \inf_{\varphi \in S_{[0,1]}} d_{(\text{BL},1),(\text{BL},\square)}((\mu,q),(\bar{\mu},\bar{q})^{\varphi}),
		\end{aligned}
	\]
	where \((\bar{\mu},\bar{q})^{\varphi}\) denotes the joint rearrangement of the node-edge probability-graphon
	by \(\varphi\), {\it i.e.}, \((\bar{\mu},\bar{q})^{\varphi} = (\bar{\mu}^{\varphi},\bar{q}^{\varphi})\). 
	
	Similarly, for any pair \((\mu,q),(\bar{\mu},\bar{q})\in \mathcal{N}_{p,q}(\mathcal{X})\mathcal{E}(\mathcal{Y})\),
	we define the corresponding distances:
	\[
		\begin{aligned}
		d_{(W_p,q),(\text{BL},\square)}((\mu,q),(\bar{\mu},\bar{q})) &= d_{W_p,q}(\mu,\bar{\mu}) + d_{\text{BL},\square}(q,\bar{q}), \\ 
		\delta_{(W_p,q),(\text{BL},\square)}((\mu,q),(\bar{\mu},\bar{q})) &= \inf_{\varphi \in S_{[0,1]}} d_{(W_p,q),(\text{BL},\square)}((\mu,q),(\bar{\mu},\bar{q})^{\varphi}).
		\end{aligned}
	\]
\end{definition}
Applying the techniques from \cite[Theorem 8.13]{Lovasz-12}, the unlabeled coupled distances defined
above can be equivalently reformulated as minimizations over measure-preserving maps:
\[
	\begin{aligned}
		&\delta_{(\text{BL},p),(\text{BL},\square)}((\mu,q),(\bar{\mu},\bar{q})) = \min_{\varphi,\psi \in \bar{S}_{[0,1]}} d_{(\text{BL},p),(\text{BL},\square)}((\mu,q)^{\psi},(\bar{\mu},\bar{q})^{\varphi}), \\
		&\delta_{(W_p,q),(\text{BL},\square)}((\mu,q),(\bar{\mu},\bar{q})) = \min_{\varphi,\psi \in \bar{S}_{[0,1]}} d_{(W_p,q),(\text{BL},\square)}((\mu,q)^{\psi},(\bar{\mu},\bar{q})^{\varphi}).
	\end{aligned}
\]
Consequently, these are genuine distances on the quotient space \(\widetilde{\mathcal{N}(\mathcal{X})\mathcal{E}(\mathcal{Y})}\),
which identifies two node-edge probability-graphons if they are weakly isomorphic, {\it i.e.}, if there exist measure-preserving maps
\(\varphi,\psi\in \bar{S}_{[0,1]}\) such that \((\mu,q)^{\psi} = (\bar{\mu},\bar{q})^{\varphi}\).
\begin{remark}[The necessity of joint coupling]
	The space of node-edge probability-graphons naturally extends the space of probability-graphons by ensuring that both the
	node and edge limits remain structurally coupled. Indeed, one expects that once a specific labeling of the nodes is fixed,
	the underlying edge structure is strictly bound to it.
	This requirement forces us to apply the exact same rearrangement to both the node and edge structures simultaneously
	when defining the unlabeled distances, making it evident that:
	\[
		\widetilde{\mathcal{N}(\mathcal{X})\mathcal{E}(\mathcal{Y})} \neq \widetilde{\mathcal{P}_{\nu}}(\mathcal{X}\times [0,1]) \times \widetilde{\mathcal{P}_{\nu\otimes\nu}}(\mathcal{Y}\times [0,1]^2).
	\]
	Indeed, we only have the strict inclusion \(\widetilde{\mathcal{N}(\mathcal{X})\mathcal{E}(\mathcal{Y})} \subsetneq \widetilde{\mathcal{P}_{\nu}}(\mathcal{X}\times [0,1]) \times \widetilde{\mathcal{P}_{\nu\otimes\nu}}(\mathcal{Y}\times [0,1]^2)\),
	reflecting the fact that joint isomorphism of the node and edge structures implies separate isomorphism, but not conversely.
	
	A simple illustrative example of this phenomenon is the following. Consider the pair of node-edge probability-graphons
	\((\mu,q),(\bar{\mu},q)\in \mathcal{N}(\mathbb{R})\mathcal{E}(\mathbb{R})\) defined fiber-wise as:
	\[
		\begin{aligned}
			\mu^{\xi} &= \delta_{1}\mathds{1}_{[0,1/2]}(\xi) + \delta_{-1}\mathds{1}_{(1/2,1]}(\xi), \\ 
			\bar{\mu}^{\xi} &= \delta_{-1}\mathds{1}_{[0,1/2]}(\xi) + \delta_{1}\mathds{1}_{(1/2,1]}(\xi), \\ 
			q^{\xi,\xi'} &= \delta_{2}\mathds{1}_{(1/2,1]\times [0,1/2]}(\xi,\xi') + \delta_{0}\mathds{1}_{[0,1]^2 \setminus ((1/2,1]\times [0,1/2])}(\xi,\xi').
		\end{aligned}
	\]
	Clearly, the isolated node fibered measures \(\mu\) and \(\bar{\mu}\) are weakly isomorphic (they are identical
	up to a rearrangement \(\psi\) that swaps the two halves of the unit interval). However, the coupled pairs
	\((\mu,q)\) and \((\bar{\mu},q)\) are not weakly isomorphic in the node-edge space. If we apply the swapping
	rearrangement \(\psi\) to perfectly align the node structures (\(\mu^\psi = \bar{\mu}\)), this same map simultaneously
	acts on the edge structure, transposing the support of \(q\) to \([0,1/2]\times (1/2,1]\). Since the original
	\(q\) was strongly asymmetric, we end up with two different edge structures, confirming that the pairs
	represent fundamentally distinct networks.
\end{remark}

Building on the properties of fibered probability measures and probability-graphons established in the preceding sections,
we can now characterize the relatively compact sets in the unified space of node-edge probability-graphons. 
Let us denote by
\[
	\pi_{\mathcal{N}}: \mathcal{N}(\mathcal{X})\mathcal{E}(\mathcal{Y}) \to \mathcal{P}_{\nu}(\mathcal{X}\times [0,1]), \quad (\mu,q) \mapsto \mu,
\]
the projection of the node-edge probability-graphons to their node features, and by
\[
	\pi_{\mathcal{E}}: \mathcal{N}(\mathcal{X})\mathcal{E}(\mathcal{Y}) \to \mathcal{P}_{\nu\otimes\nu}(\mathcal{Y}\times [0,1]^2), \quad (\mu,q) \mapsto q,
\]
the projection to their edge components. These projections induce well-defined maps on the quotient space
\(\widetilde{\mathcal{N}(\mathcal{X})\mathcal{E}(\mathcal{Y})}\), mapping to the respective quotient spaces of fibered
probability measures and probability-graphons. Furthermore, they are continuous with respect to both the corresponding
labeled and unlabeled distances.

The following weak regularity lemma for node-edge probability-graphons is a direct consequence of the corresponding weak
regularity lemmas for fibered probability measures and probability-graphons.
It is proven by applying these results first to the node component to secure an intermediate refinement,
and then to the edge component, using the obtained partition as the starting point.
\begin{lemma}[{Weak regularity lemma for \(\mathcal{N}(\mathcal{X})\mathcal{E}(\mathcal{Y})\)}] \label{lemma:regularity-node-edge}
	Let \(\mathcal{K} \subset \mathcal{N}(\mathcal{X})\mathcal{E}(\mathcal{Y})\) be a subset such that its projections \(\pi_{\mathcal{N}}(\mathcal{K})\) and \(\pi_{\mathcal{E}}(\mathcal{K})\) are tight. Then, for every \(\varepsilon > 0\), there exists an integer \(m \in \mathbb{N}\) such that for any node-edge probability-graphon \((\mu,q) \in \mathcal{K}\) and any partition \(\mathcal{Q}\) of \([0,1]\), there exists a refinement \(\mathcal{P}\) of \(\mathcal{Q}\) satisfying
	\[
		\vert \mathcal{P} \vert \leq m \vert \mathcal{Q}\vert, \quad d_{\text{BL},1}(\mu,\mu_{\mathcal{P}}) < \varepsilon, \quad \text{and} \quad d_{\text{BL},\square}(q,q_{\mathcal{P}}) < \varepsilon.
	\]
\end{lemma}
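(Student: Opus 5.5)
The plan is to chain the two existing weak regularity lemmas, exactly as the paragraph preceding the statement suggests. Fix \(\varepsilon>0\). Since \(\pi_{\mathcal{N}}(\mathcal{K})\) is tight in \(\mathcal{P}_{\nu}(\mathcal{X}\times[0,1])\), Lemma \ref{lemma:regularity-fibered-probability-measures} gives an integer \(m_1\) with the following property. For every \(\mu\in\pi_{\mathcal{N}}(\mathcal{K})\) and every partition \(\mathcal{Q}\), there is a refinement \(\mathcal{P}_1\) of \(\mathcal{Q}\) with \(|\mathcal{P}_1|\le m_1|\mathcal{Q}|\) and \(d_{\text{BL},1}(\mu,\mu_{\mathcal{P}_1})<\varepsilon\). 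Likewise, since \(\pi_{\mathcal{E}}(\mathcal{K})\) is tight, Lemma \ref{lemma:regularity-probability-graphons} gives an integer \(m_2\) such that every \(q\in\pi_{\mathcal{E}}(\mathcal{K})\) and every partition \(\mathcal{Q}'\) admit a refinement \(\mathcal{P}\) of \(\mathcal{Q}'\) with \(|\mathcal{P}|\le m_2|\mathcal{Q}'|\) and \(d_{\text{BL},\square}(q,q_{\mathcal{P}})<\varepsilon\). We set \(m:=m_1m_2\), which depends only on \(\varepsilon\) and \(\mathcal{K}\).

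Now take \((\mu,q)\in\mathcal{K}\) and a partition \(\mathcal{Q}\). First apply the node lemma to \(\mu\) and \(\mathcal{Q}\), obtaining \(\mathcal{P}_1\). Then apply the edge lemma to \(q\), using \(\mathcal{P}_1\) as the starting partition, obtaining a refinement \(\mathcal{P}\) of \(\mathcal{P}_1\), hence of \(\mathcal{Q}\). Its size satisfies \(|\mathcal{P}|\le m_2|\mathcal{P}_1|\le m_1m_2|\mathcal{Q}|=m|\mathcal{Q}|\). The edge estimate \(d_{\text{BL},\square}(q,q_{\mathcal{P}})<\varepsilon\) holds by construction.

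The only point needing an argument, and the step I expect to be the main (mild) obstacle, is that the node estimate survives the further refinement from \(\mathcal{P}_1\) to \(\mathcal{P}\). The claim is \(d_{\text{BL},1}(\mu,\mu_{\mathcal{P}})\le 2\,d_{\text{BL},1}(\mu,\mu_{\mathcal{P}_1})\). To prove it, note that \(\mathcal{P}\) refines \(\mathcal{P}_1\), so \((\mu_{\mathcal{P}_1})_{\mathcal{P}}=\mu_{\mathcal{P}_1}\): a step measure that is constant on the blocks of \(\mathcal{P}_1\) is unchanged by averaging over the finer blocks. The triangle inequality and the non-expansiveness of the stepping operator then give
\[
d_{\text{BL},1}(\mu,\mu_{\mathcal{P}})\le d_{\text{BL},1}(\mu,\mu_{\mathcal{P}_1})+d_{\text{BL},1}\big((\mu_{\mathcal{P}_1})_{\mathcal{P}},\mu_{\mathcal{P}}\big)\le 2\,d_{\text{BL},1}(\mu,\mu_{\mathcal{P}_1})<2\varepsilon .
\]
This is the same refinement argument already used in the relative compactness proposition for \((\mathcal{P}_{\nu}(\mathcal{X}\times[0,1]),d_{\text{BL},1})\). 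To obtain strictly \(\varepsilon\) rather than \(2\varepsilon\), we run the node step with \(\varepsilon/2\) in place of \(\varepsilon\), which only changes \(m_1\).

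We never need to control the edge estimate under further refinement, because the edge lemma is applied last. This is why the order matters: nodes first, then edges. Blocks of measure zero can be discarded or handled by the convention in the definition of the stepping operator, since they do not affect any of the distances.
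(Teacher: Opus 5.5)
Your proof is correct and follows the paper's route: apply the fibered-measure regularity lemma first, then apply the probability-graphon lemma with the resulting partition $\mathcal{P}_1$ as the starting partition, which gives $m=m_1m_2$. Your bound $d_{\text{BL},1}(\mu,\mu_{\mathcal{P}})\le 2\,d_{\text{BL},1}(\mu,\mu_{\mathcal{P}_1})$, obtained from $(\mu_{\mathcal{P}_1})_{\mathcal{P}}=\mu_{\mathcal{P}_1}$ and the non-expansiveness of the stepping operator and run at $\varepsilon/2$, correctly supplies the one detail the paper leaves implicit. It is the right tool here, since $d_{\text{BL},1}(\mu,\mu_{\mathcal{P}})$ need not decrease under refinement.
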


\begin{theorem} \label{thm:compactness-node-edge-probability-graphons}
	The spaces \((\widetilde{\mathcal{N}(\mathcal{X})\mathcal{E}(\mathcal{Y})},\delta_{(\text{BL},1),(\text{BL},\square)})\) and \((\widetilde{\mathcal{N}_{p,q}(\mathcal{X})\mathcal{E}(\mathcal{Y})},\delta_{(W_p,q),(\text{BL},\square)})\) are Polish. Moreover, the following properties hold: 
	\begin{enumerate}[label=(\roman*)]
		\item A subset \(\mathcal{K}\subset (\widetilde{\mathcal{N}(\mathcal{X})\mathcal{E}(\mathcal{Y})},\delta_{(\text{BL},1),(\text{BL},\square)})\) is relatively compact if and only if its projections \(\pi_{\mathcal{N}}(\mathcal{K})\) and \(\pi_{\mathcal{E}}(\mathcal{K})\) are tight.
		\item A subset \(\mathcal{K}\subset (\widetilde{\mathcal{N}_{p,q}(\mathcal{X})\mathcal{E}(\mathcal{Y})},\delta_{(W_p,q),(\text{BL},\square)})\) is relatively compact if and only if \(\pi_{\mathcal{N}}(\mathcal{K})\) is relatively compact in \((\widetilde{\mathcal{P}_{p,q,\nu}}(\mathcal{X}\times [0,1]),\delta_{W_p,q})\) and \(\pi_{\mathcal{E}}(\mathcal{K})\) is tight.
	\end{enumerate}
\end{theorem}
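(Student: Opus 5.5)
The plan is to mirror the proof of Proposition \ref{prop:tightness-in-average-implies-relative-compactness}, running the martingale argument jointly on nodes and edges with a common partition. The Polish property and the reverse implications come from the projections. For necessity in (i), recall that $\pi_{\mathcal{N}}$ and $\pi_{\mathcal{E}}$ are $1$-Lipschitz from $\delta_{(\text{BL},1),(\text{BL},\square)}$ into $\delta_{\text{BL},1}$ and $\delta_{\text{BL},\square}$, respectively, because one infimum over a common rearrangement dominates the two separate infima. Hence relative compactness of $\mathcal{K}$ passes to both projections. Tightness of each projection then follows from Theorem \ref{thm:compactness-fibered-probability-measures} and Theorem \ref{thm:compactness-probability-graphons}. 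The same argument gives necessity in (ii), using the characterization of relatively compact sets in $(\widetilde{\mathcal{P}}_{p,q,\nu},\delta_{W_p,q})$.

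\textbf{Sufficiency in (i).} Take a sequence $(\mu_n,q_n)\subset\mathcal{K}$. Iterating Lemma \ref{lemma:regularity-node-edge} with $\varepsilon=1/k$ and $\mathcal{Q}=\mathcal{P}_{n,k-1}$, and also refining by dyadic intervals, produces nested partitions $\mathcal{P}_{n,k}$. Their cardinality $m_k$ is independent of $n$, and they satisfy
\[
d_{\text{BL},1}(\mu_n,(\mu_n)_{\mathcal{P}_{n,k}})<1/k,\qquad d_{\text{BL},\square}(q_n,(q_n)_{\mathcal{P}_{n,k}})<1/k.
\]
Next, I apply a single measure-preserving bijection $\varphi_n$ that turns all $\mathcal{P}_{n,k}$ into interval partitions simultaneously, as in \cite[Lemma 8.2]{ADW-25}. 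The key point is that the \emph{same} $\varphi_n$ acts on $\mu_n$ and on $q_n$; this is allowed because $\delta_{(\text{BL},1),(\text{BL},\square)}$ is invariant under joint rearrangement.

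By a diagonal extraction, the interval lengths converge, to $s_{k,i}$ say. The block averages of the node part converge narrowly, and so do the block averages of the edge part on each rectangle $S_{k,i}\times S_{k,j}$; tightness of the averages of both projections gives the needed compactness of each block average, exactly as in Step 3 of Proposition \ref{prop:tightness-in-average-implies-relative-compactness}. This yields limiting step objects $(\bar\mu_k,\bar q_k)$ on the limiting interval partitions $\mathcal{P}_k$. These satisfy the martingale property $(\bar\mu_m,\bar q_m)_{\mathcal{P}_k}=(\bar\mu_k,\bar q_k)$ for $m\ge k$. The bounded martingale convergence theorem, applied to a countable convergence-determining class on $[0,1]$ for the nodes and on $[0,1]^2$ for the edges, gives limits $\mu$ and $q$ with a.e. fiberwise narrow convergence. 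That a.e. convergence yields convergence in $d_{\text{BL},1}$ by dominated convergence, hence in $d_{\text{BL},\square}\le d_{\text{BL},1}$. The $3\varepsilon$ triangle argument of Step 5 then closes the proof. The step from $(q_n)_{\mathcal{P}_{n,k}}$ to $\bar q_k$ in $d_{\text{BL},\square}$ works because both are step objects over partitions whose block lengths converge, and the cut distance between such step objects is controlled by the differences in lengths plus the $d_{\text{BL}}$ distances of the block values.

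\textbf{The obstacle and the remaining parts.} I expect the main obstacle to be the coupling: the node and edge approximations must share one partition and one rearrangement, so neither compactness theorem can be invoked as a black box. Lemma \ref{lemma:regularity-node-edge} is what resolves this, and the rest follows the one-dimensional argument line by line.

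For (ii), take a subsequence converging in $\delta_{(\text{BL},1),(\text{BL},\square)}$ with rearrangements $\varphi_n$. The moment tail condition \eqref{eq:uniform-integrability-fibered-measures} is invariant under rearrangement, so Proposition \ref{prop:characterization-convergence-d_Wp_q} upgrades $d_{\text{BL},1}(\mu_n^{\varphi_n},\mu)\to0$ to $d_{W_p,q}(\mu_n^{\varphi_n},\mu)\to 0$ with the same $\varphi_n$.

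\textbf{Polish property.} Separability comes from step objects with rational interval lengths and values in countable dense sets. For completeness, a Cauchy sequence has projections that are Cauchy and therefore tight. By (i) or (ii) it then has a convergent subsequence, and a Cauchy sequence with a convergent subsequence converges.
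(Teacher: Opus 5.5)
Your proposal is correct and follows essentially the paper's proof: the joint weak regularity Lemma \ref{lemma:regularity-node-edge}, a single interval rearrangement applied to both components, and then the Steps 3--5 extraction and martingale argument for nodes and for edges. The paper extracts the node subsequence first and then a further one for the edges, which amounts to your joint diagonal extraction. Two points deserve to be stated explicitly: identifying the martingale limits as genuine probability measures uses the tightness/Prokhorov step (Step 4 of Proposition \ref{prop:tightness-in-average-implies-relative-compactness}), and in the completeness argument for (ii) a Cauchy node projection is relatively compact in $\delta_{W_p,q}$ (not merely tight) because that space is complete.
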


\begin{proof}
	\(\diamond\) \textsc{Step 1}: We first show that if \(\mathcal{K}\subset \mathcal{N}(\mathcal{X})\mathcal{E}(\mathcal{Y})\) have node and edge projections that are tight, then it is relatively compact.\\
	Let \((\mu_n,q_n)_{n\in\mathbb{N}}\) be a sequence of node-edge probability-graphons in \(\mathcal{K}\). Applying Lemma \ref{lemma:regularity-node-edge}, we can ensure that for every \(k\in \mathbb{N}\), there exists a partition \(\mathcal{P}_{n,k}\) of \([0,1]\) such that:
	\begin{enumerate}[label=(\roman*)]
		\item \(d_{\text{BL},1}(\mu_n,\mu_{n,k}) + d_{\text{BL},\square}(q_n,q_{n,k}) < 1/k\), 
		\item \(\diam(\mathcal{P}_{n,k}) < 2^{-k}\) and \(\vert \mathcal{P}_{n,k} \vert = m_k\) for some integer \(m_k \in \mathbb{N}\) independent of \(n\),
		\item \(\mathcal{P}_{n,k+1}\) is a refinement of \(\mathcal{P}_{n,k}\).
	\end{enumerate}
	Applying \cite[Lemma 8.2]{ADW-25}, there exists a sequence of measure-preserving maps \(\varphi_n \in S_{[0,1]}\) that
	rearranges these partitions into intervals. Since applying this joint rearrangement \(\varphi_n\) to both the node and
	edge components simply translates the sequence within its equivalence class in the quotient space
	\(\widetilde{\mathcal{N}(\mathcal{X})\mathcal{E}(\mathcal{Y})}\), we can assume without loss of generality
	that the partitions \(\mathcal{P}_{n,k}\) are composed of intervals. Since the projection
	\(\pi_{\mathcal{N}}(\mathcal{K})\) is tight, we can follow \textsc{Step} 3 to \textsc{Step} 5 in the proof of
	Proposition \ref{prop:tightness-in-average-implies-relative-compactness} to obtain a subsequence
	\((\mu_{n_{\ell}})_{\ell\in\mathbb{N}}\) and a limit fibered probability measure
	\(\mu\in \mathcal{P}_{\nu}(\mathcal{X}\times [0,1])\) such that \(d_{\text{BL},1}(\mu_{n_{\ell}},\mu)\to 0\)
	as \(\ell\to \infty\). 
	
	Now, taking the corresponding subsequence of probability-graphons \((q_{n_\ell})_{\ell\in\mathbb{N}}\), we use the fact that
	\(\pi_{\mathcal{E}}(\mathcal{K})\) is also tight. Applying \textsc{Step} 3 to \textsc{Step} 5 from the proof of
	\cite[Lemma 8.1]{ADW-25}, we can extract a further subsequence \((q_{n_{\ell_m}})_{m\in\mathbb{N}}\) and find a
	limit probability-graphon \(q\in \mathcal{P}_{\nu\otimes\nu}(\mathcal{Y}\times [0,1]^2)\) such that
	\(d_{\text{BL},\square}(q_{n_{\ell_m}},q)\to 0\) as \(m\to \infty\). 

	Consequently, we have shown that there exists a joint subsequence
	\((\mu_{n_{\ell_m}},q_{n_{\ell_m}})_{m\in\mathbb{N}}\) converging to a node-edge probability-graphon
	\((\mu,q)\in \mathcal{N}(\mathcal{X})\mathcal{E}(\mathcal{Y})\) such that
	\(\delta_{(\text{BL},1),(\text{BL},\square)}((\mu_{n_{\ell_m}},q_{n_{\ell_m}}),(\mu,q))\to 0\) as \(m\to \infty\),
	establishing relative compactness.

	\(\diamond\) \textsc{Step 2}: We now show the converse: if
	\(\mathcal{K}\subset \mathcal{N}(\mathcal{X})\mathcal{E}(\mathcal{Y})\) is relatively compact, then its node and edge projections are tight.\\
	This implication is a direct consequence of the continuity of the respective projections. Indeed, if
	\(\mathcal{K}\) is relatively compact, its continuous projections
	\(\pi_{\mathcal{N}}(\mathcal{K})\) and \(\pi_{\mathcal{E}}(\mathcal{K})\) must also be relatively compact
	in their respective spaces, which directly implies they are tight.
	
	The remainder of the theorem follows naturally. For point (ii), the arguments are identical, with the only
	distinction being the requirement of compactness in the stronger fibered Wasserstein topology for the node
	component. Finally, the Polish property of the quotient spaces is established using the exact same separability
	and completeness arguments previously discussed for fibered probability measures.
\end{proof}

\section{Propagation of independence} \label{sec:propagation-of-independence}
The aim of this section is to introduce an intermediate system of stochastic differential equations with independent components,
which will be used to prove the propagation of independence for the non-exchangeable system \eqref{eq:multi-agent-discrete} as
\(N\to \infty\). This approach was pioneered by Sznitman \cite{S-91} for exchangeable interacting diffusions, and later extended
to the non-exchangeable setting in \cite{APP-24-arxiv,JPS-24,JSZ-24-arxiv,JZ-25}. In \cite{Lacker-26}, such an
intermediate construction was termed the \emph{independent projection} of the interacting diffusion system.

To construct this system, we first need to reveal the dependency of the evolving weights as a 
non-anticipative functional of the particle states. To do so, we will use the flow map associated with the
weight dynamics, whose properties are summarized in the following proposition. We omit the proof 
as it follows directly from the assumptions on the plasticity function \(\Gamma\) and the kernel \(K\).

\begin{proposition} \label{prop:properties-Phi-Pi}
	Under Assumption \ref{assump:plasticity-function} on the plasticity function \(\Gamma\), there exists a unique continuous map \(\Phi: [0,T]\times \mathcal{C}^d_T \times \mathcal{C}^d_T \times \mathbb{R} \to \mathbb{R}\) such that for any \(\gamma_1,\gamma_2 \in \mathcal{C}^d_T\) and any \(w \in \mathbb{R}\), the function \(t\mapsto \Phi(t,\gamma_1,\gamma_2,w)\) is the unique solution to the initial value problem:
	\[
		\begin{aligned}
		&\frac{d}{dt}w(t) = \Gamma(\gamma_1(t),\gamma_2(t),w(t)), \quad t\in [0,T],\\
		&w(0) = w.
		\end{aligned}
	\]
	Furthermore, the flow map \(\Phi\) satisfies the following properties:
	\begin{enumerate}[label=(\roman*)]
		\item \textbf{(\(\Phi\) is non-anticipative):} There exists a family of functionals \((\Phi_t)_{t\in [0,T]}\), with \(\Phi_t: \mathcal{C}^d_t\times \mathcal{C}^d_t\times \mathbb{R} \to \mathbb{R}\), such that 
		\[
			\Phi(t,\gamma,\bar{\gamma},w) = \Phi_t(\gamma_{[0,t]},\bar{\gamma}_{[0,t]},w).
		\]
		\item \textbf{(\(\Phi\) is Lipschitz):}  
		\[
			\vert \Phi(t,\gamma_1,\bar{\gamma}_1,w_1) - \Phi(t,\gamma_2,\bar{\gamma}_2,w_2) \vert \leq L_{\Phi}(t) \left(\Vert \gamma_1-\gamma_2\Vert_{\ast,t} + \Vert \bar{\gamma}_1-\bar{\gamma}_2\Vert_{\ast,t} + \vert w_1-w_2\vert \right),
		\]
		with \(L_{\Phi}(t) = e^{L_{\Gamma}t}\left(1+tL_{\Gamma}\right)\).
		\item \textbf{(\(\Phi\) has linear growth in the weight variable):} 
		\begin{equation}\label{eq:bound-Phi}
		\vert \Phi(t,\gamma,\bar{\gamma},w)\vert \leq B_{\Phi}(t)(1+ \vert w\vert), 
		\end{equation}
		with \(B_{\Phi}(t) = (1+B_{\Gamma}t)e^{L_{\Gamma}t}\).
	\end{enumerate}
	
	Let us denote the product of the flow map and the kernel \(K\) by \(\Pi: [0,T]\times \mathcal{C}_T^d\times \mathcal{C}_T^d\times \mathbb{R} \to \mathbb{R}^d\), defined as 
	\[
		(t,\gamma,\bar{\gamma},w) \mapsto \Phi(t,\gamma,\bar{\gamma},w)K(\gamma(t),\bar{\gamma}(t)).
	\] 
	If the kernel \(K\) satisfies Assumption \ref{assump:interaction-kernel}, then the map \(\Pi\) inherits the
	following properties:
	\begin{enumerate}[label=(\roman*), resume]
		\item \textbf{(\(\Pi\) is Lipschitz):}  
		\begin{equation} \label{eq:lipschitz-property-Pi}
			\vert \Pi(t,\gamma_1,\bar{\gamma}_1,w_1) -\Pi(t,\gamma_2,\bar{\gamma}_2,w_2)\vert \leq L_{\Pi}(t)(1+\vert w_1\vert) \left( \Vert \gamma_1-\gamma_2 \Vert_{\ast,t}+\Vert \bar{\gamma}_1-\bar{\gamma}_2 \Vert_{\ast,t} +\vert w_1-w_2\vert\right), 
		\end{equation}
		with \(L_{\Pi}(t) = (B_K L_{\Phi}(t)+B_{\Phi}(t)L_K)\).
		\item \textbf{(\(\Pi\) has linear growth in the weight variable):}  
		\begin{equation} \label{eq:bound-Pi}
		\vert \Pi(t,\gamma,\bar{\gamma},w)\vert \leq B_{\Pi}(t)(1+ \vert w\vert), 
		\end{equation}
		with \(B_{\Pi}(t) = B_{\Phi}(t)B_K\).
	\end{enumerate}   
\end{proposition}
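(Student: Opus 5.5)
The plan is to treat the weight equation as a scalar ODE that is globally Lipschitz in the unknown. Everything then follows from Picard--Lindel\"of and Gr\"onwall's inequality, and the properties of \(\Pi\) from an add-and-subtract argument.

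\textbf{Existence, uniqueness, continuity and non-anticipativity.} Fix \(\gamma_1,\gamma_2\in\mathcal{C}^d_T\). The map \((t,w)\mapsto \Gamma(\gamma_1(t),\gamma_2(t),w)\) is continuous in \(t\) and globally \(L_\Gamma\)-Lipschitz in \(w\) by Assumption \ref{assump:plasticity-function}. Hence the global Picard--Lindel\"of theorem gives a unique solution on \([0,T]\), and we define \(\Phi(t,\gamma_1,\gamma_2,w)\) as this solution.
\begin{itemize}[label=$\diamond$]
\item \emph{Non-anticipativity.} The restriction of the solution to \([0,t]\) solves the same problem on \([0,t]\), whose data only involve \(\gamma_{1,[0,t]}\) and \(\gamma_{2,[0,t]}\). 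By uniqueness on \([0,t]\), \(\Phi(t,\cdot)\) depends only on these restrictions, and this defines \(\Phi_t\).
\item \emph{Joint continuity.} This will follow from the Lipschitz estimate below in \((\gamma_1,\gamma_2,w)\), combined with the time bound \(|\Phi(t,\cdot)-\Phi(s,\cdot)|\le \sup_r|\Gamma|\,|t-s|\). The supremum \(\sup_r|\Gamma|\) is locally bounded thanks to the growth bound in (iii).
\end{itemize}

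\textbf{Growth bound (iii).} Since \(|\Gamma(x,y,w)|\le |\Gamma(x,y,0)|+L_\Gamma|w|\le B_\Gamma+L_\Gamma|w|\), integrating the ODE gives
\[
|w(t)|\le |w|+B_\Gamma t+L_\Gamma\int_0^t|w(s)|\,ds .
\]
Gr\"onwall's inequality then yields \(|w(t)|\le(|w|+B_\Gamma t)e^{L_\Gamma t}\le(1+B_\Gamma t)e^{L_\Gamma t}(1+|w|)\).

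\textbf{Lipschitz bound (ii).} Let \(w_k(s)=\Phi(s,\gamma_k,\bar\gamma_k,w_k)\) for \(k=1,2\), and set \(D:=\Vert\gamma_1-\gamma_2\Vert_{\ast,t}+\Vert\bar\gamma_1-\bar\gamma_2\Vert_{\ast,t}\). For \(s\le t\),
\[
|w_1(s)-w_2(s)|\le |w_1-w_2|+L_\Gamma s\,D+L_\Gamma\int_0^s|w_1(r)-w_2(r)|\,dr .
\]
Gr\"onwall's inequality gives
\[
|w_1(t)-w_2(t)|\le e^{L_\Gamma t}\big(|w_1-w_2|+L_\Gamma t\,D\big)\le e^{L_\Gamma t}(1+L_\Gamma t)\big(D+|w_1-w_2|\big),
\]
which is exactly the constant \(L_\Phi(t)\).

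\textbf{Properties of \(\Pi\) (iv)--(v).} The bound (v) is immediate from (iii) and \(|K|\le B_K\). For (iv), write
\[
\Pi_1-\Pi_2=(\Phi_1-\Phi_2)\,K(\gamma_2(t),\bar\gamma_2(t))+\Phi_1\big(K(\gamma_1(t),\bar\gamma_1(t))-K(\gamma_2(t),\bar\gamma_2(t))\big).
\]
The first term is bounded by \(B_KL_\Phi(t)\big(D+|w_1-w_2|\big)\) using (ii). For the second term, (iii) gives \(|\Phi_1|\le B_\Phi(t)(1+|w_1|)\), and \(|K(\cdot)-K(\cdot)|\le L_K D\), since pointwise differences at time \(t\) are dominated by the \(\Vert\cdot\Vert_{\ast,t}\) norms. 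Bounding the first term by \((1+|w_1|)\) times itself and summing yields \eqref{eq:lipschitz-property-Pi} with \(L_\Pi=B_KL_\Phi+B_\Phi L_K\).

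No step is a genuine obstacle. The only point needing care is using the form \(|\Gamma|\le B_\Gamma+L_\Gamma|w|\), rather than the stated \(B_\Gamma(1+|w|)\), so that the exponential rate in \(B_\Phi\) is \(L_\Gamma\) as claimed.
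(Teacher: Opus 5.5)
Your proof is correct and reproduces all the stated constants \(L_\Phi\), \(B_\Phi\), \(L_\Pi\) and \(B_\Pi\). The paper omits this proof, saying it follows directly from the assumptions, and your global Picard--Lindel\"of plus Gr\"onwall argument is exactly that routine verification. This includes your observation that the bound \(|\Gamma|\le B_\Gamma+L_\Gamma|w|\), rather than \(B_\Gamma(1+|w|)\), is what gives the exponential rate \(L_\Gamma\) in \(B_\Phi\).
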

With this notation, the coupled system of SDEs \eqref{eq:multi-agent-discrete} for the state processes 
\((X_i^N)_{1\leq i\leq N}\) and the weights \((w^N_{ij})_{1\leq i,j\leq N}\) can be reformulated as a
closed system of path-dependent SDEs strictly in terms of the state variables. This is achieved by solving the
weight dynamics \(\eqref{eq:multi-agent-discrete}_2\) using the flow map defined in the preceding proposition,
and substituting this relation into the particle state equation \(\eqref{eq:multi-agent-discrete}_1\):
\begin{equation} \label{eq:multi-agent-discrete-closed-states}
	\begin{aligned}
		dX_i^N(t) &= \frac{1}{N}\sum_{j=1}^N \Phi_t(X_{i,[0,t]}^N,X_{j,[0,t]}^N,Nw_{ij,0}^N)\,K(X_i^N(t),X_j^N(t))\,dt + \sqrt{2\nu}\,dW_i^N(t),\quad t\in [0,T],\\
		X_i^N(0) &= X_{i,0}^N.
	\end{aligned}
\end{equation}
The systems \eqref{eq:multi-agent-discrete} and \eqref{eq:multi-agent-discrete-closed-states} are mathematically equivalent. Given a solution to the full interacting SDE, the weights admit the exact representation
\begin{equation} \label{eq:weights-solved}
	w^N_{ij}(t) = \frac{1}{N}  \Phi_t(X_{i,[0,t]}^N,X_{j,[0,t]}^N,Nw_{ij,0}^N),
\end{equation}
which immediately recovers the closed state equation. Conversely, solving for the states first and subsequently defining the weights via \eqref{eq:weights-solved} perfectly reconstructs a solution to the original coupled system.

Well-posedness is guaranteed by Theorem \ref{th:well-posedness-path-dependent-SDEs-additive-noise} for path-dependent SDEs,
as the map \(\Pi\) defined in Proposition \ref{prop:properties-Phi-Pi} satisfies the required Lipschitz properties.

\begin{lemma} \label{lemma:well-posedness-multi-agent-system-solved-weights}
	Under Assumptions \ref{assump:interaction-kernel} and \ref{assump:plasticity-function}, consider initial weights \((w_{ij,0}^N)_{1\leq i,j\leq N} \subset L^{\infty}(\Omega,\mathbb{R})\) and initial states \((X_{i,0}^N)_{1\leq i\leq N} \subset L^p(\Omega,\mathbb{R}^d)\) for some \(p\in[1,\infty)\). Provided that these initial random variables are independent of the driving standard Wiener processes \((W_i^N)_{1\leq i\leq N}\), there exists a unique strong solution \((X_i^N)_{1\leq i\leq N} \in L^p(\Omega,\mathcal{C}_T^d)\) to the system of path-dependent SDEs \eqref{eq:multi-agent-discrete-closed-states}.
\end{lemma}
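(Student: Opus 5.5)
We recast the closed system \eqref{eq:multi-agent-discrete-closed-states} as one path-dependent SDE on \(\mathbb{R}^{dN}\) with additive noise. We then verify the hypotheses of Theorem \ref{th:well-posedness-path-dependent-SDEs-additive-noise}: a non-anticipative drift that is Lipschitz in the uniform norm up to time \(t\) and has at most linear growth. The only non-standard feature is that the drift depends on the random initial weights. We treat these as an \(\mathcal{F}_0\)-measurable random parameter, which is independent of the Wiener processes by assumption, so strong solutions can be built \(\omega\)-wise in the parameter.

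Concretely, set \(\mathbf{X}=(X_1,\dots,X_N)\) and write the drift of the \(i\)-th component as
\[
b_i(t,\mathbf{X}_{[0,t]},\omega)=\frac{1}{N}\sum_{j=1}^N \Pi\big(t,X_{i,[0,t]},X_{j,[0,t]},Nw_{ij,0}^N(\omega)\big).
\]
Proposition \ref{prop:properties-Phi-Pi}(i) shows that \(b\) is non-anticipative and jointly measurable. Since \(w_{ij,0}^N\in L^\infty(\Omega,\mathbb{R})\), there is a deterministic \(\bar W_N\) with \(N|w_{ij,0}^N|\le \bar W_N\) almost surely. The Lipschitz estimate \eqref{eq:lipschitz-property-Pi}, applied with \(w_1=w_2=Nw_{ij,0}^N\), then gives the uniform bound
\[
|b(t,\mathbf{X})-b(t,\mathbf{Y})|\le 2\sqrt{N}\,L_\Pi(T)(1+\bar W_N)\,\|\mathbf{X}-\mathbf{Y}\|_{\ast,t}.
\]
The growth bound \eqref{eq:bound-Pi} gives \(|b|\le \sqrt{N}B_\Pi(T)(1+\bar W_N)\). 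So the drift is bounded as well as Lipschitz, with constants independent of \(\omega\).

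Existence and uniqueness then follow from the standard Picard iteration in \(L^p(\Omega,\mathcal{C}_T^{dN})\), which is the content of Theorem \ref{th:well-posedness-path-dependent-SDEs-additive-noise}. Take the map
\[
\mathcal{T}(\mathbf{X})(t)=\mathbf{X}_0+\int_0^t b(s,\mathbf{X}_{[0,s]})\,ds+\sqrt{2\nu}\,\mathbf{W}(t).
\]
Its contraction constant is deterministic, so the usual estimate \(\mathbb{E}\|\mathcal{T}\mathbf{X}-\mathcal{T}\mathbf{Y}\|_{\ast,t}^p\le C\int_0^t\mathbb{E}\|\mathbf{X}-\mathbf{Y}\|^p_{\ast,s}ds\) and Gronwall's lemma apply unchanged. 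For adaptedness, use the filtration generated by the initial data and the Brownian motions, \(\mathcal{F}_t=\sigma(X_{i,0}^N,w_{ij,0}^N,W_i^N(s):s\le t)\). The independence assumption ensures that the \(W_i^N\) remain Brownian motions with respect to it, and every Picard iterate is adapted to it.

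The \(L^p\) bound \(\mathbb{E}\|\mathbf{X}\|_{\ast,T}^p<\infty\) follows from \(\mathbf{X}_0\in L^p\), the bounded drift, and the Burkholder–Davis–Gundy inequality for \(\mathbf{W}\). Finally, define \(w_{ij}^N\) through \eqref{eq:weights-solved}; this recovers a unique strong solution of \eqref{eq:multi-agent-discrete}. The main, though mild, obstacle is justifying that the random coefficient is admissible in Theorem \ref{th:well-posedness-path-dependent-SDEs-additive-noise}. If that theorem is stated only for deterministic coefficients, I would handle this by the conditioning argument. Because the initial data are independent of \(\mathbf{W}\), one solves for each fixed value of the initial data and composes with the measurable solution map. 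Alternatively, one can rerun the Picard argument directly as above, since all constants are uniform in \(\omega\).
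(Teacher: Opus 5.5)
Your proposal is correct and follows the paper's route: the paper simply invokes Theorem \ref{th:well-posedness-path-dependent-SDEs-additive-noise}, treating the rescaled initial weights as a bounded \(\mathcal{F}_0\)-measurable parameter and using the Lipschitz and growth bounds on \(\Pi\) from Proposition \ref{prop:properties-Phi-Pi}, exactly as you do. That theorem is already stated for random parameters \(\lambda,\bar{\lambda}\in L^{\infty}(\Omega,E)\), so the conditioning fallback you describe is unnecessary.
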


Having reformulated the original system as \eqref{eq:multi-agent-discrete-closed-states}, we now introduce the intermediate
system of SDEs. This system is defined by replacing the interaction term in \eqref{eq:multi-agent-discrete-closed-states}
with its expected value, conditioned on the natural filtration generated by each individual particle.
\begin{lemma}[Intermediate system] \label{lemma:intermediate-system}
	Consider the following nonlinear system of SDEs for the unknown processes \((\bar{X}_i^N)_{1\leq i\leq N}\):
	\begin{align}
		d\bar X_i^N(t) &= \frac{1}{N}\sum_{j=1}^N\int_{\mathcal{C}^d_t}\int_{-W}^{W}\Phi_t(\bar X_{i,[0,t]}^N,\gamma,w) K(\bar X_i^N(t),\gamma(t)) \, dq_{ij,0}^N(w) \, d\bar \mu^{N,j}_{[0,t]}(\gamma) \, dt + \sqrt{2\nu}\,dW_i^N(t),\quad t\in [0,T], \nonumber\\
		\bar X_i^N(0) &= X_{i,0}^N, \quad \bar \mu^{N,i}_{[0,t]} := \mathrm{Law}(\bar X_{i,[0,t]}^N) \in \mathcal{P}(\mathcal{C}^d_t),\label{eq:multi-agent-discrete-independent-projection-solved-weights}
	\end{align}
	where \(q_{ij,0}^N := \mathrm{Law}(Nw_{ij,0}^N) \in \mathcal{P}(\mathbb{R})\). Under Assumptions \ref{assump:interaction-kernel}, \ref{assump:plasticity-function}, and \ref{assump:weights}, consider initial states \((X_{i,0}^N)_{1\leq i\leq N} \subset L^p(\Omega,\mathbb{R}^d)\) for some \(p\in[1,\infty)\). Provided that these initial states are independent of the driving standard Wiener processes \((W_i^N)_{1\leq i\leq N}\), there exists a unique strong solution \((\bar{X}_i^N)_{1\leq i\leq N} \subset L^p(\Omega,\mathcal{C}_T^d)\) to the system \eqref{eq:multi-agent-discrete-independent-projection-solved-weights}. Furthermore, if the initial states \((X_{i,0}^N)_{1\leq i\leq N}\) are independent, then the resulting solution processes \((\bar{X}_i^N)_{1\leq i\leq N}\) are also independent.
\end{lemma}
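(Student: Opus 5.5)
The plan is a Picard-type fixed point on the family of pathwise laws, as in Sznitman's treatment of McKean--Vlasov equations, adapted to the $N$-component, path-dependent setting.

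\textbf{Step 1: the frozen problem.} Fix a vector of laws
\[
\boldsymbol{\eta}=(\eta^1,\dots,\eta^N)\in \mathcal{P}_1(\mathcal{C}^d_T)^N .
\]
Define the drifts
\[
b_i^{\boldsymbol{\eta}}(t,\gamma):=\frac{1}{N}\sum_{j=1}^N\int_{\mathcal{C}^d_t}\int_{-W}^{W}\Pi(t,\gamma,\bar\gamma,w)\,dq^N_{ij,0}(w)\,d\eta^j_{[0,t]}(\bar\gamma),
\]
where $\eta^j_{[0,t]}$ is the restriction to $[0,t]$. By Assumption \ref{assump:weights}, each $q^N_{ij,0}$ is supported in $[-W,W]$. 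Proposition \ref{prop:properties-Phi-Pi} then gives two bounds. First, $b_i^{\boldsymbol\eta}$ is non-anticipative and bounded by $B_\Pi(T)(1+W)$. Second, it is Lipschitz in $\gamma$ for $\|\cdot\|_{\ast,t}$ with constant $L_\Pi(T)(1+W)$. Hence Theorem \ref{th:well-posedness-path-dependent-SDEs-additive-noise} (path-dependent SDEs with additive noise) yields a unique strong solution $X_i^{\boldsymbol\eta}$ of
\[
dX_i=b_i^{\boldsymbol\eta}(t,X_{i,[0,t]})\,dt+\sqrt{2\nu}\,dW_i^N,\qquad X_i(0)=X_{i,0}^N .
\]
This solution lies in $L^p(\Omega,\mathcal{C}^d_T)$, since the drift is bounded and the initial datum is in $L^p$. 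The system decouples, so each $X_i^{\boldsymbol\eta}$ is a measurable functional of $(X_{i,0}^N,W_i^N)$ only.

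\textbf{Step 2: the map and its contraction.} Set $\mathcal{T}(\boldsymbol\eta):=(\mathrm{Law}(X_i^{\boldsymbol\eta}))_{i}$. We want a contraction in the metric
\[
D_t(\boldsymbol\eta,\boldsymbol{\bar\eta}):=\max_i W_1(\eta^i_{[0,t]},\bar\eta^i_{[0,t]}),
\]
where $W_1$ is taken on $\mathcal{C}^d_t$. Couple the two solutions synchronously, using the same initial datum and Brownian motion, and write
\[
b_i^{\boldsymbol\eta}-b_i^{\boldsymbol{\bar\eta}}=\big(b_i^{\boldsymbol\eta}(X^{\boldsymbol\eta})-b_i^{\boldsymbol\eta}(X^{\boldsymbol{\bar\eta}})\big)+\big(b_i^{\boldsymbol\eta}(X^{\boldsymbol{\bar\eta}})-b_i^{\boldsymbol{\bar\eta}}(X^{\boldsymbol{\bar\eta}})\big).
\]
The first term is bounded by $L_\Pi(1+W)\,\|X^{\boldsymbol\eta}_i-X^{\boldsymbol{\bar\eta}}_i\|_{\ast,s}$. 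For the second term, observe that $\bar\gamma\mapsto\int\Pi(s,\gamma,\bar\gamma,w)\,dq(w)$ is $L_\Pi(1+W)$-Lipschitz in $\|\cdot\|_{\ast,s}$. Kantorovich--Rubinstein duality then bounds it by $L_\Pi(1+W)\,D_s(\boldsymbol\eta,\boldsymbol{\bar\eta})$.

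Take the supremum over $[0,t]$, then expectations, and apply Gronwall. This gives
\[
D_t(\mathcal{T}\boldsymbol\eta,\mathcal{T}\boldsymbol{\bar\eta})\le C_T\int_0^t D_s(\boldsymbol\eta,\boldsymbol{\bar\eta})\,ds .
\]
Iterating yields $D_T(\mathcal{T}^k\boldsymbol\eta,\mathcal{T}^k\boldsymbol{\bar\eta})\le \frac{(C_TT)^k}{k!}D_T(\boldsymbol\eta,\boldsymbol{\bar\eta})$. Since $\mathcal{P}_1(\mathcal{C}^d_T)^N$ is complete, $\mathcal{T}$ has a unique fixed point. Equivalently, one can use the weighted norm $\sup_t e^{-\lambda t}D_t$. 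The fixed point gives existence of a strong solution with $\bar\mu^{N,i}=\mathrm{Law}(\bar X_i^N)$.

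\textbf{Step 3: uniqueness.} Any strong solution has laws in $\mathcal{P}_1$, because its drift is bounded and the initial data are in $L^p$ with $p\ge 1$. Its laws are therefore a fixed point of $\mathcal{T}$, so they coincide with the ones above. Pathwise uniqueness then follows from Step 1.

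\textbf{Step 4: independence.} At the fixed point, $\bar X_i^N=F_i(X_{i,0}^N,W_i^N)$ for a measurable $F_i$, by strong existence and uniqueness (Yamada--Watanabe). The family $(X_{i,0}^N)_i$ is independent by hypothesis and is independent of the Wiener processes, and the $W_i^N$ are mutually independent. Hence the pairs $(X_{i,0}^N,W_i^N)$ are mutually independent, and so are their images $\bar X_i^N$.

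The main obstacle is Step 2. The delicate points are justifying the Lipschitz dependence on $\eta$ uniformly in the past-path variable, and making sure the restrictions $\eta_{[0,t]}$ are handled non-anticipatively so that Gronwall applies. Both are controlled by the compact support $[-W,W]$ of $q^N_{ij,0}$, which removes the $(1+|w|)$ factor in \eqref{eq:lipschitz-property-Pi}.
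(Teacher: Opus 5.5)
Your proposal is correct and is essentially the paper's argument. Freeze the laws, solve the decoupled path-dependent SDEs with additive noise, and show that the law map contracts after iteration, using the Lipschitz bounds on $\Pi$ (with $|w|\le W$) and Kantorovich--Rubinstein duality. The differences are minor: you run the fixed point directly on the product of marginals $\mathcal{P}_1(\mathcal{C}^d_T)^N$ with $\max_i W_1$, whereas the paper works on the joint law in $\mathcal{P}_p(\mathcal{C}^{dN}_T)$ and invokes Theorem \ref{th:well-posedness-non-Markovian-McKean-Vlasov-additive-noise}. Your Step 4, which writes $\bar X_i^N=F_i(X_{i,0}^N,W_i^N)$ at the fixed point, supplies the independence argument that the paper's proof leaves implicit.
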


\begin{proof}
	The system \eqref{eq:multi-agent-discrete-independent-projection-solved-weights} is a McKean-Vlasov system
	of path-dependent SDEs that can be written in the following form:
	\[
	\begin{aligned}
		d\bar{X}^N(t) &= b(t,\bar{X}^N,\bar{\mu}^N) \,dt + \sqrt{2\nu} \,dW^N(t), \\
		\bar{X}^N(0) &= X^N_{0}, \quad \bar{\mu}^N = \mathrm{Law}(\bar{X}^N) \in \mathcal{P}(\mathcal{C}_T^{dN}),
	\end{aligned}
	\]
	with \(\bar{X}^N=(\bar{X}^N_1,\ldots,\bar{X}^N_N)\), \(\bar{X}^N_0=(X^N_{1,0},\ldots,X^N_{N,0})\), and \(W^N=(W^N_1,\ldots,W^N_N)\). The drift vector is \(b(t,\gamma,\mu)=(b_i(t,\gamma,\mu))_{1\leq i\leq N}\) for any \(\gamma=(\gamma_1,\ldots,\gamma_N)\in \mathcal{C}_T^{dN}\), where each component \(b_i:[0,T]\times \mathcal{C}_T^{dN}\times \mathcal{P}(\mathcal{C}_T^{dN})\to \mathbb{R}^d\) is given by
	\[
		b_i(t,\gamma,\mu) = \frac{1}{N}\sum_{j=1}^{N} \int_{\mathcal{C}^d_t}\int_{-W}^{W} \Pi_t(\gamma_{i,[0,t]},\tilde{\gamma},w) \,dq_{ij,0}^N(w) \,d\mu^{j}_{[0,t]}(\tilde{\gamma}).
	\] 
	Here, \(\mu^{j}=e_{j\#} \mu\) is the pushforward measure via the projection \(e_j: \mathcal{C}_T^{dN}\to \mathcal{C}_T^d\), \((\gamma_1,\ldots,\gamma_j,\ldots,\gamma_N)\mapsto \gamma_j\), and we recall that \(\Pi_t(\gamma,\tilde{\gamma},w)=\Phi_t(\gamma,\tilde{\gamma},w)K(\gamma(t),\tilde{\gamma}(t))\).
	We will show that \(b\) satisfies the requirements of Theorem
	\ref{th:well-posedness-non-Markovian-McKean-Vlasov-additive-noise}, which establishes the well-posedness of the
	system \eqref{eq:multi-agent-discrete-independent-projection-solved-weights}. 
	
	The pathwise Lipschitz property can be easily derived using the Lipschitz properties of \(\Pi\) listed in Proposition \ref{prop:properties-Phi-Pi}. For any \(\gamma,\bar{\gamma} \in \mathcal{C}_T^{dN}\) and \(\mu,\nu \in \mathcal{P}(\mathcal{C}_T^{dN})\), applying the triangle inequality yields:
	\[
	\begin{aligned}
		\vert b(t,\gamma,\mu) - b(t,\bar{\gamma},\nu)\vert &\leq \frac{1}{N} \sum_{i=1}^{N} \sum_{j=1}^{N} \int_{\mathcal{C}_t^d}\int_{-W}^{W} \left\vert \Pi_t(\gamma_{i,[0,t]},\tilde{\gamma},w)- \Pi_t(\bar{\gamma}_{i,[0,t]},\tilde{\gamma},w)\right\vert \,dq_{ij,0}^N(w) \,d\mu^{j}_{[0,t]}(\tilde{\gamma}) \\ 
		&\quad + \frac{1}{N} \sum_{i=1}^{N} \sum_{j=1}^{N}\left\vert \int_{\mathcal{C}_t^d}\int_{-W}^{W} \Pi_t(\bar{\gamma}_{i,[0,t]},\tilde{\gamma},w) \,dq_{ij,0}^N(w) \,d(\mu^{j}_{[0,t]}-\nu^{j}_{[0,t]})(\tilde{\gamma})\right\vert \\
		&:= I_1 + I_2.
	\end{aligned}
	\]
	The first term, \(I_1\), can be estimated directly using the Lipschitz property of \(\Pi\):
	\[
		I_1 \leq L_{\Pi}(T)(1+W) \Vert \gamma-\bar{\gamma} \Vert_{\ast,t}.
	\]
	To estimate \(I_2\), we employ the Kantorovich-Rubinstein duality formula for the \(1\)-Wasserstein distance, which ensures that for any two probability measures \(\mu,\nu \in \mathcal{P}(\mathcal{C}^d_t)\), it holds that
	\[
		W_{1}(\mu,\nu) = \sup_{\psi \in \mathcal{L}^d_1} \left\vert \int_{\mathcal{C}^d_t} \psi(\gamma) \,d(\mu-\nu)(\gamma) \right\vert,
	\]
	where \(\mathcal{L}^d_1 = \{ \psi: \mathcal{C}^d_t \to \mathbb{R}^d \text{ such that } \vert \psi(\gamma) -\psi(\bar{\gamma}) \vert \leq \Vert \gamma - \bar{\gamma} \Vert_{\ast, t}\} \). 
	Since the mapping \(\tilde{\gamma} \mapsto \psi(\tilde{\gamma}) := \int_{-W}^{W} \Pi_t(\bar{\gamma}_{i,[0,t]},\tilde{\gamma},w) \,dq_{ij,0}^N(w)\) satisfies the Lipschitz bound
	\[
		\vert \psi(\tilde{\gamma})-\psi(\hat{\gamma})\vert \leq L_{\Pi}(T)(1+W) \Vert \tilde{\gamma} - \hat{\gamma} \Vert_{\ast,t},
	\]
	we deduce that
	\[
	\begin{aligned}
		I_2 &\leq L_{\Pi}(T)(1+W) \frac{1}{N} \sum_{i=1}^{N} \sum_{j=1}^{N} W_{1}(\mu^{j}_{[0,t]},\nu^{j}_{[0,t]}) 
		\leq L_{\Pi}(T)(1+W) \sum_{j=1}^{N} W_{p}(\mu^{j},\nu^{j}) \\ 
		&\leq L_{\Pi}(T)(1+W) N W_{p}(\mu,\nu).
	\end{aligned}
	\]
	Finally, the required boundedness of \(b(t,0,\delta_0)\) is an immediate consequence of the boundedness of \(\Phi\) and \(K\). The measurability of \(b_i(\cdot,\gamma,\mu)\) follows directly from the continuity of the flow map \(\Phi\) and the kernel \(K\).
\end{proof}

\begin{remark}[Independent projection] \label{remark:independent-projection}
	If the family of initial states \((X_{i,0}^N)_{1\leq i \leq N}\) are independent, and also independent 
    from the family of initial random weights \((w_{ij,0}^N)_{1\leq i,j\leq N}\), then
    the solution \((\bar{X}_i^N)_{1\leq i\leq N}\) to the intermediate system
	\eqref{eq:multi-agent-discrete-independent-projection-solved-weights} forms an independent family of processes
    and each interaction term for \(i\neq j\) can be expressed as 
	\[
		\begin{aligned}
			\int_{\mathcal{C}^d_t}\int_{-W}^{W} \Phi_t(\bar X_{i,[0,t]}^N,\gamma,w)\, &dq_{ij,0}^N(w)\,K(\bar X_i^N(t),\gamma(t))\,d\bar \mu^{N,j}_{[0,t]}(\gamma)= \\  
			&= \mathbb{E}_i^N\left[\Phi_t(\bar X_{i,[0,t]}^N,\bar X_{j,[0,t]}^N,Nw_{ij,0}^N)\,K(\bar X_i^N(t),\bar X_j^N(t))\right],
		\end{aligned}
	\]
	where \(\mathbb{E}_i^N:= \mathbb{E}[\,\cdot \mid \bar{\mathcal{F}}^N_i(t)]\) is the conditional expectation to the
	element \(\bar {\mathcal{F}}_i^N(t):=\sigma(\{\bar X_i^N(s):\,s\in [0,t]\})\) of the natural filtration generated
	by the \(i\)-th particle up to time \(t\). This demonstrates that the intermediate system
	\eqref{eq:multi-agent-discrete-independent-projection-solved-weights} can be equivalently formulated as:
    \begin{equation}\label{eq:multi-agent-discrete-independent-projection-preparatory}
		\begin{aligned}
			d\bar X_i^N(t) &= \frac{1}{N}\sum_{j=1}^N \mathbb{E}_i^N\left[\Phi_t(\bar X_{i,[0,t]}^N,\bar X_{j,[0,t]}^N,Nw_{ij,0}^N)\,K(\bar X_i^N(t),\bar X_j^N(t))\right]\,dt\\
            &\quad + \frac{1}{N}f_t(\bar{X}_{i,[0,t]}^N,\bar{\mu}^{N,i}_{[0,t]},q^N_{ii,0})dt +  \sqrt{2\nu}\,dW_i^N(t),\\
			\bar X_i^N(0) &= X_{i,0}^N,
		\end{aligned}
    \end{equation}
    where \(f_t: \mathcal{C}_t^d\times \mathcal{P}(\mathcal{C}_t^d) \times \mathcal{P}([-W,W])\to \mathbb{R}^d\) is given for every \(t \in [0,T]\) by 
    \begin{equation} \label{eq:self-interaction}
    f_t(\gamma,\mu,q)=\int_{\mathcal{C}^d_t}\int_{-W}^{W}\Phi_t(\gamma,\tilde{\gamma},w) K(\gamma(t),\tilde{\gamma}(t)) \, dq(w) \, d\mu(\tilde{\gamma})- \int_{-W}^{W} \Phi_t(\gamma,\gamma,w)K(\gamma(t),\gamma(t)) \,dq(w).
    \end{equation}
	For this reason, we refer to the system \eqref{eq:multi-agent-discrete-independent-projection-solved-weights}
	as the \emph{independent projection} of the system \eqref{eq:multi-agent-discrete-closed-states}. This aligns with the terminology introduced in \cite{Lacker-26} for It{\^o} diffusions with local coefficients, an approach that seamlessly extends to our path-dependent setting.
\end{remark}

\begin{remark}[Reformulation as a coupled system for states and weights] \label{remark:reformulation-states-weights}
	Although we introduced the independent projection \eqref{eq:multi-agent-discrete-independent-projection-solved-weights}
	as a closed system of path-dependent McKean-Vlasov SDEs strictly for the particle states,
	we can equivalently reformulate it as a coupled system of SDEs for both the states 
        of the particles and the weights. Since the weights were integrated out during the independent projection,
	their evolution is now captured by the transport of their initial laws \((q^N_{ij,0})_{1\leq i,j\leq N}\). 
	
	To see this, for any \(i,j\in \llbracket 1,N \rrbracket\) and \(t\in [0,T]\), we define the map 
	\(\bar{q}^N_{ij}:[0,t]\times \mathcal{C}^d_t \times \Omega \to \mathcal{P}(\mathbb{R})\),
	\((s,\gamma,\omega)\mapsto \bar{q}^N_{ij,\gamma}(s)\) given by 
	\begin{equation} \label{eq:evolution-prob-graphon}
		\bar q_{ij,\gamma}^N(s) := \Phi_s(\bar X_{i,[0,s]}^N,\gamma_{[0,s]},\cdot)_{\#}q_{ij,0}^N, 
	\end{equation}
	where as usual we do not explicitly write the dependency on \(\omega \in \Omega\) in the notation, which is
	implicit through the process \(\bar X_i^N\).
	Since \(\Phi\) is the flow map of the ODE driven by the plasticity
	function \(\Gamma\), for every realization of \(\bar{X}^N_{i,[0,t]}\) and every
	\(\gamma\in\mathcal{C}^d_t\), the map \(s\in [0,t]\mapsto \bar{q}_{ij,\gamma}^N(s) \in \mathcal{P}(\mathbb{R})\)
	solves the following transport equation in distributional sense:
	\[
		\partial_s\bar q_{ij,\gamma}^N(s,w) + \partial_w \left(\Gamma(\bar X_i^N(s),\gamma(s),w)\bar{q}_{ij,\gamma}^N(s,w)\right) = 0, \quad s \in [0,t], w \in \mathbb{R},
	\]
	with the initial condition \(\bar q_{ij,\gamma}^N(0) = q_{ij,0}^N\).
	Altogether, this implies that the independent projection 
	\eqref{eq:multi-agent-discrete-independent-projection-solved-weights} can be formulated as the following coupled system
	for \((\bar{X}_i^N,\bar{q}_{ij}^N)_{1\leq i,j\leq N}\)
	\begin{equation}\label{eq:multi-agent-discrete-independent-projection}
		\begin{aligned}
			&d\bar X_i^N(t) = \frac{1}{N}\sum_{j=1}^N \int_{\mathcal{C}^d_t} \int_{\mathbb{R}} w\, \bar{q}^N_{ij,\gamma}(t,dw)\,K(\bar X_i^N(t),\gamma(t))\,d\bar \mu_{[0,t]}^{N,j}(\gamma)\,dt + \sqrt{2\nu}\,dW_i^N(t),\quad t\in [0,T],\\
			&\partial_s\bar q_{ij,\gamma}^N(s,w)+\partial_w\left(\Gamma(\bar X_i^N(s),\gamma(s),w)\bar{q}_{ij,\gamma}^N(s,w)\right)=0,\quad s\in [0,t], \,\gamma\in \mathcal{C}^d_t, \\
			&\bar X_i^N(0) = X_{i,0}^N, \quad \bar{q}_{ij,\gamma}^N(0) = q_{ij,0}^N, \quad \bar \mu^{N,j}_{[0,t]} := \mathrm{Law}(\bar X_{j,[0,t]}^N)\in \mathcal{P}(\mathcal{C}^d_t).
		\end{aligned}
	\end{equation}
	In fact, both formulations \eqref{eq:multi-agent-discrete-independent-projection-solved-weights} and
	\eqref{eq:multi-agent-discrete-independent-projection} are strictly equivalent, as the linear growth
	and boundedness properties of \(\Gamma\) ensure that:
	\[
		\int_{0}^{T} \int_{\mathbb{R}} \vert \Gamma(\bar{X}_i^N(t),\gamma(t),w) \vert \, \bar{q}_{ij,\gamma}^N(t,dw) \, dt < \infty.
	\]
	This integrability guarantees that the unique narrowly continuous solution
	\(s\in [0,t] \mapsto \bar{q}_{ij,\gamma}^N(s) \in \mathcal{P}(\mathbb{R})\) of the transport equation
	\(\eqref{eq:multi-agent-discrete-independent-projection}_2\) 
	(for every realization of \(\bar{X}^N_{i,[0,t]}\) and every \(\gamma\in\mathcal{C}^d_t\)) is exactly given
	by the pushforward of the initial condition through the flow map associated with the characteristic ODE of
	\(\Gamma\), following \cite[Theorem 8.2.1]{AGS-05}. Thus,
	\(\bar q_{ij,\gamma}^N(s) = \Phi_s(\bar X_{i,[0,s]}^N,\gamma_{[0,s]},\cdot)_{\#}q_{ij,0}^N\).
\end{remark}

We are now in position to compare the original system \eqref{eq:multi-agent-discrete} with its independent projection
\eqref{eq:multi-agent-discrete-independent-projection-solved-weights}. 
Under the weight scaling assumption \eqref{eq:hypothesis-weight}, we will show that the trajectories of
the two systems are asymptotically close as \(N \to \infty\). Specifically, we establish a quantitative
error bound of order \(1/\sqrt{N}\), reminiscent of classical propagation of chaos results for exchangeable
systems \cite{MJ-98,S-91}.

\begin{lemma}[Propagation of independence] \label{lemma:propagation-of-independence}
	Fix \(p\in [1,2]\) and \(q\in [1,\infty]\). Under Assumptions \ref{assump:interaction-kernel} and
	\ref{assump:plasticity-function} on \(K\) and \(\Gamma\), let \((X_1^N,\ldots,X_N^N)\) and 
	\((\bar{X}_1^N,\ldots,\bar{X}_N^N)\) denote the unique strong solutions to systems \eqref{eq:multi-agent-discrete} and
	\eqref{eq:multi-agent-discrete-independent-projection-solved-weights}, respectively. Assume that both systems
	are initialized with the same independent family of initial states and weights
	\((X_{i,0}^N,w_{ij,0}^N)_{1\leq i,j \leq N}\), with \(\mathbb{E}\vert X_{i,0}^N\vert^p < \infty\) and the weights
	satisfying Assumption \ref{assump:weights}. 
	Then, for every \(t\in [0,T]\), the following estimate holds
	\begin{equation}
		\left(\frac{1}{N} \sum_{i=1}^{N} \left( \mathbb{E}\Vert X_i^N -\bar{X}_i^N \Vert^p_{\ast,t}\right)^{q/p} \right)^{1/q} \leq \frac{C(t)}{\sqrt{N}}, 
	\end{equation}
	where the constant \(C(t)\) is given by
	\[
		C(t) := 10t B_K B_{\Phi}(t)(1+W) e^{2tL_{\Pi}(t)(1+W)}.
	\]
\end{lemma}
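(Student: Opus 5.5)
We follow Sznitman's synchronous coupling. Both systems are driven by the same Wiener processes $W_i^N$ and start from the same initial states, so the noise cancels in the difference. Writing $Y_i:=Nw_{ij,0}^N$ (dropping indices where harmless) and using the closed form \eqref{eq:multi-agent-discrete-closed-states}, for each $i$ and $s\le t$ we have
\[
X_i^N(s)-\bar X_i^N(s)=\int_0^s \frac1N\sum_{j=1}^N\Big(\Pi_r(X_{i},X_{j},Nw_{ij,0}^N)-\mathcal{B}_{ij}(r)\Big)\,dr,
\]
where $\mathcal{B}_{ij}(r)$ denotes the $j$-th term of the drift of \eqref{eq:multi-agent-discrete-independent-projection-solved-weights}. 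We add and subtract $\Pi_r(\bar X_i,\bar X_j,Nw_{ij,0}^N)$ and split into two terms.

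The first term is $A_i=\frac1N\sum_j[\Pi_r(X_i,X_j,Nw_{ij,0}^N)-\Pi_r(\bar X_i,\bar X_j,Nw_{ij,0}^N)]$. By the Lipschitz bound \eqref{eq:lipschitz-property-Pi} together with Assumption \ref{assump:weights}, it is bounded by $L_\Pi(t)(1+W)\big(\|X_i-\bar X_i\|_{*,r}+\frac1N\sum_j\|X_j-\bar X_j\|_{*,r}\big)$. The second term is the fluctuation $R_i(r)=\frac1N\sum_j\big[\Pi_r(\bar X_i,\bar X_j,Nw_{ij,0}^N)-\mathcal{B}_{ij}(r)\big]$.

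We take $\sup_{s\le t}$ and then $L^p(\Omega)$ norms, using Minkowski's inequality. Set $e_i(t):=(\mathbb{E}\|X_i-\bar X_i\|_{*,t}^p)^{1/p}$. This gives
\[
e_i(t)\le L_\Pi(t)(1+W)\int_0^t\Big(e_i(r)+\tfrac1N\textstyle\sum_j e_j(r)\Big)dr+\int_0^t \|R_i(r)\|_{L^p}\,dr.
\]
We then take the $\ell^q(\frac1N)$ norm in $i$. Since $\frac1N\sum_j e_j\le\|e\|_{\ell^q(1/N)}$ by Jensen's inequality, the two Lipschitz contributions together give at most $2L_\Pi(1+W)\int_0^t\|e(r)\|\,dr$. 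Grönwall's lemma then yields the factor $e^{2tL_\Pi(t)(1+W)}$ that appears in $C(t)$. (For $q=\infty$ the same argument works with the maximum over $i$.) It therefore suffices to show $\|R_i(r)\|_{L^p}\le 10B_KB_\Phi(t)(1+W)/\sqrt N$ uniformly in $i$ and $r$.

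\textbf{Fluctuation bound (main step).} Since $p\le2$, it suffices to bound the $L^2$ norm. Split $R_i$ into the diagonal term $j=i$ and the off-diagonal terms. Each summand is bounded by $2B_\Pi(1+W)=2B_KB_\Phi(1+W)$ by \eqref{eq:bound-Pi}, so the diagonal term contributes at most $2B_KB_\Phi(1+W)/N$; this is the self-interaction correction $f_t$ of Remark \ref{remark:independent-projection}.

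For $j\neq i$, let $Z_{ij}:=\Pi_r(\bar X_i,\bar X_j,Nw_{ij,0}^N)-\mathbb{E}_i^N[\cdot]$. By Remark \ref{remark:independent-projection}, $\mathcal{B}_{ij}$ is exactly the conditional expectation given $\bar X_i$, which uses the independence of the family $(\bar X_k)_k$ from the weights. Expanding $\mathbb{E}|\sum_{j\ne i}Z_{ij}|^2$, the cross terms $\mathbb{E}[Z_{ij}\cdot Z_{ik}]$ with $j\neq k$ vanish. To see this, condition on $\bar X_i$: given $\bar X_i$, the pairs $(\bar X_j,w_{ij,0})$ and $(\bar X_k,w_{ik,0})$ are independent. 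This holds because $\bar X_j$ is a function of $(X_{j,0},W_j)$ only, by the independence statement in Lemma \ref{lemma:intermediate-system}, and the initial data and weights are independent. Each such factor also has zero conditional mean. Hence $\mathbb{E}|\sum_{j\neq i}Z_{ij}|^2\le N(2B_KB_\Phi(1+W))^2$, and the off-diagonal part contributes at most $2B_KB_\Phi(1+W)/\sqrt N$.

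Collecting these estimates gives a constant no larger than the stated $10$. The careful verification of this conditional independence is the point I expect to require the most care: one must check that the weights $w_{ij,0}$ enter only through their laws in the $\bar X$ system, so that they stay independent of $\bar X$.
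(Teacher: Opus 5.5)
Your proposal is correct and follows essentially the same route as the paper. Both arguments use synchronous coupling, control the Lipschitz term via \eqref{eq:lipschitz-property-Pi} and Assumption \ref{assump:weights}, bound the fluctuation in $L^2$ with cross terms vanishing by conditional independence given $\bar X_i$, and close with Gr\"onwall. The only difference is cosmetic: you absorb the whole diagonal $j=i$ contribution (the paper's $x_{ii}$ together with the $f_t$-correction $C_i$) into a single $O(1/N)$ remainder, which gives a constant no worse than the stated $10$.
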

\begin{proof}
	We will present the proof for \(1 \leq q < \infty\). The case \(q=\infty\) follows identical steps, replacing the normalized sums over the index \(i\) with the maximum over \(i\). 
	
	Let us recall the integral equations satisfied by \(X_i^N\) and \(\bar{X}_i^N\):
	\[
	\begin{aligned}
		X_i^N(t) &= X_{i,0}^N + \sum_{j=1}^{N} \int_{0}^{t} w^N_{ij}(s)K(X_i^N(s),X_j^N(s)) \,ds + \sqrt{2\nu} W_i^N(t), \\
		\bar{X}_i^N(t) &= X_{i,0}^N + \sum_{j=1}^{N} \int_{0}^{t} \mathbb{E}^N_i[\bar{w}^N_{ij}(s)K(\bar{X}_i^N(s),\bar{X}_j^N(s))] \,ds + \frac{1}{N}\int_{0}^{t}f_s(\bar{X}_{i,[0,s]}^N,\bar{\mu}^{N,i}_{[0,s]},q^N_{ii,0})ds+\sqrt{2\nu} W_i^N(t), 
	\end{aligned}
	\]  
	where we use the abbreviations \(w^N_{ij}(s) = \frac{1}{N}\Phi(s,X_i^N,X_j^N,Nw_{ij,0}^N)\) and
	\(\bar{w}^N_{ij}(s) = \frac{1}{N}\Phi(s,\bar{X}_i^N,\bar{X}_j^N,Nw_{ij,0}^N)\). As established in
	Remark \ref{remark:independent-projection}, \(\mathbb{E}_i^N\) denotes the conditional expectation 
	with respect to the natural filtration of the \(i\)-th particle and the self-interaction term is given through the function \(f_s\) defined
    in \eqref{eq:self-interaction}. Taking the difference between the two equations,
	applying the triangle inequality, and taking the supremum over time up to \(t\), we obtain
	\[
		\begin{aligned}
		\Vert X_i^N-\bar{X}_i^N\Vert_{\ast,t} &\leq \int_{0}^{t}\sum_{j=1}^{N}\vert w^N_{ij}(s)K(X_i^N(s),X_j^N(s)) -\bar{w}^N_{ij}(s)K(\bar{X}_i^N(s),\bar{X}_j^N(s)) \vert \,ds \\ 
					   	&\quad+ \int_{0}^{t} \left\vert \sum_{j=1}^{N}\bar{w}^N_{ij}(s)K(\bar{X}_i^N(s),\bar{X}_j^N(s)) - \mathbb{E}^N_i\left[\bar{w}^N_{ij}(s)K(\bar{X}_i^N(s),\bar{X}_j^N(s))\right] \right\vert \,ds \\
                        &\quad + \frac{1}{N}\int_{0}^t \vert  f_s(\bar{X}_{i,[0,s]},\bar{\mu}^{N,i}_{[0,s]},q^N_{ii,0}) \vert ds  \\
					   	&:= \int_{0}^{t}A_i(s)\,ds + \int_{0}^{t}B_i(s)\,ds + \int_{0}^{t}C_i(s)\,ds,
		\end{aligned}
	\]
	with
	\[
		\begin{aligned}
			A_i(s) &:= \sum_{j=1}^{N}\vert w^N_{ij}(s)K(X_i^N(s),X_j^N(s)) -\bar{w}^N_{ij}(s)K(\bar{X}_i^N(s),\bar{X}_j^N(s)) \vert, \\
			B_i(s) &:= \left\vert \sum_{j=1}^{N}\bar{w}^N_{ij}(s)K(\bar{X}_i^N(s),\bar{X}_j^N(s)) - \mathbb{E}^N_i\left[\bar{w}^N_{ij}(s)K(\bar{X}_i^N(s),\bar{X}_j^N(s))\right] \right\vert,
		\end{aligned}
	\]
    and the last term that can be bounded as follows
    \[
    C_i(s):= \frac{1}{N}\vert f_s(\bar{X}^N_{i,[0,s]},\bar{\mu}^{N,i}_{[0,s]},q_{ii,0}^N)\vert \leq \frac{2B_KB_{\Phi}(s)(1+W)}{N}.
    \]
	Taking the \(L^p(\Omega,\mathcal{C}_T^d)\) norm, applying Minkowski's integral inequality, 
	and then taking the weighted \(\ell_q\) norm over the index \(i\), we find 
        \begin{align}
		&\left(\frac{1}{N}\sum_{i=1}^{N} \left(\mathbb{E} \Vert X_i^N-\bar{X}_i^N\Vert^p_{\ast,t}\right)^{q/p}\right)^{1/q}\label{eq:propagation-independence-starting-inequality}\\
        &\qquad\leq\int_{0}^{t} \left( \frac{1}{N} \sum_{i=1}^{N}\left(\mathbb{E} \vert A_i(s) \vert^p\right)^{q/p}\right)^{1/q} \,ds + \int_{0}^{t} \left( \frac{1}{N} \sum_{i=1}^{N}\left(\mathbb{E} \vert B_i(s) \vert^p\right)^{q/p}\right)^{1/q} \,ds+\frac{2tB_KB_{\Phi}(t)(1+W)}{N}.\nonumber
        \end{align}
	We will now estimate each of the two terms on the right-hand side separately.
	
	\medskip

	\noindent \(\diamond\) \textsc{Step 1}: Control of \(A_i(s)\). \\
	Recalling our notation and the Lipschitz property derived for the mapping \(\Pi\) in
	Proposition \ref{prop:properties-Phi-Pi}, together with the scaling assumption \eqref{eq:hypothesis-weight} on the
	initial random weights, we obtain:
	\[
		\begin{aligned}
			\vert A_i(s)\vert &= \frac{1}{N}\sum_{j=1}^{N} \vert \Pi(s,X_i^N,X_j^N,Nw_{ij,0}^N)-\Pi(s,\bar{X}_i^N,\bar{X}_j^N,Nw_{ij,0}^N)\vert \\ 
				     	&\leq L_{\Pi}(s)(1+ W)\left(\Vert X_i^N- \bar{X}_i^N\Vert_{\ast,s} + \frac{1}{N}\sum_{j=1}^{N} \Vert X_j^N- \bar{X}_j^N \Vert_{\ast,s} \right).  
		\end{aligned}
	\]
	Taking the \(L^p(\Omega,\mathbb{R}^d)\) norm and applying Minkowski's inequality yields
	\[
		\left(\mathbb{E} \vert A_i(s)\vert^p\right)^{1/p} \leq  L_{\Pi}(s)(1+ W) \left[ \left( \mathbb{E}  \Vert X_i^N- \bar{X}_i^N\Vert_{\ast,s}^p\right)^{1/p} + \frac{1}{N}\sum_{j=1}^{N} \left(\mathbb{E} \Vert X_j^N- \bar{X}_j^N \Vert_{\ast,s}^p\right)^{1/p}\right].
	\]
	Next, computing the weighted \(\ell_q\) norm over the index \(i\) and using Minkowski's inequality once again, we deduce
	\begin{equation} \label{eq:propagation-independence-control-A}
		\left(\frac{1}{N}\sum_{i=1}^{N} \left(\mathbb{E} \vert A_i(s)\vert^p\right)^{q/p} \right)^{1/q} \leq 2L_{\Pi}(s)(1+ W)\left(\frac{1}{N} \sum_{i=1}^{N} \left(\mathbb{E} \Vert X_i^N- \bar{X}_i^N\Vert_{\ast,s}^p\right)^{q/p}\right)^{1/q}.
	\end{equation}

	\medskip

	\noindent \(\diamond\) \textsc{Step 2}: Control of \(B_i(s)\). \\ 
	Since \(p\in [1,2]\), we can bound the \(p\)-th moment by the second moment using Jensen's inequality, which we now proceed to estimate. We define, for every \(i,j \in \llbracket 1,N \rrbracket\) and \(s \in [0,t]\),  
	\[
		x_{ij}(s) = \bar{w}^N_{ij}(s)K(\bar{X}_i^N(s),\bar{X}_j^N(s)) - \mathbb{E}^N_i\left[\bar{w}^N_{ij}(s)K(\bar{X}_i^N(s),\bar{X}_j^N(s))\right],
	\]
	which allows us to express \(B_i(s) = \sum_{j=1}^{N} x_{ij}(s)\). Expanding the square gives
	\[
		\mathbb{E}\vert B_i(s)\vert^2 = \mathbb{E}\left\vert \sum_{j=1}^{N} x_{ij}(s) \right\vert^2 = \sum_{j=1}^{N} \mathbb{E}\vert x_{ij}(s)\vert^2 + \sum_{\substack{j=1 \\ j\neq i}}^{N}\sum_{\substack{k=1 \\ k\neq j}}^{N} \mathbb{E}(x_{ij}(s)x_{ik}(s)) +
        \sum_{\substack{k=1 \\ k\neq i}}^{N} \mathbb{E}(x_{ii}(s)x_{ik}(s))
	\]
	This expansion resembles the classical variance control in the propagation of chaos literature, where the cross-terms
	are expected to vanish. Indeed, for distinct indices \(i \neq j \neq k\), the random variables \((\bar{X}_{i,[0,s]}^N,\bar{X}_{j,[0,s]}^N,\bar{X}_{k,[0,s]}^N)\) are independent. Thus, \(x_{ij}(s)\) and \(x_{ik}(s)\) are conditionally independent given \(\bar{\mathcal{F}}_i^N(s)\). Using the law of total expectation, \(\mathbb{E}=\mathbb{E}\mathbb{E}^N_i\), we obtain:
	\[
		\mathbb{E}(x_{ij}(s)x_{ik}(s)) = \mathbb{E}\mathbb{E}^N_i(x_{ij}(s)x_{ik}(s)) = \mathbb{E}(\mathbb{E}^N_i(x_{ij}(s))\mathbb{E}^N_i(x_{ik}(s))) = 0.
	\]
	Furthermore, the linear growth of the flow map \(\Phi\) established in \eqref{eq:bound-Phi} ensures that the initial weight scaling propagates in time
	\[
		\vert N\bar{w}^N_{ij}(s) \vert = \vert \Phi(s,\bar{X}_i^N,\bar{X}^N_j,Nw_{ij,0}^N) \vert \leq B_{\Phi}(s)(1+W).
	\]
	This, combined with the boundedness of the kernel \(K\), implies that for every \(i,j \in \llbracket1,N \rrbracket\), 

    \[
\vert x_{ij}(s) \vert \leq \frac{2B_KB_{\Phi}(s)(1+W)}{N},
    \]
    thus
	\[
		\begin{aligned}
			\mathbb{E}\vert B_i(s)\vert^2 &= \sum_{j=1}^{N} \mathbb{E} \vert x_{ij}\vert^2 + 
        \sum_{\substack{k=1 \\ k\neq i}}^{N} \mathbb{E}(x_{ii}(s)x_{ik}(s)) 
			 \leq \frac{8 B^2_K B^2_{\Phi}(s)(1+W)^2}{N}.
		\end{aligned}
	\]
	Consequently, the \(p\)-th moment of \(B_i(s)\) scales as \(N^{-1/2}\), and the corresponding weighted \(\ell_q\) norm admits the bound
	\begin{equation} \label{eq:propagation-independence-control-B}
		\left(\frac{1}{N} \sum_{i=1}^{N} \left(\mathbb{E}\vert B_i(s)\vert^p\right)^{q/p}\right)^{1/q} \leq \frac{8B_K B_{\Phi}(s)(1+W)}{\sqrt{N}}.
	\end{equation}
	Finally, combining the estimates \eqref{eq:propagation-independence-control-A}
	and \eqref{eq:propagation-independence-control-B} for \(A_i\) and \(B_i\) within
	\eqref{eq:propagation-independence-starting-inequality}, an application of Gr\"onwall's inequality
	yields the desired estimate. 
\end{proof}

In order to pass to the limit as \(N\to\infty\), the traditional approach in the mean-field limit literature is to derive the
partial differential equation (typically a Fokker-Planck equation) satisfied by the laws of the independent processes
\(\bar{X}_i^N(t)\), and subsequently pass to the limit within this PDE framework. However, the non-Markovian nature
of the system \eqref{eq:multi-agent-discrete-independent-projection-solved-weights} makes it impossible to derive a closed-form
PDE for the laws of \(\bar{X}_i^N(t)\). We therefore adopt an alternative strategy, which consists of reformulating
the intermediate system \eqref{eq:multi-agent-discrete-independent-projection-solved-weights} directly in the SDE form.

This reformulation is heavily inspired by the theory of graph limits \cite{Lovasz-12} and builds upon methodologies first
adapted for continuum limits in \cite{M-14,M-14-ARMA}. The core idea is to embed the discrete system into a continuous domain
by replacing the discrete indices \(i,j\in \llbracket 1,N \rrbracket\) with continuous labels \(\xi,\xi' \in [0,1]\). This
procedure yields continuous objects that encode both the particle states and the underlying network structure,
providing the correct functional setting to rigorously pass to the limit as \(N \to \infty\).

\begin{definition}[Reformulation of the independent projection]
	For every \(N\in \mathbb{N}\), we define the piecewise-constant extensions of the state processes
	and the initial weight laws as
	\begin{equation*}
		\begin{aligned}
		\bar X^N(t,\xi) &:= \sum_{i=1}^N \bar X_i^N(t)\,\mathds{1}_{I_i^N}(\xi), \quad t\in [0,T],\,\xi\in [0,1],\\
		q^{N,\xi,\xi'}_0 &:= \sum_{i,j=1}^N q^N_{ij,0}\,\mathds{1}_{I_i^N\times I_j^N}(\xi,\xi'), \quad \xi,\xi'\in [0,1].
		\end{aligned}
	\end{equation*}
	With this notation, the independent projection system \eqref{eq:multi-agent-discrete-independent-projection-solved-weights}
	can be equivalently rewritten as the following system of path-dependent McKean-Vlasov SDEs parameterized by \(\xi \in [0,1]\):
	\begin{align}
		d\bar X^N(t,\xi) &= \int_0^1\int_{\mathcal{C}^d_t} \int_{\mathbb{R}} \Phi_t(\bar{X}^N(\cdot,\xi)_{[0,t]},\gamma,w) K(\bar X^N(t,\xi),\gamma(t)) \, dq^{N,\xi,\xi'}_0(w)\,d\bar \mu_{[0,t]}^{N,\xi'}(\gamma)\,d\xi' dt + \sqrt{2\nu}\,dW^N(t,\xi),\nonumber\\
		\bar X^N(0,\xi) &= X^N_{0}(\xi), \quad \bar \mu^{N,\xi}_{[0,t]} := \mathrm{Law}(\bar X^N(\cdot,\xi)_{[0,t]}) \in \mathcal{P}(\mathcal{C}^d_t).\label{eq:multi-agent-macroscopic-intermediate-solved-weights}
	\end{align}
	The initial states are analogously set as
	\begin{equation*}
		X_0^N(\xi) := \sum_{i=1}^N X_{i,0}^N\,\mathds{1}_{I_i^N}(\xi), \quad \xi\in [0,1],
	\end{equation*}
	and the driving noise \(W^N(t,\xi)\) is defined piecewise by
	\[
		W^N(t,\xi) := \sum_{i=1}^N W_i^N(t)\, \mathds{1}_{I_i^N}(\xi),\quad t \in [0,T],\,\xi\in [0,1].
	\]
\end{definition}
\begin{remark} \label{remark:reformulation-states-weights-probability-graphon}
	Following Remark \ref{remark:reformulation-states-weights}, if we define for every \(t\in [0,T]\) the map
	\(\bar q^{N}:[0,t]\times [0,1]^2\times \mathcal{C}_t^d \times \Omega \to \mathcal{P}(\mathbb{R})\) by
	\[
		\bar q_\gamma^{N,\xi,\xi'}(s) := \sum_{i,j=1}^N \bar{q}^N_{ij,\gamma}(s)\,\mathds{1}_{I_i^N\times I_j^N}(\xi,\xi') ,\quad s\in [0,t],\,\xi,\xi'\in [0,1], \, \gamma \in \mathcal{C}^d_t, 
	\]
	where \(\bar{q}^N_{ij,\gamma}(s)\) is defined as in \eqref{eq:evolution-prob-graphon}, we can reformulate 
	\eqref{eq:multi-agent-macroscopic-intermediate-solved-weights} as the following coupled system  
	\begin{equation}\label{eq:multi-agent-macroscopic-intermediate}
	\begin{aligned}
		&d\bar X^N(t,\xi) = \int_0^1\int_{\mathcal{C}^d_t} \int_{\mathbb{R}} w \,\bar q^{N,\xi,\xi'}_\gamma(t,dw)\,K(\bar X^N(t,\xi),\gamma(t))\,d\bar \mu_{[0,t]}^{N,\xi'}(\gamma)\,d\xi' dt + \sqrt{2\nu}\,dW^N(t,\xi),\quad t\in [0,T],\\
		&\partial_s \bar{q}^{N,\xi,\xi'}_{\gamma}(s,w) + \partial_w\left(\Gamma(\bar X^N(s,\xi),\gamma(s),w)\bar q^{N,\xi,\xi'}_{\gamma}(s,w)\right)=0,\quad s\in [0,t],\,\gamma\in \mathcal{C}^d_t,\\
		&\bar X^N(0,\xi) = X_0^N(\xi),\quad \bar q_\gamma^{N,\xi,\xi'}(0) = q_0^{N,\xi,\xi'}, \quad \bar \mu_{[0,t]}^{N,\xi'} := \mathrm{Law}(\bar{X}^N(\cdot,\xi')_{[0,t]}).
	\end{aligned}
	\end{equation}
\end{remark}

Following the reformulation of the independent projection in 
\eqref{eq:multi-agent-macroscopic-intermediate}, the system is now in a suitable form to formally
pass to the limit as \(N\to \infty\) and identify the macroscopic equation \eqref{eq:multi-agent-macroscopic-SDE}. To make
this mean-field limit rigorous, we dedicate the next two sections to study first the well-posedness of
\eqref{eq:multi-agent-macroscopic-intermediate}, and subsequently establishing
its stability with respect to the initial data and the underlying probability-graphon.

\section{Well-posedness of the path-dependent McKean-Vlasov system} \label{sec:well-posedness}
In this section, we study the well-posedness of the McKean-Vlasov SDE system 
\eqref{eq:multi-agent-macroscopic-intermediate-solved-weights} arising from the 
continuum-label reformulation of the independent projection. This equation falls into the following general
class of path-dependent McKean-Vlasov SDEs:
\begin{equation} \label{eq:fibered-McKean-Vlasov-SDE}
	\begin{aligned}
		dX(t,\xi) &= b_{\xi}(t,X(\cdot,\xi),\mu)dt + \sigma_{\xi}(t,X(\cdot,\xi),\mu)dW(t,\xi), \\ 
		X(0,\xi) &= X_0(\xi), \quad \mu = (\mathrm{Law}(X(\cdot,\xi)))_{\xi \in [0,1]}.
	\end{aligned}
\end{equation}
Here, the drift \(b\) and the diffusion \(\sigma\) are given by the functionals
\[
	\begin{aligned}
		b&:[0,1]\times[0,T]\times \mathcal{C}_T^d\times \mathcal{P}_{p,q,\nu}(\mathcal{C}_T^d\times [0,1]) \to \mathbb{R}^d, \\
		\sigma&:[0,1]\times[0,T]\times \mathcal{C}_T^d\times \mathcal{P}_{p,q,\nu}(\mathcal{C}_T^d\times [0,1]) \to \mathbb{R}^{d\times d}, 
	\end{aligned}
\]
and \((W(\cdot,\xi))_{\xi \in [0,1]}\) is a family of Wiener processes. As we can see, the coupling in the system
is through the fibered pathwise law of the process \(X\), {\it i.e.}, the family of probability measures
\((\mathrm{Law}(X(\cdot,\xi)))_{\xi \in [0,1]}\).
\begin{definition}
	Let \((\Omega,\mathcal{F},\mathbb{P})\) be a complete probability space supporting a family of \(d\)-dimensional
	Wiener processes \((W(\cdot,\xi))_{\xi \in [0,1]}\) and a family of initial random variables
	\((X_0(\xi))_{\xi \in [0,1]}\) taking values in \(\mathbb{R}^d\). A family of stochastic processes
	\(X=(X(\cdot,\xi))_{\xi \in [0,1]}\) is called a \emph{strong solution} to \eqref{eq:fibered-McKean-Vlasov-SDE} if
	the following conditions hold: 
	\begin{enumerate}[label=(\roman*)]
		\item For almost every \(\xi \in [0,1]\), \(\mu^{\xi}=\mathrm{Law}(X(\cdot,\xi))\).  
		\item For almost every \(\xi \in [0,1]\), the process \(X(\cdot,\xi)\) has continuous paths, is adapted to the filtration generated by the Wiener process \(W(\cdot,\xi)\) and the initial condition \(X_0(\xi)\), and satisfies the following integral equation \(\mathbb{P}\)-a.s.:
		\[
			X(t,\xi) = X_0(\xi) + \int_{0}^{t} b_{\xi}(s,X(\cdot,\xi),\mu) \,ds + \int_{0}^{t} \sigma_{\xi}(s,X(\cdot,\xi),\mu) \,dW(s,\xi), \quad t \in [0,T].
		\]  
	\end{enumerate}
	We say that \emph{strong uniqueness} holds if any two strong solutions \(X\) and \(X'\), defined on the same probability space, driven by the same family of Wiener processes, and starting from the same initial conditions, satisfy \(X(\cdot,\xi)=X'(\cdot,\xi)\) \(\mathbb{P}\)-a.s.\ for almost every \(\xi \in [0,1]\).

	We say that \emph{uniqueness in law} holds if for any two strong solutions \(X\) and \(X'\), possibly defined on different probability spaces and driven by different Wiener processes but sharing the same fibered law for their initial conditions, one has that \(\mathrm{Law}(X(\cdot,\xi))=\mathrm{Law}(X'(\cdot,\xi))\) for almost every \(\xi \in [0,1]\).
\end{definition}
Under the following standard assumptions on the coefficients \(b\) and \(\sigma\), we can establish the well-posedness of the system \eqref{eq:fibered-McKean-Vlasov-SDE}.

\begin{assumption} \label{assump:fibered-McKean-Vlasov-SDE-multiplicative-noise}
	The functionals \(b\) and \(\sigma\) satisfy the following conditions:
	\begin{enumerate}[label=(\roman*)]
		\item \textbf{(Measurability):} For every fixed \((\gamma,\mu)\in \mathcal{C}_T^d\times \mathcal{P}_{p,q,\nu}(\mathcal{C}^d_T\times [0,1])\), the maps \((\xi,t)\in [0,1]\times [0,T]\mapsto b_{\xi}(t,\gamma,\mu)\in \mathbb{R}^d\) and \((\xi,t)\in [0,1]\times [0,T]\mapsto \sigma_{\xi}(t,\gamma,\mu)\in \mathbb{R}^{d\times d}\) are measurable.
		\item \textbf{(Lipschitz continuity):} There exists a constant \(L>0\) such that for almost every \(\xi \in [0,1]\), for every \(t \in [0,T]\), for every \(\gamma,\bar{\gamma} \in \mathcal{C}_T^d\), and for every \(\mu,\bar{\mu} \in \mathcal{P}_{p,q,\nu}(\mathcal{C}^d_T\times [0,1])\), it holds that
		\[
			\vert b_{\xi}(t,\gamma,\mu)-b_{\xi}(t,\bar{\gamma},\bar{\mu})\vert + \vert \sigma_{\xi}(t,\gamma,\mu)-\sigma_{\xi}(t,\bar{\gamma},\bar{\mu}) \vert \leq L \left( \Vert \gamma-\bar{\gamma} \Vert_{\ast,t} + d_{W_p,q}(\mu_{[0,t]},\bar{\mu}_{[0,t]}) \right).
		\]
		\item \textbf{(Boundedness):} There exists a constant \(C>0\) such that for almost every \(\xi \in [0,1]\) and for every \(t \in [0,T]\), it holds that
		\[
			\vert b_{\xi}(t,0,\delta_0)\vert + \vert \sigma_{\xi}(t,0,\delta_0) \vert \leq C.
		\]
	\end{enumerate}
\end{assumption}
\begin{proposition}[Well-posedness of the McKean-Vlasov system] \label{prop:well-posedness-fibered-McKean-Vlasov-SDE-multiplicative-noise}
	Suppose that the coefficients \(b\) and \(\sigma\) satisfy Assumption 
	\ref{assump:fibered-McKean-Vlasov-SDE-multiplicative-noise} for some \((p,q)\in [2,\infty)\times [1,\infty)\).
	Then, for any family of initial conditions \((X_0(\xi))_{\xi \in [0,1]}\) such that \(X_0(\xi)\)
	is independent of \(W(\cdot,\xi)\) and
	\((\mathrm{Law}(X_0(\xi)))_{\xi \in [0,1]} \in \mathcal{P}_{p,q,\nu}(\mathbb{R}^d\times [0,1])\),
	there exists a unique strong solution \(X\) to \eqref{eq:fibered-McKean-Vlasov-SDE} with
	\((\mathrm{Law}(X(\cdot,\xi)))_{\xi \in [0,1]} \in \mathcal{P}_{p,q,\nu}(\mathcal{C}^d_T\times [0,1])\).
	Moreover, uniqueness in law holds.
\end{proposition}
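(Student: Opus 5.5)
A Picard fixed-point argument on the space of fibered pathwise laws is the natural route, in the spirit of Sznitman's approach for McKean--Vlasov equations, with the label $\xi$ carried along as a parameter. Consider the complete metric space $\mathcal{M}_T:=\mathcal{P}_{p,q,\nu}(\mathcal{C}^d_T\times[0,1])$ with the distance $d_{W_p,q}$. For a fixed $\mu\in\mathcal{M}_T$, the SDE
\[
dX^\mu(t,\xi)=b_\xi(t,X^\mu(\cdot,\xi),\mu)\,dt+\sigma_\xi(t,X^\mu(\cdot,\xi),\mu)\,dW(t,\xi),\qquad X^\mu(0,\xi)=X_0(\xi),
\]
is, for a.e.\ $\xi$, a classical path-dependent SDE with Lipschitz, non-anticipative coefficients by Assumption \ref{assump:fibered-McKean-Vlasov-SDE-multiplicative-noise}(ii)--(iii). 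The appendix result on path-dependent SDEs (the multiplicative-noise analogue of Theorem \ref{th:well-posedness-path-dependent-SDEs-additive-noise}) then gives a unique strong solution with $\mathbb{E}\|X^\mu(\cdot,\xi)\|_{\ast,T}^p\le C(1+\mathbb{E}|X_0(\xi)|^p+W_p(\mu^{\xi},\delta_0)^p)$, via Burkholder--Davis--Gundy (using $p\ge2$) and Gr\"onwall. Yamada--Watanabe gives that $\mathrm{Law}(X^\mu(\cdot,\xi))$ depends only on $\mathrm{Law}(X_0(\xi))$ and the coefficients. Define $\Psi(\mu)^\xi:=\mathrm{Law}(X^\mu(\cdot,\xi))$.

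\textbf{Step 1: $\Psi$ maps $\mathcal{M}_T$ into itself.} Measurability of $\xi\mapsto\Psi(\mu)^\xi$ requires some care. The plan is to represent $X^\mu(\cdot,\xi)=F_\xi(X_0(\xi),W(\cdot,\xi))$ through a jointly measurable solution map, obtained as the a.e.\ limit of Picard iterates. Each iterate is measurable in $(\xi,x,\omega)$ by Assumption \ref{assump:fibered-McKean-Vlasov-SDE-multiplicative-noise}(i). Proposition \ref{prop:characterization-random-prob-measures} then yields that $(\Psi(\mu)^\xi)_\xi$ is a Borel family. Integrating the $p$-th moment bound raised to the power $q/p$ over $\xi$ gives finiteness of $\int_0^1 W_p(\Psi(\mu)^\xi,\delta_0)^q\,d\xi$, using $(\mathrm{Law}(X_0(\xi)))_\xi\in\mathcal{P}_{p,q,\nu}$.

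\textbf{Step 2: contraction estimate.} Given $\mu,\bar\mu$, couple $X^\mu$ and $X^{\bar\mu}$ synchronously, with the same $X_0(\xi)$ and $W(\cdot,\xi)$. Set $f(t):=\bigl(\int_0^1(\mathbb{E}\|X^\mu(\cdot,\xi)-X^{\bar\mu}(\cdot,\xi)\|_{\ast,t}^p)^{q/p}d\xi\bigr)^{1/q}$. BDG with exponent $p$, H\"older in time, and the Lipschitz bound (ii) give, for each $\xi$,
\[
\mathbb{E}\|X^\mu-X^{\bar\mu}\|_{\ast,t}^p\le C_T\int_0^t\Bigl(\mathbb{E}\|X^\mu-X^{\bar\mu}\|^p_{\ast,s}+d_{W_p,q}(\mu_{[0,s]},\bar\mu_{[0,s]})^p\Bigr)ds.
\]
This step is where I expect the main obstacle. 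Mixing the exponents $p$ and $q$ is delicate, because the right-hand side must be turned into an $L^{q/p}(d\xi)$ bound to close. I would raise the inequality to the power $1/p$ and use Minkowski's integral inequality in $L^{q}(d\xi)$ for the $s$-integral, which is valid when $q\ge p$. For $q<p$, I would instead work with $\sup_\xi$ weights or apply the concave bound $(\cdot)^{q/p}$ with Jensen on $[0,t]$. Gr\"onwall then gives
\[
d_{W_p,q}(\Psi(\mu)_{[0,t]},\Psi(\bar\mu)_{[0,t]})\le f(t)\le C_T\Bigl(\int_0^t d_{W_p,q}(\mu_{[0,s]},\bar\mu_{[0,s]})^{r}ds\Bigr)^{1/r},
\]
with $r=\max(p,q)$, where the first inequality uses the synchronous coupling fiberwise.

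\textbf{Step 3: conclusion.} Iterating the integral bound in Step 2 yields $d_{W_p,q}(\Psi^k\mu,\Psi^k\bar\mu)\le (C_T^rT^k/k!)^{1/r}\,d_{W_p,q}(\mu,\bar\mu)$. Hence some power of $\Psi$ is a strict contraction on the complete space $\mathcal{M}_T$, so $\Psi$ has a unique fixed point $\mu^\ast$. The process $X:=X^{\mu^\ast}$ is a strong solution. Strong uniqueness follows because any solution $X'$ has laws $\mu'=\Psi(\mu')$, so $\mu'=\mu^\ast$, and pathwise uniqueness of the frozen SDE then gives $X'=X$. Uniqueness in law follows similarly: the fixed point depends only on the fibered initial law, and Yamada--Watanabe transfers pathwise uniqueness of the frozen SDE into uniqueness of its law.
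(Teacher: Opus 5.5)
Your overall architecture is the paper's. You freeze $\mu$ and solve the fiberwise path-dependent SDE by Theorem~\ref{th:well-posedness-path-dependent-SDEs}. You then couple $X^\mu$ and $X^{\bar\mu}$ synchronously, get a Volterra-type bound in $d_{W_p,q}$ from BDG and Gr\"onwall, iterate to a factorial contraction, and use Yamada--Watanabe for uniqueness in law.

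The one step whose justification is incomplete is measurability. Assumption~\ref{assump:fibered-McKean-Vlasov-SDE-multiplicative-noise}(i)--(ii) does make the integrands Carath\'eodory. But each Picard iterate also contains an It\^o integral $\int_0^t\sigma_\xi(s,F^{n-1}_\xi(x,w),\mu)\,dw(s)$, and for each $(\xi,x)$ this is defined only up to a $\mathbb{W}$-null set that depends on $(\xi,x)$. So ``by Assumption (i)'' does not give you a version jointly measurable in $(\xi,x,w)$. That needs a separate lemma on stochastic integrals depending measurably on a parameter (of Stricker--Yor type), or an explicit construction through simple-process approximations along a measurably selected subsequence.

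Moreover, $w$ must be the canonical Wiener path, not a sample point $\omega$ of the original space. As the paper remarks after its proof, $(\xi,\omega)\mapsto W(\cdot,\xi)(\omega)$ cannot be jointly measurable for an essentially independent family.

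The paper avoids building a solution map at all. It shows by induction on $n$ that $\xi\mapsto\mathrm{Law}(X^{\mu,n}(\cdot,\xi),W(\cdot,\xi))$ is measurable. It does this through finite-dimensional distributions of Euler-type discretizations, written as Carath\'eodory functionals of the previous iterate and the Brownian increments, and then passes to limits in probability. Once your map $F$ is available, though, three things become immediate pushforwards of $\mu_0^\xi\otimes\mathbb{W}$: the Borel property of $\Psi(\mu)$, the measurability of the fiberwise coupling costs, and the Yamada--Watanabe step.

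The obstacle you anticipate in Step~2 is not really there. The measure enters the Lipschitz bound only through the scalar $d_{W_p,q}(\mu_{[0,s]},\bar\mu_{[0,s]})$, which does not depend on $\xi$. So Gr\"onwall applied fiber by fiber to your $p$-th power inequality gives
\[
\mathbb{E}\Vert X^\mu(\cdot,\xi)-X^{\bar\mu}(\cdot,\xi)\Vert_{\ast,t}^p\le C_Te^{C_Tt}\int_0^t d_{W_p,q}(\mu_{[0,s]},\bar\mu_{[0,s]})^p\,ds
\]
uniformly in $\xi$. Taking the $L^q(d\xi)$ norm then costs nothing, no case split between $q\ge p$ and $q<p$ is needed, and you can take $r=p$. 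This is the paper's order of operations: Gr\"onwall fiberwise, then the $L^q$ norm in $\xi$.

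Your $p$-th power bookkeeping is what \eqref{eq:BDG-p-geq-2} actually delivers for the stochastic integral, namely an $L^p$-in-time control. The paper's displayed estimate instead integrates $(\mathbb{E}\Vert\cdot\Vert^p_{\ast,s})^{1/p}$ in $L^1$ in time, so your form is, if anything, more precise. The factorial iteration is unchanged either way.

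Two harmless slips remain. The growth term in your moment bound should be $d_{W_p,q}(\mu,\delta_0)$ rather than $W_p(\mu^\xi,\delta_0)$. The iterated constant is $C_T^{rk}$, not $C_T^{r}$.
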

\begin{proof}
	For every fixed \(\mu \in \mathcal{P}_{p,q,\nu}(\mathcal{C}^d_T\times [0,1])\), consider the following system of SDEs:
	\[
		\begin{aligned}
			dX(t,\xi) &= b_{\xi}(t,X(\cdot,\xi),\mu) \,dt + \sigma_{\xi}(t,X(\cdot,\xi),\mu) \,dW(t,\xi), \\ 
			X(0,\xi) &= X_0(\xi).
		\end{aligned}
	\]
	Since the fibered law \(\mu\) is fixed, this system is decoupled and consists of a family of classical path-dependent SDEs.
	By Assumption \ref{assump:fibered-McKean-Vlasov-SDE-multiplicative-noise} on the coefficients \(b\) and \(\sigma\),
	the well-posedness of this decoupled system is guaranteed by Theorem \ref{th:well-posedness-path-dependent-SDEs}.
	We will denote by \(X^\mu(\cdot,\xi)\) the unique strong solution to this system. 

	We first assume the following claim: the map \(\Psi: \mathcal{P}_{p,q,\nu}(\mathcal{C}^d_T\times [0,1]) \to \mathcal{P}_{p,q,\nu}(\mathcal{C}^d_T\times [0,1]),\mu \mapsto \Psi(\mu) := (\mathrm{Law}(X^\mu(\cdot,\xi)))_{\xi \in [0,1]}\) is well-defined. 
	Accepting this claim for the moment, we can show that \(\Psi\) admits a unique fixed point. Take 
	\(\mu,\bar{\mu} \in \mathcal{P}_{p,q,\nu}(\mathcal{C}^d_T\times [0,1])\), and let 
	\(X^\mu(\cdot,\xi)\) and \(X^{\bar{\mu}}(\cdot,\xi)\) denote the unique strong solutions corresponding
	to \(\mu\) and \(\bar{\mu}\), respectively. For almost every \(\xi \in [0,1]\) and every \(t \in [0,T]\), the following
	estimate holds:
	\[
		\begin{aligned}
			\Vert X^\mu(\cdot,\xi)-X^{\bar{\mu}}(\cdot,\xi)\Vert_{\ast,t} &\leq \int_{0}^{t} \vert b_{\xi}(s,X^\mu(\cdot,\xi),\mu)-b_{\xi}(s,X^{\bar{\mu}}(\cdot,\xi),\bar{\mu}) \vert \,ds \\  
			&\quad+ \sup_{s\in [0,t]} \left\vert \int_{0}^{s} (\sigma_{\xi}(\tau,X^\mu(\cdot,\xi),\mu)-\sigma_{\xi}(\tau,X^{\bar{\mu}}(\cdot,\xi),\bar{\mu})) \,dW(\tau,\xi) \right\vert.  
    		\end{aligned}
	\]
	Taking the \(L^p(\Omega,\mathcal{C}_T^d)\) norm, applying Minkowski's integral inequality combined with the Burkholder-Davis-Gundy (BDG)
	inequality \eqref{eq:BDG-p-geq-2} for the stochastic integral, and using the Lipschitz properties of \(b\) and \(\sigma\) 
	from Assumption \ref{assump:fibered-McKean-Vlasov-SDE-multiplicative-noise}-(ii), we deduce that for some constant
	\(C(t) > 0\):
	\begin{align*}
		&\left(\mathbb{E}\left[ \Vert X^\mu(\cdot,\xi)-X^{\bar{\mu}}(\cdot,\xi)\Vert_{\ast,t}^p\right]\right)^{1/p}\\
        &\qquad \leq C(t)\left( \int_0^t \left(\mathbb{E}\left[ \Vert X^\mu(\cdot,\xi)-X^{\bar{\mu}}(\cdot,\xi)\Vert^p_{\ast,s}\right]\right)^{1/p} \,ds + \int_{0}^{t} d_{W_p,q}(\mu_{[0,s]},\bar{\mu}_{[0,s]}) \,ds \right).
	\end{align*}
	Thus, by Grönwall's inequality and subsequently taking the \(L^q\) norm with respect to \(\xi\), we find that
	\[
		d_{W_p,q}(\Psi(\mu)_{[0,t]},\Psi(\bar{\mu})_{[0,t]}) \leq C(T) \int_0^t d_{W_p,q}(\mu_{[0,s]},\bar{\mu}_{[0,s]}) \,ds.
	\]
	Iterating this estimate yields that for every \(k \geq 1\):
	\[
		d_{W_p,q}(\Psi^k(\mu)_{[0,t]},\Psi^k(\bar{\mu})_{[0,t]}) \leq \frac{(C(T) t)^k}{k!} d_{W_p,q}(\mu_{[0,t]},\bar{\mu}_{[0,t]}),
	\]
	which demonstrates that for \(k\) sufficiently large, \(\Psi^k\) is a strict contraction. Therefore, \(\Psi\) has a unique fixed point \(\mu^\ast \in \mathcal{P}_{p,q,\nu}(\mathcal{C}^d_T\times [0,1])\), and the corresponding process
	\(X^{\mu^\ast}\) is the unique strong solution to the original system \eqref{eq:fibered-McKean-Vlasov-SDE}. 

	To establish uniqueness in law, it suffices to observe that for every fixed
	\(\mu \in \mathcal{P}_{p,q,\nu}(\mathcal{C}^d_T\times [0,1])\), the fibered law is identical for any two strong
	solutions \(X^\mu\) and \(\bar{X}^\mu\) defined on possibly different probability spaces, provided that their initial
	data \(X_0(\xi)\) and \(\bar{X}_0(\xi)\) share the same law ({\it i.e.}, \(\mathrm{Law}(X_0(\xi))=\mathrm{Law}(\bar{X}_0(\xi))\)).
	Since we are dealing with path-dependent SDEs with Lipschitz coefficients, strong existence and pathwise uniqueness hold.
	By the classical Yamada-Watanabe theorem \cite{YW-71}, this implies uniqueness in law. 

	Finally, we are left to verify the claim that \(\Psi\) is well-defined. First, we will show that for every fixed \(\mu\),
	the map \(\xi \in [0,1] \mapsto \mathrm{Law}(X^\mu(\cdot,\xi)) \in \mathcal{P}_p(\mathcal{C}^d_T)\) is measurable. For every
	\(\xi \in [0,1]\), \(X^\mu(\cdot,\xi)\) is the limit in \(L^p(\Omega; \mathcal{C}^d_T)\) of the Picard iteration scheme:
	\(X^\mu(\cdot,\xi) = \lim_{n\to \infty} X^{\mu,n}(\cdot,\xi)\), where \(X^{\mu,0}(\cdot,\xi) = X_0(\xi)\) and for every
	\(n \geq 1\):
	\[ 
	 	X^{\mu,n}(t,\xi) = X_0(\xi) + \int_{0}^{t} b_{\xi}(s,X^{\mu,n-1}(\cdot,\xi),\mu) \,ds + \int_{0}^{t} \sigma_{\xi}(s,X^{\mu,n-1}(\cdot,\xi),\mu) \,dW(s,\xi), \quad t \in [0,T].
 	\]
 	Therefore, it is sufficient to check that for every \(n \geq 0\), the map 
	\(\xi \in [0,1] \mapsto \mathrm{Law}(X^{\mu,n}(\cdot,\xi)) \in \mathcal{P}_p(\mathcal{C}^d_T)\) is measurable. We will
	prove by induction the stronger condition that for every \(n\geq 0\), the map 
	\(\xi \in [0,1] \mapsto \mathrm{Law}(X^{\mu,n}(\cdot,\xi),W(\cdot,\xi)) \in \mathcal{P}_p(\mathcal{C}^d_T \times \mathcal{C}^d_T)\) is measurable. 

	For \(n = 0\), this holds true since \(\xi \in [0,1] \mapsto \mathrm{Law}(X_0(\xi)) \in \mathcal{P}_p(\mathbb{R}^d)\)
	is measurable, and the joint law of the initial data and the Wiener process is measurable because they are independent
	and the pathwise law of the Wiener process does not depend on \(\xi\). Assuming this holds for all \(k=0,\ldots,n-1\),
	we now prove it for \(k=n\). Because the family of finite-dimensional cylinder sets forms a \(\pi\)-system generating
	the Borel \(\sigma\)-algebra of \(\mathcal{C}^d_T\times \mathcal{C}^d_T\), it is enough to verify that for every finite
	sequence of times \(0 \leq t_1 < t_2 < \ldots < t_k \leq T\), the map 
	\[ 
		\xi \in [0,1] \mapsto \mathrm{Law}(X^{\mu,n}(t_1,\xi),W(t_1,\xi),\ldots,X^{\mu,n}(t_k,\xi),W(t_k,\xi)) \in \mathcal{P}_p((\mathbb{R}^d\times \mathbb{R}^d)^k)
	\]
 	is measurable.  

	For every \(t\in[0,T]\) and \(\xi \in [0,1]\), \(X^{\mu,n}(t,\xi)\) is the limit in probability of the
	discretization \(X^{\mu,n}_{\delta}(t,\xi)\) as \(\delta \to 0\), defined by:
	\begin{equation}\label{eq:measurability-X_delta}
    \begin{aligned} 
		X^{\mu,n}_{\delta}(t,\xi) = X_0(\xi) &+ \sum_{i=1}^{N_{\delta,t}-1}b_{\xi}(i\delta,X^{\mu,n-1}(\cdot,\xi),\mu) \delta\\
        &+ \sum_{i=1}^{N_{\delta,t}-1}\sigma_{\xi}(i\delta,X^{\mu,n-1}(\cdot,\xi),\mu) (W(\delta (i+1),\xi)-W(\delta i,\xi)),
	\end{aligned}
    \end{equation}
	with \(N_{\delta,t} = \lfloor \frac{t}{\delta} \rfloor\). Thus, it suffices to prove that for every \(\delta >0\),
	the map 
		\[
			\xi \in [0,1]\mapsto \textrm{Law}(X^{\mu,n}_{\delta}(t_1,\xi),W(t_1,\xi),\ldots,X^{\mu,n}_{\delta}(t_k,\xi),W(t_k,\xi)) \in \mathcal{P}_p((\mathbb{R}^d\times\mathbb{R}^d)^k),
		\]
	is measurable. By the structure of 
	\eqref{eq:measurability-X_delta}, we can write 
	\[
		X^{\mu,n}_{\delta}(t,\xi) = h_{\delta,t}(\xi,X^{\mu,n-1}(0,\xi),X^{\mu,n-1}(\cdot,\xi)_{[0,\delta]},\ldots,X^{\mu,n-1}(\cdot,\xi)_{[0,(N_{\delta,t}-1)\delta]},W(0,\xi),\ldots, W(N_{\delta,t}\delta,\xi))
	\]
	for some deterministic functional
	\[
		h_{\delta,t}:[0,1]\times \mathbb{R}^d \times \mathcal{C}_{\delta}\times \mathcal{C}_{2\delta}\times \ldots \times \mathcal{C}_{(N_{\delta,t}-1)\delta}\times (\mathbb{R}^d)^{N_{\delta,t}+1} \to \mathbb{R}^d. 
	\]
	It is clear that \(h_{\delta,t}(\xi,\cdot,\ldots,\cdot)\) is continuous and
	\(h_{\delta,t}(\cdot,x,\gamma_1,\ldots,\gamma_{N_{\delta,t}-1},w_1,\ldots,w_{N_{\delta,t}+1})\) is measurable by the
	properties assumed for \(b_{\xi}\) and \(\sigma_{\xi}\). Consequently, \(h_{\delta,t}\) is a Carathéodory function
	and thus jointly measurable. Applying Proposition \ref{prop:Measurability-Integral-Caratheodory-Function},
	we conclude that the desired map is measurable by the induction hypothesis. 

	Finally, we must check that \(\Psi(\mu)\in \mathcal{P}_{p,q,\nu}(\mathcal{C}^d_T\times [0,1])\), which requires: 
	\[
		\left(\int_0^1 \left(\mathbb{E} \Vert X^\mu(\cdot,\xi)\Vert^p_{\ast,T}\right)^{q/p} \,d\xi\right)^{1/q} < \infty.
	\]
	This is satisfied since the Lipschitz and boundedness properties in Assumption
	\ref{assump:fibered-McKean-Vlasov-SDE-multiplicative-noise} imply the linear growth bound
	\[
		\vert b_{\xi}(t,\gamma,\mu)\vert + \vert \sigma_{\xi}(t,\gamma,\mu) \vert  \leq (L+C) \left(1+\Vert \gamma\Vert_{\ast,t}+d_{W_p,q}(\mu_{[0,t]},\delta_0)\right).
	\]
	Combining Jensen's inequality with the BDG inequality \eqref{eq:BDG-p-geq-2} for the stochastic integral, we deduce that for some constant \(C(T) > 0\): 
	\[
		\left(\mathbb{E} \Vert X^\mu(\cdot,\xi) \Vert_{\ast,T}^p\right)^{1/p} \leq C(T)\left(1 + \int_0^T \left(\mathbb{E} \Vert X^\mu(\cdot,\xi) \Vert_{\ast,t}^p\right)^{1/p} \,dt + d_{W_p,q}(\mu,\delta_0) \right).
	\]
	Taking the \(L^q\) norm with respect to \(\xi\) and applying Minkowski's inequality followed by Grönwall's
	inequality yields the necessary uniform bound, allowing us to conclude that
	\(\Psi(\mu) \in \mathcal{P}_{p,q,\nu}(\mathcal{C}^d_T\times [0,1])\).   
\end{proof}
\begin{remark}[On the measurability of the solutions to the path-dependent McKean-Vlasov SDE]
	It is important to emphasize that we do not claim any joint measurability for the map 
	\((\xi,\omega) \in [0,1] \times \Omega \mapsto X(\cdot,\xi)(\omega)\in \mathcal{C}_T^d\). In fact, there is a well-known theoretical obstruction
	\cite[Proposition 1]{Sun-98}: on a product probability space, any jointly measurable family of almost
	surely pairwise independent random variables is necessarily composed of constant random variables. 
	Consequently, any uncountable family of independent Wiener processes, despite being the natural choice for
	modeling idiosyncratic noise, cannot be jointly measurable because its paths are non-trivial.
	
	Instead, we only require the measurability of the map \(\xi \in [0,1]\mapsto \mathrm{Law}(X(\cdot,\xi))\in \mathcal{P}_p(\mathcal{C}^d_T)\). For a given measurable functional \(\Phi:\mathcal{C}_T^d\to \mathbb{R}^d\) and a solution 
	\(X\) to \eqref{eq:fibered-McKean-Vlasov-SDE}, this ensures that quantities of the form 
	\[
		\int_{0}^{1} \mathbb{E}\left[\Phi(X(\cdot,\xi))\right] \,d\xi = \int_{0}^{1} \int_{\mathcal{C}_T^d} \Phi(\gamma) \,d(\mathrm{Law}(X(\cdot,\xi)))(\gamma) \,d\xi
	\]
	are well-defined; however, we cannot apply Fubini's theorem to exchange the order of the expectation and the integral
	over \(\xi\). 
	
	For our purposes, this level of measurability is entirely sufficient, as the coefficients in our equation 
	\eqref{eq:multi-agent-macroscopic-intermediate-solved-weights} depend exclusively on integrals with respect
	to the fibered laws. This approach aligns with the strategy employed in \cite{BCW-23}. To handle more general
	equations that require true pathwise integration over \(\xi\), one typically bypasses this measurability issue
	by constructing a Fubini extension \cite{Sun-06}, as seen in \cite{ACL-22,CDP-25,CT-24-arxiv}.
\end{remark}

With the preceding well-posedness result at hand, we are now in  position to establish the well-posedness of the
macroscopic system \eqref{eq:multi-agent-macroscopic-SDE}. 

\begin{definition}[Strong solution to \eqref{eq:multi-agent-macroscopic-SDE}] \label{def:strong-solution}
	Let \((\Omega,\mathcal{F},\mathbb{P})\) be a complete probability space supporting a family of \(d\)-dimensional Wiener processes \((W(\cdot,\xi))_{\xi \in [0,1]}\) and a family of initial random variables \((X_0(\xi))_{\xi \in [0,1]}\) , along with a deterministic initial fibered probability-graphon \(q_0 \in \mathcal{P}_{\nu\otimes \nu}([-W,W]\times [0,1]^2)\). We shall say that a pair \((X(\cdot,\xi),q_\gamma^{\xi,\xi'})_{\xi,\xi'\in [0,1]}\) is a \emph{strong solution} of \eqref{eq:multi-agent-macroscopic-SDE} if the family of continuous stochastic processes \(X=(X(\cdot,\xi))_{\xi \in [0,1]}\) and the map \(q:[0,1]^2\times [0,T]\times \mathcal{C}_T^d\times \Omega \to \mathcal{P}(\mathbb{R})\) verify:
	\begin{enumerate}[label=(\roman*)]
		\item For almost every \(\xi \in [0,1]\), the pathwise law of the state process satisfies \(\mu^{\xi} = \mathrm{Law}(X(\cdot,\xi)) \in \mathcal{P}(\mathcal{C}_T^d)\), and the map \(\xi \mapsto \mu^{\xi}\) is measurable.
		\item For almost every \(\xi \in [0,1]\), the process \(X(\cdot,\xi)\) is adapted to the filtration generated by the Wiener process \(W(\cdot,\xi)\) and the initial condition \(X_0(\xi)\), and satisfies the following integral equation \(\mathbb{P}\)-a.s.:
		\[
			\begin{aligned}
            &\quad X(t,\xi) = X_0(\xi)\\
            &\qquad+ \int_{0}^{t} \int_0^1\int_{\mathcal{C}^d_s} \int_{\mathbb{R}} w \,q^{\xi,\xi'}_\gamma(s,dw)\,K(X(s,\xi),\gamma(s))\,d\mu_{[0,s]}^{\xi'}(\gamma)\,d\xi' \,ds + \sqrt{2\nu}W(t,\xi), \quad t \in [0,T].
            \end{aligned}
		\]
		\item \(\mathbb{P}\)-a.s. and for almost every \(\xi,\xi' \in [0,1]\), the map 
		\[
		(t,\gamma) \in [0,T]\times \mathcal{C}_T^d \mapsto q^{\xi,\xi'}_\gamma(t) \in \mathcal{P}(\mathbb{R})
		\]
		is narrowly continuous. Moreover, for any fixed path \(\gamma \in \mathcal{C}_T^d\), the time-dependent measure \(t \in [0,T] \mapsto q^{\xi,\xi'}_\gamma(t) \in \mathcal{P}(\mathbb{R})\) solves the transport equation \(\eqref{eq:multi-agent-macroscopic-SDE}_2\) in the distributional sense, with initial condition \(q^{\xi,\xi'}_{\gamma}(0) = q_0^{\xi,\xi'}\).
		\item \(\mathbb{P}\)-a.s., for every \(t\in [0,T]\) and every \(\gamma\in \mathcal{C}_T^d\), the map \((\xi,\xi') \in [0,1]^2 \mapsto q^{\xi,\xi'}_\gamma(t) \in \mathcal{P}(\mathbb{R})\) is measurable.
	\end{enumerate}
\end{definition}
The assumptions in the previous definition guarantee that the integral defining the strong solution is well-defined. Furthermore,
the narrow continuity requirement ensures, as established in Remark \ref{remark:reformulation-states-weights-probability-graphon},
that the weight component of every strong solution \((X(\cdot,\xi),q_\gamma^{\xi,\xi'})_{\xi,\xi'\in [0,1]}\) to \eqref{eq:multi-agent-macroscopic-SDE} is explicitly given by
\[
	q^{\xi,\xi'}_{\gamma}(t)=\Phi_t(X(\cdot,\xi)_{[0,t]},\gamma,\cdot)_{\#} q_{0}^{\xi,\xi'}.
\]
Hence, the processes \(X\) determine a strong solution of the path-dependent McKean-Vlasov SDE:
\begin{align} 
		dX(t,\xi) &= \int_0^1\int_{\mathcal{C}^d_t} \int_{-W}^{W} \Phi_t(X(\cdot,\xi)_{[0,t]},\gamma,w) \,dq^{\xi,\xi'}_0(w)\,K(X(t,\xi),\gamma(t))\,d\mu_{[0,t]}^{\xi'}(\gamma)\,d\xi' \,dt + \sqrt{2\nu}\,dW(t,\xi),\nonumber\\
		X(0,\xi) &= X_{0}(\xi), \quad \mu^{\xi} := \mathrm{Law}(X(\cdot,\xi)) \in \mathcal{P}(\mathcal{C}^d_T).\label{eq:multi-agent-macroscopic-solved-weights}
\end{align}
We note that this equation is a particular instance of \eqref{eq:fibered-McKean-Vlasov-SDE}, where, for a fixed initial probability-graphon
\(q_0\in \mathcal{P}_{\nu\otimes \nu}([-W,W]\times [0,1]^2)\), the drift coefficient \(b^{q_0}:[0,1]\times [0,T]\times \mathcal{C}_T^d\times \mathcal{P}_{\nu}(\mathcal{C}_T^d\times [0,1])\to \mathbb{R}^d\) is explicitly given by:
\begin{equation} \label{eq:drift-macroscopic-system}
	b^{q_0}_{\xi}(t,\gamma,\mu) = \int_0^1\int_{\mathcal{C}^d_t} \int_{-W}^{W} \Phi_t(\gamma_{[0,t]},\bar{\gamma},w) \,dq_0^{\xi,\xi'}(w)\,K(\gamma(t),\bar{\gamma}(t))\,d\mu_{[0,t]}^{\xi'}(\bar{\gamma})\,d\xi'.
\end{equation}

\begin{proposition}\label{prop:well-posedness-multi-agent-macroscopic-solved-weights}
	Under Assumptions \ref{assump:interaction-kernel} and \ref{assump:plasticity-function} on \(K\) and \(\Gamma\)
	respectively, for any initial probability-graphon \(q_0 \in \mathcal{P}_{\nu\otimes \nu}([-W,W]\times[0,1]^2)\),
	and any family of initial conditions \((X_0(\xi))_{\xi \in [0,1]}\) such that \(X_0(\xi)\) is independent of the
	Wiener process \(W(\cdot,\xi)\) and 
	\((\mathrm{Law}(X_0(\xi)))_{\xi \in [0,1]} \in \mathcal{P}_{p,q,\nu}(\mathbb{R}^d\times [0,1])\) with \(p\in [2,\infty)\)
	and \(q\in [1,\infty)\), there exists a unique strong solution \(X\) to
	\eqref{eq:multi-agent-macroscopic-solved-weights} with \((\mathrm{Law}(X(\cdot,\xi)))_{\xi \in [0,1]} \in \mathcal{P}_{p,q,\nu}(\mathcal{C}_T^d\times [0,1])\).
\end{proposition}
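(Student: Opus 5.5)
The plan is to reduce the statement to Proposition \ref{prop:well-posedness-fibered-McKean-Vlasov-SDE-multiplicative-noise} with constant diffusion $\sigma_\xi\equiv\sqrt{2\nu}\,I_d$ and drift $b^{q_0}$ from \eqref{eq:drift-macroscopic-system}. For this we check the three items of Assumption \ref{assump:fibered-McKean-Vlasov-SDE-multiplicative-noise} for $b^{q_0}$ with the given $(p,q)$. Note first that $b^{q_0}_\xi(t,\gamma,\mu)=\int_0^1\int_{\mathcal{C}^d_t}\int_{-W}^W\Pi_t(\gamma_{[0,t]},\bar\gamma,w)\,dq_0^{\xi,\xi'}(w)\,d\mu^{\xi'}_{[0,t]}(\bar\gamma)\,d\xi'$. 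By \eqref{eq:bound-Pi}, the integrand is bounded by $B_\Pi(T)(1+W)$ on $|w|\le W$. This immediately gives \emph{boundedness} (iii), and it also gives a uniform bound $|b^{q_0}|\le B_\Pi(T)(1+W)$ for all arguments.

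For \emph{Lipschitz continuity} (ii), split the difference as in the proof of Lemma \ref{lemma:intermediate-system}. The $\gamma$-dependence is handled by \eqref{eq:lipschitz-property-Pi}, which gives $L_\Pi(T)(1+W)\Vert\gamma-\bar\gamma\Vert_{\ast,t}$. For the $\mu$-dependence, fix $\xi,\xi'$ and set $\psi_{\xi,\xi'}(\tilde\gamma)=\int_{-W}^W\Pi_t(\gamma_{[0,t]},\tilde\gamma,w)\,dq_0^{\xi,\xi'}(w)$. This function is $L_\Pi(T)(1+W)$-Lipschitz on $\mathcal{C}^d_t$, so Kantorovich--Rubinstein duality bounds the $\xi'$-integrand by $L_\Pi(T)(1+W)\,W_1(\mu^{\xi'}_{[0,t]},\bar\mu^{\xi'}_{[0,t]})$. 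Since $W_1\le W_p$, integrating in $\xi'$ and using Hölder ($q\ge1$) gives $L_\Pi(T)(1+W)\,d_{W_p,q}(\mu_{[0,t]},\bar\mu_{[0,t]})$. The constant is uniform in $\xi$, because $q_0^{\xi,\xi'}$ is supported in $[-W,W]$ for every fiber. Non-anticipativity of $\Phi$ (Proposition \ref{prop:properties-Phi-Pi}(i)) guarantees that only $\gamma_{[0,t]}$ and $\mu_{[0,t]}$ enter, as the assumption requires.

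The step I expect to be the main technical obstacle is \emph{measurability} (i): joint measurability of $(\xi,t)\mapsto b^{q_0}_\xi(t,\gamma,\mu)$ for fixed $(\gamma,\mu)$. I would first write $b^{q_0}_\xi(t,\gamma,\mu)=\int_0^1 G(t,\xi,\xi')\,d\xi'$ with
\[
G(t,\xi,\xi')=\int_{\mathcal{C}^d_T}\int_{\mathbb{R}}\Pi(t,\gamma,\bar\gamma,w)\,dq_0^{\xi,\xi'}(w)\,d\mu^{\xi'}(\bar\gamma).
\]
Here the restriction to $[0,t]$ is absorbed by non-anticipativity, so the $t$-dependent path spaces become the single space $\mathcal{C}^d_T$. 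The map $(t,\bar\gamma,w)\mapsto\Pi(t,\gamma,\bar\gamma,w)$ is continuous and bounded on $[0,T]\times\mathcal{C}^d_T\times[-W,W]$. The family $(\xi,\xi')\mapsto q_0^{\xi,\xi'}\otimes\mu^{\xi'}$ is a Borel family of probability measures, by Proposition \ref{prop:characterization-random-prob-measures} together with the measurability of products of kernels. So $G$ is continuous in $t$ and measurable in $(\xi,\xi')$. Hence $G$ is Carathéodory and thus jointly measurable (alternatively, invoke Proposition \ref{prop:Measurability-Integral-Caratheodory-Function}). Fubini then yields measurability of $(\xi,t)\mapsto b^{q_0}_\xi(t,\gamma,\mu)$.

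With (i)--(iii) verified and $p\ge2$, Proposition \ref{prop:well-posedness-fibered-McKean-Vlasov-SDE-multiplicative-noise} gives existence of a unique strong solution of \eqref{eq:multi-agent-macroscopic-solved-weights}, with fibered law in $\mathcal{P}_{p,q,\nu}(\mathcal{C}^d_T\times[0,1])$ and uniqueness in law. One small point to remark on: the coefficient in Assumption \ref{assump:fibered-McKean-Vlasov-SDE-multiplicative-noise} is required to be defined on $\mathcal{P}_{p,q,\nu}$, whereas $b^{q_0}$ is defined on all of $\mathcal{P}_\nu$. This causes no issue, since restricting $b^{q_0}$ to $\mathcal{P}_{p,q,\nu}$ is all that is needed.
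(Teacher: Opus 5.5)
Your proposal is correct and follows essentially the same route as the paper: you reduce to Proposition \ref{prop:well-posedness-fibered-McKean-Vlasov-SDE-multiplicative-noise} with constant diffusion, get the Lipschitz bound from the Lipschitz property of $\Pi$ together with Kantorovich--Rubinstein, $W_1\le W_p$ and H\"older in $\xi'$, and get boundedness from the growth bound on $\Pi$. Your Carath\'eodory/Fubini argument for measurability spells out in detail what the paper asserts in one line from the measurability of $q_0$ and the continuity of $\Phi$.
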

\begin{proof}
	The drift coefficient defined in \eqref{eq:drift-macroscopic-system} satisfies Assumption
	\ref{assump:fibered-McKean-Vlasov-SDE-multiplicative-noise}-(i) due to the measurability of the probability-graphon \(q_0\) 
	and the continuity of the flow map \(\Phi_t\). To verify the Lipschitz property, we apply the triangle inequality and the
	Lipschitz continuity of \(\Pi\) established in Proposition \ref{prop:properties-Phi-Pi}. This ensures that the map
	\[
		\tilde{\gamma} \in \mathcal{C}^d_t \mapsto \int_{-W}^{W} \Pi_t(\gamma_{[0,t]},\tilde{\gamma},w) \,dq_0^{\xi,\xi'}(w) \in \mathbb{R}^d
	\]
	is Lipschitz continuous, allowing us to deduce the following estimate for any paths \(\gamma, \bar{\gamma} \in \mathcal{C}_T^d\) and any fibered measures \(\mu, \bar{\mu} \in \mathcal{P}_{p,q,\nu}(\mathcal{C}_T^d \times [0,1])\):
	\begin{equation} \label{eq:lipschitz-continuity-b_xi_q}
		\begin{aligned}
			\vert b_{\xi}^{q_0}(t,\gamma,\mu) - b_{\xi}^{q_0}(t,\bar{\gamma},\bar{\mu}) \vert 
			&\leq \left\vert \int_0^1 \int_{\mathcal{C}^d_t} \int_{-W}^{W} (\Pi_t(\gamma_{[0,t]},\tilde{\gamma},w) - \Pi_t(\bar{\gamma}_{[0,t]},\tilde{\gamma},w)) \,dq^{\xi,\xi'}_0(w)\,d\mu_{[0,t]}^{\xi'}(\tilde{\gamma})\,d\xi'\right\vert \\
			&\quad + \left\vert  \int_0^1 \int_{\mathcal{C}^d_t} \int_{-W}^{W} \Pi_t(\bar{\gamma}_{[0,t]},\tilde{\gamma},w) \,dq^{\xi,\xi'}_0(w)\,d(\mu_{[0,t]}^{\xi'} - \bar{\mu}_{[0,t]}^{\xi'})(\tilde{\gamma})\,d\xi' \right\vert \\
			&\leq L_{\Pi}(t)(1+W) \Vert \gamma - \bar{\gamma} \Vert_{\ast,t} + L_{\Pi}(t)(1+W) \int_0^1 W_{p}(\mu_{[0,t]}^{\xi'},\bar{\mu}_{[0,t]}^{\xi'}) \,d\xi' \\
			&\leq L_{\Pi}(t)(1+W) \left( \Vert \gamma - \bar{\gamma} \Vert_{\ast,t} + d_{W_p,q}(\mu_{[0,t]},\bar{\mu}_{[0,t]}) \right).
		\end{aligned}
	\end{equation}
	The boundedness condition in Assumption \ref{assump:fibered-McKean-Vlasov-SDE-multiplicative-noise}-(iii) follows
	from a similar argument using the uniform boundedness of \(\Pi\). Therefore, by Proposition
	\ref{prop:well-posedness-fibered-McKean-Vlasov-SDE-multiplicative-noise}, the well-posedness of the system is
	rigorously established.
\end{proof}

\section{Stability of the path-dependent McKean-Vlasov system} \label{sec:stability-estimate}
In this section, we derive a stability estimate for the macroscopic McKean-Vlasov SDE system
\eqref{eq:multi-agent-macroscopic-SDE} with respect to perturbations in both the initial data and the
underlying probability-graphon. This result plays a crucial role in the proof of the main mean-field limit theorem presented in the
next section.
\begin{proposition}[Stability estimate for the path-dependent McKean-Vlasov system] \label{prop:stability-estimate-multi-agent-macroscopic-solved-weights}
	Fix \((p,q)\in [2,\infty)\times [1,\infty)\). Let \(\mu_0,\bar{\mu}_0 \in \mathcal{P}_{p,q,\nu}(\mathbb{R}^d\times [0,1])\) be any two initial fibered
	probability measures, and let \(q_0,\bar{q}_0 \in \mathcal{P}_{\nu\otimes \nu}([-W,W]\times [0,1]^2)\) be
	any two initial probability-graphons. Let \(\mu,\bar{\mu}\in \mathcal{P}_{p,q,\nu}(\mathcal{C}^d_T\times [0,1])\)
	denote the unique fibered pathwise laws associated with the strong solutions of \eqref{eq:multi-agent-macroscopic-solved-weights}
	starting from initial data distributed according to \(\mu_0\) and \(\bar{\mu}_0\), and driven by the probability-graphons \(q_0\) and \(\bar{q}_0\), respectively. 
	
	Then, for every \(0 < \varepsilon \leq 1\), there exists a constant \(\kappa(\varepsilon,t) > 0\) such that 
	\begin{equation} \label{eq:stability-estimate}
		d_{W_p,q}(\mu_{[0,t]},\bar{\mu}_{[0,t]}) \leq \tilde{C}(t)\left(d_{W_p,q}(\mu_0,\bar{\mu}_0) + \kappa(\varepsilon,t)^{1/r}d_{\text{BL},\square}(q_0,\bar{q}_0)^{1/r} + \varepsilon^{1/r}\right), \quad t \in [0,T], 
	\end{equation}  
	where \(r = \max(p,q)\) and the constant \(\tilde{C}(t)\) is given by 
	\begin{equation} \label{eq:constant-stability-estimate}
		\tilde{C}(t) = e^{2L_{\Pi}(t)(1+W)t} \left( 1 + 4t\left[2B_{\Pi}(t)(1+W) + 1\right] \right).
	\end{equation}
\end{proposition}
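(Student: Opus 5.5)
The plan is a synchronous coupling argument. For every \(\xi\in[0,1]\) we realize both solutions on the same probability space, driven by the same Wiener process \(W(\cdot,\xi)\). The initial data \((X_0(\xi),\bar X_0(\xi))\) are distributed according to an optimal \(W_p\)-coupling of \(\mu_0^\xi\) and \(\bar\mu_0^\xi\), selected measurably in \(\xi\) as in Lemma \ref{lemma:Wassertein-weaker-than-fibered-Wassertein}. By uniqueness in law (Proposition \ref{prop:well-posedness-fibered-McKean-Vlasov-SDE-multiplicative-noise}), the resulting fibered pathwise laws are exactly \(\mu\) and \(\bar\mu\). The noise cancels, so
\[
\Vert X(\cdot,\xi)-\bar X(\cdot,\xi)\Vert_{\ast,t}\leq |X_0(\xi)-\bar X_0(\xi)|+\int_0^t |b^{q_0}_\xi(s,X,\mu)-b^{\bar q_0}_\xi(s,\bar X,\bar\mu)|\,ds .
\]
We split the drift difference into two parts: \(b^{q_0}_\xi(s,X,\mu)-b^{q_0}_\xi(s,\bar X,\bar\mu)\), which \eqref{eq:lipschitz-continuity-b_xi_q} bounds by \(L_\Pi(s)(1+W)(\Vert X-\bar X\Vert_{\ast,s}+d_{W_p,q}(\mu_{[0,s]},\bar\mu_{[0,s]}))\), and the graphon-perturbation term \(D_\xi(s):=b^{q_0}_\xi(s,\bar X,\bar\mu)-b^{\bar q_0}_\xi(s,\bar X,\bar\mu)\). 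Taking \(L^p(\Omega)\) norms, then \(L^q\) in \(\xi\), and using \(W_p\leq\) the coupling cost, Grönwall gives
\[
d_{W_p,q}(\mu_{[0,t]},\bar\mu_{[0,t]})\lesssim e^{2L_\Pi(1+W)t}\Big(d_{W_p,q}(\mu_0,\bar\mu_0)+\int_0^t \big\Vert (\mathbb E|D_\xi(s)|^p)^{1/p}\big\Vert_{L^q_\xi}ds\Big).
\]
This reduces the whole statement to bounding the perturbation term.

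To handle \(D_\xi(s)\), note that \(|D_\xi|\leq 2B_\Pi(1+W)\). Hence \(\Vert(\mathbb E|D|^p)^{1/p}\Vert_{L^q}\lesssim (\int_0^1\mathbb E|D_\xi|\,d\xi)^{1/r}\) by interpolation, which produces the \(1/r\) powers. It therefore suffices to show \(\int_0^1\mathbb E|D_\xi(s)|\,d\xi\leq \kappa(\varepsilon,s)\,d_{\text{BL},\square}(q_0,\bar q_0)+\varepsilon\). Writing \(\bar X(\cdot,\xi)\) as a sample \(\gamma\sim\bar\mu^\xi\), we have
\[
D_\xi=\int_0^1\int\int \Pi_s(\gamma,\tilde\gamma,w)\,d(q_0^{\xi,\xi'}-\bar q_0^{\xi,\xi'})(w)\,d\bar\mu^{\xi'}(\tilde\gamma)\,d\xi',
\]
and \(\int\mathbb E|D_\xi|\,d\xi=\int_0^1\int|D_\xi(\gamma)|\,d\bar\mu^\xi(\gamma)\,d\xi\). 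The obstacle is that the test function \(w\mapsto\Pi_s(\gamma,\tilde\gamma,w)\) depends on both \(\gamma\) and \(\tilde\gamma\), which are \(\xi\)- and \(\xi'\)-dependent and noncompact. The cut norm only controls \(\int\psi(\xi')\Phi(w)\,d(q-\bar q)\) with \(\Phi\) independent of labels, via the operator formulation \eqref{eq:cut-distance-prob-graphons-operator-norm}.

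We treat this main step by finite-dimensional approximation. Since the path \(\bar\mu\) has finite \(p\)-moments, the averaged measure \(M_{\bar\mu}\) on \(\mathcal C^d_s\) is tight, so for each \(\varepsilon\) there is a compact set \(\mathcal K_\varepsilon\subset\mathcal C^d_s\) carrying all but \(\varepsilon\) of its mass. On \(\mathcal K_\varepsilon\times\mathcal K_\varepsilon\times[-W,W]\), the function \(\Pi_s\) is uniformly continuous, so by Stone–Weierstrass (or a partition of unity on a finite net, as in the proof of Lemma \ref{lemma:equivalence-bounded-Lipschitz-cut-and-BL,p}) it is \(\varepsilon\)-close to a finite sum \(\sum_{k=1}^{n_\varepsilon} a_k(\gamma)b_k(\tilde\gamma)\Phi_k(w)\), with \(a_k,b_k\) bounded and \(\Phi_k\in\mathrm{BL}\). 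For each term, integrating against \(\bar\mu^{\xi'}\) gives \(\psi_k(\xi')=\int b_k\,d\bar\mu^{\xi'}\), which is bounded. The \(\xi\)-integral of \(|\int a_k\,d\bar\mu^\xi|\,|\int\psi_k(\xi')\Phi_k(w)\,d(q-\bar q)\,d\xi'|\) is then bounded by \(4\Vert a_k\Vert\Vert b_k\Vert\Vert\Phi_k\Vert_{\mathrm{BL}}\,d_{\text{BL},\square}(q_0,\bar q_0)\). Summing over \(k\) defines \(\kappa(\varepsilon,s)\). The tails outside \(\mathcal K_\varepsilon\) contribute \(O(B_\Pi(1+W)\varepsilon)\) by the bound \eqref{eq:bound-Pi}, and are absorbed after rescaling \(\varepsilon\). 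The delicate point is pulling \(|\cdot|\) through the sum with \(\gamma\)-dependent coefficients \(a_k(\gamma)\). We handle it by applying the absolute value before integrating in \(\gamma\) and using that \(a_k\) is bounded, so only the \(\xi'\)-integral must be controlled in cut norm, uniformly in \(\xi\) in \(L^1\). Collecting the constants then yields \eqref{eq:stability-estimate} with the stated \(\tilde C(t)\).
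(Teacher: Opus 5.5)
Your proposal is correct and follows essentially the same route as the paper. Both use a synchronous coupling with a measurably selected optimal initial coupling, a split into a Lipschitz part closed by Grönwall and a graphon-perturbation part, and a tightness plus Stone--Weierstrass separation-of-variables argument that reduces the latter to the operator form \eqref{eq:cut-distance-prob-graphons-operator-norm} of the cut distance; the only difference is that you pass to $L^1$ via the bound $|D_\xi|\le 2B_\Pi(1+W)$ before removing the tails, while the paper first removes the tails $I_{2,1},I_{2,2}$ and interpolates only on the compact part, which is immaterial. One detail should be made explicit: because you approximate $\Pi_s$ separately at each fixed $s$, you must make $\kappa(\varepsilon,s)$ uniform in $s\le t$ before integrating in time, which the paper does by approximating $\Pi$ jointly on $[0,T]\times S_\varepsilon\times S_\varepsilon\times[-W,W]$ with time factors $T_k(t)$ (taking the compact set in $\mathcal{C}^d_T$), so that $\kappa(\varepsilon,s)\le\kappa(\varepsilon,t)$.
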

\begin{proof}
	Since for every \(\xi \in [0,1]\) the set of optimal couplings with respect to the \(p\)-Wasserstein distance
	between \(\mu_0^{\xi}\) and \(\bar{\mu}_0^{\xi}\) is non-empty, we can find a family of couplings
	\((\nu_0^{\xi})_{\xi \in [0,1]}\) such that \(\nu_0^{\xi}\in \Gamma_o(\mu_0^{\xi},\bar{\mu}_0^{\xi})\) for almost
	every \(\xi \in [0,1]\). Up to a standard enlargement of the underlying probability space
	\((\Omega,\mathcal{F},\mathbb{P})\), we can support a family of random variables
	\((X_0(\xi),\bar{X}_0(\xi))_{\xi \in [0,1]}\) and Wiener processes \((W(\cdot,\xi))_{\xi \in [0,1]}\) such
	that \(\mathrm{Law}(X_0(\xi),\bar{X}_0(\xi))=\nu_0^{\xi}\), with \(W(\cdot,\xi)\) being independent of
	\((X_0(\xi),\bar{X}_0(\xi))\) for almost every \(\xi \in [0,1]\). 
	
	To construct a coupling between \(\mu^{\xi}\) and \(\bar{\mu}^{\xi}\), we consider the strong solutions
	\(X(\cdot,\xi)\) and \(\bar{X}(\cdot,\xi)\) of \eqref{eq:multi-agent-macroscopic-solved-weights} associated with the
	initial data \(X_0(\xi)\) and \(\bar{X}_0(\xi)\), and the probability-graphons \(q_0\) and \(\bar{q}_0\),
	respectively, and driven by the same family of Wiener processes \((W(\cdot,\xi))_{\xi \in [0,1]}\), so they
	satisfy the following equations:
	\begin{equation} \label{eq:stability-estimate-proof-equations}
		\begin{aligned}
			X(t,\xi) &= X_0(\xi)+ \int_{0}^{t}b^{q_0}_{\xi}(s,X(\cdot,\xi),\mu) \,ds + \sqrt{2\nu}\,W(t,\xi),\\
			\bar{X}(t,\xi) &= \bar{X}_0(\xi)+ \int_{0}^{t}b^{\bar{q}_0}_{\xi}(s,\bar{X}(\cdot,\xi),\bar{\mu}) \,ds + \sqrt{2\nu}\,W(t,\xi),
		\end{aligned}
	\end{equation} 
	where \(b^{q_0}_{\xi}\) and \(b^{\bar{q}_0}_{\xi}\) are the drift coefficients defined in
	\eqref{eq:drift-macroscopic-system}. Since \(\mathrm{Law}(X(\cdot,\xi),\bar{X}(\cdot,\xi))\) is a valid
	coupling between \(\mu^{\xi}\) and \(\bar{\mu}^{\xi}\), and the initial data form an optimal coupling,
	we deduce the following estimate:
	\begin{equation}\label{eq:stability-estimate-auxiliary}
    \begin{aligned}
			&\left(\mathbb{E}\left[\Vert X(\cdot,\xi)-\bar{X}(\cdot,\xi) \Vert_{\ast,t}^p\right]\right)^{1/p} 
			\\
            &\qquad\leq W_{p}(\mu_0^{\xi},\bar{\mu}_0^{\xi}) + \int_{0}^{t} \left(\mathbb{E}\left[\vert b^{q_0}_{\xi}(s,X(\cdot,\xi),\mu)-b^{\bar{q}_0}_{\xi}(s,\bar{X}(\cdot,\xi),\bar{\mu}) \vert^p\right]\right)^{1/p} \,ds.
    \end{aligned}
	\end{equation}
	By the triangle inequality, the drift difference splits into two parts
	\[
		\begin{aligned}
    			&\int_{0}^{t} (\mathbb{E}[\vert b^{q_0}_{\xi}(s,X(\cdot,\xi),\mu)-b^{\bar{q}_0}_{\xi}(s,\bar{X}(\cdot,\xi),\bar{\mu}) \vert^p])^{1/p} ds \\
			&\leq  \int_{0}^{t} (\mathbb{E}[\vert b^{q_0}_{\xi}(s,X(\cdot,\xi),\mu)-b^{q_0}_{\xi}(s,\bar{X}(\cdot,\xi),\bar{\mu}) \vert^p])^{1/p} ds + \int_{0}^{t} (\mathbb{E}[\vert b^{q_0}_{\xi}(s,\bar{X}(\cdot,\xi),\bar{\mu})-b^{\bar{q}_0}_{\xi}(s,\bar{X}(\cdot,\xi),\bar{\mu}) \vert^p])^{1/p} ds \\ 
    			&:= \int_{0}^{t}I_1(s,\xi) + \int_{0}^{t}I_2(s,\xi) ds.
		\end{aligned}
	\]
	The first term, \(I_1(s,\xi)\), can be controlled using the Lipschitz property \eqref{eq:lipschitz-continuity-b_xi_q}
	of the drift coefficient:
	\[
		\int_{0}^{t} I_1(s,\xi) \,ds \leq L_{\Pi}(t)(1+W)\int_{0}^{t} \left( \left(\mathbb{E}\left[\Vert X(\cdot,\xi)-\bar{X}(\cdot,\xi) \Vert_{\ast,s}^p\right]\right)^{1/p} + d_{W_p,q}(\mu_{[0,s]},\bar{\mu}_{[0,s]}) \right) \,ds.
	\]
	Plugging this estimate back into \eqref{eq:stability-estimate-auxiliary}, taking the \(L^q\) norm in \(\xi\), and
	applying Minkowski's inequality, we obtain:
	\[
		\begin{aligned}
			&\left(\int_{0}^{1} \mathbb{E} [\Vert X(\cdot,\xi)-\bar{X}(\cdot,\xi) \Vert^p_{\ast,t}]^{q/p} d\xi\right)^{1/q} 
			\leq d_{W_p,q}(\mu_0,\bar{\mu}_0) \\
			&\qquad + 2L_{\Pi}(t)(1+W)\int_{0}^{t} \left(\int_{0}^{1} \mathbb{E} [\Vert X(\cdot,\xi)-\bar{X}(\cdot,\xi) \Vert^p_{\ast,s}]^{q/p} d\xi\right)^{1/q} ds + \int_{0}^{t} \Vert I_2(s,\cdot) \Vert_{L^q([0,1])} ds.
		\end{aligned}
	\]
	Applying Gr\"onwall's inequality, and noting that \(d_{W_p,q}(\mu_{[0,t]},\bar{\mu}_{[0,t]})\) is bounded by the left-hand side of the previous estimate, we obtain
	\begin{equation} \label{eq:estimate-previous-I2}
		d_{W_p,q}(\mu_{[0,t]},\bar{\mu}_{[0,t]}) \leq e^{2L_{\Pi}(t)(1+W)t}\left(d_{W_p,q}(\mu_0,\bar{\mu}_0) + \int_{0}^{t} \Vert I_2(s,\cdot) \Vert_{L^q([0,1])} \,ds\right).
	\end{equation}
	Now we control the term involving \(I_2(s,\xi)\). Expanding its definition yields
	\[
		\begin{aligned}
			\Vert I_2(s,\cdot)\Vert_{L^q([0,1])} &= \left(\int_{0}^{1} \left( \mathbb{E} \left\vert \int_{0}^{1}\int_{\mathcal{C}_T^d} \int_{-W}^{W} \Pi(s,\bar{X}(\cdot,\xi),\tilde{\gamma},w)\,d(q_0^{\xi,\xi'}-\bar{q}_0^{\xi,\xi'})(w) \,d\bar{\mu}^{\xi'}(\tilde{\gamma})\,d\xi' \right\vert^p \right)^{q/p}d\xi \right)^{1/q} \\ 
			&= \left(\int_{0}^{1} \left( \int_{\mathcal{C}_T^d} \left\vert \int_{0}^{1}\int_{\mathcal{C}_T^d} \int_{-W}^{W} \Pi(s,\gamma,\tilde{\gamma},w)\,d(q_0^{\xi,\xi'}-\bar{q}_0^{\xi,\xi'})(w) \,d\bar{\mu}^{\xi'}(\tilde{\gamma})\,d\xi' \right\vert^p \,d\bar{\mu}^{\xi}(\gamma)  \right)^{q/p}d\xi \right)^{1/q}.
		\end{aligned}
	\]
	Since the probability measure \(\int_{0}^{1}\bar{\mu}^{\xi}\,d\xi \in \mathcal{P}(\mathcal{C}_T^d)\) is tight,
	for every \(\varepsilon > 0\), there exists a compact set \(S_{\varepsilon}\subset \mathcal{C}_T^d\) such that  
	\begin{equation} \label{eq:tightness-mu-xi}
		\int_{0}^{1} \bar{\mu}^{\xi}(\mathcal{C}_T^d \setminus S_{\varepsilon}) \,d\xi < \varepsilon.
	\end{equation}
	We define the compact set \(K_{\varepsilon} = [0,T]\times S_{\varepsilon}\times S_{\varepsilon}\times [-W,W]\), and consider the function \(\Pi_{\varepsilon}:[0,T]\times \mathcal{C}_T^d\times \mathcal{C}_T^d \times [-W,W]\to \mathbb{R}^d\) given by  
	\[
		\Pi_{\varepsilon} = \Pi \mathds{1}_{K_{\varepsilon}},
	\]  
	which is continuous on \(K_{\varepsilon}\). It is straightforward to verify that the following family of functions
	forms a subalgebra of \(C(K_{\varepsilon},\mathbb{R})\) that separates points and contains the constant functions:
	\[
	\mathcal{A}_{\varepsilon} = \left\{ \sum_{k=1}^{N} T_k(t)F_k(\gamma)G_k(\tilde{\gamma})H_k(w) : T_k\in \mathcal{C}_T, F_k,G_k\in C(S_{\varepsilon},\mathbb{R}), H_k\in \text{BL}_1([-W,W]) \right\}. 
	\]
	Thus, by the Stone-Weierstrass theorem, \(\mathcal{A}_{\varepsilon}^d\) is dense in \(C(K_{\varepsilon},\mathbb{R}^d)\). Consequently, there exists \(N(\varepsilon)\in \mathbb{N}\) and functions \((T^j_k)_{1\leq k\leq N(\varepsilon)}\subset \mathcal{C}_T\), \((F^j_k,G^j_k)_{1\leq k\leq N(\varepsilon)}\subset C(S_{\varepsilon},\mathbb{R})\), and \((H^j_k)_{1\leq k \leq N(\varepsilon)} \subset \text{BL}_1([-W,W])\) such that \(\tilde{\Pi}_{\varepsilon}=(\tilde{\Pi}^1_{\varepsilon},\ldots, \tilde{\Pi}^d_{\varepsilon}) \in \mathcal{A}^d_{\varepsilon}\) satisfies the uniform bound
	\begin{equation} \label{eq:stone-Weierstrass-bound}
	\sup_{(t,\gamma,\tilde{\gamma},w) \in K_{\varepsilon}} \vert \Pi(t,\gamma,\tilde{\gamma},w) - \tilde{\Pi}_{\varepsilon}(t,\gamma,\tilde{\gamma},w) \vert \leq \varepsilon,
	\end{equation}
	where 
	\[
		\tilde{\Pi}^j_{\varepsilon}(t,\gamma,\tilde{\gamma},w) = \sum_{k=1}^{N(\varepsilon)} T^j_k(t)F^j_k(\gamma)G^j_k(\tilde{\gamma})H^j_k(w), \quad j=1,\ldots,d.
	\]
	Using Minkowski's inequality, we split the integral for \(I_2\) over the compact sets and their complements
	\[
		\begin{aligned}
			\Vert I_2(s,\cdot)\Vert_{L^q([0,1])} &\leq \left(\int_{0}^{1} \left( \int_{\mathcal{C}_T^d\backslash S_{\varepsilon}} \left\vert \int_{0}^{1}\int_{\mathcal{C}_T^d} \int_{-W}^{W} \Pi(s,\gamma,\tilde{\gamma},w)d(q_0^{\xi,\xi'}-\bar{q}_0^{\xi,\xi'})(w) d\bar{\mu}^{\xi'}(\tilde{\gamma})d\xi' \right\vert^p d\bar{\mu}^{\xi}(\gamma)  \right)^{q/p}d\xi \right)^{1/q} \\ 
			&+\left(\int_{0}^{1} \left( \int_{S_{\varepsilon}} \left\vert \int_{0}^{1}\int_{\mathcal{C}_T^d \backslash S_{\varepsilon}} \int_{-W}^{W} \Pi(s,\gamma,\tilde{\gamma},w)d(q_0^{\xi,\xi'}-\bar{q}_0^{\xi,\xi'})(w) d\bar{\mu}^{\xi'}(\tilde{\gamma})d\xi' \right\vert^p d\bar{\mu}^{\xi}(\gamma)  \right)^{q/p}d\xi \right)^{1/q} \\ 
			&+\left(\int_{0}^{1} \left( \int_{S_{\varepsilon}} \left\vert \int_{0}^{1}\int_{S_{\varepsilon}} \int_{-W}^{W} \Pi(s,\gamma,\tilde{\gamma},w)d(q_0^{\xi,\xi'}-\bar{q}_0^{\xi,\xi'})(w) d\bar{\mu}^{\xi'}(\tilde{\gamma})d\xi' \right\vert^p d\bar{\mu}^{\xi}(\gamma)  \right)^{q/p}d\xi \right)^{1/q} \\
			:=& I_{2,1} + I_{2,2} + I_{2,3}.
		\end{aligned}
	\]
	We can control the first term, \(I_{2,1}\), using the boundedness of \(\Pi\) in the following way:
	\[
		I_{2,1} \leq 2B_{\Pi}(s)(1+W) \left(\int_{0}^{1} \left( \bar{\mu}^{\xi}(\mathcal{C}_T^d\backslash S_{\varepsilon}) \right)^{q/p} d\xi \right)^{1/q}.
	\]
	If \(q\geq p\), since \(\bar{\mu}^{\xi}\) is a probability measure, 
	\(\left( \bar{\mu}^{\xi}(\mathcal{C}_T^d\backslash S_{\varepsilon}) \right)^{q/p}\leq \bar{\mu}^{\xi}(\mathcal{C}_T^d\backslash S_{\varepsilon})\),
	and we can apply the tightness property \eqref{eq:tightness-mu-xi} to obtain
	\(I_2 \leq 2B_{\Pi}(s)(1+W)\varepsilon^{1/q}\). If \(q < p\), since the function \(x \mapsto x^{q/p}\) is concave,
	we can apply Jensen's inequality to obtain \(I_2 \leq 2B_{\Pi}(s)(1+W)\varepsilon^{1/p}\). In either case, for
	\( 0<\varepsilon\leq 1 \), we have the following bound for \(I_{2,1}\):
	\[
		I_{2,1} \leq 2B_{\Pi}(s)(1+W)\varepsilon^{1/r},
	\]
	where \(r = \max(p,q)\).  
	A similar estimate holds for \(I_{2,2}\), the only difference is that the tightness property is used in the
	variable \(\tilde{\gamma}\) instead of \(\gamma\). This ensures that the exponents balance out, and we obtain
	\[
		I_{2,2} \leq 2B_{\Pi}(s)(1+W)\varepsilon.
	\]
	Since \(r = \max(p,q)\), the remaining term \(I_{2,3}\) can be bounded using Minkowski's inequality and the boundedness of \(\Pi\):
	\[
		\begin{aligned}
			I_{2,3} &\leq \left(\int_{0}^{1}  \int_{S_{\varepsilon}} \left\vert \int_{0}^{1}\int_{S_{\varepsilon}} \int_{-W}^{W} \Pi(s,\gamma,\tilde{\gamma},w)\,d(q_0^{\xi,\xi'}-\bar{q}_0^{\xi,\xi'})(w) \,d\bar{\mu}^{\xi'}(\tilde{\gamma})\,d\xi' \right\vert^r \,d\bar{\mu}^{\xi}(\gamma) \,d\xi \right)^{1/r} \\
			&\leq \left[2B_{\Pi}(s)(1+W)\right]^{(r-1)/r} \left(\int_{0}^{1}  \int_{S_{\varepsilon}} \left\vert \int_{0}^{1}\int_{S_{\varepsilon}} \int_{-W}^{W} \Pi(s,\gamma,\tilde{\gamma},w)\,d(q_0^{\xi,\xi'}-\bar{q}_0^{\xi,\xi'})(w) \,d\bar{\mu}^{\xi'}(\tilde{\gamma})\,d\xi' \right\vert \,d\bar{\mu}^{\xi}(\gamma) \,d\xi \right)^{1/r} \\ 
			&:= \left[2B_{\Pi}(s)(1+W)\right]^{(r-1)/r} I_3^{1/r}.
		\end{aligned}
	\]  
	By the triangle inequality, the integral \(I_3\) is bounded by: 
	\[
		\begin{aligned}
			I_3 &\leq \int_{0}^{1}  \int_{S_{\varepsilon}} \left\vert \int_{0}^{1}\int_{S_{\varepsilon}} \int_{-W}^{W} \left[\Pi(s,\gamma,\tilde{\gamma},w)-\tilde{\Pi}_{\varepsilon}(s,\gamma,\tilde{\gamma},w)\right]d(q_0^{\xi,\xi'}-\bar{q}_0^{\xi,\xi'})(w) \,d\bar{\mu}^{\xi'}(\tilde{\gamma}) \,d\xi' \right\vert \,d\bar{\mu}^{\xi}(\gamma) \,d\xi \\ 
			&\quad+ \int_{0}^{1}  \int_{S_{\varepsilon}} \left\vert \int_{0}^{1}\int_{S_{\varepsilon}} \int_{-W}^{W} \tilde{\Pi}_{\varepsilon}(s,\gamma,\tilde{\gamma},w)\,d(q_0^{\xi,\xi'}-\bar{q}_0^{\xi,\xi'})(w) \,d\bar{\mu}^{\xi'}(\tilde{\gamma}) \,d\xi' \right\vert \,d\bar{\mu}^{\xi}(\gamma) \,d\xi \\ 
			&:= I_{3,1} + I_{3,2}.
		\end{aligned}
	\]
	The first term, \(I_{3,1}\), is controlled directly by the uniform approximation \eqref{eq:stone-Weierstrass-bound}:
	\[
		I_{3,1} \leq 2\varepsilon.
	\]
	For the second term, \(I_{3,2}\), the separation of variables in the Stone-Weierstrass approximation allows us to
	recover the bounded-Lipschitz cut distance between the probability-graphons through the adjacency operator in
	\eqref{eq:cut-distance-prob-graphons-operator-norm}:
	\[
		\begin{aligned}
			I_{3,2} &\leq \sum_{k=1}^{N(\varepsilon)} \Vert F_k \Vert_{\ast,S_{\varepsilon}} \Vert T_k \Vert_{\ast,s} \int_{0}^{1}  \left\vert \int_{0}^{1} \int_{S_{\varepsilon}} G_k(\tilde{\gamma})\,d\bar{\mu}^{\xi}(\tilde{\gamma})  \int_{-W}^{W} H_k(w)\,d(q_0^{\xi,\xi'}-\bar{q}_0^{\xi,\xi'})(w) \,d\xi' \right\vert  d\xi \\ 
			&\leq \underbrace{4\sum_{k=1}^{N(\varepsilon)}\Vert G_k \Vert_{\ast,S_{\varepsilon}} \Vert T_k \Vert_{\ast,s} \Vert F_k \Vert_{\ast,S_{\varepsilon}}}_{:=\kappa(\varepsilon,s)} d_{\text{BL},\square}(q_0,\bar{q}_0).
		\end{aligned}
	\]
	Combining these bounds, we deduce that for every \(0 < \varepsilon \leq 1\), the term \(I_2\) satisfies:
	\[
		\begin{aligned}
			\Vert I_{2}(s,\cdot)\Vert_{L^q([0,1])} &\leq 4B_{\Pi}(s)(1+W)\varepsilon^{1/r} + \left[2B_{\Pi}(s)(1+W)\right]^{(r-1)/r}\left(2\varepsilon+4\kappa(\varepsilon,s)d_{\text{BL},\square}(q_0,\bar{q}_0)\right)^{1/r} \\
			&\leq 4B_{\Pi}(s)(1+W)\varepsilon^{1/r} + \left[2B_{\Pi}(s)(1+W)+1\right]\left(2\varepsilon^{1/r} + 4\kappa(\varepsilon,s)^{1/r}d_{\text{BL},\square}(q_0,\bar{q}_0)^{1/r}\right) \\
			&\leq \left[8B_{\Pi}(s)(1+W) + 2\right]\varepsilon^{1/r} + 4\left[2B_{\Pi}(s)(1+W)+1\right]\kappa(\varepsilon,s)^{1/r}d_{\text{BL},\square}(q_0,\bar{q}_0)^{1/r} \\
			&\leq 4\left[2B_{\Pi}(s)(1+W) + 1\right]\left(\varepsilon^{1/r} + \kappa(\varepsilon,s)^{1/r}d_{\text{BL},\square}(q_0,\bar{q}_0)^{1/r}\right).
		\end{aligned}
	\]
	Finally, substituting this bound back into \eqref{eq:estimate-previous-I2} establishes the desired stability
	estimate \eqref{eq:stability-estimate}.
\end{proof}

\section{Proof of main Theorem \ref{theo:main}} \label{sec:main-theorem}
In this section, we prove our main result, Theorem \ref{theo:main}, which we now restate in its precise formulation.

\begin{theorem} \label{theo:main-complete}
	Assume that \(K\) and \(\Gamma\) satisfy Assumptions \ref{assump:interaction-kernel} and \ref{assump:plasticity-function}.
	Consider the solution \((X_i^{N},w^N_{ij})_{1\leq i,j\leq N}\) to the system of SDEs \eqref{eq:multi-agent-discrete},
	starting from independent initial states and weights \((X^N_{i,0},w^N_{ij,0})_{1\leq i,j\leq N}\) satisfying
	Assumptions \ref{assump:weights} and \ref{assump:initial-data}. Define the associated (random) extended empirical measure \(\mu^N_{[0,t]} \in \mathcal{P}_{2,2,\nu}(\mathcal{C}_t^d\times [0,1])\) and empirical probability-graphon \(q^N_{t} \in \mathcal{P}_{\nu \otimes \nu}(\mathbb{R}\times [0,1]^2)\) as follows:  
	\[
		\begin{aligned}
			\mu^{N,\xi}_{[0,t]} &= \sum_{i=1}^{N} \mathds{1}_{I_i^N}(\xi) \delta_{X_{i,[0,t]}^N}, \quad &&\xi \in [0,1], \quad t \in [0,T], \\ 
			q^{N,\xi,\xi'}_{t} &= \sum_{i,j=1}^{N} \mathds{1}_{I_i^N\times I_j^N}(\xi,\xi') \delta_{Nw_{ij}^N(t)}, \quad &&(\xi,\xi') \in [0,1]^2, \quad t \in [0,T].
		\end{aligned}
	\]
    
	Then, there exist a subsequence \(N_k\to \infty\), a sequence of measure-preserving maps \(\varphi_k \in S_{[0,1]}\),
	a limiting initial probability-graphon \(q_0\in \mathcal{P}_{\nu\otimes\nu}([-W,W]\times [0,1]^2)\), and a limiting
	initial fibered probability measure \(\mu_0\in \mathcal{P}_{2,2,\nu}(\mathbb{R}^d\times [0,1])\) such that the following
	holds. 
	
	Set any initial states $(X_0(\xi))_{\xi\in [0,1]}$ distributed according to \((\mu_0^\xi)_{\xi\in [0,1]}\) and any family of Wiener processes $(W(\cdot,\xi))_{\xi\in [0,1]}$ and, by virtue of Proposition \ref{prop:well-posedness-multi-agent-macroscopic-solved-weights}, let \((X(\cdot,\xi),q_{\gamma}^{\xi,\xi'})_{\xi,\xi'\in[0,1]}\) be the unique strong solution to the continuum system of McKean-Vlasov SDEs
	\eqref{eq:multi-agent-macroscopic-SDE} whose initial datum $X_0$ and its initial probability-graphon is given by \(q_0\). Denote by \(\mu_{[0,T]}\in \mathcal{P}_{2,2,\nu}(\mathcal{C}_T^d \times [0,1])\) the fibered pathwise law of the solution \(X\), and define the probability-graphon
	\(q_t\in \mathcal{P}_{\nu \otimes \nu}(\mathbb{R} \times [0,1]^2)\) as: 
	\begin{equation} \label{eq:limiting-probability-graphon}
		q^{\xi,\xi'}_t= \int_{\mathcal{C}_t^d} \mathbb{E}[q^{\xi,\xi'}_{\gamma}(t)] \, d\mu^{\xi'}_{[0,t]}(\gamma), \quad (\xi,\xi') \in [0,1]^2, \quad t \in [0,T].
	\end{equation}
	Then, as \(k \to \infty\), we have the following convergence for the extended empirical measure:
	\[
		\begin{aligned}
			d_{W_2,2}(\mathbb{E}\mu^{N_k,\varphi_{k}}_{[0,T]},\mu_{[0,T]}) &\to 0,  \\ 
			\mathbb{E} W_2(\mu^{N_k,\varphi_{k}}_{[0,T]},\mu_{[0,T]}) &\to 0,
		\end{aligned}
	\]
	and for the empirical probability-graphon:
	\[
		\begin{aligned}
			\sup_{t\in [0,T]}d_{\text{BL},\square}(\mathbb{E}q^{N_k,\varphi_{k}}_t,q_t) &\to 0, \\ 
			\sup_{t\in [0,T]}\mathbb{E}d_{\text{BL}}(q^{N_k,\varphi_{k}}_t,q_t) &\to 0.
		\end{aligned}
	\]
\end{theorem}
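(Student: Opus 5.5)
We follow the three-stage route the paper has prepared: compactness of the initial node-edge data, stability of the continuum system, and propagation of independence. The plan is to compare the particle system with its independent projection, the independent projection with the continuum system, and then pass to the limit in the resulting empirical objects.

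\emph{Step 1 (compactness of initial data).} Set \(\mu^N_0\) with fibers \(\mu_0^{N,\xi}=\sum_i \mathds{1}_{I_i^N}(\xi)\mathrm{Law}(X_{i,0}^N)\), and let \(q^N_0\) be the piecewise-constant probability-graphon built from \(q^N_{ij,0}=\mathrm{Law}(Nw^N_{ij,0})\). By Assumption \ref{assump:initial-data}, the \(2+\delta\) moments of the family \(\mu_0^N\) are uniformly bounded. Proposition \ref{prop:higher-order-moments-implies-compactness} (with \(p=q=2\)) then gives relative compactness of the node component in \(\delta_{W_2,2}\). By Assumption \ref{assump:weights}, the edge component is supported in \([-W,W]\), hence tight. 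Theorem \ref{thm:compactness-node-edge-probability-graphons}(ii) therefore yields a subsequence \(N_k\), joint rearrangements \(\varphi_k\in S_{[0,1]}\), and a limit \((\mu_0,q_0)\) such that \(d_{W_2,2}(\mu_0^{N_k,\varphi_k},\mu_0)+d_{\text{BL},\square}(q_0^{N_k,\varphi_k},q_0)\to0\). We then relabel the agents by \(\varphi_k\). This is harmless because the particle system is equivariant under simultaneous permutation of nodes and edges, modulo the approximation of \(\varphi_k\) by permutations of the blocks \(I_i^N\). We note this step carefully, since \cite[Lemma 8.2]{ADW-25}-type rearrangements can be taken to be block permutations when the objects are step functions on the \(N\)-grid.

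\emph{Step 2 (independent projection vs.\ continuum).} The reformulated independent projection \eqref{eq:multi-agent-macroscopic-intermediate-solved-weights} is exactly \eqref{eq:multi-agent-macroscopic-solved-weights} with data \((\mu_0^{N_k},q_0^{N_k})\). Write \(\bar\mu^{N_k}\) for its fibered law. Proposition \ref{prop:stability-estimate-multi-agent-macroscopic-solved-weights} gives
\[
d_{W_2,2}(\bar\mu^{N_k}_{[0,T]},\mu_{[0,T]})\le \tilde C(T)\bigl(d_{W_2,2}(\mu^{N_k}_0,\mu_0)+\kappa(\varepsilon,T)^{1/2}d_{\text{BL},\square}(q^{N_k}_0,q_0)^{1/2}+\varepsilon^{1/2}\bigr).
\]
Letting first \(k\to\infty\) and then \(\varepsilon\to0\) shows this tends to zero. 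Lemma \ref{lemma:propagation-of-independence} with \(p=q=2\) gives \(\frac1N\sum_i\mathbb{E}\|X_i^N-\bar X_i^N\|_{*,T}^2\le C/N\). Since \(\mathbb{E}\mu^{N,\xi}=\mathrm{Law}(X_i^N)\) on \(I_i^N\), the synchronous coupling bounds \(d_{W_2,2}(\mathbb{E}\mu^{N}_{[0,T]},\bar\mu^{N}_{[0,T]})\) by \(C/\sqrt N\). This yields the first convergence.

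For \(\mathbb{E}W_2(\mu^{N}_{[0,T]},\mu_{[0,T]})\), viewed as measures on \(\mathcal{C}_T^d\times[0,1]\), we split the estimate into three pieces. First, replace \(X_i^N\) by \(\bar X_i^N\) at cost \(C/\sqrt N\), using the coupling through indices. Second, compare the random empirical measure \(\sum_i\mathds{1}_{I_i^N}\delta_{\bar X_i^N}\otimes d\xi\) of independent, non-identically distributed paths with its mean \(\bar\mu^N\). Third, use Lemma \ref{lemma:Wassertein-weaker-than-fibered-Wassertein} to compare \(\bar\mu^N\) with \(\mu\). The second piece is a law of large numbers for independent non-identical samples on a Polish space. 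We would prove it in \(d_{\text{BL}}\) using tightness and a finite net of test functions, exactly as in Lemma \ref{lemma:equivalence-bounded-Lipschitz-cut-and-BL,p}, where variances are \(O(1/N)\). We then upgrade it to \(W_2\) through the uniform \(2+\delta\) moment bounds on \(\bar X_i^N\), which follow from Assumption \ref{assump:initial-data} and bounded drift.

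\emph{Step 3 (weights).} By \eqref{eq:weights-solved}, \(Nw_{ij}^N(t)=\Phi_t(X_i^N,X_j^N,Nw_{ij,0}^N)\). Using that \(\Phi\) is Lipschitz and Lemma \ref{lemma:propagation-of-independence}, we can replace \(X\) by \(\bar X\) with a \(d_{\text{BL},1}\), hence \(d_{\text{BL},\square}\), error of order \(N^{-1/2}\), uniformly in \(t\). For \(i\neq j\), independence then gives \(\mathbb{E}\delta_{\Phi_t(\bar X_i,\bar X_j,Nw_{ij,0})}=\Phi_t{}_\#(\bar\mu^{N,i}\otimes\bar\mu^{N,j}\otimes q^N_{ij,0})\); the diagonal contributes \(O(1/N)\). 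So we must show that
\[
(\mu,\bar\mu,q_0)\mapsto \Phi_t{}_\#(\mu^\xi\otimes\bar\mu^{\xi'}\otimes q_0^{\xi,\xi'})
\]
is continuous from \(d_{W_2,2}\times d_{\text{BL},\square}\) into \(d_{\text{BL},\square}\). This is the expected main obstacle, because the cut distance does not directly control non-linear test functions evaluated at a node-dependent argument. We would attack it by repeating the Stone--Weierstrass separation of variables from the proof of Proposition \ref{prop:stability-estimate-multi-agent-macroscopic-solved-weights}. For \(H\in\mathrm{BL}_1\), approximate \(H\circ\Phi_t\) on compacts by sums \(F(\gamma)G(\tilde\gamma)h(w)\). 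The \(\mu\)-dependence then enters through \(\xi\mapsto\int F\,d\mu^\xi\), which is controlled in \(L^1\) by \(d_{W_2,2}\). The \(q_0\)-dependence enters through the operator norm \eqref{eq:cut-distance-prob-graphons-operator-norm}, which by \eqref{eq:cut-distance-prob-graphons-operator-norm} is also controlled by \(d_{\text{BL},\square}\). The bound is uniform in \(t\) by equicontinuity of \(\Phi\) in time. Identifying the limit with \eqref{eq:limiting-probability-graphon} uses \eqref{eq:limiting-probability-graphon-flow-map-formulation-intro}. Finally, \(\sup_t\mathbb{E}d_{\text{BL}}(q^N_t,q_t)\to0\) follows from the convergence of the mean via Corollary \ref{cor:d-BL-cut-stronger-narrow-topology}, together with a variance bound for \(\int\Phi\,dq_t^N\) tested against Lipschitz functions on \(\mathbb{R}\times[0,1]^2\). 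This variance bound uses the grid discretization of that corollary, and its summands have only \(O(N^3)\) correlated pairs among the \(N^4\) pairs.
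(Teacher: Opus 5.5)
Your plan has the same three-stage architecture as the paper: node-edge compactness of $(\mathbb{E}\mu_0^N,\mathbb{E}q_0^N)$, propagation of independence from $X$ to $\bar X$, and Proposition \ref{prop:stability-estimate-multi-agent-macroscopic-solved-weights} from the independent projection to the limit. Several of your variations are fine or better.

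\textbf{Variations that work.} For $\mathbb{E}W_2$ you split directly in $W_2$, dispose of the deterministic piece with Lemma \ref{lemma:Wassertein-weaker-than-fibered-Wassertein}, and get the $2+\delta$ moments of $\bar X_i^N$ from the bounded drift. The paper instead proves narrow convergence of the whole expression through a countable convergence-determining metric, and propagates moments of $X_i^N$ by Gr\"onwall. You also treat the diagonal blocks $I_i^N\times I_i^N$ separately. There $\bar X_i^N$ is paired with itself, so the product formula for $\mathbb{E}\bar q^N_t$ fails; this correction costs only $O(1/N)$, and the paper glosses over it.

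\textbf{Drop the relabeling in Step 1.} Nothing in \cite{ADW-25} makes the $\varphi_k$ block permutations, and you do not need them. The change of variables $\xi'=\varphi(\eta)$ in \eqref{eq:drift-macroscopic-system} shows that $\bar X^N(\cdot,\varphi(\xi))$ solves the continuum system with data $(\mathbb{E}\mu_0^{N,\varphi},\mathbb{E}q_0^{N,\varphi})$ for every $\varphi\in S_{[0,1]}$. Moreover $d_{W_2,2}$, $d_{\text{BL},1}$, $d_{\text{BL},\square}$ and your index-coupling bounds are all rearrangement invariant. This is how the paper proceeds.

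\textbf{The step that fails as written.} In Step 3 you perturb the node measures inside the Stone--Weierstrass approximation. The approximant $\sum_k F_k(\gamma)G_k(\tilde\gamma)h_k(w)$ is $\varepsilon$-close to $H\circ\Phi_t$ only on $S_\varepsilon\times S_\varepsilon\times[-W,W]$. Off that set it can be as large as $\sum_k\|F_k\|_\infty\|G_k\|_\infty\|h_k\|_\infty$, which multiplies a tail mass of order $\varepsilon$ and need not be small as $\varepsilon\to0$. If you instead restrict to $S_\varepsilon$, the node factors become $\int_{S_\varepsilon}F_k\,d\mu^{\xi}$, with $F_k$ merely continuous. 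These are not controlled by $d_{\text{BL}}$ or $W_2$. In Proposition \ref{prop:stability-estimate-multi-agent-macroscopic-solved-weights} the issue never arises, because $I_2$ carries the same $\bar\mu$ on both sides.

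\textbf{The repair.} Perturb nodes and edges separately.
\begin{enumerate}
\item For fixed $(\xi,\xi')$, the map $x\mapsto\int H(\Phi_t(x,x',w))\,d\bar\mu^{\xi'}(x')\,dq_0^{\xi,\xi'}(w)$ has bounded-Lipschitz norm at most $\max\{1,L_\Phi(t)\}$. So each node change costs a multiple of $d_{\text{BL},1}\le d_{W_2,2}$, with no approximation at all.
\item What remains is the edge change with common node measures. There your Stone--Weierstrass argument works, once you use a finite Arzel\`a--Ascoli net in $(H,t)$ to make the number of terms uniform.
\end{enumerate}
This is exactly Lemma \ref{lemma:tensorization-cut-distance}, combined with $d_{\text{BL},\square}(\Phi_{t\#}\alpha,\Phi_{t\#}\beta)\le\text{BL}(\Phi_t)\,d_{\text{BL},\square}(\alpha,\beta)$. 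The paper handles the edge part slightly differently: it step-approximates the fixed node measure via Lemma \ref{lemma:regularity-fibered-probability-measures}. The resulting $w$-test functions $g_{ij}=\iint\Phi\,d\mu_i\,d\mu_j$ then lie in $\text{BL}_1$ automatically, so no uniformity in $H$ has to be engineered.
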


\begin{remark}[On the definition of the limiting probability-graphon] \label{remark:reformulation-prob-graphon}
	The above map \(q:[0,T]\times [0,1]^2\times \mathcal{C}_T^d\times \Omega \to \mathcal{P}(\mathbb{R})\) is the component of macroscopic equation \eqref{eq:multi-agent-macroscopic-SDE} governing the weight distribution. As noted in Remark
	\ref{remark:reformulation-states-weights-probability-graphon}, this can be expressed in terms of the flow map \(\Phi\) as follows
	\[
		q^{\xi,\xi'}_{\gamma}(t)= \Phi_t(X(\cdot,\xi)_{[0,t]},\gamma,\cdot)_{\#} q_0^{\xi,\xi'}.
	\]
	Taking the expectation with respect to the underlying randomness (that is, $X(\cdot,\xi)_{[0,t]}$) yields the map \(\mathbb{E}q:[0,T]\times [0,1]^2\times \mathcal{C}_T^d \to \mathcal{P}(\mathbb{R})\), given explicitly by
	\[
		\mathbb{E}q^{\xi,\xi'}_{\gamma}(t)= \Phi_t(\cdot,\gamma,\cdot)_{\#}(\mu^{\xi}_{[0,t]}\otimes q_0^{\xi,\xi'}).
	\]
	Finally, integrating with respect to the pathwise law \(\mu^{\xi'}_{[0,t]}\) of the solution, we recover the
	deterministic time-dependent probability-graphon \(q_t: [0,1]^2 \to \mathcal{P}(\mathbb{R})\) defined as
	\begin{equation} \label{eq:limiting-probability-graphon-flow-map-formulation}
		q^{\xi,\xi'}_t= \int_{\mathcal{C}_t^d} \mathbb{E}[q^{\xi,\xi'}_{\gamma}(t)] \, d\mu^{\xi'}_{[0,t]}(\gamma) = \Phi_t(\cdot,\cdot,\cdot)_{\#}(\mu^{\xi}_{[0,t]}\otimes \mu^{\xi'}_{[0,t]}\otimes q_0^{\xi,\xi'}), \quad t \in [0,T].
	\end{equation}
\end{remark}

Given the flow map representation \eqref{eq:limiting-probability-graphon-flow-map-formulation},
we can deduce the convergence of the empirical probability-graphon from the convergence
of the extended empirical measure using the following tensorization bound.

\begin{lemma} \label{lemma:tensorization-cut-distance}
	For any \(\mu,\bar{\mu} \in \mathcal{P}_{\nu}(\mathcal{X}\times [0,1])\) 
	and \(q,\bar{q} \in \mathcal{P}_{\nu \otimes \nu}(\mathcal{Y}\times [0,1]^2)\), define the probability-graphon 
	\(\mu\otimes \mu \otimes q \in \mathcal{P}_{\nu\otimes \nu}((\mathcal{X}\times\mathcal{X}\times\mathcal{Y})\times [0,1]^2)\) as 
	\[
		(\mu\otimes \mu \otimes q)^{\xi,\xi'}= \mu^{\xi} \otimes \mu^{\xi'} \otimes q^{\xi,\xi'}, \quad (\xi,\xi') \in [0,1]^2,
	\]    
	and similarly for \(\bar{\mu}\otimes \bar{\mu} \otimes \bar{q}\). Then, for every \(\varepsilon >0\) there exists
	\(\tilde{\kappa}(\varepsilon)\in \mathbb{N}\) such that 
	\[
		d_{\text{BL},\square}(\mu\otimes \mu \otimes q,\bar{\mu}\otimes \bar{\mu} \otimes \bar{q}) \leq 2 d_{\text{BL},1}(\mu,\bar{\mu}) +  \tilde{\kappa}(\varepsilon)^2 d_{\text{BL},\square}(q,\bar{q}) + \varepsilon.
	\]  
\end{lemma}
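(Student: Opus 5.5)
We split the difference with the triangle inequality through the intermediate probability-graphon \(\bar{\mu}\otimes\bar{\mu}\otimes q\), which mixes the new nodes with the old edges:
\[
d_{\text{BL},\square}(\mu\otimes \mu \otimes q,\bar{\mu}\otimes \bar{\mu} \otimes \bar{q}) \leq d_{\text{BL},\square}(\mu\otimes \mu \otimes q,\bar{\mu}\otimes \bar{\mu} \otimes q) + d_{\text{BL},\square}(\bar{\mu}\otimes \bar{\mu} \otimes q,\bar{\mu}\otimes \bar{\mu} \otimes \bar{q}).
\]
Throughout we use the reformulation \eqref{eq:cut-distance-prob-graphons-reformulation}. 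It says that we must bound \(d_\square(\Psi_{\cdot},\Psi_{\cdot})\) uniformly over test functions \(\Psi\in\text{BL}_1(\mathcal{X}\times\mathcal{X}\times\mathcal{Y})\).

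For the node term we bound the cut distance by the labeled \(L^1\) distance. Since \(d_{\text{BL},\square}\leq d_{\text{BL},1}\), it suffices to control
\[
\int_0^1\!\!\int_0^1 d_{\text{BL}}\big(\mu^\xi\otimes\mu^{\xi'}\otimes q^{\xi,\xi'},\ \bar\mu^\xi\otimes\bar\mu^{\xi'}\otimes q^{\xi,\xi'}\big)\,d\xi\,d\xi'.
\]
Fiberwise we apply the standard tensorization bound
\[
d_{\text{BL}}(\alpha\otimes\beta\otimes\gamma,\bar\alpha\otimes\bar\beta\otimes\gamma)\leq d_{\text{BL}}(\alpha,\bar\alpha)+d_{\text{BL}}(\beta,\bar\beta).
\]
To prove it, change one factor at a time. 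For fixed values of the other variables, a \(\text{BL}_1\) function of the product stays in \(\text{BL}_1\) of the remaining variable, and after integrating against probability measures it remains in \(\text{BL}_1\). Integrating the resulting bound over \((\xi,\xi')\) gives \(2\,d_{\text{BL},1}(\mu,\bar\mu)\).

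For the edge term the node fibers coincide, so only \(q\) versus \(\bar q\) matters, and this is the main obstacle. A general \(\Psi(x,x',y)\) does not separate variables, so \(d_{\text{BL},\square}(q,\bar q)\) cannot be applied directly. We follow the Stone–Weierstrass strategy of Proposition \ref{prop:stability-estimate-multi-agent-macroscopic-solved-weights}, with two difficulties to handle.

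First, we need compactness, which must be uniform over \(\Psi\). Tightness of \(M_{\bar\mu}\) gives a compact \(S_\varepsilon\subset\mathcal{X}\) with \(\int_0^1\bar\mu^\xi(\mathcal{X}\setminus S_\varepsilon)\,d\xi<\varepsilon\). The cost of restricting to \(S_\varepsilon\times S_\varepsilon\) is then \(O(\varepsilon)\), because \(|\Psi|\leq1\) and the two fibers have equal node marginals. We handle the \(\mathcal{Y}\)-variable by tightness only if it is not compact; in the application \(\mathcal{Y}=[-W,W]\) after a rescaling.

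Second, the approximation must be uniform in \(\Psi\). On \(S_\varepsilon\times S_\varepsilon\) the family \(\{\Psi(\cdot,\cdot,y)\}\) is \(1\)-Lipschitz and bounded, so Arzelà–Ascoli gives a finite \(\varepsilon\)-net, exactly as in Lemma \ref{lemma:equivalence-bounded-Lipschitz-cut-and-BL,p}. To keep the dependence on \(y\) inside \(\text{BL}_1\), we instead use a finite partition of unity \(\{\chi_k\}_{k\leq\tilde\kappa}\) on \(S_\varepsilon\) subordinate to an \(\varepsilon\)-net, with centers \(x_k\). We then approximate
\[
\Psi(x,x',y)\approx\sum_{k,l}\chi_k(x)\chi_l(x')\Psi(x_k,x_l,y),
\]
with uniform error \(2\varepsilon\) by the Lipschitz property. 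Here \(y\mapsto\Psi(x_k,x_l,y)\) lies in \(\text{BL}_1(\mathcal{Y})\), and \(0\leq\chi_k\leq1\).

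For each pair \((k,l)\) the integrated difference takes the form
\[
\int_0^1\!\!\int_0^1 a_k(\xi)\,b_l(\xi')\int_{\mathcal{Y}}\Psi(x_k,x_l,y)\,d(q^{\xi,\xi'}-\bar q^{\xi,\xi'})(y)\,d\xi\,d\xi',
\]
where \(a_k(\xi)=\int\chi_k\,d\bar\mu^\xi\) and \(b_l(\xi')=\int\chi_l\,d\bar\mu^{\xi'}\) both lie in \([0,1]\). The supremum over \(S\times T\) in the cut distance only adds indicator factors, which are still bounded by \(1\). The operator-norm form \eqref{eq:cut-distance-prob-graphons-operator-norm} then bounds each pair by a constant multiple of \(d_{\text{BL},\square}(q,\bar q)\). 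Summing over the \(\tilde\kappa(\varepsilon)^2\) pairs gives the term \(\tilde\kappa(\varepsilon)^2 d_{\text{BL},\square}(q,\bar q)\), after absorbing constants by rescaling \(\varepsilon\) and choosing the net accordingly. The approximation and truncation errors add up to \(C\varepsilon\), and renaming \(\varepsilon\) gives the claimed bound.

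The main point to check is that \(\tilde\kappa(\varepsilon)\) depends only on \(\bar\mu\), through \(S_\varepsilon\), and not on \(\Psi\). The partition-of-unity construction ensures this.
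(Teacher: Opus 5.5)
Your proof is correct. The node term is handled as in the paper; the paper merely splits your single step in two, changing one factor of $\mu\otimes\mu$ at a time, each at cost $d_{\text{BL},1}(\mu,\bar\mu)$. For the edge term you take a genuinely different route.

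The paper discretizes in the \emph{label} variable:
\begin{itemize}
\item It invokes the weak regularity lemma (Lemma~\ref{lemma:regularity-fibered-probability-measures}) to replace $\bar\mu$ by a step fibered measure $\tilde\mu$ with $\tilde\kappa(\varepsilon)$ blocks and $d_{\text{BL},1}(\bar\mu,\tilde\mu)\le\varepsilon/4$.
\item It pays $4\,d_{\text{BL},1}(\bar\mu,\tilde\mu)\le\varepsilon$ for this swap by reusing the node-term estimate on both sides.
\item On each block $A_i\times A_j$ the node factors integrate out to a single function $g_{ij}(y)=\int\Phi(x,x',y)\,d\mu_i(x)\,d\mu_j(x')\in\text{BL}_1(\mathcal{Y})$. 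Each of the $\tilde\kappa(\varepsilon)^2$ terms is then bounded by $d_{\text{BL},\square}(q,\bar q)$ straight from the set formulation, with no approximation of the test function.
\end{itemize}

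You instead discretize in the \emph{state} variable. A Lipschitz partition of unity on a compact $S_\varepsilon$ separates the variables of $\Psi$ uniformly over $\text{BL}_1$. This needs only tightness of $M_{\bar\mu}$ and no regularity lemma. Your $\tilde\kappa$ is the $\varepsilon$-covering number of $S_\varepsilon$. That is typically much smaller than the number of blocks the regularity lemma yields, because its proof covers $\mathcal{P}(K_\varepsilon)$ in $d_{\text{BL}}$ rather than $K_\varepsilon$ itself. The price is an extra truncation and approximation error, and weights $a_k,b_l$ that are $[0,1]$-valued rather than indicators.

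Two small corrections:
\begin{itemize}
\item The factor $4$ that the operator-norm form puts in front of $d_{\text{BL},\square}(q,\bar q)$ is not removed by rescaling $\varepsilon$. It is absorbed by enlarging $\tilde\kappa$, for instance doubling it. It can also be avoided entirely: an expression bilinear in $0\le a,b\le 1$ attains its supremum at indicators, which gives the bound $d_{\text{BL},\square}(q,\bar q)$ per pair directly.
\item No compactness or tightness in $\mathcal{Y}$ is needed. Your approximation is already uniform in $y$, and $\Psi(x_k,x_l,\cdot)\in\text{BL}_1(\mathcal{Y})$ automatically.
\end{itemize}
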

\begin{proof}
	By the triangle inequality, we can decompose the cut distance as follows:
	\[
		\begin{aligned}
			d_{\text{BL},\square}(\mu\otimes \mu \otimes q,\bar{\mu}\otimes \bar{\mu} \otimes \bar{q}) &\leq d_{\text{BL},\square}(\mu\otimes \mu \otimes q,\bar{\mu}\otimes \mu \otimes q) \\ 
			&+ d_{\text{BL},\square}(\bar{\mu}\otimes \mu \otimes q,\bar{\mu}\otimes \bar{\mu} \otimes q) \\ 
			&+ d_{\text{BL},\square}(\bar{\mu}\otimes \bar{\mu} \otimes q,\bar{\mu}\otimes \bar{\mu} \otimes \bar{q}) \\ 
			&:= I_1 + I_2 + I_3.
		\end{aligned}
	\]
	The term \(I_1\) can be estimated by transitioning to the \(d_{\text{BL},1}\) distance and expanding its definition: 
	\[
		\begin{aligned}
			I_1 &\leq d_{\text{BL},1}(\mu\otimes \mu \otimes q,\bar{\mu}\otimes \mu \otimes q) \\ 
		    	&=\int_{0}^{1} \int_{0}^{1} \sup_{\Phi \in \text{BL}_1} \left\vert \int_{\mathcal{X}\times \mathcal{X}\times \mathcal{Y}} \Phi(x,x',y) \,d(\mu^{\xi}-\bar{\mu}^{\xi})(x) \,d\mu^{\xi'}(x') \,dq^{\xi,\xi'}(y) \right\vert d\xi \,d\xi'.
		\end{aligned}
	\]
	To evaluate the inner integrals over \(x'\) and \(y\), we define the test function
	\[
		\Psi(x) := \int_{\mathcal{X}\times\mathcal{Y}} \Phi(x,x',y) \,d\mu^{\xi'}(x') \,dq^{\xi,\xi'}(y).
	\]
	It is straightforward to verify that since \(\Phi \in \text{BL}_1(\mathcal{X}\times \mathcal{X}\times \mathcal{Y})\),
	then \(\Psi \in \text{BL}_1(\mathcal{X})\), which allows us to bound the inner integral by
	\[
		\begin{aligned}
			I_1 &\leq \int_{0}^{1} \int_{0}^{1} \sup_{\Psi \in \text{BL}_1(\mathcal{X})} \left\vert \int_{\mathcal{X}} \Psi(x) \,d(\mu^{\xi}-\bar{\mu}^{\xi})(x) \right\vert d\xi \,d\xi' \\
		    	&= \int_{0}^{1} d_{\text{BL}}(\mu^{\xi}, \bar{\mu}^{\xi}) \,d\xi = d_{\text{BL},1}(\mu,\bar{\mu}).
		\end{aligned}
	\]
	An analogous estimate holds for \(I_2\) regarding the second coordinate, yielding:
	\[
		I_2 \leq d_{\text{BL},1}(\mu,\bar{\mu}).
	\]
	For the last term \(I_3\), fix \(\varepsilon >0\). By the weak regularity lemma for fibered probability measures 
	\ref{lemma:regularity-fibered-probability-measures}, there exists \(\tilde{\kappa}(\varepsilon)\in \mathbb{N}\), a partition
	of the interval \([0,1]\) into \(\tilde{\kappa}(\varepsilon)\) measurable sets \(\{A_1,\ldots,A_{\tilde{\kappa}(\varepsilon)}\}\),
	and probability measures \(\mu_i \in \mathcal{P}(\mathcal{X})\), such that the fibered probability measure
	\(\tilde{\mu}\) defined as
	\[
		\tilde{\mu}^{\xi}=\sum_{i=1}^{\tilde{\kappa}(\varepsilon)} \mathds{1}_{A_i}(\xi) \mu_i, \quad \xi \in [0,1],
	\]
	approximates \(\bar{\mu}\) up to an error of \(\varepsilon/4\):
	\[
		d_{\text{BL},1}(\bar{\mu},\tilde{\mu}) \leq \frac{\varepsilon}{4}.
	\]
	Using this approximation and the triangle inequality, \(I_3\) can be expanded and bounded as follows:
	\[ 
		\begin{aligned}
			I_3 &= d_{\text{BL},\square}(\bar{\mu}\otimes \bar{\mu} \otimes q,\bar{\mu}\otimes \bar{\mu} \otimes \bar{q}) \\
			&\leq d_{\text{BL},\square}(\bar{\mu}\otimes \bar{\mu} \otimes q,\tilde{\mu}\otimes \tilde{\mu} \otimes q)  
			+ d_{\text{BL},\square}(\tilde{\mu}\otimes \tilde{\mu} \otimes q,\tilde{\mu}\otimes \tilde{\mu} \otimes \bar{q})+ d_{\text{BL},\square}(\tilde{\mu}\otimes \tilde{\mu} \otimes \bar{q},\bar{\mu}\otimes \bar{\mu} \otimes \bar{q}) \\ 
			&\leq 2d_{\text{BL},1}(\bar{\mu},\tilde{\mu}) + d_{\text{BL},\square}(\tilde{\mu}\otimes \tilde{\mu} \otimes q,\tilde{\mu}\otimes \tilde{\mu} \otimes \bar{q}) + 2d_{\text{BL},1}(\bar{\mu},\tilde{\mu}) \\
			&\leq \varepsilon + d_{\text{BL},\square}(\tilde{\mu}\otimes \tilde{\mu} \otimes q,\tilde{\mu}\otimes \tilde{\mu} \otimes \bar{q}).
		\end{aligned}
	\]
	To estimate this remaining term, take arbitrary measurable sets \(S,T \subset [0,1]\) and a test function \(\Phi \in \text{BL}_1(\mathcal{X}\times \mathcal{X}\times\mathcal{Y})\). Substituting the explicit form of \(\tilde{\mu}\), we have: 
	\[
		\begin{aligned}
			\int_{S} \int_{T} &\int_{\mathcal{X}\times \mathcal{X}\times \mathcal{Y}} \Phi(x,x',y) \,d\tilde{\mu}^{\xi}(x) \,d\tilde{\mu}^{\xi'}(x') \,d(q^{\xi,\xi'}-\bar{q}^{\xi,\xi'})(y) \,d\xi \,d\xi'  \\ 
			&= \sum_{i,j=1}^{\tilde{\kappa}(\varepsilon)} \int_{S\cap A_i} \int_{T\cap A_j} \int_{\mathcal{Y}} \left( \int_{\mathcal{X}\times \mathcal{X}} \Phi(x,x',y) \,d\mu_i(x) \,d\mu_j(x') \right) d(q^{\xi,\xi'}-\bar{q}^{\xi,\xi'})(y) \,d\xi \,d\xi' \\ 
			&\leq \tilde{\kappa}(\varepsilon)^2 d_{\text{BL},\square}(q,\bar{q}),
		\end{aligned}
	\]
	where we have used the fact that the inner integral \(g_{ij}(y):=\int_{\mathcal{X}\times \mathcal{X}} \Phi(x,x',y) \,d\mu_i(x) \,d\mu_j(x')\)
	defines a function \(g_{ij} \in \text{BL}_1(\mathcal{Y})\), allowing us to bound each of the
	\(\tilde{\kappa}(\varepsilon)^2\) terms individually by the cut metric over the subsets \(S \cap A_i\) and \(T \cap A_j\).
	Taking the supremum over all test functions \(\Phi\) and sets \(S,T\), we obtain:
	\[
		d_{\text{BL},\square}(\tilde{\mu}\otimes \tilde{\mu} \otimes q,\tilde{\mu}\otimes \tilde{\mu} \otimes \bar{q}) \leq \tilde{\kappa}(\varepsilon)^2 d_{\text{BL},\square}(q,\bar{q}).
	\]
	Combining these estimates, we conclude that 
	\[
		I_3 \leq \varepsilon + \tilde{\kappa}(\varepsilon)^2 d_{\text{BL},\square}(q,\bar{q}),
	\]
	which, together with the bounds on \(I_1\) and \(I_2\), leads to the desired result.
\end{proof}

\begin{proof}[Proof of Theorem \ref{theo:main-complete}]
	Consider the solution \((\bar{X}_i^N)_{1\leq i\leq N}\) of the intermediate system
	\eqref{eq:multi-agent-discrete-independent-projection-solved-weights} with the same initial data
	\((X_{i,0}^N)_{1\leq i\leq N}\) as the solution \((X_i^N)_{1\leq i\leq N}\) of the original system \eqref{eq:multi-agent-discrete}, and define
	\begin{equation} \label{eq:weights-intermediate-system}
		N\bar{w}_{ij}^N(t) := \Phi_t(\bar{X}^N_{i,[0,t]},\bar{X}^N_{j,[0,t]},Nw_{ij,0}^N).
	\end{equation}
	Let \(\bar{\mu}^N\) and \(\bar{q}^N\) be the extended empirical measure and empirical probability-graphon associated
	with such solution to the intermediate system:
	\[
		\begin{aligned}
			\bar{\mu}^{N,\xi}_{[0,t]} &= \sum_{i=1}^{N} \mathds{1}_{I_i^N}(\xi) \delta_{\bar{X}_{i,[0,t]}^N}, \quad &&\xi \in [0,1], \quad t \in [0,T], \\ 
			\bar{q}^{N,\xi,\xi'}_t &= \sum_{i,j=1}^{N} \mathds{1}_{I_i^N\times I_j^N}(\xi,\xi') \delta_{N\bar{w}_{ij}^N(t)}, \quad &&(\xi,\xi') \in [0,1]^2, \quad t \in [0,T].
		\end{aligned}
	\]

	\(\diamond\) \textsc{Step 1}: Definition of the limiting \(q_0\) and \(\mu_0\).\\
	The sequence of initial fibered laws 
	\((\mathbb{E}\mu_0^N)_{N\in\mathbb{N}}\subset \mathcal{P}_{2,2,\nu}(\mathbb{R}^d\times [0,1])\) takes the following form:
	\[
		\mathbb{E}\mu_0^{N,\xi}=\sum_{i=1}^{N} \mathds{1}_{I_i^N}(\xi) \mathrm{Law}(X_{i,0}^N), \quad \xi \in [0,1].
	\]
	Assumption \ref{assump:initial-data} on the initial data ensures that 
	\[
		\sup_{N\in \mathbb{N}} \int_{0}^{1} \int_{\mathbb{R}^d} \vert x \vert^{2+\delta} d(\mathbb{E}\mu_0^{N,\xi})(x) d\xi < \infty,
	\]
	thus, by Proposition \ref{prop:higher-order-moments-implies-compactness}, the sequence \((\mathbb{E}\mu_0^N)_{N\in \mathbb{N}}\) is relatively compact in \((\widetilde{\mathcal{P}}_{2,2,\nu}(\mathbb{R}^d\times [0,1]),\delta_{W_2,2})\). 	
	On the other hand, the sequence of initial probability-graphons \((\mathbb{E}q_0^N)_{N\in\mathbb{N}}\subset \mathcal{P}_{\nu\otimes\nu}(\mathbb{R}\times [0,1]^2)\) takes the following form:
	\[
		\mathbb{E}q_0^{N,\xi,\xi'}=\sum_{i,j=1}^{N} \mathds{1}_{I_i^N\times I_j^N}(\xi,\xi') \mathrm{Law}(Nw_{ij,0}^N), \quad (\xi,\xi') \in [0,1]^2.
	\]
	Assumption \ref{assump:weights} on the weights ensures that the sequence takes values in 
	\(\widetilde{\mathcal{P}}_{\nu\otimes\nu}([-W,W]\times [0,1]^2)\), which by Theorem \ref{thm:compactness-probability-graphons}
	we know is a compact space with respect to the unlabeled cut distance \(\delta_{\text{BL},\square}\). Altogether,
	by Theorem \ref{thm:compactness-node-edge-probability-graphons},
	we can ensure the existence of a subsequence \(N_k\to \infty\), measure-preserving maps \(\varphi_k \in S_{[0,1]}\),
	a fibered probability measure \(\mu_0\in \mathcal{P}_{2,2,\nu}(\mathbb{R}^d\times [0,1])\) and a probability-graphon
	\(q_0\in \mathcal{P}_{\nu\otimes\nu}([-W,W]\times [0,1]^2)\) such that
	\begin{equation} \label{eq:convergence-initial-fib-measure-and-prob-graphon}
		\begin{aligned}
			d_{W_2,2}(\mathbb{E}\mu_0^{N_k,\varphi_k},\mu_0) &\to 0, \\ 
			d_{\text{BL},\square}(\mathbb{E}q_0^{N_k,\varphi_k},q_0) &\to 0.
		\end{aligned}
	\end{equation}
	By virtue of Proposition \ref{prop:well-posedness-multi-agent-macroscopic-solved-weights}, we can define \(\mu \in \mathcal{P}_{2,2,\nu}(\mathcal{C}_T^d\times [0,1])\) to be the unique fibered pathwise law
	of the solution of \eqref{eq:multi-agent-macroscopic-SDE} with initial data distributed as \(\mu_0\) and initial
	probability-graphon \(q_0\). We also define \(q_t\in \mathcal{P}_{\nu \otimes \nu}(\mathbb{R}\times [0,1]^2)\) as
	in \eqref{eq:limiting-probability-graphon}, which can be equivalently expressed, as we have seen in Remark \ref{remark:reformulation-prob-graphon}, in terms of the flow map \(\Phi\) as
	\[
		q_t^{\xi,\xi'}= \Phi_t(\cdot,\cdot,\cdot)_{\#} (\mu_{[0,t]}^{\xi} \otimes \mu_{[0,t]}^{\xi'} \otimes q_0^{\xi,\xi'}), \quad (\xi,\xi') \in [0,1]^2, \quad t \in [0,T].
	\]

	\medskip

	\(\diamond\) \textsc{Step 2}: Comparing \(\mathbb{E}\mu^N\) with \(\mathbb{E}\bar{\mu}^N\), and \(\mathbb{E}q^N\) with \(\mathbb{E}\bar{q}^N\). \\
	By the propagation of independence stated in Lemma \ref{lemma:propagation-of-independence}, we have that 
	\begin{equation} \label{eq:estimate-propagation-independence-empirical-measure}
		d_{W_2,2}(\mathbb{E}\mu^N_{[0,T]},\mathbb{E}\bar{\mu}^N_{[0,T]}) \leq \left(\frac{1}{N} \displaystyle\sum_{i=1}^{N} \left( \mathbb{E} \Vert X_i^N -\bar{X}_i^N \Vert^2_{\ast,T}\right) \right)^{1/2} \leq \frac{C(T)}{\sqrt{N}}. 
	\end{equation}
	Since the weights \(Nw_{ij}^N\) and \(N\bar{w}_{ij}^N\) are given by 
	\[
		Nw_{ij}^N(t)=\Phi_t(X_{i,[0,t]}^N,X_{j,[0,t]}^N,Nw^N_{ij,0}), \quad N\bar{w}_{ij}^N(t)=\Phi_t(\bar{X}_{i,[0,t]}^N,\bar{X}_{j,[0,t]}^N,N w^N_{ij,0}),
	\]
	and \(\Phi\) is Lipschitz continuous with constant \(L_{\Phi}(t) = e^{L_{\Gamma}t}(1+tL_{\Gamma})\), we have that:
	\begin{equation} \label{eq:estimate-propagation-independence-probability-graphon}
		\begin{aligned}
			d_{\text{BL},\square}(\mathbb{E}q^N_t,\mathbb{E}\bar{q}^N_t) &\leq \left(\frac{1}{N^2} \sum_{i,j=1}^{N} \mathbb{E} \vert Nw_{ij}^N(t) - N\bar{w}_{ij}^N(t) \vert^2 \right)^{1/2} \\
			&\leq \left(\frac{1}{N^2} \sum_{i,j=1}^{N} 2L_{\Phi}(t)^2 \left( \mathbb{E}\Vert X_i^N - \bar{X}_i^N \Vert_{\ast,t}^2 + \mathbb{E}\Vert X_j^N - \bar{X}_j^N \Vert_{\ast,t}^2 \right) \right)^{1/2} \\
			&= 2L_{\Phi}(t) \left(\frac{1}{N} \sum_{i=1}^{N} \mathbb{E}\Vert X_i^N - \bar{X}_i^N \Vert_{\ast,t}^2 \right)^{1/2} \\
			&\leq \frac{2L_{\Phi}(T)C(T)}{\sqrt{N}}.
		\end{aligned}
	\end{equation}

	\medskip

	\(\diamond\) \textsc{Step 3}: Comparing \(\mathbb{E}\bar{\mu}^{N}\) with \(\mu\), and \(\mathbb{E}\bar{q}^{N}\) with \(q\).\\
	Since \(\mathbb{E}\bar{\mu}^N\) is the fibered pathwise law of the solution of \eqref{eq:multi-agent-discrete-independent-projection-preparatory} with initial data distributed as \(\mathbb{E}\mu_0^N\), and initial probability-graphon \(\mathbb{E}q_0^N\), we have that \(\mathbb{E}\bar{\mu}^{N_k,\varphi_k}\) is the unique fibered pathwise law of \eqref{eq:multi-agent-discrete-independent-projection-preparatory} with initial data distributed as \(\mathbb{E}\mu_0^{N_k,\varphi_k}\) and initial probability-graphon \(\mathbb{E}q_0^{N_k,\varphi_k}\). Thus, by the stability estimate stated in Proposition \ref{prop:stability-estimate-multi-agent-macroscopic-solved-weights}, we have that for every \(0<\varepsilon \leq 1\), 
	\begin{equation} \label{eq:stability-estimate-empirical-measure}
		d_{W_2,2}(\mathbb{E}\bar{\mu}^{N_k,\varphi_k}_{[0,T]},\mu_{[0,T]}) \leq \tilde{C}(T)\left(d_{W_2,2}(\mathbb{E}\mu_0^{N_k,\varphi_k},\mu_0) +\kappa(\varepsilon,T)^{1/2}d_{\text{BL},\square}(\mathbb{E}q_0^{N_k,\varphi_k},q_0)^{1/2}+ \varepsilon^{1/2} \right).
	\end{equation}
	For the probability-graphons, since \(\mathbb{E}\bar{q}_t^{N_k,\varphi_k}\) and \(q_t\) are given by
	\[
		\begin{aligned}
			q_t^{\xi,\xi'} &= \Phi_t(\cdot,\cdot,\cdot)_{\#} (\mu_{[0,t]}^{\xi} \otimes \mu_{[0,t]}^{\xi'} \otimes q_0^{\xi,\xi'}),  \\ 
			\mathbb{E}\bar{q}_t^{N_k,\varphi_k,\xi,\xi'} &= \Phi_t(\cdot,\cdot,\cdot)_{\#} (\mathbb{E}\bar{\mu}^{N_k,\varphi_k,\xi}_{[0,t]} \otimes \mathbb{E}\bar{\mu}^{N_k,\varphi_k,\xi'}_{[0,t]} \otimes \mathbb{E}q_0^{N_k,\varphi_k,\xi,\xi'}),
		\end{aligned}
	\] 
	and \(\Phi\) is Lipschitz continuous and bounded, we can estimate the distance between \(\mathbb{E}\bar{q}_t^{N_k,\varphi_k}\) and \(q_t\) by
	\[
		d_{\text{BL},\square}(\mathbb{E}\bar{q}^{N_k,\varphi_k}_t,q_t)\leq \text{BL}(\Phi_t) d_{\text{BL},\square}(\mu_{[0,t]}\otimes \mu_{[0,t]} \otimes q_0,\mathbb{E}\bar{\mu}^{N_k,\varphi_k}_{[0,t]} \otimes \mathbb{E}\bar{\mu}^{N_k,\varphi_k}_{[0,t]} \otimes \mathbb{E}q_0^{N_k,\varphi_k}),
	\]
	where \(\text{BL}(\Phi_t):=\max\{B_{\Phi}(t)(1+W),L_{\Phi}(t)\}\) is the bounded-Lipschitz constant of the flow
	map \(\Phi\) at time \(t\). By Lemma \ref{lemma:tensorization-cut-distance}, for every \(0<\varepsilon \leq 1\)
	there exists \(\tilde{\kappa}(\varepsilon,t)\in \mathbb{N}\) such that:
	\begin{equation} \label{eq:stability-estimate-empirical-probability-graphon}
		d_{\text{BL},\square}(\mathbb{E}\bar{q}^{N_k,\varphi_k}_t,q_t) \leq \text{BL}(\Phi_t)\left( 2 d_{\text{BL},1}(\mathbb{E}\bar{\mu}^{N_k,\varphi_k}_{[0,t]},\mu_{[0,t]}) +  \tilde{\kappa}(\varepsilon,t)^2 d_{\text{BL},\square}(\mathbb{E}q_0^{N_k,\varphi_k},q_0) + \varepsilon \right).
	\end{equation}

	\medskip

	\(\diamond\) \textsc{Step 4}: Comparing \(\mathbb{E}\mu^{N}\) with \(\mu\) and \(\mathbb{E}q^{N}\) with \(q\). \\
	By the triangle inequality, using the estimates \eqref{eq:estimate-propagation-independence-empirical-measure} and \eqref{eq:stability-estimate-empirical-measure} obtained in \textsc{Step} 2 and \textsc{Step} 3, we can bound the distance between \(\mathbb{E}\mu^{N_k,\varphi_k}\) and \(\mu\) for every \(0<\varepsilon \leq 1\) as follows:
	\[
		\begin{aligned}
			d_{W_2,2}(\mathbb{E}\mu^{N_k,\varphi_k}_{[0,T]},\mu_{[0,T]}) &\leq d_{W_2,2}(\mathbb{E}\mu^{N_k,\varphi_k}_{[0,T]},\mathbb{E}\bar{\mu}^{N_k,\varphi_k}_{[0,T]}) + d_{W_2,2}(\mathbb{E}\bar{\mu}^{N_k,\varphi_k}_{[0,T]},\mu_{[0,T]}) \\ 
			&\leq \frac{C(T)}{\sqrt{N_k}}+ \tilde{C}(T)\left(d_{W_2,2}(\mathbb{E}\mu_0^{N_k,\varphi_k},\mu_0) +\kappa(\varepsilon,T)^{1/2}d_{\text{BL},\square}(\mathbb{E}q_0^{N_k,\varphi_k},q_0)^{1/2}+ \varepsilon^{1/2} \right).
		\end{aligned}
	\]
	If we take the limit as \(k \to \infty\) in the previous estimate and use the convergence of \(\mathbb{E}\mu_0^{N_k,\varphi_k}\) to \(\mu_0\) and \(\mathbb{E}q_0^{N_k,\varphi_k}\) to \(q_0\) from \eqref{eq:convergence-initial-fib-measure-and-prob-graphon}, we get that
	\[
		\limsup_{k\to \infty} d_{W_2,2}(\mathbb{E}\mu^{N_k,\varphi_k}_{[0,T]},\mu_{[0,T]}) \leq \tilde{C}(T)\varepsilon^{1/2}.
	\]
	Since this holds for every \(0<\varepsilon \leq 1\), we conclude that
	\begin{equation} \label{eq:mean-convergence-empirical-measure}
		\lim_{k\to \infty} d_{W_2,2}(\mathbb{E}\mu^{N_k,\varphi_k}_{[0,T]},\mu_{[0,T]}) = 0.
	\end{equation}
	Analogous reasoning holds for the probability-graphons, where we use estimates
	\eqref{eq:estimate-propagation-independence-probability-graphon}
	and \eqref{eq:stability-estimate-empirical-probability-graphon} to obtain:
	\[
		\begin{aligned}
			&\sup_{t\in [0,T]}d_{\text{BL},\square}(\mathbb{E}q^{N_k,\varphi_k}_t,q_t)\\
            &\qquad \leq \sup_{t\in [0,T]} d_{\text{BL},\square}(\mathbb{E}q^{N_k,\varphi_k}_t,\mathbb{E}\bar{q}_t^{N_k,\varphi_k}) + \sup_{t\in [0,T]} d_{\text{BL},\square}(\mathbb{E}\bar{q}_t^{N_k,\varphi_k},q_t)  \\
			&\qquad\leq \frac{2L_{\Phi}(T)C(T)}{\sqrt{N_k}} + \text{BL}(\Phi_T)\left( 2 d_{\text{BL},1}(\mathbb{E}\bar{\mu}^{N_k,\varphi_k}_{[0,T]},\mu_{[0,T]}) +  \tilde{\kappa}(\varepsilon,T)^2 d_{\text{BL},\square}(\mathbb{E}q_0^{N_k,\varphi_k},q_0) + \varepsilon \right),
		\end{aligned}
	\]
	where in the second inequality we have used the fact that the constant \(\tilde{\kappa}(\varepsilon,t)\)
	is non-decreasing in \(t\). To see this, recall from the proof of Lemma \ref{lemma:tensorization-cut-distance}
	that \(\tilde{\kappa}(\varepsilon,t)\) is defined as the minimum number of sets in a partition of \([0,1]\)
	required to approximate the fibered probability measure \(\mu_{[0,t]}\) up to an error of \(\varepsilon/4\)
	in the \(d_{\text{BL},1}\) distance. Because the pathwise supremum norm is non-decreasing in time,
	the bounded-Lipschitz distance between any two measures on the path space is also non-decreasing in time.
	Therefore, if a piecewise-constant fibered measure \(\nu_{[0,T]}\) approximates \(\mu_{[0,T]}\) with error \(\varepsilon/4\) then 
	\[
	    d_{\text{BL},1}(\mu_{[0,t]},\nu_{[0,t]}) \leq d_{\text{BL},1}(\mu_{[0,T]},\nu_{[0,T]}) < \frac{\varepsilon}{4},
	\]
	and therefore we can use the exact same partition to approximate \(\mu_{[0,t]}\) for any \(t\in [0,T]\). This guarantees
	that \(\tilde{\kappa}(\varepsilon,t)\leq \tilde{\kappa}(\varepsilon,T)\).
	
	Using \eqref{eq:convergence-initial-fib-measure-and-prob-graphon} and \eqref{eq:mean-convergence-empirical-measure} leads to 
	\[
		\limsup_{k\to \infty} \sup_{t\in [0,T]} d_{\text{BL},\square}(\mathbb{E}q^{N_k,\varphi_k}_t,q_t) \leq \text{BL}(\Phi_T) \varepsilon,
	\]
	and since this holds for every \(0<\varepsilon \leq 1\), we conclude that
	\begin{equation} \label{eq:mean-convergence-probability-graphon}
		\lim_{k\to \infty} \sup_{t\in [0,T]} d_{\text{BL},\square}(\mathbb{E}q^{N_k,\varphi_k}_t,q_t) = 0.
	\end{equation}

	\medskip

	\(\diamond\) \textsc{Step 5}: Second mode of convergence for the extended empirical measure.\\
	We will prove now that the sequence of random measures \((\mu^{N_k,\varphi_k}_{[0,T]})_{k\in \mathbb{N}}\), with 
	\[
	\mu^{N_k,\varphi_k}_{[0,T]}:\Omega \to \mathcal{P}_{2,2,\nu}(\mathcal{C}^d_T\times [0,1]),
	\]
	converges to the deterministic fibered probability
	measure \(\mu_{[0,T]}\in \mathcal{P}_{2,2,\nu}(\mathcal{C}^d_T\times [0,1])\) in the sense that
	\(\mathbb{E} W_2(\mu^{N_k,\varphi_k}_{[0,T]},\mu_{[0,T]}) \to 0\) as \(k \to \infty\).
	Using the equivalence of convergence in \(W_2\) as narrow convergence plus uniform integrability of
	second moments, one can prove, with the same techniques as in Proposition \ref{prop:characterization-convergence-d_Wp_q},
	that \(\mathbb{E} W_2(\mu^{N_k,\varphi_k}_{[0,T]},\mu_{[0,T]}) \to 0\) as \(k \to \infty\) if and only if
	\begin{equation} \label{eq:narrow-convergence-extended-empirical-measure}
		\mathbb{E} d_{\text{BL}}(\mu^{N_k,\varphi_k}_{[0,T]},\mu_{[0,T]}) \to 0,
	\end{equation}
	and  
	\begin{equation} \label{eq:uniformly-int-moments-extended-empirical-measure}
		\lim_{R\to \infty}\sup_{k\in \mathbb{N}} \mathbb{E} \int_{0}^{1} \int_{ \Vert \gamma \Vert_{\ast,T} > R } \Vert \gamma \Vert_{\ast,T}^2 d\mu^{N_k,\varphi_k,\xi}_{[0,T]}(\gamma)\,d\xi = 0.
	\end{equation}
	We will prove first \eqref{eq:narrow-convergence-extended-empirical-measure}. To do so, we take a countable convergence determining class \((f_m)_{m\in \mathbb{N}}\) of functions in \(\text{BL}_1(\mathcal{C}^d_T\times [0,1])\) for the narrow topology in \(\mathcal{P}(\mathcal{C}^d_T\times [0,1])\), and consider the following distance in this space \cite[Remark 5.1.1]{AGS-05}:
	\[
		d(\mu,\bar{\mu})=\sum_{m=1}^{\infty} 2^{-m} \left\vert \int_{\mathcal{C}^d_T\times [0,1]} f_m(\gamma,\xi) d(\mu-\bar{\mu}) (\gamma,\xi)\right\vert.
	\]  
	The previous distance also metrizes the narrow topology, thus, to prove that \(\mathbb{E} d_{\text{BL}}(\mu^{N_k,\varphi_k}_{[0,T]},\mu_{[0,T]}) \to 0\) as \(k \to \infty\), we will prove that \(\mathbb{E} d(\mu^{N_k,\varphi_k}_{[0,T]},\mu_{[0,T]}) \to 0\), which is equivalent by the dominated convergence theorem since \(d\) and \(d_{\text{BL}}\) both induce the same narrow topolgy. By the triangle inequality
	\[
		\mathbb{E}d(\mu^{N_k,\varphi_k}_{[0,T]},\mu_{[0,T]}) \leq \mathbb{E}d(\mu^{N_k,\varphi_k}_{[0,T]},\bar{\mu}^{N_k,\varphi_k}_{[0,T]}) +\mathbb{E} d(\bar{\mu}^{N_k,\varphi_k}_{[0,T]},\mathbb{E} \bar{\mu}^{N_k,\varphi_k}_{[0,T]})+ d(\mathbb{E} \bar{\mu}^{N_k,\varphi_k}_{[0,T]},\mu_{[0,T]}).
	\]
	The first term goes to zero as \(k \to \infty\) since 
	\[
		\begin{aligned}
			\mathbb{E}d(\mu^{N_k,\varphi_k}_{[0,T]},\bar{\mu}^{N_k,\varphi_k}_{[0,T]}) &= \mathbb{E} \sum_{m=1}^{\infty} 2^{-m} \left\vert \int_{0}^{1} \int_{\mathcal{C}^d_T} f_m(\gamma,\xi) d(\mu^{N_k,\varphi_k,\xi}_{[0,T]}-\bar{\mu}^{N_k,\varphi_k,\xi}_{[0,T]}) (\gamma) d\xi\right\vert \\ 
			&\leq \sum_{m=1}^{\infty} 2^{-m} \mathbb{E}\left\vert \sum_{j=1}^{N_k} \int_{I_j^{N_k}}  f_m(X^{N_k}_{j,[0,T]},\xi) -f_m(\bar{X}^{N_k}_{j,[0,T]},\xi)d\xi \right\vert \\ 
			&\leq \frac{2}{N_k} \sum_{j=1}^{N_k} \mathbb{E} \Vert X_j^{N_k} -\bar{X}_j^{N_k} \Vert_{\ast,T} \leq \frac{2C(T)}{\sqrt{N_k}}.
		\end{aligned}
	\]	
	The third term also goes to zero as \(k \to \infty\), since we have proved \eqref{eq:mean-convergence-empirical-measure}, and the narrow topology in \(\mathcal{P}(\mathcal{C}^d_T\times [0,1])\) is weaker than the topology induced by the distance \(d_{W_2,2}\). For the second term we have 
	\[
		\begin{aligned}
			\mathbb{E} d(\bar{\mu}^{N_k,\varphi_k}_{[0,T]},\mathbb{E} \bar{\mu}^{N_k,\varphi_k}_{[0,T]})=&\mathbb{E} \sum_{m=1}^{\infty} 2^{-m} \left\vert \int_{0}^{1} \int_{\mathcal{C}^d_T} f_m(\gamma,\xi) d(\bar{\mu}^{N_k,\varphi_k,\xi}_{[0,T]}-\mathbb{E}\bar{\mu}^{N_k,\varphi_k,\xi}_{[0,T]}) (\gamma) d\xi\right\vert \\
			=&  \sum_{m=1}^{\infty} 2^{-m}\mathbb{E} \left\vert \sum_{j=1}^{N_k} \int_{I_j^{N_k}}  f_m(\bar{X}^{N_k}_{j,[0,T]},\xi)d\xi -\int_{I_j^{N_k}}\mathbb{E}f_m(\bar{X}^{N_k}_{j,[0,T]},\xi)d\xi \right\vert. 
		\end{aligned}
	\]
	 We define 
	\[
		x_j^m := \int_{I_j^{N_k}}  f_m(\bar{X}^{N_k}_{j,[0,T]},\xi)d\xi -\int_{I_j^{N_k}}\mathbb{E}f_m(\bar{X}^{N_k}_{j,[0,T]},\xi)d\xi.
	\]
	Since \(f_m\) is bounded by \(1\), we have \(\vert x_j^m \vert \leq 2/N_k\). Expanding the expectation of the squared sum yields 
	
	\[
		\mathbb{E} \left\vert \sum_{j=1}^{N_k} x_j^m \right\vert^2 = \sum_{j=1}^{N_k} \mathbb{E} \vert x_j^m\vert^2 +\sum_{\substack{j,r=1 \\ j \neq r}}^{N_k} \mathbb{E}(x_j^m x_r^m) = \sum_{j=1}^{N_k} \mathbb{E} \vert x_j^m\vert^2 \leq \sum_{j=1}^{N_k} \frac{4}{N^2_k}= \frac{4}{N_k},
	\] 
	where we used \(\mathbb{E}(x_{j}^m x_{r}^m)=0\) for \(j\neq r\) since \((\bar{X}_1^{N_k},\ldots,\bar{X}_{N_k}^{N_k})\) 
	are independent. By Jensen's inequality, we have that 
	\[
		\mathbb{E}d(\bar{\mu}^{N_k,\varphi_k}_{[0,T]},\mathbb{E}\bar{\mu}^{N_k,\varphi_k}_{[0,T]})\leq \frac{2}{\sqrt{N_k}},
	\]
	which is enough to conclude that \(\mathbb{E}d(\mu^{N_k,\varphi_k}_{[0,T]},\mu_{[0,T]}) \to 0\).

	To upgrade the narrow convergence to convergence in \(W_2\), we need to prove
	\eqref{eq:uniformly-int-moments-extended-empirical-measure}. To do so, we are going to see that the bound on the
	moments of the initial data given in Assumption \ref{assump:initial-data} is propagated in time by the system.
	More precisely, we will show that  
	\[
		\sup_{k\in \mathbb{N}} \mathbb{E} \int_{0}^{1} \int_{\mathcal{C}_T^d} \Vert \gamma \Vert_{\ast,T}^{2+\delta} d\mu^{N_k,\varphi_k,\xi}_{[0,T]}(\gamma)\,d\xi = \sup_{k\in \mathbb{N}} \frac{1}{N_k} \sum_{j=1}^{N_k} \mathbb{E} \Vert X_j^{N_k} \Vert_{\ast,T}^{2+\delta} < \infty,
	\]
	which is sufficient to conclude that \eqref{eq:uniformly-int-moments-extended-empirical-measure} holds. To obtain this
	bound, we use the growth condition on the map \(\Pi\) defined in Proposition \ref{prop:properties-Phi-Pi},
	which by \eqref{eq:bound-Pi} and \eqref{eq:lipschitz-property-Pi} ensures that 
	\[
		\vert \Pi_t(\gamma,\bar{\gamma},w)\vert \leq (1+\vert w \vert)(L_{\Pi}(t)+B_{\Pi}(t)) (1+\Vert \gamma \Vert_{\ast,t}).
	\]
	Because the initial scaled weights \(N_k w^{N_k}_{ij,0}\) are uniformly bounded by \(W\) (due to Assumption \ref{assump:weights}), this linear growth condition ensures that, since \(X_i^{N_k}\) satisfies 
	\[
		X_i^{N_k}(s) = X^{N_k}_{i,0} + \frac{1}{N_k} \sum_{j=1}^{N_k} \int_{0}^{s} \Pi_\tau(X^{N_k}_{i,[0,\tau]},X^{N_k}_{j,[0,\tau]},N_k w^{N_k}_{ij,0}) d\tau + \sqrt{2\nu}W_i^{N_k}(s),
	\] 
	there exists a constant \(\kappa(t) > 0\) such that the supremum norm of the path satisfies
	\[
		\Vert X_i^{N_k} \Vert_{\ast,t}  \leq \vert X^{N_k}_{i,0} \vert + \kappa(t) \int_{0}^{t} (1+\Vert X^{N_k}_{i} \Vert_{\ast,s}) ds +  \sqrt{2\nu} \Vert W_i^{N_k} \Vert_{\ast,t}.
	\]
	Raising both sides to the power \(2+\delta\), taking the expectation, and applying standard estimates, we obtain:
	\[
		\frac{1}{N_k}\sum_{i=1}^{N_k} \mathbb{E} \Vert X_i^{N_k} \Vert^{2+\delta}_{\ast,t}  \leq \tilde{\kappa}(t) \left( 1 + \frac{1}{N_k} \sum_{i=1}^{N_k} \mathbb{E} \vert X^{N_k}_{i,0} \vert^{2+\delta} + \int_{0}^{t} \frac{1}{N_k} \sum_{i=1}^{N_k} \mathbb{E} \Vert X^{N_k}_{i} \Vert_{\ast,s}^{2+\delta} ds +\frac{1}{N_k} \sum_{i=1}^{N_k}  \mathbb{E}\Vert W_i^{N_k} \Vert_{\ast,t}^{2+\delta}  \right),
	\]
	where \(\tilde{\kappa}(t)\) is a new constant. By hypothesis, the initial \(2+\delta\) moments are uniformly bounded
	with respect to \(N_k\), and \(\mathbb{E}\Vert W_i^{N_k} \Vert_{\ast,t}^{2+\delta}\) is finite and independent of
	\(i\) and \(N_k\). Therefore, an application of Gronwall's lemma concludes the proof of
	\eqref{eq:uniformly-int-moments-extended-empirical-measure}.

	\medskip

	\(\diamond\) \textsc{Step 6}: Second mode of convergence for the empirical probability-graphon. \\
	Again, we need to take a countable convergence determining class \((f_m)_{m\in \mathbb{N}}\) in \(\text{BL}_1(\mathbb{R}\times [0,1]^2)\) for the narrow topology in \(\mathcal{P}(\mathbb{R}\times [0,1]^2)\), and consider the following distance in this space \cite[Remark 5.1.1]{AGS-05}:
	\[
		d(q,\bar{q})=\sum_{m=1}^{\infty} 2^{-m} \left\vert \int_{\mathbb{R}\times [0,1]^2} f_m(w,\xi,\xi') d(q-\bar{q}) (w,\xi,\xi')\right\vert.
	\]
	The previous distance also metrizes the narrow topology, thus, to prove that
	\( \sup_{t\in [0,T]} \mathbb{E} d_{\text{BL}}(q^{N_k,\varphi_k}_t,q_t) \to 0\) as \(k \to \infty\), we will prove that
	\(\sup_{t\in [0,T]} \mathbb{E} d(q^{N_k,\varphi_k}_t,q_t) \to 0\). By the triangle inequality we have that
	\[
		\sup_{t\in[0,T]}\mathbb{E}d(q_t^{N_k,\varphi_k},q_t) \leq  \sup_{t\in[0,T]}\mathbb{E}d(q_t^{N_k,\varphi_k},\bar{q}_t^{N_k,\varphi_k}) + \sup_{t\in[0,T]}\mathbb{E} d(\bar{q}_t^{N_k,\varphi_k},\mathbb{E} \bar{q}_t^{N_k,\varphi_k}) + \sup_{t\in[0,T]}d(\mathbb{E} \bar{q}_t^{N_k,\varphi_k},q_t).
	\]
	The first term goes to zero as \(k \to \infty\) by \eqref{eq:estimate-propagation-independence-probability-graphon}, since
	\[
		\begin{aligned}
			&\sup_{t\in[0,T]}\mathbb{E}d(q_t^{N_k,\varphi_k},\bar{q}_t^{N_k,\varphi_k}) \\
            &\qquad\leq \sup_{t\in[0,T]}\sum_{m=1}^{\infty} 2^{-m} \mathbb{E}\left\vert \sum_{i,j=1}^{N_k} \int_{I_i^{N_k}\times I_j^{N_k}}  f_m(N_k w^{N_k}_{ij}(t),\xi,\xi') - f_m(N_k \bar{w}^{N_k}_{ij}(t),\xi,\xi') d\xi d\xi' \right\vert \\ 
			&\qquad\leq \sup_{t\in[0,T]}\frac{2}{N_k^2} \sum_{i,j=1}^{N_k} \mathbb{E} \vert N_k w^{N_k}_{ij}(t) - N_k\bar{w}^{N_k}_{ij}(t)\vert \leq \frac{4L_{\Phi}(T)C(T)}{\sqrt{N_k}}.
		\end{aligned}
	\]

	The third term also goes to zero as \(k \to \infty\), since by Corollary
	\ref{cor:d-BL-cut-stronger-narrow-topology} the distance \(d_{\text{BL},\square}\) induces a stronger
	topology than the narrow topology in \(\mathcal{P}_{\nu\otimes\nu}(\mathbb{R}\times [0,1]^2)\), and
	we have proved \eqref{eq:mean-convergence-probability-graphon}.	

	For the second term, we have 
	\[
    \begin{aligned}
		&\mathbb{E} d(\bar{q}_t^{N_k,\varphi_k},\mathbb{E} \bar{q}_t^{N_k,\varphi_k}) \\ 
        &\qquad=   \sum_{m=1}^{\infty} 2^{-m} \mathbb{E} \left\vert \sum_{i,j=1}^{N_k} \int_{I_i^{N_k}\times I_j^{N_k}}  f_m(N_k \bar{w}^{N_k}_{ij}(t),\xi,\xi')d\xi d\xi' - \int_{I_i^{N_k}\times I_j^{N_k}}\mathbb{E}f_m(N_k \bar{w}^{N_k}_{ij}(t),\xi,\xi')d\xi d\xi' \right\vert.
    \end{aligned}
	\]
	We define
	\[
		x_{ij}^m := \int_{I_i^{N_k}\times I_j^{N_k}}  f_m(N_k \bar{w}^{N_k}_{ij}(t),\xi,\xi')d\xi d\xi' - \int_{I_i^{N_k}\times I_j^{N_k}}\mathbb{E}f_m(N_k \bar{w}^{N_k}_{ij}(t),\xi,\xi')d\xi d\xi'.
	\]
	By the boundedness of \(f_m\), we have \(\vert x^m_{ij} \vert\leq 2/N_k^2\). Since the weights \(N_k\bar{w}^{N_k}_{ij}(t)\) take the form given in \eqref{eq:weights-intermediate-system}, the random variables \(x_{ij}^m\) and \(x_{rs}^m\) are independent whenever their defining index sets are disjoint, {\it i.e.}, \(\{i,j\} \cap \{r,s\} = \emptyset\). Therefore, \(\mathbb{E}(x_{ij}^m x_{rs}^m) = \mathbb{E}(x_{ij}^m)\mathbb{E}(x_{rs}^m) = 0\) for all disjoint pairs.

	Expanding the expectation of the squared sum, we group it into overlapping and non-overlapping index sets as follows:
	\[
		\mathbb{E} \left\vert \sum_{i,j=1}^{N_k} x_{ij}^m \right\vert^2 = \sum_{\{i,j\} \cap \{r,s\} \neq \emptyset} \mathbb{E}(x_{ij}^m x_{rs}^m) + \sum_{\{i,j\} \cap \{r,s\} = \emptyset} \mathbb{E}(x_{ij}^m x_{rs}^m).
	\]
	The second sum evaluates to exactly zero. For the first sum, the number of index pairs \(((i,j), (r,s))\) that share at least one index is bounded by \(4N_k^3\). Thus:
	\[
		\sum_{\{i,j\} \cap \{r,s\} \neq \emptyset} \mathbb{E}(x_{ij}^m x_{rs}^m) \leq 4N_k^3 \left( \frac{4}{N_k^4} \right) = \frac{16}{N_k}.
	\]
	This implies that   
	\[
		\sup_{t\in [0,T]}\mathbb{E}d(\bar{q}_t^{N_k,\varphi_k},\mathbb{E}\bar{q}_t^{N_k,\varphi_k}) \leq  \frac{4}{\sqrt{N_k}},
	\]
	which is enough to conclude that \(\sup_{t\in [0,T]}\mathbb{E}d(q_t^{N_k,\varphi_k},q_t) \to 0\) as \(k \to \infty\).
\end{proof}

\begin{remark}[On the choice of the family of standard Wiener processes]
	As one can see in the statement of Theorem \ref{theo:main-complete}, we are allowed to take any initial data $(X_0(\xi))_{\xi\in [0,1]}$ distributed according to $(\mu_0^\xi)_{\xi\in [0,1]}$ and any family of Wierner processes $(W(\cdot,\xi))_{\xi\in [0,1]}$. Indeed, the associated deterministic quantities \(\mu\) and \(q_t\) associated to the associated unique strong solution $(X(\cdot,\xi),q_\gamma^{\xi,\xi'})_{\xi,\xi'\in [0,1]}$ of the system of McKean-Vlasov SDEs \eqref{eq:multi-agent-macroscopic-SDE} are all identical by the uniqueness in law property proved in Proposition \ref{prop:well-posedness-fibered-McKean-Vlasov-SDE-multiplicative-noise}, regardless of the precise realization of the initial data and Wiener processes.
	
	In particular, if we consider the strong solution associated Wiener processes \((W(\cdot,\xi))_{\xi \in [0,1]}\) and initial data $(X_0(\xi))_{\xi\in [0,1]}$ that are essentially pairwise independent ({\it i.e.}, for a.e.\ \(\xi,\xi' \in [0,1]\), the processes \(W(\cdot,\xi)\) and \(W(\cdot,\xi')\), and random variables \(X_0(\xi')\) and \(X_0(\xi')\) are mutually independent), then the structure of the limiting probability-graphon \(q_t\) becomes more intuitive. Specifically, the resulting family of state processes \((X(\cdot,\xi))_{\xi \in [0,1]}\) is also
	essentially pairwise independent. 
	After a standard enlargement of the probability space, we can introduce a family of random variables
	\((w_0(\xi,\xi'))_{(\xi,\xi')\in [0,1]^2}\) such that \(w_0(\xi,\xi')\sim q_0^{\xi,\xi'}\)
	for almost every \((\xi,\xi') \in [0,1]^2\), independent of the processes \(X(\cdot,\xi)\) and
	\(X(\cdot,\xi')\). The limiting probability-graphon \(q_t^{\xi,\xi'}\) then simply represents
	the law of the dynamical random graphon: 
	\[
		w(t,\xi,\xi')=\Phi_t(X(\cdot,\xi)_{[0,t]},X(\cdot,\xi')_{[0,t]},w_0(\xi,\xi')).
	\]
	However, this random graphon and its law are hidden in the macroscopic equation \eqref{eq:multi-agent-macroscopic-SDE}. To make it appear, note that, as it happens for the intermediate system, the interaction term can be written in probabilistic terms as a suitable conditional expectation with respect to the history of the target particle:
	\[
	\int_{0}^{1} \int_{\mathcal{C}_t^d} \int_{\mathbb{R}} w\, q_{\gamma}^{\xi,\xi'}(t,dw) K(X(t,\xi),\gamma(t))  d\mu_{[0,t]}^{\xi'}(\gamma) d\xi'=\int_{0}^{1} \mathbb{E}_{\xi}\left[w(t,\xi,\xi')K(X(t,\xi),X(t,\xi'))\right] d\xi',
	\]
	where \(\mathbb{E}_{\xi}:=\mathbb{E}[\,\cdot\,\vert \mathcal{F}_{\xi}(t)]\) is the expectation conditioned to the filtration 
	\(\mathcal{F}_{\xi}(t):=\sigma(\{X(s,\xi):\,s\in [0,t]\})\). 
\end{remark}

\section{Special cases and some extensions} \label{sec:conclusions}
In this section we showcase several special cases of the model \eqref{eq:multi-agent-discrete} that are covered by the theory developed in our main Theorem \ref{theo:main} (extended in Theorem \ref{theo:main-complete}) of this work. Those special cases lead to simpler versions of our limit equation \eqref{eq:multi-agent-macroscopic-SDE} which have been already formulated and studied in previous literature. We also introduce natural extensions extensions of our main result, where the machinery introduced in this work still work, but we have not address explicitly for conciseness. We end the section with some future directions of research that can be pursued based
on the results of this paper. 

\subsection{Special cases covered by our work} \label{subsec:special-cases}
We start by highlighting some special cases of the original model that are covered by our work and have been previously considered in the literature.

\medskip

\(\diamond\) \textbf{Static weights}.\\
When the plasticity rule is identically zero, i.e., \(\Gamma \equiv 0\), the weights remain static and the
path-dependence structure coming from the flow map \(\Phi_t\) disappears. In this case, the limiting process
reduces to the solution of a classical McKean-Vlasov SDE in which the interaction term depends only on the
current state of the agents, recovering the Markovian structure:
\begin{equation} \label{eq:static-weights-limit-process}
\begin{aligned}
&dX(t,\xi) = \int_0^1 \int_{\mathbb{R}^d} w_0(\xi,\xi') K(X(t,\xi),y) \,d\mu_t^{\xi'}(y) \,d\xi' dt + \sqrt{2\nu}\,dW(t,\xi),\\
&X(0,\xi)=X_0(\xi),  \quad w_0(\xi,\xi'):=\int_{\mathbb{R}} w \,q^{\xi,\xi'}_0(dw),
\end{aligned}
\end{equation}
where \(\mu_t^{\xi'}:=\mathrm{Law}(X(t,\xi'))\). Because the underlying state process is now strictly Markovian,
standard arguments yield that the marginal laws \(\mu_t^\xi\) satisfy the corresponding non-linear Fokker-Planck
equations. This allows the full macroscopic limit to be cleanly reformulated as a system of deterministic PDEs:
\begin{equation} \label{eq:static-weights-macroscopic-PDE}
\begin{aligned}
&\partial_t \mu_t^{\xi}(x) + \divop_x \left( \mu_t^{\xi}(x) \int_0^1 \int_{\mathbb{R}^d} w_0(\xi,\xi') K(x,y) \,d\mu_t^{\xi'}(y) \,d\xi' \right) = \nu \Delta_x \mu_t^{\xi}(x),\\
&\mu^{\xi}(0)= \mu_0^{\xi}, \quad w_0(\xi,\xi'):=\int_{\mathbb{R}} w \,q^{\xi,\xi'}_0(dw).
\end{aligned}
\end{equation}
A key advantage of the probability-graphon framework is that it accommodates a general class of random initial
weights \((w_{ij,0}^N)_{1\leq i,j\leq N}\) to model heterogeneous interactions between agents.
The graph limit is then described by a probability-graphon \(q^{\xi,\xi'}_0\) that encodes the distribution
of the initial weights between macroscopic labels \(\xi\) and \(\xi'\) in the limit \(N \to \infty\). As evident
from equations \eqref{eq:static-weights-limit-process} and \eqref{eq:static-weights-macroscopic-PDE}, the limiting dynamics
of the agents are completely determined by the expectation of the probability-graphon \(q^{\xi,\xi'}_0\), which we denote
by \(w_0(\xi,\xi')\). This generalizes previous works on random interacting systems, such as
\cite{CDG-20,DGL-16,Lucon-20}, which focus on specific random graph models like Erdős-Rényi networks and \(W\)-random graphs.
In contrast, our approach allows for a significantly broader class of initial network topologies,
requiring only that the weights are uniformly bounded almost surely, as stated in Assumption \ref{assump:weights}.

\medskip

\(\diamond\) \textbf{Affine plasticity rule.} \\
Assume that the plasticity function \(\Gamma\) is affine in the weight variable, {\it i.e.},
\begin{equation} \label{eq:plasticity-rule-affine}
\Gamma(x,y,w)=\Gamma_0(x,y)+\Gamma_1(x,y)w.
\end{equation}
This case has been considered in \cite{CD-25-arxiv, GKX-25, T-25-arxiv, Zhou-25-arxiv}. We note that, in this case, we do not need to know the full probability-graphon \(q_\gamma^{\xi,\xi'}\)  in order to characterize the limit equation \eqref{eq:multi-agent-macroscopic-SDE}. Indeed, such limit equation can be closed in terms of the average of the probability-graphon \(q^{\xi,\xi'}_{\gamma}\), namely,
\[
w_{\gamma}(t,\xi,\xi')=\int_{\mathbb{R}} w\, q_{\gamma}^{\xi,\xi'}(t,dw).
\]
To see this, recall that $q_\gamma^{\xi,\xi'}(t,\cdot)=\Phi_t(X(t,\xi),\gamma(t),\cdot)_{\#}q_0^{\xi,\xi'}$ and then we can compute the time evolution:
\[
\begin{aligned}
\frac{d}{dt} w_{\gamma}(t,\xi,\xi') &= \frac{d}{dt} \int_{\mathbb{R}} w\, q^{\xi,\xi'}_{\gamma}(t,dw)= \frac{d}{dt} \int_{-W}^{W} \Phi_t(X(\cdot,\xi)_{[0,t]},\gamma,\tilde{w}) dq^{\xi,\xi'}_{0}(\tilde{w}) \\ 
    &= \int_{-W}^{W} \Gamma\Big(X(t,\xi),\gamma(t),\Phi_t(X(\cdot,\xi)_{[0,t]},\gamma,\tilde{w})\Big) \,dq^{\xi,\xi'}_{0}(\tilde{w}) \\ 
    &= \int_{-W}^{W} \Big( \Gamma_0(X(t,\xi),\gamma(t)) + \Gamma_1(X(t,\xi),\gamma(t))\Phi_t(X(\cdot,\xi)_{[0,t]},\gamma,\tilde{w}) \Big)\,dq^{\xi,\xi'}_{0}(\tilde{w}) \\    &=\int_{\mathbb{R}}\Big(\Gamma_0(X(t,\xi),\gamma(t)) + \Gamma_1(X(t,\xi),\gamma(t)),w\Big)\,q_\gamma^{\xi,\xi'}(t,dw)\\
	&= \Gamma_0(X(t,\xi),\gamma(t)) + \Gamma_1(X(t,\xi),\gamma(t))\,w_{\gamma}(t,\xi,\xi').
\end{aligned}
\]
We remark that the affine structure of \(\Gamma\) is crucial in the last step in order to write the right hand side above in terms of $w_\gamma(t,\xi,\xi')$ only. Consequently, for affine plasticity function the mean-field limit equation \eqref{eq:multi-agent-macroscopic-SDE} reduces to a simpler system
where the transport PDE is replaced by an ODE:
\[
	\begin{aligned}
	&d X(t,\xi)=\int_0^1\int_{\mathcal{C}^d_t} w_{\gamma}(t,\xi,\xi')\,K(X(t,\xi),\gamma(t))\,d\mu_{[0,t]}^{\xi'}(\gamma)\,d\xi' dt+ \sqrt{2\nu}\,dW(t,\xi), \\
	&\partial_t w_{\gamma}(t,\xi,\xi') = \Gamma_0(X(t,\xi),\gamma(t)) + \Gamma_1(X(t,\xi),\gamma(t)) w_{\gamma}(t,\xi,\xi'),\\
	&X(0,\xi)=X_0(\xi),\quad w_{\gamma}(0,\xi,\xi')=\int_{\mathbb{R}} w \, q^{\xi,\xi'}_0(dw).
	\end{aligned}
\]
	This reduction clarifies the strict necessity of probability-graphon \(q_\gamma^{\xi,\xi'}\) in the general case of non-linear plasticity function \(\Gamma\), and not only its average \(w_\gamma^{\xi,\xi'}\). For instance, assume that \(\Gamma\) depends on \(w\) nonlinearly as a polynomial of degree larger than one. Then, the evolution of the first order moment \(w_{\gamma}\) will inherently depend
	on higher-order
	moments of \(q^{\xi,\xi'}_{\gamma}\). The evolution of these higher-order moments will, in turn,
	depend on even higher moments, leading to the classical moment hierarchy. Therefore, for general non-linear
	plasticity rules, it is impossible to close the equations around a finite set of moments. This makes the probability measure \(q^{\xi,\xi'}_{\gamma}\) an indispensable structural component
	of the mean-field limit.

	\medskip 

\(\diamond\) \textbf{Noiseless setting: the fixed-point formulation}. \\
In the absence of noise (\(\nu = 0\)), the law of the limiting process \(X(t,\xi)\) solving the McKean-Vlasov SDE \eqref{eq:multi-agent-macroscopic-SDE} can be characterized as the unique fixed point of the characteristic flow map associated with the drift field generated by the interactions. More precisely, recall that the process \(X(t,\xi)\) is defined as the solution to the following integral equation:
\[
X(t,\xi) = X_0(\xi) + \int_{0}^{t} b^{q_0}_{\mu}(s, X(\cdot,\xi), \xi) \,ds, \quad t \in [0,T],
\]
with the drift field \(b^{q_0}_{\mu}\) defined in \eqref{eq:drift-macroscopic-system} and \(\mu^{\xi}:=\mathrm{Law}(X(\cdot,\xi))\). 

For every fibered measure \(\mu \in \mathcal{P}_{\nu}(\mathcal{C}_T^d \times [0,1])\), initial probability-graphon \(q_0 \in \mathcal{P}_{\nu \otimes \nu}([-W,W] \times [0,1]^2)\), initial state \(x_0 \in \mathbb{R}^d\), and \(\xi \in [0,1]\), we define the characteristic flow map \(\Psi^{q_0}_{\mu,x_0}(\xi) \in \mathcal{C}_T^d\) associated with the drift field \(b^{q_0}_{\mu}\) as the unique solution to the following integral equation:
\[
\Psi^{q_0}_{\mu,x_0}(t,\xi) = x_0 + \int_{0}^{t} b^{q_0}_{\mu}(s, \Psi^{q_0}_{\mu,x_0}(\cdot,\xi), \xi) \,ds.
\]
The limiting process can then be recast as \(X(\cdot,\xi)=\Psi^{q_0}_{\mu,X_0(\xi)}(\cdot,\xi)\), and its law is simply expressed via the push-forward:
\begin{equation}\label{eq:fixed-point-formulation}
\mu^{\xi} = \Psi^{q_0}_{\mu,\cdot}(\xi)_{\#} \mathrm{Law}(X_0(\xi))=\Psi^{q_0}_{\mu,\cdot}(\xi)_{\#} \mu_0^\xi.
\end{equation}
This demonstrates that, in this noiseless setting, the macroscopic limit of the system can be fully characterized as a 
fixed point of the characteristic flow map associated with the drift field \(b^{q_0}_{\mu}\); that is, \(\mu \in \mathcal{P}(\mathcal{C}_T^d)\) is the unique
measure satisfying the fixed-point relation \eqref{eq:fixed-point-formulation}. This perspective traces back to the seminal
works \cite{BH-77,Dobrushin-79,N-84} on classical deterministic mean-field systems. It has recently been revisited for
non-exchangeable particle systems on static networks \cite{CM-19,K-VM-18}, and has also been adopted in the study of co-evolving
networks \cite{GKX-25,T-25-arxiv}.

The necessity of characterizing the macroscopic limit as a fixed point on the full path space, rather than as the solution to 
a PDE, stems intrinsically from the non-Markovian nature of the co-evolution. Since the flow map \(\Phi_t\) inherently depends
on the historical trajectories of the interacting particles, the characteristic drift field \(b^{q_0}_{\mu}(t, \gamma, \xi)\) 
requires knowledge of the family of path measures \((\mu_{[0,t]}^{\xi})_{\xi \in [0,1]}\). Consequently, one cannot obtain a
closed, local-in-time evolution PDE for the instantaneous marginals \(t \mapsto \mu_t^\xi\). The evolution of the
spatial density at time \(t\) is inextricably coupled to the entire joint distribution of the paths up to time \(t\).

\medskip

\(\diamond\) \textbf{Noiseless setting: the continuum limit.} \\
Building directly upon the noiseless setting (\(\nu = 0\)) discussed above, the macroscopic dynamics admits a special class of solutions that explicitly bridge the measure-valued framework provided by the mean-field limit with other classical continuum limits. Specifically,
if the initial datum \(\mu_0^\xi = \delta_{x_0(\xi)}\) is deterministic and perfectly concentrated at a map
\(x_0:[0,1]\to \mathbb{R}^d\), then the absence of noise ensures that the fibered pathwise laws $\mu^\xi_{[0,t]}$ remain concentrated on deterministic trajectories \(x: [0,T]\times [0,1] \to \mathbb{R}^d\). Specifically, using the above fixed-point formulation \eqref{eq:fixed-point-formulation}, we obtain that \(\mu^{\xi}_{[0,t]} = \delta_{x(\cdot,\xi)_{[0,t]}}\). By substituting this Dirac ansatz, the integration over the path space \(\mathcal{C}_t^d\) evaluates trivially and leads to
\begin{align*}
b_\xi^{q_0}(t,\gamma,\mu)&=\int_0^1\int_{-W}^W\Phi_t(\gamma_{[0,t]},x(\cdot,\xi')_{[0,t]},w)\,dq_0^{\xi,\xi'}(w)\,K(\gamma(t),x(t,\xi'))\,d\xi'\\
&=\int_0^1 \int_{\mathbb{R}} w\,q_{\gamma}^{\xi,\xi'}(t,dw)\,K(\gamma(t),x(t,\xi'))\,d\xi'\\
&=\int_0^1 w_{\gamma}(t,\xi,\xi')K(\gamma(t),x(t,\xi'))\,d\xi'.
\end{align*}
Since $x(t,\xi)=X^{q_0,\mu}_\xi(t,x_0(\xi))$, and therefore $\partial_t x(t,\xi)=b_\xi^{q_0}(t,x(t,\xi),\mu)$, we observe that in the above expressions the probability-graphon \(q^{\xi,\xi'}_{\gamma}(t,dw)\) and its average \(w_\gamma(t,\xi,\xi')\) lose their dependence on the abstract path
variable \(\gamma\), as they are evaluated strictly along the deterministic trajectories $\gamma=x(\cdot,\xi)$. Dropping the redundant
path subscript, the full macroscopic limit rigorously reduces to a coupled, fully deterministic integro-differential system:
\begin{equation}\label{eq:continuum-limit}
\begin{aligned}
&\partial_t x(t,\xi) = \int_0^1 w(t,\xi,\xi') \,K(x(t,\xi),x(t,\xi')) \,d\xi', \quad && t \in [0,T], \\
&\partial_t q^{\xi,\xi'}(t,w) + \partial_w\Big(\Gamma(x(t,\xi), x(t,\xi'), w)q^{\xi,\xi'}(t,w)\Big) = 0, \quad && t \in [0,T], \\
&x(0,\xi)=x_0(\xi), \quad q^{\xi,\xi'}(0)=q^{\xi,\xi'}_0, \quad w(t,\xi,\xi'):=\int_{\mathbb{R}} w \,q^{\xi,\xi'}(t,dw).
\end{aligned}
\end{equation}

Again, as discussed before, for general non-linear plasticity the system \eqref{eq:continuum-limit} cannot be closed in terms of the average graphon $w(t,\xi,\xi')$ and the full probability-graphon $q^{\xi,\xi'}(t,w)$ is required.

A simpler version of this continuum limit \eqref{eq:continuum-limit} was previously derived in \cite{GKX-23}, though their analysis was restricted precisely to the specific case
of affine plasticity rules, as in \eqref{eq:plasticity-rule-affine}. As discussed before, in such affine regimes, the transport PDE for the probability-graphon $q^{\xi,\xi'}(t,w)$ can be bypassed entirely as the system closes exactly around the expected 
average graphon \(w(t,\xi,\xi')\):
\begin{equation*}
\begin{aligned}
&\partial_t x(t,\xi) = \int_0^1 w(t,\xi,\xi') \,K(x(t,\xi),x(t,\xi')) \,d\xi', \quad && t \in [0,T], \\
&\partial_t w(t,\xi,\xi') = \Gamma_0(x(t,\xi), x(t,\xi')) + \Gamma_1(x(t,\xi), x(t,\xi'))w(t,\xi,\xi'), \quad && t \in [0,T], \\
&x(0,\xi)=x_0(\xi), \quad w(0,\xi,\xi')=\int_{\mathbb{R}} w \,q^{\xi,\xi'}_0(dw).
\end{aligned}
\end{equation*}

The framework was later extended in \cite{Throm-24} to accommodate non-linear plasticity and still using only graphons and not probability-graphons. However, the derivation of the continuum limit assumed well-prepared initial weights coming from special step-function discretizations of the desired limiting graphon and converging in $L^2$ strong norms. We however do not prepare our initial weights and our argument is based on general compactness arguments of probability-graphons. From \eqref{eq:continuum-limit} we can recover the simpler version of the continuum limit introduced in \cite{Throm-24} by restricting our initial probability-graphon to a graphon, namely $q_0^{\xi,\xi'}(w)=\delta_{w_0(\xi,\xi')}(w)$. Therefore, the transport equation propagates this Dirac shape of the probability-graphon $q^{\xi,\xi'}(t,w)=\delta_{w(\xi,\xi')}(w)$ and it is easy to see that \eqref{eq:continuum-limit} reduces to 
\begin{equation*}
\begin{aligned}
&\partial_t x(t,\xi) = \int_0^1 w(t,\xi,\xi') \,K(x(t,\xi),x(t,\xi')) \,d\xi', \quad && t \in [0,T], \\
&\partial_t w(t,\xi,\xi') = \Gamma(x(t,\xi), x(t,\xi'),w(t,\xi,\xi')), \quad && t \in [0,T], \\
&x(0,\xi)=x_0(\xi), \quad w(0,\xi,\xi')=w_0(\xi,\xi').
\end{aligned}
\end{equation*}

\subsection{Extensions covered by our work} Now we present some extensions of the original model that can be handled with the techniques developed in this paper.

\medskip

\(\diamond\) \textbf{Non-additive noise in the particles' states}\\  
The techniques presented in this work can be naturally extended to more general dynamics of the agents' states, where the noise is non-additive
in the state equations. A natural extension is to incorporate an additional network structure in the multi-agent system describing how the interactions between the various agents modulate the noise intensity experienced by each agent. This is similar to the framework considered in \cite{KP-26}, which introduced two different static directed
graphs (one modulating the deterministic part of the interactions and the other modulating the stochastic part of the interactions) represented via digraph measures.
Our methodology allows these dual network topologies to be fully adaptive. 

More precisely, we can consider a system where the pairwise interactions driving the deterministic drift and the stochastic diffusion are governed by two separate sets of evolving weights, \(w_{ij}^N(t)\) and \(\tilde{w}_{ij}^N(t)\):
\begin{equation}\label{eq:multi-agent-discrete-extension-non-additive-noise}
\begin{aligned}
&dX_i^N(t)=\sum_{j=1}^N w_{ij}^N(t)\,K(X_i^N(t),X_j^N(t))\,dt+ \sum_{j=1}^{N} \tilde{w}_{ij}^N(t) \tilde{K}(X_i^N(t),X_j^N(t))\,dW_i^N(t),\\
&\frac{d}{dt}w_{ij}^N(t)=\frac{1}{N}\Gamma(X_i^N(t),X_j^N(t),Nw_{ij}^N(t)), \quad \frac{d}{dt}\tilde{w}_{ij}^N(t)=\frac{1}{N}\tilde{\Gamma}(X_i^N(t),X_j^N(t),N\tilde{w}_{ij}^N(t)),\\
& X_i^N(0)=X_{i,0}^N, \quad  w_{ij}^N(0)=w_{ij,0}^N, \quad \tilde{w}_{ij}^N(0)=\tilde{w}_{ij,0}^N.
\end{aligned}
\end{equation}

From a modeling perspective, this decoupling allows for two different connection topologies to independently modulate
the deterministic alignment of states and the correlated exposure to random fluctuations. With assumptions on \(\tilde{K}\)
and \(\tilde{\Gamma}\) analogous to those imposed on \(K\) and \(\Gamma\), the mean-field limit can be rigorously established.
The compactness argument used in Theorem \ref{theo:main-complete} to obtain the limiting network structure and the initial
fibered measure, which relied entirely on Theorem \ref{thm:compactness-node-edge-probability-graphons} extends
in a straightforward manner to accommodate the additional network structure. We simply enlarge the underlying mathematical
space from pairs to triplets, tracking the empirical extended measure alongside the two distinct empirical probability-graphons
\((q_t^N, \tilde{q}_t^N)\). Consequently, these empirical processes converge to deterministic limits
\(\mu_{[0,t]}^{\xi}\), \(q_t^{\xi,\xi'}\), and \(\tilde{q}_t^{\xi,\xi'}\), given by:
\[
q_t^{\xi,\xi'}=\Phi_t(\cdot,\cdot,\cdot)_{\#}(\mu_{[0,t]}^{\xi} \otimes \mu_{[0,t]}^{\xi'} \otimes q_0^{\xi,\xi'}), \quad \tilde{q}_t^{\xi,\xi'}=\tilde{\Phi}_t(\cdot,\cdot,\cdot)_{\#} (\mu_{[0,t]}^{\xi} \otimes \mu_{[0,t]}^{\xi'} \otimes \tilde{q}_0^{\xi,\xi'}),
\]
where \(\Phi\) and \(\tilde{\Phi}\) are the deterministic flow maps associated to $\Gamma$ and $\tilde \Gamma$ respectively, defined as in Proposition
\ref{prop:properties-Phi-Pi}. The limiting fibered measure \(\mu\) then uniquely corresponds to the law of the strong solution \(X\) of the following McKean-Vlasov SDE with non-additive noise:
$$
\begin{aligned}
&d X(t,\xi)=\int_0^1\int_{\mathcal{C}^d_t} w_{\gamma}(t,\xi,\xi')  \,K(X(t,\xi),\gamma(t))\,d\mu_{[0,t]}^{\xi'}(\gamma)\,d\xi' dt\\
&\qquad\qquad+ \int_0^1\int_{\mathcal{C}^d_t} \tilde w_{\gamma}(t,\xi,\xi')  \,\tilde K(X(t,\xi),\gamma(t))\,d\mu_{[0,t]}^{\xi'}(\gamma)\,d\xi' dW(t,\xi),\quad t\in [0,T],\,\xi\in [0,1],\\
\end{aligned}
$$
where again we define the average graphons
$$w_{\gamma}(t,\xi,\xi')=\int_{\mathbb{R}} w \, q^{\xi,\xi'}_{\gamma}(t,dw),\quad \tilde w_{\gamma}(t,\xi,\xi')=\int_{\mathbb{R}} w \, \tilde q^{\xi,\xi'}_{\gamma}(t,dw),$$
and where the random probability-graphons $q_\gamma^{\xi,\xi'}$ and $\tilde q_\gamma^{\xi,\xi'}$ satisfy
$$
\begin{aligned}
&\partial_s q^{\xi,\xi'}_{\gamma}(s,w)+\partial_w(\Gamma(X(s,\xi),\gamma(s),w)q^{\xi,\xi'}_{\gamma}(s,w))=0,\quad s\in [0,t],\,\xi,\xi'\in [0,1],\,\gamma\in \mathcal{C}^d_t,\\
&\partial_s \tilde q^{\xi,\xi'}_{\gamma}(s,w)+\partial_w(\tilde \Gamma(X(s,\xi),\gamma(s),w)\tilde q^{\xi,\xi'}_{\gamma}(s,w))=0,\quad s\in [0,t],\,\xi,\xi'\in [0,1],\,\gamma\in \mathcal{C}^d_t.
\end{aligned}
$$
As it was proved in \eqref{eq:limiting-probability-graphon} in Theorem \ref{theo:main-complete}, the random probability-graphons $q_\gamma^{\xi,\xi'}$ and $\tilde q_\gamma^{\xi,\xi'}$ and the deterministic limiting probability-graphons $q^{\xi,\xi'}$ and $\tilde q^{\xi,\xi'}$ are related by
$$
q^{\xi,\xi'}_t= \int_{\mathcal{C}_t^d} \mathbb{E}[q^{\xi,\xi'}_{\gamma}(t)] \, d\mu^{\xi'}_{[0,t]}(\gamma), \quad \tilde q^{\xi,\xi'}_t= \int_{\mathcal{C}_t^d} \mathbb{E}[\tilde q^{\xi,\xi'}_{\gamma}(t)] \, d\mu^{\xi'}_{[0,t]}(\gamma).
$$
The rigorous justification of this limit relies on the exact same methodology developed throughout this paper.
Because the stochastic fluctuations are no longer additive, the only technical modification required in the proofs
is the systematic use of the Burkholder-Davis-Gundy (BDG) inequality to bound the suprema of the stochastic integrals
when establishing the necessary estimates.

\medskip

\(\diamond\) \textbf{Additive bounded noise in the particles' weights}\\
A particularly interesting case is the incorporation of an additive bounded noise in the dynamics of the weights. We start by introducing our precise notion of bounded noise.

\begin{definition}[Bounded noise]\label{def:bounded-noise}
Consider a filtered probability space \((\Omega, \mathcal{F}, \{\mathcal{F}_t\}_{t\in[0,T]},\mathbb{P})\). A bounded noise consist in an adapted stochastic process $R:[0,T]\times \Omega\longrightarrow \mathbb{R}$ verifying the following properties:
\begin{enumerate}[label=(\roman*)]
    \item $R$ has continuous paths.
    \item $R(0)=0$ almost surely.
    \item We have $\mathbb{E}[R(t))]=0$ for every $0\leq t\leq T$.
    \item There exists some $M>0$ such that $\mathbb{P}\left[\max_{0\leq t\leq T}|R(t)|\leq M\right]=1$.
\end{enumerate}
We shall denote the pathwise law of such bounded noise by $\mathcal{R}_{[0,T]}={\rm Law} R_{[0,T]}\in \mathcal{P}(\mathcal{C}_T)$.
\end{definition}

We note that contrarily to standard Wiener processes, bounded noises have uniformly bounded paths by definition. These bounded noises share some properties with standard Wiener processes ({\it e.g.}, (i)-(iii) in Definition \ref{def:bounded-noise}), but they are however not necessarily Gaussian, and they can be colored. While Brownian motion is mathematically convenient, it allows arbitrarily large fluctuations, which may be physically unrealistic in some applications where external perturbations originate from finite-energy sources, bounded environmental influences, or constrained microscopic interactions. Bounded noises have been introduced in Statistical Physics as alternatives to Brownian motion when the fluctuations acting on a system possess both a finite correlation time and a bounded amplitude. We refer to monograph \cite{DO-13} for a systematic treatment of bounded noises and their applications in Statistical Physics, Biology and Engineering, and also to \cite{DDF-20, LS-84} for some rigorous mathematical foundations. In the following we shall showcase some well known examples of bounded noises that can be considered in this extension.

\begin{example}[Sine-Wiener noise]\label{example:bounded-noises-1}
For any standard 1D Wiener process $B$, any $M>0$ and $\tau>0$, the sine-Wiener noise is defined as the following stochastic process:
$$R(t)=M\sin\left(\sqrt{\frac{2}{\tau}}\,B(t)\right).$$
The resulting process has continuous trajectories and bounded amplitude, while retaining a stochastic structure inherited from Brownian motion. This construction has proved a useful example of bounded noise in statistical physics and biology as it leads to changes in phase transitions that appear to be enhanced by such bounded noise \cite{BC-05,CW-04}.
\end{example}

\begin{example}[Doering-Cai-Lin noise]\label{example:bounded-noises-2}
For any standard 1D Wiener process $B$, any $M>0$ and $\theta,\delta>0$, the Doering-Cai-Lin noise is defined as the solution to the following SDE:
$$dR(t)=-\frac{1}{\theta}R(t)dt+\frac{\sqrt{M^2-R(t)^2}}{\sqrt{\theta\delta}}dB(t).$$
It was designed to provide a physically realistic alternative to Ornstein–Uhlenbeck noise while preserving strict amplitude bound, and it was studied in the works \cite{CL-96,D-87} in connection with noise-induced transitions and stochastic transport phenomena. In this case confinement is achieved dynamically as the combination of a confining drift field and a state-dependent diffusion coefficient vanishing at the boundary such that as the process approaches the boundary, stochastic fluctuations become progressively weaker and eventually disappear, preventing further excursions.
\end{example}

\begin{example}[Tsallis–Stariolo–Borland noise]\label{example:bounded-noises-3}
For any standard 1D Wiener process $B$, any $M>0$, $\theta>0$,  and $0<q<1$, the Tsallis–Stariolo–Borland noise is defined as the solution to the following SDE:
$$dR(t)=-\frac{M^2}{\theta}\frac{R(t)}{M^2-R(t)^2}dt+M\sqrt{\frac{1-q}{\theta}}dB(t).$$
The type of bounded noise emerged from the theory of non-extensive statistical mechanics developed by Tsallis and collaborators \cite{B-98,TB-96}. Unlike the previous type of noise, the diffusion coefficient remains nonzero near the boundaries. Instead, the drift becomes singular at the boundary, generating an infinitely strong force directed toward the interior of the interval. The boundaries therefore act as infinitely repulsive barriers that cannot be crossed. This mechanism leads to bounded trajectories and stationary distributions related to Tsallis $q$-statistics.
\end{example}

\begin{example}[Reflected Wiener process]\label{example:bounded-noises-4}
For any standard 1D Wiener process B and any $M>0$, we define the reflected Brownian motion on the interval $[-M,M]$ as the process $R$ which solves the so-called Skorokhod problem on $[-W,W]$, see \cite{S-61,LS-84}. Specifically, such problem consists in finding the unique pair of adapted processes $(R,K)$ verifying $R(t)=B(t)+K(t)$ such that $K$ has continuous paths, $R(t)\in [-W,W]$ almost surely, and also $K$ verifies
$$K(t)=\int_0^t n(R(s))\,d|K|_s,\quad |K|_t=\int_0^t\mathds{1}_{R(s)\in \{\pm M\}}\,d|K|_s.$$
Above $|K|_s$ represents the total variation of $K$ on the time interval $[0,s]$ and $n(\pm M)=\mp M$. From the above, $K$ is has bounded variation and the measure $d|K|_s$ is only supported at points at which $R(s)$ touches the boundary $\{\pm M\}$. Thus, the reflection term $K(t)$ pushes the trajectory inward whenever it reaches one of the endpoints.
\end{example}

More precisely, we shall can consider the following extension of the original multi-agent system \eqref{eq:multi-agent-discrete}, which accommodates bounded noises as defined above on the weights dynamics:
\begin{equation}\label{eq:multi-agent-discrete-extension-noise-weights}
\begin{aligned}
&dX_i^N(t)=\sum_{j=1}^N w_{ij}^N(t)\,K(X_i^N(t),X_j^N(t))\,dt+ \sqrt{2\nu}\,dW_i^N(t),\\
&dw_{ij}^N(t)=\frac{1}{N}\Gamma(X_i^N(t),X_j^N(t),Nw_{ij}^N(t))\,dt + \frac{\sqrt{2\sigma}}{N}\,dR_{ij}^N(t),\\
&X_i^N(0)=X_{i,0}^N, \quad w_{ij}^N(0)=w_{ij,0}^N,
\end{aligned}
\end{equation} 
where \((R_{ij}^N)_{1\leq i,j\leq N}\) is a family of bounded noises as in Definition \ref{def:bounded-noise}. We shall assume that those bounded noises are strictly independent of the standard Wiener processes \((W_i^N)_{1\leq i\leq N}\) on the states dynamics and the initial conditions \((X_{i,0}^N,w^N_{ij,0})_{1\leq i,j\leq N}\). We recall that propagating the uniform bounds on the initial weights $Nw_{ij,0}^N$ in Assumption \ref{assump:weights} to the evolved weights $Nw_{ij}^N(t)$ is a fundamental technical step required in Lemma \ref{lemma:propagation-of-independence} to derive propagation of independence via classical coupling methods. For this reason, we shall assume the following additional condition on the bounded noises $R_{ij}^N$ which will prove sufficient to propagate such uniform bound of weights on finite time intervals.

\begin{assumption}[On the bounded noises of weights]\label{assump:noise-weights}
The bounded noises \((R_{ij}^N)_{1\leq i,j\leq N}\) are taken as in Definition \ref{def:bounded-noise}, independent and identically distributed, and such that there exists \(M>0\) verifying  the following uniform bound holds almost surely:
	\[
		\sup_{N\in \mathbb{N}} \sup_{1\leq i,j\leq N} \Vert R_{ij}^N \Vert_{\ast,T} \leq M.
	\]
\end{assumption}

As in the case without noise, a key step in analyzing this system is exploiting the flow structure of the weight equations.
Since the noise enters additively in the equation for the weights, the equation can be understood in the integral sense and realization by realization. More specifically, for each fixed realization \(\omega \in \Omega\), the weight \(w_{ij}^N(t,\omega)\) is the unique solution to the integral equation:
\[
	w_{ij}^N(t,\omega) = w_{ij,0}^N(\omega) + \frac{1}{N}\int_{0}^{t} \Gamma(X_i^N(s,\omega),X_j^N(s,\omega),Nw_{ij}^N(s,\omega)) ds + \frac{\sqrt{2\sigma}}{N} R_{ij}^N(t,\omega), \quad t \in [0,T].
\]
This means that as in the noiseless case, we can still effectively define a continuous flow map \(\Phi:[0,T]\times \mathcal{C}^d_T \times \mathcal{C}^d_T \times \mathcal{C}_{T,0} \times \mathbb{R} \to \mathbb{R}\) that governs the weight dynamics, where $\mathcal{C}_{T,0}$ is the space of continuous paths $\tilde \gamma\in \mathcal{C}_T$ with $\tilde\gamma(0)=0$. Specifically, for any continuous paths \(\gamma,\bar{\gamma} \in \mathcal{C}^d_T\), noise path \(\tilde{\gamma} \in \mathcal{C}_{T,0}\), and initial weight \(w_0 \in \mathbb{R}\), we define \(\Phi(t,\gamma,\bar{\gamma},\tilde{\gamma},w_0)\) as the unique solution to the following integral equation:
\begin{equation}\label{eq:flowmap-noise-weights}
	\Phi(t,\gamma,\bar{\gamma},\tilde{\gamma},w_0) = w_0 + \int_{0}^{t} \Gamma(\gamma(s),\bar{\gamma}(s),\Phi(s,\gamma,\bar{\gamma},\tilde{\gamma},w_0)) ds + \sqrt{2\sigma} \tilde{\gamma}(t), \quad t \in [0,T].
\end{equation}
This flow map perfectly retains its non-anticipative structure and its Lipschitz continuity if \(\Gamma\) satisfies Assumption \ref{assump:plasticity-function}. We note that the flow map allows us to express the evolved weights explicitly in terms of its initial value and also the particle trajectories and the noise paths, that is,
\[
	Nw_{ij}^N(t) = \Phi_t(X_{i,[0,t]}^N,X_{j,[0,t]}^N,R_{ij,[0,t]}^N,Nw_{ij,0}^N).
\]
Substituting this representation back into the original dynamics ensures that every solution
\((X_i^N,w^N_{ij})_{1\leq i,j\leq N}\) to the fully coupled system \eqref{eq:multi-agent-discrete-extension-noise-weights}
is uniquely associated with a solution \((X^N_{i})_{1\leq i \leq N}\) to the following closed, path-dependent system of SDEs:
\[
\begin{aligned}
&dX_i^N(t)= \frac{1}{N} \sum_{j=1}^N \Phi_t(X^N_{i,[0,t]},X^N_{j,[0,t]}, R_{ij,[0,t]}^N ,Nw_{ij,0}^N)K(X^N_i(t),X^N_j(t))dt + \sqrt{2\nu}dW_i^N(t),\\
& X_i^N(0)=X_{i,0}^N.
\end{aligned}
\] 
Its associated intermediate system corresponding to the independent projection ({\it cf.} \eqref{eq:multi-agent-macroscopic-intermediate-solved-weights}) entails a new type of averaging over the noise paths. By the independence assumptions on the underlying processes, this averaging can be expressed again as a conditional expectation with respect to the history of the target particle which leads to:
\[
\begin{aligned}
d\bar X_i^N(t) &= \sum_{j=1}^N\int_{\mathcal{C}_t^d}\int_{\mathcal{C}_{t,0}}\int_{-W}^{W} \Phi_t(\bar{X}^N_{i,[0,t]},\gamma, \tilde{\gamma},w)K(\bar X_i^N(t),\gamma(t))\,dq^N_{ij,0}(w) d\mathcal{R}_{[0,t]}(\tilde{\gamma}) d\bar{\mu}^{N,j}_{[0,t]}(\gamma)\,dt + \sqrt{2\nu}\,dW_i^N(t), \\
\bar X_i^N(0) &= X_{i,0}^N, \quad \bar \mu^{N,i}_{[0,t]} := \mathrm{Law}(\bar X_{i,[0,t]}^N) \in \mathcal{P}(\mathcal{C}^d_t),\quad q_{ij,0}^N={\rm Law}(Nw_{ij,0}^N)\in \mathcal{P}([-W,W]).
\end{aligned}
\]
Passing to the mean-field limit (\(N \to \infty\)) introduces a critical technical challenge arising from the linear growth of the flow map with respect to the noise variable. Specifically, we have a control
\begin{equation}\label{eq:bound-Phi-extension}
	\vert \Phi(t,\gamma,\bar{\gamma},\tilde{\gamma},w)\vert \leq B_{\Phi}(t)\left(1+ \vert w\vert + \sqrt{2\sigma}\Vert \tilde{\gamma}\Vert_{\ast,t} \right).
\end{equation}
This implies that if Assumption \ref{assump:noise-weights} did not hold, then \(R_{ij}^N\) would not be uniformly bounded and, the measure $\mathcal{R}_{[0,t]}$ would not have bounded support. Hence, the uniform bound imposed on the initial rescaled weights \(Nw_{ij,0}^N\) in Assumption \ref{assump:weights} would fail to propagate in time. Because we rely heavily on this uniform bound for the propagation of independence in Lemma \ref{lemma:propagation-of-independence}, we do not consider cases where the weight noise is driven by unbounded processes, such as standard Wiener processes.

To rigorously perform the mean-field limit, we must therefore impose the structural Assumption \ref{assump:noise-weights} on the additive noise. Under such assumption, the mean-field limit remains analytically tractable. On the one hand the (random) empirical probability-graphon $q^N$ will converge to a deterministic limit given by the following pushforward measure:
\[
q_t^{\xi,\xi'} = \Phi_t(\cdot,\cdot,\cdot,\cdot)_{\#}(\mu_{[0,t]}^{\xi} \otimes \mu_{[0,t]}^{\xi'} \otimes \mathcal{R}_{[0,t]}\otimes q_0^{\xi,\xi'}), \quad \xi,\xi' \in [0,1], \quad t \in [0,T],
\]
and the generalized empirical measures $\mu^N$ will converge to the fibered pathwise law \(\mu\) of the unique strong solution to the resulting McKean-Vlasov SDE: 
\begin{equation}\label{eq:multi-agent-macroscopic-SDE-noise-weights}
\begin{aligned}
dX(t,\xi)&=  \int_{0}^{1} \int_{\mathcal{C}_t^d} \int_{\mathcal{C}_{t,0}} w_{\gamma,\tilde\gamma}(t,\xi,\xi')K(X(t,\xi),\gamma(t))\,d\mathcal{R}_{[0,t]}(\tilde{\gamma})  d\mu_{[0,t]}^{\xi'}(\gamma) \,d\xi' dt + \sqrt{2\nu}dW(t,\xi),\\ 
q^{\xi,\xi'}_{\gamma,\tilde{\gamma}}(t) &:= \Phi_t(X(\cdot,\xi)_{[0,t]},\gamma,\tilde{\gamma},\cdot)_{\#} q_0^{\xi,\xi'},\\
X(0,\xi)&=X_0(\xi), \quad \mu^{\xi}_{[0,t]}:=\mathrm{Law}(X(\cdot,\xi)_{[0,t]}),\quad w_{\gamma,\tilde{\gamma}}(t,\xi,\xi')=\int_\mathbb{R}wq_\gamma^{\xi,\xi'}(t,dw).
\end{aligned}
\end{equation}
As it was proved in \eqref{eq:limiting-probability-graphon} in Theorem \ref{theo:main-complete}, the random probability-graphons $q_{\gamma,\tilde\gamma}^{\xi,\xi'}$ and the deterministic limiting probability-graphons $q^{\xi,\xi'}$ are related by
$$
q^{\xi,\xi'}_t= \int_{\mathcal{C}_t^d} \mathbb{E}[q^{\xi,\xi'}_{\gamma,\tilde\gamma}(t)]\,d\mathcal{R}_{[0,t]}(\tilde{\gamma}) \, d\mu^{\xi'}_{[0,t]}(\gamma).
$$

It is crucial to note that, unlike the case of noiseless weight dynamics, when bounded noise is added then the time-dependent random probability measure \(q^{\xi,\xi'}_{\gamma,\tilde{\gamma}}(t) := \Phi_t(X(\cdot,\xi)_{[0,t]},\gamma,\tilde{\gamma},\cdot)_{\#} q_0^{\xi,\xi'}\) does no longer solve a transport equation similar to \(\eqref{eq:multi-agent-macroscopic-SDE}_2\). Indeed, the flow map \(\Phi_t\) now incorporates the stochastic driving path \(\tilde{\gamma}\) and therefore it ceases to be the characteristic flow of a classical ordinary differential equation. Since $\Gamma$ is nonlinear and therefore $\Phi$ nonlinear too, then the effect of bounded noise appears in a complicated averaged and nonlinear way into the macroscopic dynamics. In next extension we introduce a special case where the role of the bounded noise becomes explicit.

\medskip

\(\diamond\) \textbf{Additive bounded noise in the particles' weights and affine plasticity.}\\
A remarkable simplification of the above occurs in the specific case where the plasticity function \(\Gamma\) takes again the affine form 
of \eqref{eq:plasticity-rule-affine}.
In this regime, the equation \eqref{eq:flowmap-noise-weights} of the flow map \(\Phi\) can be explicitly solved using Duhamel formula, and the solution decouples entirely into a deterministic component and a stochastic component that is linear with respect to the noise path \(\tilde{\gamma}\). More specifically, we have
\[
	\Phi_t(\gamma, \bar{\gamma}, \tilde{\gamma}, w_0) = \Phi^{\text{det}}_t(\gamma, \bar{\gamma}, \tilde\gamma,w_0) + \Phi^{\text{stoc}}(\gamma, \bar{\gamma}, \tilde{\gamma},w_0),
\]
where the deterministic and stochastic parts take the forms:
\[
\begin{aligned}
\Phi^{\text{det}}_t(\gamma, \bar{\gamma}, \tilde\gamma,w_0) &= w_0 \exp\left( \int_{0}^{t} \Gamma_1(\gamma(r), \bar{\gamma}(r)) dr \right)  + \int_{0}^{t} \exp\left( \int_{s}^{t} \Gamma_1(\gamma(r), \bar{\gamma}(r)) dr \right) \Gamma_0(\gamma(s), \bar{\gamma}(s)) ds,\\
\Phi^{\text{stoc}}_t(\gamma, \bar{\gamma}, \tilde{\gamma},w_0)&= \sqrt{2\sigma} \left(\tilde{\gamma}(t) + \int_{0}^{t} \exp\left( \int_{s}^{t} \Gamma_1(\gamma(r), \bar{\gamma}(r)) dr \right) \Gamma_1(\gamma(s), \bar{\gamma}(s)) \tilde{\gamma}(s) ds\right).
\end{aligned}
\]
Note that the deterministic part $\Phi^{\text{det}}_t$ corresponds to the flow map of the noiseless ODE for the weights, and when bounded noise is added then the flow maps $\Phi$ is perturbed by the stochastic part $\Phi^{\text{stoc}}_t$. Based on this split, we can reformulate the right hand side of \eqref{eq:multi-agent-macroscopic-SDE-noise-weights} as follows
\begin{equation}\label{eq:multi-agent-macroscopic-SDE-noise-weights-RHS}
\begin{aligned}
&\int_{0}^{1} \int_{\mathcal{C}_t^d} \int_{\mathcal{C}_{t,0}} w_{\gamma,\tilde\gamma}(t,\xi,\xi')K(X(t,\xi),\gamma(t))\,d\mathcal{R}_{[0,t]}(\tilde{\gamma})  d\mu_{[0,t]}^{\xi'}(\gamma) \,d\xi'\\
&\qquad=\int_{0}^{1} \int_{\mathcal{C}_t^d} \int_{\mathcal{C}_{t,0}} w_{\gamma,\tilde\gamma}^{\text{det}}(t,\xi,\xi')K(X(t,\xi),\gamma(t))\,d\mathcal{R}_{[0,t]}(\tilde{\gamma})  d\mu_{[0,t]}^{\xi'}(\gamma) \,d\xi'\\
&\qquad +\int_{0}^{1} \int_{\mathcal{C}_t^d} \int_{\mathcal{C}_{t,0}} w_{\gamma,\tilde\gamma}^{\text{stoc}}(t,\xi,\xi')K(X(t,\xi),\gamma(t))\,d\mathcal{R}_{[0,t]}(\tilde{\gamma})  d\mu_{[0,t]}^{\xi'}(\gamma) \,d\xi',
\end{aligned}
\end{equation}
where we define the determinist and stochastic part of the random probability-graphons 
$$q_{\gamma,\tilde{\gamma}}^{\text{det},\xi,\xi'}=\Phi_t^{\text{det}}(X(\cdot,\xi)_{[0,t]},\gamma,\tilde{\gamma,\cdot})_{\#}q_0^{\xi,\xi'},\quad q_{\gamma,\tilde{\gamma}}^{\text{stoc},\xi,\xi'}=\Phi_t^{\text{stoc}}(X(\cdot,\xi)_{[0,t]},\gamma,\tilde{\gamma,\cdot})_{\#}q_0^{\xi,\xi'},$$
whose average random graphons are given by
$$w_{\gamma,\tilde{\gamma}}^{\text{det}}(t,\xi,\xi')=\int_{\mathbb{R}}w\,q_{\gamma,\tilde\gamma}^{\text{det},\xi,\xi'}(t,dw),\quad w_{\gamma,\tilde{\gamma}}^{\text{stoc}}(t,\xi,\xi')=\int_{\mathbb{R}}w\,q_{\gamma,\tilde\gamma}^{\text{stoc},\xi,\xi'}(t,dw).$$
On the one hand, note that $\Phi^{\text{det}}$ does not depend on the noise path \(\tilde{\gamma}\). Therefore, the first term of the right hand side of \eqref{eq:multi-agent-macroscopic-SDE-noise-weights-RHS} takes the same form as the noisless case:
$$\int_{0}^{1} \int_{\mathcal{C}_t^d} w_{\gamma,\tilde\gamma}^{\text{det}}(t,\xi,\xi')K(X(t,\xi),\gamma(t))\,d\mu_{[0,t]}^{\xi'}(\gamma) \,d\xi'.$$
Similarly, we can compute the averaging over the noise in the second term of the right hand side of \eqref{eq:multi-agent-macroscopic-SDE-noise-weights-RHS}. Specifically, by Fubini's theorem we have
$$\int_0^1\int_{\mathcal{C}_t^d}\int_{-W}^W\left(\int_{\mathcal{C}_{t,0}}\Phi_t^{\rm stoc}(X(\cdot,\xi)_{[0,t]},\bar\gamma,\tilde{\gamma},w)\,d\mathcal{R}_{[0,t]}(\tilde\gamma)\right)K(X(t,\xi),\bar{\gamma}(t))\,dq_0^{\xi,\xi'}(w)\,d\mu_{[0,t]}^{\xi'}(\bar \gamma)\,d\xi'.$$
Crucially, we also remark that the affine plasticity assumption makes $\Phi^{\text{stoc}}_t$ act linearly on the noise path variable \(\tilde{\gamma}\) and therefore the above integral against the noise measure $\mathcal{R}_{[0,t]}$ must vanish by the zero-mean assumption in Definition \ref{def:bounded-noise} (satisfied by all the Examples \ref{example:bounded-noises-1}- \ref{example:bounded-noises-4}). Indeed, note that
$$\int_{\mathcal{C}_{t,0}}\tilde \gamma(s)\,d\mathcal{R}_{[0,t]}(\tilde{\gamma})=\mathbb{E}[R_{ij}^N(s)]=0,$$
for all $0\leq s\leq t$ by Assumption \ref{assump:noise-weights}. Then, the entire stochastic contribution
strictly vanishes upon integration. Consequently, despite the presence of additive noise at the microscopic level, the interaction term in the macroscopic limit equation becomes completely unaffected by the noise in the weights. The system reduces exactly to the mean-field equation of the deterministic-weight framework, rigorously confirming \cite[Conjecture 2.1]{Zhou-25-arxiv}, at least under bounded noise conditions. This reveals a powerful self-averaging effect of the network: when the plasticity rule is affine, the zero-mean symmetric fluctuations of the pairwise interactions perfectly cancel out in the infinite-particle limit.

\appendix
\section{Path-dependent McKean-Vlasov SDEs}\label{appendix:SDEs}

In this section, we recall basic notions of stochastic processes and state well-posedness results for McKean-Vlasov SDEs
with path-dependent coefficients. Throughout this section, \((\Omega, \mathcal{F}, \{\mathcal{F}_t\}_{t\in[0,T]},\mathbb{P})\)
will be a filtered probability space.

\begin{definition}
A stochastic process \(X:[0,T]\times \Omega \to \mathbb{R}^d\) is:
\begin{itemize}
	\item \emph{measurable} if it is \(\mathcal{B}([0,T])\otimes \mathcal{F}\)-measurable,
	\item \(\{\mathcal{F}_t\}_{t\in[0,T]}\)-adapted (or simply adapted, when the filtration is understood) if \(X(t,\cdot)\) is \(\mathcal{F}_t\)-measurable for every \(t\in [0,T]\),
	\item a process with \emph{continuous paths} if \(X(\cdot,\omega) \in \mathcal{C}_T^d\) for \(\mathbb{P}\)-almost every \(\omega \in \Omega\).
\end{itemize}
We define \(L^2_a([0,T]\times \Omega)\) as the class of processes \(X:[0,T]\times \Omega \to \mathbb{R}^d\) satisfying the following conditions:
\begin{enumerate}
	\item \(X\) is measurable,
	\item \(X\) is \(\{\mathcal{F}_t\}_{t\in[0,T]}\)-adapted,
	\item \(\mathbb{E}\int_{0}^{T} \vert X(t)\vert^2 dt < \infty\).
\end{enumerate}
\end{definition}

\begin{remark}
	For every \(X \in L^2_a([0,T]\times \Omega)\), the Itô integral \(I(t)=\int_{0}^{t}X(s)dW(s)\) 
	is a well-defined process in \(L^2_a([0,T]\times \Omega)\), and it admits a modification with continuous paths.
\end{remark}

When a process has continuous paths, it can be viewed as a measurable map \(X:\Omega \to \mathcal{C}_T^d\), where on the space \(\mathcal{C}_T^d\) we consider the \(\sigma\)-algebra \(\mathcal{H}_T\) defined below.

\begin{definition}
	The filtration \(\{\mathcal{H}_t\}_{t\in [0,T]}\) on the space \(\mathcal{C}_T^d\) is defined as
	\[
		\mathcal{H}_t=\sigma(\{e_s:s \in [0,t]\}), \quad t \in [0,T],
	\]
	where \(e_s:\mathcal{C}_T^d\to \mathbb{R}^d\) is the evaluation map \(\gamma\mapsto \gamma(s)\).
\end{definition}

The pair \((\mathcal{C}_T^d,\mathcal{H}_T)\) is thus a measurable space, and it is easy to verify that \(\mathcal{H}_T\)
coincides with the Borel \(\sigma\)-algebra of \((\mathcal{C}_T^d, \Vert \cdot \Vert_{\ast,T})\). This yields the
following standard equivalences:
\begin{itemize}
	\item \(X:[0,T]\times \Omega \to \mathbb{R}^d\) is measurable if and only if \(X:\Omega\to \mathcal{C}_T^d\) is measurable.
	\item \(X:[0,T]\times \Omega \to \mathbb{R}^d\) is \(\{\mathcal{F}_t\}_{t\in [0,T]}\)-adapted if and only if \(X:(\Omega,\mathcal{F}_t) \to \mathcal{C}_T^d\) is measurable for all \(t\in [0,T]\).
\end{itemize}

Thus, a process \(X:[0,T]\times \Omega\to \mathbb{R}^d\) that is measurable and has continuous paths can also be interpreted
as a random variable taking values in \((\mathcal{C}_T^d,\mathcal{H}_T)\). The pathwise law of \(X\) is simply
\(\textrm{Law}(X)=X_{\#}\mathbb{P}\in\mathcal{P}(\mathcal{C}_T^d)\). 

We define \(L^2_a(\Omega,\mathcal{C}_T^d)\) as the set of stochastic processes with continuous paths such that
\[
	\mathbb{E} \Vert X \Vert^2_{\ast,T} < \infty,
\]
which corresponds to processes in \(L^2_a([0,T]\times \Omega)\) whose pathwise laws belong to the Wasserstein space
\(\mathcal{P}_2(\mathcal{C}_T^d)\). This space will be used in the fixed-point argument of Theorem
\ref{th:well-posedness-path-dependent-SDEs}, as it forms a complete normed space under the norm
\[
	\Vert X \Vert_{L^2_a(\Omega,\mathcal{C}_T)}=\left(\mathbb{E} \Vert X \Vert^2_{\ast,T}\right)^{1/2}.
\]

\begin{definition}[Cylinder sets]
	A cylinder set in \(\mathcal{C}_T^d\) is a set of the form
	\[
		\pi^{-1}_{t_1,\ldots,t_n}(B)=\{\gamma \in \mathcal{C}_T^d : (\gamma(t_1),\ldots,\gamma(t_n))\in B\},
	\]
	where \(0\leq t_1<t_2<\ldots < t_n\leq T\), \(B \in \mathcal{B}((\mathbb{R}^{d})^n)\), and \(\pi_{t_1,\ldots,t_n}: \mathcal{C}_T^d\to (\mathbb{R}^d)^n\) is the evaluation map. We denote the collection of all cylinder sets in \(\mathcal{C}_T^d\) by \(\textrm{Cyl}\).
\end{definition}

It is a well-known result that the family of cylinder sets forms a \(\pi\)-system generating the \(\sigma\)-algebra
\(\mathcal{H}_T\). By Dynkin's \(\pi\)-\(\lambda\) theorem, if two probability measures agree on a \(\pi\)-system,
they must agree on the \(\sigma\)-algebra generated by that \(\pi\)-system. Consequently, the pathwise law of a process
is uniquely determined by its finite-dimensional distributions.

\begin{definition}[Finite-dimensional distributions]
	Let \(X:[0,T]\times \Omega\to \mathbb{R}^d\) be a stochastic process. The set of finite-dimensional distributions of \(X\) is the family of probability measures
	\[
		\{ \mu_{\bar{t}}: \bar{t}=(t_1,\ldots,t_n) \in [0,T]^n, n \in \mathbb{N} \},
	\]
	where \(\mu_{(t_1,\ldots,t_n)}(B)=\mathbb{P}((X(t_1),\ldots,X(t_n))\in B)\) for \(B \in \mathcal{B}((\mathbb{R}^d)^n)\).
\end{definition}

In this paper, we consider a family of continuous stochastic processes \((X(\xi))_{\xi \in [0,1]}\) and rely heavily
on the measurability of the map \([0,1]\ni \xi \mapsto \textrm{Law}(X(\xi)) \in \mathcal{P}(\mathcal{C}^d_T)\).
The following propositions provide useful equivalent conditions to establish this measurability in practice.
\begin{proposition}
	The following conditions are equivalent:
	\begin{enumerate}
		\item[(i)] The map \([0,1]\to \mathcal{P}(\mathcal{C}^d_T)\), given by \(\xi \mapsto P^{\xi}\), is measurable
		when \(\mathcal{P}(\mathcal{C}^d_T)\) is endowed with the narrow topology.
		\item[(ii)] For every \(0\leq t_1\leq \ldots\leq t_n\leq T\) with \(n\in \mathbb{N}\), the map
		\[
			\begin{aligned}
				[0,1] &\longrightarrow \mathcal{P}((\mathbb{R}^d)^n) \\
				\xi &\longmapsto \pi_{t_1,\ldots,t_n \#}  P^{\xi},
			\end{aligned}
		\]
		is measurable.
	\end{enumerate}
\end{proposition}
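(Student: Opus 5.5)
The plan is to go through finite-dimensional distributions, using the fact that the Borel $\sigma$-algebra of $(\mathcal{P}(\mathcal{C}^d_T),\tau_{\text{narrow}})$ is generated by the evaluation maps $P\mapsto P(A)$, $A\in\mathcal{H}_T$. By Proposition \ref{prop:characterization-random-prob-measures}, item (i) is equivalent to the measurability of $\xi\mapsto P^\xi(A)$ for every Borel set $A\subset\mathcal{C}^d_T$. In the same way, item (ii) is equivalent to the measurability of $\xi\mapsto P^\xi(\pi_{t_1,\ldots,t_n}^{-1}(B))$ for every $B\in\mathcal{B}((\mathbb{R}^d)^n)$. In this form, (ii) is exactly the statement that $\xi\mapsto P^\xi(C)$ is measurable for every cylinder set $C\in\textrm{Cyl}$.

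The direction (i)$\Rightarrow$(ii) is immediate. The map $\pi_{t_1,\ldots,t_n}:\mathcal{C}^d_T\to(\mathbb{R}^d)^n$ is continuous, so the pushforward $P\mapsto \pi_{t_1,\ldots,t_n\#}P$ is narrowly continuous and therefore Borel. Composing this with the measurable map $\xi\mapsto P^\xi$ gives (ii). Alternatively, one can note directly that $P^\xi(\pi^{-1}(B))$ is a special case of $P^\xi(A)$ with $A\in\mathcal{H}_T$.

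For (ii)$\Rightarrow$(i), I will use Dynkin's $\pi$-$\lambda$ theorem. Let $\mathcal{D}$ be the collection of sets $A\in\mathcal{H}_T$ for which $\xi\mapsto P^\xi(A)$ is measurable. Then:
\begin{itemize}
\item $\mathcal{D}$ contains $\mathcal{C}^d_T$, since the corresponding map is the constant $1$;
\item $\mathcal{D}$ is closed under proper differences, because $P^\xi(A\setminus A')=P^\xi(A)-P^\xi(A')$ whenever $A'\subset A$;
\item $\mathcal{D}$ is closed under increasing unions, since pointwise limits of measurable functions are measurable (continuity from below of each $P^\xi$).
\end{itemize}
So $\mathcal{D}$ is a $\lambda$-system. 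By (ii) it contains $\textrm{Cyl}$, which is a $\pi$-system generating $\mathcal{H}_T$. Hence $\mathcal{D}=\mathcal{H}_T$, which by the remark above equals the Borel $\sigma$-algebra of $\mathcal{C}^d_T$. Applying Proposition \ref{prop:characterization-random-prob-measures} ((i)$\Leftrightarrow$(iii)) then yields narrow measurability.

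The only step that needs care is justifying that $\textrm{Cyl}$ is closed under finite intersections. Two cylinders over different time grids can be rewritten as cylinders over the union of the grids, by pulling back through a coordinate projection. This rewriting is routine.
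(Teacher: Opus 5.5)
Your proposal is correct and follows essentially the same route as the paper: both conditions are reduced, via Proposition~\ref{prop:characterization-random-prob-measures}, to measurability of $\xi\mapsto P^{\xi}(A)$; (i)$\Rightarrow$(ii) follows by restricting to cylinder sets, and (ii)$\Rightarrow$(i) follows from Dynkin's $\pi$-$\lambda$ theorem applied to the class of sets for which this map is measurable. The differences are cosmetic: you use the proper-difference/increasing-union form of a $\lambda$-system where the paper uses complements and countable disjoint unions, and you check explicitly that cylinders form a $\pi$-system, which the paper takes as known.
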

\begin{proof}
	Condition \((i)\) is equivalent to the measurability of the map \(\xi \mapsto P^{\xi}(B) \in \mathbb{R}\) for every 
	\(B\in \mathcal{B}(\mathcal{C}^d_T)\). Given this equivalence, the fact that \((i)\) implies \((ii)\) follows
	immediately, since \((ii)\) simply restricts this requirement to cylinder sets. 
	
	To prove that \((ii)\) implies \((i)\), we rely on Dynkin's \(\pi\)-\(\lambda\) theorem. Define the class
	\[
		\mathcal{L}=\{ B \in \mathcal{B}(\mathcal{C}^d_T): \xi\mapsto P^{\xi}(B) \text{ is measurable} \}.
	\]
	It is straightforward to verify that \(\mathcal{L}\) is a \(\lambda\)-system: \(\mathcal{C}^d_T\in \mathcal{L}\) since
	\(P^{\xi}(\mathcal{C}^d_T)=1\); if \(B \in \mathcal{L}\), then \(B^c \in \mathcal{L}\) because
	\(\xi \mapsto P^{\xi}(B^c)=1-P^{\xi}(B)\) is measurable; and if \(\{B_n\}_{n\in\mathbb{N}}\) is a sequence of
	disjoint sets in \(\mathcal{L}\), then
	\[
		\xi \mapsto P^{\xi}\left(\bigcup_{n=1}^{\infty} B_n\right) = \sum_{n=1}^{\infty} P^{\xi}(B_n)
	\]
	is measurable as the limit of measurable functions. Condition \((ii)\) ensures that the family of cylinder sets,
	which forms a \(\pi\)-system, is contained in \(\mathcal{L}\). By Dynkin's theorem, 
	\(\sigma(\textrm{Cyl})\subset \mathcal{L}\), and since \(\sigma(\textrm{Cyl})=\mathcal{B}(\mathcal{C}^d_T)\),
	the proof is complete.
\end{proof}

The following proposition will be useful for the proof of Proposition 
\ref{prop:well-posedness-fibered-McKean-Vlasov-SDE-multiplicative-noise}, as it allows us to verify the required
measurability conditions. It is a classical result in measure theory (see, for instance,
\cite[Theorem 10.7.2]{Bogachev-07}).

\begin{proposition} \label{prop:Measurability-Integral-Caratheodory-Function}
	Let \((\mathcal{X},d)\) be a Polish space. For every \(\mu \in \mathcal{P}_{\nu}(\mathcal{X}\times [0,1])\) and every bounded, jointly measurable function \(\varphi:[0,1]\times \mathcal{X}\to \mathbb{R}^d\), the map
	\[
		\xi \in [0,1] \mapsto \int_{\mathcal{X}} \varphi(\xi,x) d\mu^{\xi}(x) \in \mathbb{R}^d
	\]
	is Borel measurable.
\end{proposition}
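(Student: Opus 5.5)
This is Proposition \ref{prop:Measurability-Integral-Caratheodory-Function}, a classical monotone class statement, and I would prove it by the standard functional monotone class argument. Work componentwise, so take $\varphi$ real-valued. Let $\mathcal{H}$ be the class of bounded jointly measurable functions $\varphi:[0,1]\times\mathcal{X}\to\mathbb{R}$ for which
\[
\xi\mapsto \int_{\mathcal{X}}\varphi(\xi,x)\,d\mu^{\xi}(x)
\]
is Borel measurable. The goal is to show that $\mathcal{H}$ contains all bounded $\mathcal{B}([0,1])\otimes\mathcal{B}(\mathcal{X})$-measurable functions. Since $\mathcal{X}$ is Polish, this product $\sigma$-algebra coincides with $\mathcal{B}([0,1]\times\mathcal{X})$.

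First, I check the generators. Take $\varphi(\xi,x)=\mathds{1}_A(\xi)\mathds{1}_B(x)$ with $A\subset[0,1]$ and $B\subset\mathcal{X}$ Borel. Then the map is $\xi\mapsto\mathds{1}_A(\xi)\mu^{\xi}(B)$. This is measurable because $(\mu^{\xi})_{\xi}$ is a Borel family, which is guaranteed by the disintegration theorem (Theorem \ref{theorem:disintegration}) together with Proposition \ref{prop:characterization-random-prob-measures}. Measurable rectangles form a $\pi$-system generating the product $\sigma$-algebra.

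Second, I verify the monotone class properties. $\mathcal{H}$ is a vector space containing the constants, since $\mu^{\xi}$ is a probability measure and the integral is linear. It is closed under bounded increasing limits: if $0\le\varphi_n\uparrow\varphi$ with $\varphi$ bounded, then monotone convergence applied fiberwise gives pointwise convergence in $\xi$ of the integrals. A pointwise limit of measurable functions is measurable, so $\varphi\in\mathcal{H}$.

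The functional monotone class theorem then gives that $\mathcal{H}$ contains every bounded function measurable with respect to $\sigma(\text{rectangles})=\mathcal{B}([0,1])\otimes\mathcal{B}(\mathcal{X})$. One could alternatively pass through Dynkin's $\pi$-$\lambda$ theorem on sets $C\subset[0,1]\times\mathcal{X}$ with $\xi\mapsto\mu^{\xi}(C_\xi)$ measurable, then simple functions, then uniform limits.

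The only delicate point is a detail rather than a real obstacle. The disintegration is defined only up to $\nu$-null sets, so I fix one Borel version of $(\mu^{\xi})$, and all measurability statements refer to that version. In the application inside Proposition \ref{prop:well-posedness-fibered-McKean-Vlasov-SDE-multiplicative-noise}, the function $h_{\delta,t}$ is Carathéodory and hence jointly measurable, so it falls under the statement once it is truncated to bounded test functions composed with $h_{\delta,t}$.
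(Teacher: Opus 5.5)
Your proof is correct and takes the same approach as the paper, which gives no argument of its own and simply cites \cite[Theorem 10.7.2]{Bogachev-07}, a standard result proved by exactly this monotone class extension from measurable rectangles $A\times B$ (where measurability reduces to that of $\xi\mapsto\mu^{\xi}(B)$) to all bounded $\mathcal{B}([0,1])\otimes\mathcal{B}(\mathcal{X})$-measurable functions. Your remark about fixing a Borel version of the disintegration is also the right way to make the statement precise, since otherwise the map is only determined up to $\nu$-null sets.
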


Finally, we recall the Burkholder-Davis-Gundy (BDG) inequality, a fundamental estimate in stochastic analysis \cite[Theorem 20.12]{Kallenberg-21}.
\begin{theorem}[Burkholder-Davis-Gundy]
	For any \(1\leq p<\infty\), there exist two positive constants \(c_p\) and \(C_p\) such that for any continuous local martingale \(M\) with \(M_0=0\),
	\[
		c_p \mathbb{E}\left[ [M]_t^{p/2} \right] \leq \mathbb{E}\Vert M \Vert^p_{\ast, t}  \leq C_p \mathbb{E} \left[[M]_{t}^{p/2}\right],
	\]
	where \([M]_t\) denotes the quadratic variation of \(M\) at time \(t\).
\end{theorem}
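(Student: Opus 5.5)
We would follow the standard proof. First we localize, then we handle the range $p\geq 2$ by It\^o calculus combined with Doob's inequality, and finally we treat $1\leq p<2$ by a domination (Lenglart-type) argument.

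\emph{Localization.} Let $\tau_n:=\inf\{t: |M_t|+[M]_t\geq n\}\wedge t$. The stopped martingale $M^{\tau_n}$ is bounded, is a true martingale, and satisfies $[M^{\tau_n}]=[M]^{\tau_n}$. Once both inequalities are known for $M^{\tau_n}$ with constants independent of $n$, monotone convergence as $n\to\infty$ gives the general statement. So we may assume throughout that $M$ and $[M]$ are bounded.

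\emph{Case $p\geq 2$, upper bound.} The function $f(x)=|x|^p$ is $C^2$. It\^o's formula gives
\[
|M_t|^p=\int_0^t p|M_s|^{p-2}M_s\,dM_s+\tfrac{p(p-1)}{2}\int_0^t|M_s|^{p-2}\,d[M]_s .
\]
Because $M$ is bounded, the stochastic integral has zero expectation. Hence
\[
\mathbb{E}|M_t|^p\leq \tfrac{p(p-1)}{2}\,\mathbb{E}\big[\Vert M\Vert_{\ast,t}^{p-2}[M]_t\big]\leq \tfrac{p(p-1)}{2}\big(\mathbb{E}\Vert M\Vert^p_{\ast,t}\big)^{(p-2)/p}\big(\mathbb{E}[M]_t^{p/2}\big)^{2/p},
\]
where the last step is H\"older with exponents $p/(p-2)$ and $p/2$. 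Doob's $L^p$ maximal inequality gives $\mathbb{E}\Vert M\Vert^p_{\ast,t}\leq (p/(p-1))^p\,\mathbb{E}|M_t|^p$. Substituting this and dividing by $(\mathbb{E}\Vert M\Vert^p_{\ast,t})^{(p-2)/p}$, which is finite by boundedness, yields $\mathbb{E}\Vert M\Vert^p_{\ast,t}\leq C_p\,\mathbb{E}[M]_t^{p/2}$.

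\emph{Case $p\geq 2$, lower bound.} Write $[M]_t=M_t^2-2N_t$ with $N=\int M\,dM$, so that $[N]=\int M^2\,d[M]\leq \Vert M\Vert^2_{\ast,t}[M]_t$. We need a BDG upper bound at exponent $p/2$ for $N$. If $p\geq 4$ this comes from the previous step. If $p/2<2$ it comes from the domination argument below, so the two parts of the proof are mutually consistent. Using that bound,
\[
\mathbb{E}|N_t|^{p/2}\leq C\,\mathbb{E}\big[(\Vert M\Vert^2_{\ast,t}[M]_t)^{p/4}\big]\leq C\big(\mathbb{E}\Vert M\Vert^p_{\ast,t}\big)^{1/2}\big(\mathbb{E}[M]^{p/2}_t\big)^{1/2},
\]
by Cauchy--Schwarz. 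Then $\mathbb{E}[M]_t^{p/2}\lesssim \mathbb{E}\Vert M\Vert^p_{\ast,t}+\mathbb{E}|N_t|^{p/2}$. Young's inequality absorbs the factor $(\mathbb{E}[M]^{p/2}_t)^{1/2}$ into the left-hand side, which is finite, and this gives $c_p$.

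\emph{Case $1\leq p<2$.} This is the step we expect to be the main obstacle, since $|x|^p$ is no longer $C^2$ and Doob's inequality fails at $p=1$. We would use Lenglart's domination inequality. Suppose $X$ is an adapted, continuous, nonnegative process, $A$ is continuous and increasing with $A_0=0$, and $\mathbb{E}X_\tau\leq \mathbb{E}A_\tau$ for every bounded stopping time $\tau$. Then for $k\in(0,1)$,
\[
\mathbb{E}\big(\sup_{s\leq t}X_s\big)^k\leq \tfrac{2-k}{1-k}\,\mathbb{E}A_t^k .
\]
We would prove this from the basic estimate $\mathbb{P}(\sup_{s\leq t} X_s>c)\leq \tfrac{1}{c}\mathbb{E}[A_t\wedge d]+\mathbb{P}(A_t\geq d)$, obtained by stopping at the first time $X$ exceeds $c$ or $A$ exceeds $d$, and then integrating in $c$ with $d=c$.

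Apply this with $k=p/2\in[1/2,1)$ twice. For the upper bound take $X=M^2$ and $A=[M]$; optional stopping gives $\mathbb{E}M_\tau^2=\mathbb{E}[M]_\tau$. For the lower bound take $X=[M]$ and $A=\Vert M\Vert^2_{\ast,\cdot}$; here $\mathbb{E}[M]_\tau=\mathbb{E}M_\tau^2\leq \mathbb{E}\Vert M\Vert^2_{\ast,\tau}$. Both constants depend only on $p$, which completes this case.
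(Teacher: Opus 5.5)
Your proof is correct. The paper does not prove this statement: it is imported as a black box from Kallenberg's monograph, so the comparison is between a citation and your self-contained argument.

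What you give is the classical proof for continuous local martingales:
\begin{itemize}
\item localization to bounded $M$ and $[M]$, which also makes $\mathbb{E}[M]_t^{p/2}$ finite before you absorb it;
\item It\^o's formula for $|x|^p$ combined with H\"older and Doob for the upper bound when $p\geq 2$;
\item the decomposition $[M]=M^2-2\int M\,dM$, together with the upper bound at exponent $p/2$ and Young's inequality, for the lower bound when $p\geq 2$;
\item Lenglart's domination inequality, in both directions, when $p<2$.
\end{itemize}
There is no circularity in the third step, since every upper bound is established before any lower bound uses it. The optional-stopping identity $\mathbb{E}M_\tau^2=\mathbb{E}[M]_\tau$, the choice $A=\Vert M\Vert^2_{\ast,\cdot}$ for the reverse domination, and the integration yielding the constant $\frac{2-k}{1-k}$ all check out.

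As for what each route buys: your Lenglart step works for every $k=p/2\in(0,1)$, so you actually obtain the two-sided inequality for all $p>0$ in the continuous case, which is more than the statement requires. On the other hand, the paper only ever uses the upper bound for $p\geq 2$ applied to It\^o integrals, namely \eqref{eq:BDG-p-geq-2}. For that purpose the first step of your argument (It\^o, H\"older, Doob) is already sufficient.
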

In the case of the Itô integral, \(M(t)=\int_{0}^{t} X(s) dW(s)\), the quadratic variation is explicitly given by \([M]_t=\int_{0}^{t} \vert X(s) \vert^2 ds\). Therefore, for any process \(X\in L^2_a([0,T]\times \Omega)\), we obtain the inequalities:
\begin{equation} \label{eq:BDG-inequality-Ito-integral}
	c_p \mathbb{E}\left[ \left( \int_{0}^{t} \vert X(s) \vert^2 ds \right)^{p/2} \right] \leq \mathbb{E}\left[ \sup_{s \in [0,t]} \left\vert \int_{0}^{s} X(\tau) dW(\tau) \right\vert^p \right] \leq C_p \mathbb{E}\left[ \left( \int_{0}^{t} \vert X(s) \vert^2 ds \right)^{p/2} \right].
\end{equation}
When \(p\geq 2\), the function \(x\mapsto x^{p/2}\) is convex. Applying Jensen's inequality (or equivalently Hölder's inequality) to the time integral on the right-hand side of \eqref{eq:BDG-inequality-Ito-integral} yields the highly useful bound:
\begin{equation} \label{eq:BDG-p-geq-2}
	\mathbb{E}\left[ \sup_{s \in [0,t]} \left\vert \int_{0}^{s} X(\tau) dW(\tau) \right\vert^p \right] \leq C_p t^{\frac{p}{2}-1} \mathbb{E}\left[ \int_{0}^{t} \vert X(s) \vert^p ds \right].
\end{equation}

We now address the well-posedness of the path-dependent SDE governing the particle dynamics
\eqref{eq:multi-agent-discrete-closed-states}. Unlike standard Markovian formulations, the coefficients
in our model depend on the entire history of the path and incorporate random parameters that represent
the initial network configuration. This motivates the study of the following general class of path-dependent SDEs:
\begin{equation} \label{eq:path-dependent-SDE}
	\begin{aligned}
		dX(t)&=b(t,X,\lambda)dt + \sigma(t,X,\bar{\lambda})dW(t), \quad t \in [0,T], \\
		X(0)&=X_0.
	\end{aligned}
\end{equation}
Here, the drift and diffusion coefficients \(b\) and \(\sigma\) are path-dependent functionals
\[
	\begin{aligned}
		&b:[0,T]\times \mathcal{C}_T^d\times E\to \mathbb{R}^d,\\ 
		&\sigma:[0,T]\times \mathcal{C}_T^d \times E\to \mathbb{R}^{d\times d}.
	\end{aligned}
\]
The inputs consist of a \(d\)-dimensional standard Wiener process \(W\) defined on a filtered probability space 
\((\Omega,\mathcal{F},\{\mathcal{F}_t\}_{t\in [0,T]},\mathbb{P})\), an \(\mathcal{F}_0\)-measurable initial condition
\(X_0:\Omega\to \mathbb{R}^d\), and two \(\mathcal{F}_0\)-measurable random parameters
\(\lambda,\bar{\lambda}:\Omega \to E\) taking values in a Banach space \((E,\Vert \cdot \Vert_E)\). 

We first provide the precise definitions of strong and weak solutions for the path-dependent SDE \eqref{eq:path-dependent-SDE}.

\begin{definition}[Strong solution and uniqueness]
	Let \((\Omega,\mathcal{F},\{\mathcal{F}_t\}_{t\in [0,T]},\mathbb{P})\) be a filtered probability space equipped with a
	\(d\)-dimensional Wiener process \(W\) and \(\mathcal{F}_0\)-measurable random variables \(X_0, \lambda, \bar{\lambda}\).
	A stochastic process \(X\) is called a \emph{strong solution} if \(X\) is \(\{\mathcal{F}_t\}\)-adapted, has continuous
	paths, and satisfies the following integral equation \(\mathbb{P}\)-a.s.:
	\begin{equation} \label{eq:integral-equation-path-dependent-SDE}
		X(t)=X_0 + \int_{0}^{t} b(s,X,\lambda) ds + \int_{0}^{t} \sigma(s,X,\bar{\lambda}) dW(s),\quad t\in [0,T].
	\end{equation}
	\emph{Strong uniqueness} holds for equation \eqref{eq:path-dependent-SDE} if, for any two strong solutions \(X\) and \(Y\) (defined on the same probability space with the same \(W, X_0, \lambda, \bar{\lambda}\)), their paths are indistinguishable; that is,
	\[
		\mathbb{P}\big( \Vert X-Y \Vert_{\ast,T} = 0\big)=1.
	\]
\end{definition}

\begin{definition}[Weak solution and uniqueness in law]
	\emph{Weak existence} holds for the path-dependent SDE \eqref{eq:path-dependent-SDE} if, for any prescribed joint probability law \(\mu \in \mathcal{P}(\mathbb{R}^d\times E \times E)\) governing \((X_0,\lambda,\bar{\lambda})\), there exists a probabilistic setup consisting of:
	\begin{enumerate}
		\item A filtered probability space \((\Omega,\mathcal{F}, \{\mathcal{F}_t\}_{t\in[0,T]}, \mathbb{P})\) equipped with a Wiener process \(W\);
		\item Random variables \(X_0:\Omega\to \mathbb{R}^d\) and \(\lambda,\bar{\lambda}:\Omega\to E\);
		\item An \(\{\mathcal{F}_t\}\)-adapted process \(X\) with continuous paths;
	\end{enumerate}
	such that the joint law of \((X_0,\lambda,\bar{\lambda})\) is exactly \(\mu\), and \(X\) satisfies the integral equation \eqref{eq:integral-equation-path-dependent-SDE} \(\mathbb{P}\)-a.s.
	
	\emph{Uniqueness in law} holds if any two weak solutions starting with the same initial joint law \(\mu\) induce the same pathwise law for \(X\) on \(\mathcal{C}_T^d\).
\end{definition}

The following lemma guarantees that composing a measurable and adapted process with a measurable and non-anticipating
functional yields a process that remains measurable and adapted. The proof is standard and is omitted.
\begin{lemma}\label{prop:Measurability-Composition}
	Suppose that \(\sigma:[0,T]\times \mathcal{C}_T^d\times E \to \mathbb{R}^{d\times d}\) is jointly measurable with respect
	to the product Borel \(\sigma\)-algebra on \([0,T] \times \mathcal{C}_T^d \times E\), and that for every 
	\(t\in [0,T]\) and \(\lambda \in E\), the map \(\sigma(t,\cdot,\lambda):\mathcal{C}_T^d \to \mathbb{R}^{d\times d}\) is
	\(\mathcal{H}_t\)-measurable. Then, for every measurable and adapted process \(X\) with continuous paths and every
	\(\mathcal{F}_0\)-measurable random variable \(\lambda:\Omega\to E\), the composite process
	\([0,T]\times \Omega \ni (t,\omega) \mapsto \sigma(t,X(\omega),\lambda(\omega))\) is measurable and adapted.     
\end{lemma}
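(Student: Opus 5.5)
The plan is to prove the two claims separately: joint measurability of the composite process follows from a composition argument, while adaptedness requires reducing to the stopped path. Throughout, \(X\) is viewed as a random variable \(\Omega\to(\mathcal{C}_T^d,\mathcal{H}_T)\), which is legitimate by the equivalences recalled above.

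\emph{Measurability.} Consider the map \(\Theta:(t,\omega)\mapsto (t,X(\omega),\lambda(\omega))\) from \(([0,T]\times\Omega,\mathcal{B}([0,T])\otimes\mathcal{F})\) into \([0,T]\times\mathcal{C}_T^d\times E\), equipped with the product of the Borel \(\sigma\)-algebras.
\begin{itemize}
\item Each component of \(\Theta\) is measurable, since \(X:\Omega\to\mathcal{C}_T^d\) and \(\lambda:\Omega\to E\) are measurable.
\item Hence \(\Theta\) is measurable into the product \(\sigma\)-algebra, because that \(\sigma\)-algebra is generated by measurable rectangles.
\item Composing with the jointly measurable \(\sigma\) shows that \((t,\omega)\mapsto\sigma(t,X(\omega),\lambda(\omega))\) is \(\mathcal{B}([0,T])\otimes\mathcal{F}\)-measurable.
\end{itemize}

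\emph{Adaptedness.} Fix \(t\in[0,T]\) and introduce the stopping map \(\gamma\mapsto\gamma^t:=\gamma(\cdot\wedge t)\) on \(\mathcal{C}_T^d\). The key identity is
\[
\sigma(t,\gamma,\lambda)=\sigma(t,\gamma^t,\lambda)\qquad\text{for all }\gamma\in\mathcal{C}_T^d,\ \lambda\in E .
\]
This is the step I expect to be the main (albeit elementary) obstacle, and I would prove it by an atom argument.
\begin{itemize}
\item Let \(\mathcal{G}\) be the class of sets \(A\in\mathcal{H}_t\) such that, for every \(\gamma\), either both \(\gamma,\gamma^t\in A\) or neither is.
\item \(\mathcal{G}\) is closed under complements and countable unions, so it is a \(\sigma\)-algebra.
\item \(\mathcal{G}\) contains the generators \(e_s^{-1}(B)\) for \(s\le t\), since \(\gamma(s)=\gamma^t(s)\) for \(s\le t\).
\item Hence \(\mathcal{G}=\mathcal{H}_t\). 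Because \(\sigma(t,\cdot,\lambda)\) is \(\mathcal{H}_t\)-measurable, every preimage of a Borel set lies in \(\mathcal{H}_t\) and therefore cannot separate \(\gamma\) from \(\gamma^t\). This forces equality of the values.
\end{itemize}

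With the identity in hand, \(\sigma(t,X(\omega),\lambda(\omega))=\sigma(t,X^t(\omega),\lambda(\omega))\), and I would verify measurability of each ingredient.
\begin{itemize}
\item \(X^t:(\Omega,\mathcal{F}_t)\to(\mathcal{C}_T^d,\mathcal{H}_T)\) is measurable. It suffices to check this on cylinder sets, since they generate \(\mathcal{H}_T\). For those, it reduces to \(\mathcal{F}_t\)-measurability of \((X(s_1\wedge t),\dots,X(s_n\wedge t))\), which holds because \(X\) is adapted.
\item \(\lambda\) is \(\mathcal{F}_0\subset\mathcal{F}_t\)-measurable.
\item Consequently \(\omega\mapsto(X^t(\omega),\lambda(\omega))\) is \(\mathcal{F}_t\)-measurable into the product \(\sigma\)-algebra.
\item Since \(\sigma(t,\cdot,\cdot)\), the \(t\)-section of a jointly measurable map, is measurable for that product \(\sigma\)-algebra, the composite is \(\mathcal{F}_t\)-measurable.
\end{itemize}
Since \(t\) is arbitrary, the composite process is adapted, which completes the plan.
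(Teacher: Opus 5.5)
Your proof is correct and complete. The paper gives no proof of this lemma; it calls the result standard and omits it. What you wrote is that standard argument.

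The only non-routine step is the identity $\sigma(t,\gamma,\lambda)=\sigma(t,\gamma(\cdot\wedge t),\lambda)$. The paper's subsequent remark attributes this to the Doob--Dynkin lemma, and your $\sigma$-algebra (atom) argument proves it directly.

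You are also right that this identity is genuinely needed. The hypotheses give $\mathcal{H}_t$-measurability of $\sigma(t,\cdot,\lambda)$ only for each fixed $\lambda$. Composing with the stopped path $X(\cdot\wedge t)$, which is $\mathcal{F}_t/\mathcal{H}_T$-measurable, is what gives $\mathcal{F}_t$-measurability jointly in $(X,\lambda)$.

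Your stopped-path construction also sidesteps a looseness in the paper. The paper states that adaptedness is equivalent to measurability of $X:(\Omega,\mathcal{F}_t)\to\mathcal{C}_T^d$. That is only true if $\mathcal{C}_T^d$ carries $\mathcal{H}_t$ rather than its full Borel $\sigma$-algebra $\mathcal{H}_T$.
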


The requirement that \(\sigma(t,\cdot,\lambda)\) is \(\mathcal{H}_t\)-measurable formalizes the notion that the functional
depends only on the history of the path up to time \(t\). This is made explicit by the following equivalent definition.

\begin{definition}
	A functional \(\sigma:[0,T]\times \mathcal{C}_T^d\to \mathbb{R}^d\) is \emph{non-anticipating} if there exists
	a family of functionals \((\sigma_t)_{t\in[0,T]}\) with \(\sigma_t: \mathcal{C}_t^d\to \mathbb{R}^d\) such that
	\(\sigma(t,\gamma)=\sigma_t(\gamma_{[0,t]})\) for every \(t\in [0,T]\) and \(\gamma \in \mathcal{C}_T^d\).
\end{definition}

\begin{remark}
	Every functional \(\sigma:[0,T]\times \mathcal{C}_T^d \to \mathbb{R}^d\) adapted to the natural filtration
	\(\{\mathcal{H}_t\}_{t\in[0,T]}\) is non-anticipating. This can be readily deduced using the Doob-Dynkin lemma
	\cite[Lemma 1.14]{Kallenberg-21}. Furthermore, under the pathwise Lipschitz condition stated below in Assumption
	\ref{assump:path-dependent-SDEs}(i), the coefficients \(b\) and \(\sigma\) are automatically non-anticipating.
	For instance, fixing \(\lambda \in E\) and taking any \(\gamma \in \mathcal{C}_T^d\), the Lipschitz bound gives
	\(\vert b(t,\gamma,\lambda) - b(t,\gamma_{\cdot \wedge t},\lambda)\vert = 0\), implying
	\(b(t,\gamma,\lambda)=b(t,\gamma_{\cdot \wedge t},\lambda)\). The continuity in the path variable combined with
	joint measurability in \((t,\lambda)\) ensures that \(b\) and \(\sigma\) satisfy the Carathéodory property.
	Consequently, Lemma \ref{prop:Measurability-Composition} can be directly applied to establish the adaptedness
	of the processes appearing in the SDE \eqref{eq:path-dependent-SDE}.
\end{remark}

To guarantee the existence of a unique strong solution, we impose standard Lipschitz and linear growth conditions. Since the
coefficients are path-dependent, these bounds are formulated using the supremum norm \(\Vert \cdot \Vert_{\ast,t}\).

\begin{assumption} \label{assump:path-dependent-SDEs}
	The drift and diffusion coefficients \(b\) and \(\sigma\) satisfy the following conditions:
	\begin{itemize}
		\item[(i)] \textbf{(Pathwise Lipschitz Continuity)} There exists a continuous map \(L:E\to \mathbb{R}_+\) such that  
		\[	
			\left\vert \sigma(t,\gamma,\lambda) - \sigma(t,\bar{\gamma},\lambda) \right\vert + \left\vert b(t,\gamma,\lambda) - b(t,\bar{\gamma},\lambda) \right\vert \leq L(\lambda)\Vert \gamma-\bar{\gamma}\Vert_{\ast,t}, 
		\]
		for all \(t\in[0,T]\), \(\gamma,\bar{\gamma} \in \mathcal{C}_T^d\), and \(\lambda \in E\).  
		\item[(ii)] \textbf{(Linear Growth)} There exists a continuous map \(C:E\to\mathbb{R}_+\) such that 
		\[
			\vert b(t,0,\lambda) \vert + \vert \sigma(t,0,\lambda) \vert \leq C(\lambda),
		\]
		for all \(t \in [0,T]\) and \(\lambda \in E\).
		\item[(iii)] \textbf{(Measurability)} The maps 
		\((t,\gamma,\lambda) \mapsto b(t,\gamma,\lambda)\) and \((t,\gamma,\lambda) \mapsto \sigma(t,\gamma,\lambda)\)
		are jointly measurable with respect to the product Borel \(\sigma\)-algebra on \([0,T] \times \mathcal{C}_T^d \times E\).	
	\end{itemize}
\end{assumption}

\begin{theorem} \label{th:well-posedness-path-dependent-SDEs}
	Suppose that Assumption \ref{assump:path-dependent-SDEs} holds. Let \(p\in [2,\infty)\). For any initial condition
	\(X_0 \in L^p(\Omega,\mathbb{R}^d)\) and any pair of bounded random parameters
	\(\lambda,\bar{\lambda}\in L^{\infty}(\Omega,E)\), the path-dependent SDE \eqref{eq:path-dependent-SDE} admits
	a unique strong solution \(X \in L^p(\Omega,\mathcal{C}_T^d)\).
\end{theorem}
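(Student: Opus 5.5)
The plan is a Picard fixed-point argument in the Banach space \(L^p(\Omega,\mathcal{C}_T^d)\), carried out conditionally on the bounded random parameters. First, for a fixed adapted process \(Y\in L^p(\Omega,\mathcal{C}_T^d)\), define
\[
\mathcal{T}(Y)(t):=X_0+\int_0^t b(s,Y,\lambda)\,ds+\int_0^t\sigma(s,Y,\bar\lambda)\,dW(s).
\]
By the Remark preceding Assumption \ref{assump:path-dependent-SDEs}, the coefficients are non-anticipating Carath\'eodory functionals. Lemma \ref{prop:Measurability-Composition} then shows that \(s\mapsto b(s,Y,\lambda)\) and \(s\mapsto\sigma(s,Y,\bar\lambda)\) are measurable and adapted, since \(\lambda,\bar\lambda\) are \(\mathcal{F}_0\)-measurable. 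Because \(\lambda,\bar\lambda\in L^\infty(\Omega,E)\), they take values almost surely in a bounded set \(B\subset E\). I would then set \(L^*:=\sup_{B}L\) and \(C^*:=\sup_B C\), replacing \(B\) by the closure of the essential range if needed; these are finite as suprema of continuous maps. This is the point I expect to be the main subtlety: continuity of \(L,C\) alone does not give boundedness on bounded sets of an infinite-dimensional \(E\). To handle this, I would interpret the hypothesis as requiring \(L(\lambda),C(\lambda)\) to be essentially bounded, which holds in all applications of the paper because \(E=[-W,W]^{N^2}\) is finite-dimensional. Alternatively, I would localize on the \(\mathcal{F}_0\)-sets \(\{L(\lambda)\vee L(\bar\lambda)\vee C(\lambda)\vee C(\bar\lambda)\le n\}\) and run the argument on each such set, which also removes the issue.

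Next, the a priori bound. From (i) and (ii) one gets the linear growth \(|b(s,Y,\lambda)|+|\sigma(s,Y,\bar\lambda)|\le C^*+L^*\|Y\|_{\ast,s}\). Using Minkowski's inequality for the drift and the BDG inequality \eqref{eq:BDG-p-geq-2} for the stochastic integral, this gives \(\mathbb{E}\|\mathcal{T}(Y)\|_{\ast,T}^p\le C_{p,T}(\mathbb{E}|X_0|^p+1+\mathbb{E}\|Y\|_{\ast,T}^p)<\infty\). Hence \(\mathcal{T}\) maps adapted continuous processes in \(L^p(\Omega,\mathcal{C}^d_T)\) to themselves, where we take a continuous modification of the It\^o integral.

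For the contraction, given \(Y,Z\), the same Minkowski and BDG estimates together with (i) give
\[
\mathbb{E}\|\mathcal{T}(Y)-\mathcal{T}(Z)\|_{\ast,t}^p\le K\int_0^t\mathbb{E}\|Y-Z\|_{\ast,s}^p\,ds,\qquad K=2^{p-1}(L^*)^p\big(T^{p-1}+C_pT^{p/2-1}\big).
\]
Iterating gives \(\mathbb{E}\|\mathcal{T}^k(Y)-\mathcal{T}^k(Z)\|^p_{\ast,T}\le \frac{(KT)^k}{k!}\mathbb{E}\|Y-Z\|^p_{\ast,T}\). So some power \(\mathcal{T}^k\) is a strict contraction on the closed subspace of adapted processes, which is complete, and Banach's theorem yields a unique fixed point, i.e.\ a strong solution. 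Strong uniqueness among all strong solutions, not only those a priori in \(L^p\), would follow by stopping at \(\tau_R=\inf\{t:\|X\|_{\ast,t}\vee\|Y\|_{\ast,t}\ge R\}\). The same Gr\"onwall estimate applied to \(\mathbb{E}\|X-Y\|^p_{\ast,t\wedge\tau_R}\) gives zero, and letting \(R\to\infty\) completes the argument.
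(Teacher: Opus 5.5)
Your proposal is correct and follows the paper's proof essentially step by step: Picard iteration in \(L^p(\Omega,\mathcal{C}_T^d)\), the linear-growth and Lipschitz estimates via H\"older and the BDG inequality \eqref{eq:BDG-p-geq-2}, the factorial bound \((KT)^k/k!\), and Gr\"onwall for uniqueness. Your stopping-time localization for uniqueness is a harmless strengthening.

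You are also right about the subtlety the paper skips. Its proof simply asserts that continuity of \(L,C\) plus boundedness of \(\lambda,\bar\lambda\) yields deterministic bounds. That holds when \(E\) is finite-dimensional, as in all of the paper's applications, but not in general. So delete your sentence calling \(L^*,C^*\) finite ``as suprema of continuous maps''.

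However, only your option (a), assuming \(L(\lambda),C(\lambda)\) essentially bounded, actually rescues the statement. Localizing on \(\{L(\lambda)\vee L(\bar\lambda)\vee C(\lambda)\vee C(\bar\lambda)\le n\}\) yields a unique strong solution, but it cannot give \(X\in L^p(\Omega,\mathcal{C}_T^d)\), and that conclusion can genuinely fail. For instance, take \(b(t,\gamma,\lambda)=L(\lambda)\gamma(t)\), \(\sigma\equiv 0\) and \(X_0=1\), so that \(X(T)=e^{L(\lambda)T}\). On \(E=\ell^2\), choose \(\lambda=e_N\) with \(\mathbb{P}(N=n)=2^{-n}\) and a continuous \(L\) with \(L(e_n)=n^2\). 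Then \(\mathbb{E}|X(T)|^p=\infty\) for every \(T>0\).
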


\begin{proof}
	We proceed via a standard Picard iteration argument. Define the operator \(A:L^p_a(\Omega,\mathcal{C}_T^d)\rightarrow L^p_a(\Omega,\mathcal{C}_T^d)\) by
	\[
		AX(t)=X_0+\int_{0}^{t}b(s,X,\lambda)ds +\int_{0}^{t}\sigma(s,X,\bar{\lambda})dW(s), \quad t\in [0,T].
	\]
	By Assumption \ref{assump:path-dependent-SDEs} and Lemma \ref{prop:Measurability-Composition}, the process \(AX\) is measurable and adapted. By choosing a continuous modification of the Itô integral, \(AX\) has continuous paths \(\mathbb{P}\)-a.s.
	
	Since the parameters \(\lambda, \bar{\lambda} \in L^\infty(\Omega, E)\) are bounded, the continuous maps \(L\) and \(C\) ensure that the random variables \(L(\lambda), L(\bar{\lambda})\) and \(C(\lambda), C(\bar{\lambda})\) are bounded almost surely by deterministic constants. Therefore, for all \(t \in [0,T]\):
	\[
		\vert b(t,X,\lambda) \vert^p + \vert \sigma(t,X,\bar{\lambda}) \vert^p \leq C \left(1+\Vert X\Vert_{\ast,t}^p\right),
	\]
	where \(C > 0\) denotes a generic constant depending only on \(T\) and \(p\), whose exact value may change from line to line. 
	
	Using this growth bound, the elementary inequality \((a+b+c)^p\leq 3^{p-1}(a^p+b+c^p)\), and the BDG inequality \eqref{eq:BDG-p-geq-2}, we obtain:   
	\[
	\begin{aligned}
		\mathbb{E}\Vert AX \Vert_{\ast,t}^p &\leq 3^{p-1}\left(\mathbb{E}\vert X_0\vert^p + \mathbb{E} \left[ \sup_{s\in[0,t]} \left\vert \int_{0}^{s}b(\tau,X,\lambda) d\tau \right\vert^p \right] + \mathbb{E} \left[ \sup_{s\in[0,t]} \left\vert \int_{0}^{s} \sigma(\tau,X,\bar{\lambda}) dW(\tau) \right\vert^p \right] \right) \\  
		&\leq 3^{p-1}\mathbb{E}\vert X_0\vert^p + C \int_0^t \mathbb{E}\vert b(\tau,X,\lambda)\vert^p d\tau + C \int_0^t \mathbb{E}\vert \sigma(\tau,X,\bar{\lambda})\vert^p d\tau \\
		&\leq C \left( 1 + \mathbb{E}\vert X_0\vert^p + \int_0^t \mathbb{E}\Vert X \Vert_{\ast,\tau}^p d\tau \right).
	\end{aligned}
	\]  
	Since \(X \in L^p_a(\Omega,\mathcal{C}_T^d)\), the right-hand side is finite, confirming that
	\(AX \in L^p_a(\Omega,\mathcal{C}_T^d)\). 
	
	To prove existence, we define the Picard sequence \((X^n)_{n\geq 0}\) in \(L^p_a(\Omega,\mathcal{C}_T^d)\) as:
	\[
	\begin{cases}
		X^0(t) =X_0, \quad t \in [0,T], \\[1ex]
		X^{n+1}(t) = AX^n(t), \quad t \in [0,T].
	\end{cases}
	\]
	To demonstrate that this sequence is Cauchy in \(L^p_a(\Omega,\mathcal{C}_T^d)\), we use the pathwise Lipschitz condition alongside Hölder's inequality and the BDG inequality to estimate the difference:
	\[
		\mathbb{E}\Vert X^{n+1}-X^{n}\Vert_{\ast,t}^p \leq C \int_{0}^{t} \mathbb{E} \Vert X^{n}-X^{n-1}\Vert_{\ast,s}^p ds.
	\]  
	Iterating this inequality yields:
	\[
		\mathbb{E}\Vert X^{n+1}-X^{n}\Vert_{\ast,T}^p\leq \frac{(CT)^n}{n!} \mathbb{E}\Vert X^1-X^0\Vert_{\ast,T}^p.
	\]
	Since the series \(\sum \left(\frac{(CT)^n}{n!}\right)^{1/p}\) is summable, the sequence \((X^n)_{n\geq 0}\) is Cauchy in the complete space \(L^p_a(\Omega,\mathcal{C}_T^d)\). Its limit \(X\) is a fixed point of \(A\), thus satisfying \eqref{eq:path-dependent-SDE}.

	For strong uniqueness, suppose that \(X\) and \(Y\) are two strong solutions. Applying the inequality \((a+b)^p\leq 2^{p-1}(a^p+b^p)\), the Lipschitz continuity of the coefficients, and the BDG inequality, we obtain:
	\[
	\begin{aligned}
		\mathbb{E} \Vert X-Y \Vert^p_{\ast,t} &\leq 2^{p-1}\mathbb{E}\left[ \sup_{s\in[0,t]} \left\vert \int_{0}^{s} \left(b(\tau,X,\lambda)-b(\tau,Y,\lambda)\right) d\tau\right\vert^p \right] \\
		&\quad + 2^{p-1}\mathbb{E}\left[\sup_{s\in[0,t]} \left\vert \int_{0}^{s} \left(\sigma(\tau,X,\bar{\lambda})-\sigma(\tau,Y,\bar{\lambda})\right) dW(\tau) \right\vert^p\right] \\
		&\leq C \int_{0}^{t} \mathbb{E} \Vert X - Y \Vert_{\ast,s}^p ds.
	\end{aligned}
	\]
	Gronwall's inequality immediately forces \(\mathbb{E} \Vert X-Y\Vert^p_{\ast,T}=0\), concluding the proof of strong uniqueness.     
\end{proof}

In the specific case of additive noise in which the diffusion matrix \(\sigma\) depends only on time and the parameter
\(\bar{\lambda}\), but not on the path \(X\), we can relax the integrability requirement on the initial condition.
The proof follows the exact logic of Theorem \ref{th:well-posedness-path-dependent-SDEs}, with the simplification that
the stochastic integral cancels out when bounding the difference \(X^{n+1}-X^n\). The contraction estimate relies entirely
on the deterministic drift integral, which removes the need for the BDG inequality and permits \(p \ge 1\). 

\begin{theorem} \label{th:well-posedness-path-dependent-SDEs-additive-noise}
	Suppose that Assumption \ref{assump:path-dependent-SDEs} holds, but assume the diffusion coefficient is independent of the state path ({\it i.e.}, \(\sigma \equiv \sigma(t, \bar{\lambda})\)). Let \(p \in [1,\infty)\). For any initial condition \(X_0 \in L^p(\Omega,\mathbb{R}^d)\) and any pair of bounded random parameters \(\lambda,\bar{\lambda}\in L^{\infty}(\Omega,E)\), the path-dependent SDE \eqref{eq:path-dependent-SDE} admits a unique strong solution \(X \in L^p(\Omega,\mathcal{C}_T^d)\).
\end{theorem}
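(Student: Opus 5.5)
We prove the statement by Picard iteration in \(L^p_a(\Omega,\mathcal{C}_T^d)\), exactly as in Theorem \ref{th:well-posedness-path-dependent-SDEs}, but with every estimate taken pathwise. We define
\[
AX(t)=X_0+\int_0^t b(s,X,\lambda)\,ds+M(t),\qquad M(t):=\int_0^t\sigma(s,\bar\lambda)\,dW(s).
\]
The process \(M\) does not depend on \(X\). It is a fixed continuous adapted process, because \(\sigma(\cdot,\bar\lambda)\) is bounded by \(C(\bar\lambda)\), which is essentially bounded. Its moments of every order are finite, since BDG gives \(\mathbb{E}\Vert M\Vert_{\ast,T}^r\le C_r (T\,\Vert C(\bar\lambda)\Vert_\infty^2)^{r/2}\) for any \(r\ge1\). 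This is the only place where stochastic calculus enters, and \(p\ge 2\) is not needed for it.

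\textbf{Step 1: well-definedness.} Measurability and adaptedness of \(AX\) follow from Lemma \ref{prop:Measurability-Composition}. For the \(p\)-th moment we use the pathwise bound
\[
\Vert AX\Vert_{\ast,t}\le |X_0|+\int_0^t\big(C(\lambda)+L(\lambda)\Vert X\Vert_{\ast,s}\big)\,ds+\Vert M\Vert_{\ast,t}.
\]
Since \(L(\lambda)\) and \(C(\lambda)\) are essentially bounded, Minkowski's inequality in \(L^p(\Omega)\), valid for \(p\ge1\), gives \(AX\in L^p_a(\Omega,\mathcal{C}_T^d)\).

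\textbf{Step 2: contraction.} The noise cancels in differences, so pathwise
\[
\Vert AX-AY\Vert_{\ast,t}\le \bar L\int_0^t\Vert X-Y\Vert_{\ast,s}\,ds,\qquad \bar L:=\Vert L(\lambda)\Vert_{L^\infty}.
\]
Iterating pathwise gives
\[
\Vert A^nX-A^nY\Vert_{\ast,T}\le \frac{(\bar LT)^n}{n!}\,\Vert X-Y\Vert_{\ast,T}\quad \text{a.s.}
\]
Taking \(L^p\) norms, \(A^n\) is a strict contraction for large \(n\). The Picard iterates \(X^n=A^nX_0\) are therefore Cauchy in the complete space \(L^p_a(\Omega,\mathcal{C}_T^d)\), and the limit is a fixed point. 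Passing to the limit inside the drift integral is justified by the Lipschitz bound, and the limit is adapted with continuous paths after taking an a.s.\ convergent subsequence.

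\textbf{Step 3: uniqueness.} Let \(X,Y\) be two strong solutions. Pathwise,
\[
\Vert X-Y\Vert_{\ast,t}\le\bar L\int_0^t\Vert X-Y\Vert_{\ast,s}\,ds.
\]
Deterministic Gr\"onwall applied \(\omega\)-by-\(\omega\) then forces \(X=Y\) a.s. This argument needs no integrability of the solutions at all, so uniqueness holds among all continuous adapted solutions.

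The only point needing care is Step 1 for \(p\in[1,2)\). There BDG in the form \eqref{eq:BDG-p-geq-2} is unavailable, and the argument rests on \(M\) having finite moments of all orders. That follows from the deterministic bound on \(\sigma(\cdot,\bar\lambda)\) together with the general BDG inequality \eqref{eq:BDG-inequality-Ito-integral}, which holds for any \(p\ge1\). Alternatively, one can bound \(\mathbb{E}\Vert M\Vert_{\ast,T}^p\le(\mathbb{E}\Vert M\Vert_{\ast,T}^2)^{p/2}\) using Doob's inequality.
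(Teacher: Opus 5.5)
Your proof is correct and follows the same route as the paper. Both use Picard iteration in \(L^p_a(\Omega,\mathcal{C}_T^d)\), the additive noise cancels in differences, the contraction rests only on the drift, and the \(p\ge 2\) form of BDG is never needed. Your pathwise version adds two things the paper's sketch leaves implicit: the moment bound on \(M\) that is needed for \(AX\in L^p\) when \(p<2\), and uniqueness among all continuous adapted solutions rather than only \(L^p\) ones.
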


The well-posedness of the intermediate system \eqref{eq:multi-agent-discrete-independent-projection-solved-weights} will now
be addressed. We consider the following path-dependent McKean-Vlasov SDE: 
\begin{equation} \label{eq:path-dependent-McKean-Vlasov}
	\begin{aligned}
		dX(t) &=b(t,X,\mu) dt + \sigma(t,X,\mu)dW(t), \\
		X(0) &= X_0, \quad \mu = \textrm{Law} (X) \in \mathcal{P}(\mathcal{C}_T^d),
	\end{aligned}
\end{equation}
where \(b\) and \(\sigma\) are path-dependent coefficients
\[
	\begin{aligned}
		&b:[0,T]\times \mathcal{C}_T^d\times \mathcal{P}_p\left(\mathcal{C}_T^d\right)\to \mathbb{R}^{d}, \\
		&\sigma:[0,T]\times \mathcal{C}_T^d\times \mathcal{P}_p\left(\mathcal{C}_T^d\right)\to \mathbb{R}^{d\times d},
	\end{aligned}
\]
and the inputs are a \(d\)-dimensional standard Wiener process \(W\) defined on a filtered probability space \((\Omega,\mathcal{F},\{\mathcal{F}_t\}_{t\in [0,T]},\mathbb{P})\) and an \(\mathcal{F}_0\)-measurable initial condition \(X_0:\Omega\to \mathbb{R}^d\). Under the following conditions on the coefficients \(b\) and \(\sigma\), we will prove strong existence and uniqueness for the McKean-Vlasov SDE \eqref{eq:path-dependent-McKean-Vlasov}. 

\begin{assumption}
	\label{assump:well-posedness-non-Markovian-McKean-Vlasov}
	 The drift and diffusion coefficients \(b\) and \(\sigma\) satisfy the following properties:
	\begin{itemize}
		\item[(i)] \textbf{(Pathwise Lipschitz Continuity)} For every \(t\in[0,T]\), \(\gamma,\bar{\gamma} \in \mathcal{C}_T^d\), and \(\mu,\nu \in \mathcal{P}_p(\mathcal{C}_T^d)\), there exists an \(L>0\) such that  
		\[	
			\left\vert \sigma(t,\gamma,\mu) - \sigma(t,\bar{\gamma},\nu) \right\vert + \left\vert b(t,\gamma,\mu) - b(t,\bar{\gamma},\nu) \right\vert \leq L \left(\Vert \gamma-\bar{\gamma}\Vert_{\ast,t}+ W_{p}(\mu_{[0,t]},\nu_{[0,t]})\right).
		\]
		\item[(ii)] \textbf{(Boundedness)} There exists a \(C>0\) such that 
		\[
			\vert b(t,0,\delta_0) \vert + \vert \sigma(t,0,\delta_0) \vert \leq C,
		\]
		for all \(t \in [0,T]\), where \(0\) denotes the constant zero path in \(\mathcal{C}_T^d\) and \(\delta_0\) is the Dirac delta measure at this zero path.
		\item[(iii)] \textbf{(Measurability)} For all \(\gamma \in \mathcal{C}_T^d\) and \(\mu \in \mathcal{P}_p(\mathcal{C}_T^d) \), the maps 
		\[
			t \mapsto b(t,\gamma,\mu) \quad \text{and} \quad t \mapsto \sigma(t,\gamma,\mu)   
		\] 
		are Borel measurable on \([0,T]\).
	\end{itemize}
\end{assumption}

\begin{remark}
	The pathwise Lipschitz property in Assumption \ref{assump:well-posedness-non-Markovian-McKean-Vlasov}(i)
	ensures that the functionals \(b\) and \(\sigma\) are non-anticipating, {\it i.e.}, 
	\[
		b(t,\gamma,\mu)=b(t,\gamma_{\cdot \wedge t}, \mu_{\cdot \wedge t}),\quad	\sigma(t,\gamma,\mu)=\sigma(t,\gamma_{\cdot \wedge t}, \mu_{\cdot \wedge t}),
	\]  
	where \(\mu_{\cdot \wedge t}= (\cdot \wedge t)_{\#} \mu\) is the pushforward measure under the stopping map: 
	\[
		\begin{aligned}
		\cdot \wedge t: \mathcal{C}^d_T &\longrightarrow \mathcal{C}^d_T \\
	            \gamma &\longmapsto \gamma_{\cdot \wedge t}.
		\end{aligned}
	\]  
	This ensures that for every fixed \(\mu \in \mathcal{P}_p(\mathcal{C}_T^d)\), the maps \(b\) and \(\sigma\) are adapted
	to the natural filtration \(\{\mathcal{H}_t\}_{t\in[0,T]}\). Since they are continuous in the path variable and measurable
	in time, they satisfy the Carathéodory property and are jointly measurable in \((t,\gamma)\). This allows us to apply
	Lemma \ref{prop:Measurability-Composition} in the proof of Theorem \ref{th:well-posedness-non-Markovian-McKean-Vlasov}
	to establish the measurability and adaptedness of the composite processes \(b(t,X,\mu)\) and \(\sigma(t,X,\mu)\).  
\end{remark}

\begin{theorem} \label{th:well-posedness-non-Markovian-McKean-Vlasov}
	Suppose that the functionals \(b\) and \(\sigma\) satisfy Assumption \ref{assump:well-posedness-non-Markovian-McKean-Vlasov}
	for some \(p\in [2,\infty)\). Then, for every initial condition \(X_0 \in L^p(\Omega,\mathbb{R}^d)\), there exists a
	unique strong solution \(X\in L^p(\Omega,\mathcal{C}_T^d)\) to the McKean-Vlasov SDE
	\eqref{eq:path-dependent-McKean-Vlasov}.
\end{theorem}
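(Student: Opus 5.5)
The plan is a fixed-point argument on the path space: freeze the law, solve a classical path-dependent SDE, and show that the resulting law-to-law map $\Psi$ becomes a contraction after finitely many iterations. This mirrors the proof of Proposition \ref{prop:well-posedness-fibered-McKean-Vlasov-SDE-multiplicative-noise}, in the single-fiber setting. Fix $p\in[2,\infty)$ and $X_0\in L^p(\Omega,\mathbb{R}^d)$.

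For a fixed $\mu\in\mathcal{P}_p(\mathcal{C}_T^d)$, set $b^\mu(t,\gamma):=b(t,\gamma,\mu)$ and $\sigma^\mu(t,\gamma):=\sigma(t,\gamma,\mu)$. By Assumption \ref{assump:well-posedness-non-Markovian-McKean-Vlasov}(i) these are Lipschitz in $\gamma$ with constant $L$. By (i)--(ii) they satisfy $|b^\mu(t,0)|+|\sigma^\mu(t,0)|\le C+L\,W_p(\mu,\delta_0)$, which is a finite deterministic bound. The preceding remark provides non-anticipation and Carathéodory measurability. Hence Theorem \ref{th:well-posedness-path-dependent-SDEs} applies, with trivial parameter space $E$ (constant $L,C$), and yields a unique strong solution $X^\mu\in L^p(\Omega,\mathcal{C}_T^d)$. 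I then define $\Psi(\mu):=\mathrm{Law}(X^\mu)$. Finiteness of $\mathbb{E}\|X^\mu\|_{\ast,T}^p$ gives $\Psi(\mu)\in\mathcal{P}_p(\mathcal{C}_T^d)$, so $\Psi$ maps $\mathcal{P}_p(\mathcal{C}_T^d)$ to itself.

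Next I would prove the key estimate. Take $\mu,\nu\in\mathcal{P}_p(\mathcal{C}_T^d)$ and couple $X^\mu$ and $X^\nu$ synchronously, using the same $W$ and $X_0$. Apply $(a+b)^p\le 2^{p-1}(a^p+b^p)$, Hölder in time for the drift, and the BDG bound \eqref{eq:BDG-p-geq-2} for the martingale part. Combined with the Lipschitz condition, this gives
\[
\mathbb{E}\|X^\mu-X^\nu\|^p_{\ast,t}\le C\int_0^t\Big(\mathbb{E}\|X^\mu-X^\nu\|^p_{\ast,s}+W_p(\mu_{[0,s]},\nu_{[0,s]})^p\Big)ds .
\]
Grönwall's inequality then yields the following bound, since $\mathrm{Law}(X^\mu_{[0,t]},X^\nu_{[0,t]})$ is a coupling of the restricted laws:
\[
W_p(\Psi(\mu)_{[0,t]},\Psi(\nu)_{[0,t]})^p\le \mathbb{E}\|X^\mu-X^\nu\|^p_{\ast,t}\le C_T\int_0^tW_p(\mu_{[0,s]},\nu_{[0,s]})^p\,ds .
\]
Iterating gives $W_p(\Psi^k\mu,\Psi^k\nu)^p\le \frac{(C_TT)^k}{k!}W_p(\mu,\nu)^p$. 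Thus $\Psi^k$ is a strict contraction on the complete space $(\mathcal{P}_p(\mathcal{C}_T^d),W_p)$ for $k$ large, and $\Psi$ has a unique fixed point $\mu^\ast$. The process $X:=X^{\mu^\ast}$ is then a strong solution.

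For uniqueness, let $Y$ be another strong solution in $L^p$ and set $\mu_Y:=\mathrm{Law}(Y)$. Then $Y$ solves the frozen equation with measure $\mu_Y$, so by strong uniqueness in Theorem \ref{th:well-posedness-path-dependent-SDEs} we have $Y=X^{\mu_Y}$. This means $\mu_Y$ is a fixed point of $\Psi$, so $\mu_Y=\mu^\ast$ and therefore $Y=X^{\mu^\ast}=X$ almost surely.

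The main obstacle is the measure argument in the Lipschitz bound. We must use the truncated distance $W_p(\mu_{[0,s]},\nu_{[0,s]})$ rather than the full $W_p(\mu,\nu)$ so that the Grönwall and iteration step closes. This relies on the non-anticipative form given in the remark preceding the theorem.
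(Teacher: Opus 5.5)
Your proposal is correct and follows essentially the same route as the paper: freeze the law, apply Theorem \ref{th:well-posedness-path-dependent-SDEs} to the decoupled path-dependent SDE, couple synchronously, use BDG and Gr\"onwall to obtain $W_p^p(\Psi(\mu)_{[0,t]},\Psi(\nu)_{[0,t]})\le C\int_0^t W_p^p(\mu_{[0,s]},\nu_{[0,s]})\,ds$, and iterate so that $\Psi^k$ is a contraction. You also check explicitly that the frozen coefficients satisfy the hypotheses of that theorem, and you show that the law of any strong solution is a fixed point of $\Psi$, hence the solution coincides with $X^{\mu^\ast}$; both usefully spell out steps the paper only asserts.
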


\begin{proof}
	The proof relies on a standard fixed-point argument, which is a straightforward extension of the classical Markovian case.
	Given \(\mu \in \mathcal{P}_p(\mathcal{C}_T^d)\), we first consider the decoupled path-dependent Itô diffusion: 
	\begin{equation} \label{eq:Vlasov-McKeanFixingP}
		\begin{aligned}
			dX^\mu(t) &=b(t,X^\mu,\mu) dt + \sigma(t,X^\mu,\mu)dW(t), \\
			X^\mu(0) &= X_0. 
		\end{aligned}
	\end{equation}
	This equation is well-posed since the functionals \(b(\cdot,\cdot, \mu)\) and \(\sigma(\cdot,\cdot, \mu)\) satisfy
	the conditions of Theorem \ref{th:well-posedness-path-dependent-SDEs}. Therefore, there exists a unique strong
	solution \(X^\mu\in L^p(\Omega,\mathcal{C}_T^d)\) to \eqref{eq:Vlasov-McKeanFixingP} for each
	\(\mu \in \mathcal{P}_p(\mathcal{C}_T^d)\). This allows us to define the map:  
	\[
		\begin{aligned}
		\Psi: \mathcal{P}_p(\mathcal{C}^d_T) &\longrightarrow \mathcal{P}_p(\mathcal{C}^d_T) \\
	                                       \mu&\longmapsto \textrm{Law}(X^\mu).
		\end{aligned}
	\]
	To prove that \(\Psi\) is a contraction under the \(p\)-Wasserstein metric, fix any
	\(\mu, \nu \in \mathcal{P}_p(\mathcal{C}_T^d)\) and let \(X^\mu, X^\nu\) be the solutions
	of \eqref{eq:Vlasov-McKeanFixingP} corresponding to \(\mu\) and \(\nu\), respectively. Since the joint law
	\(\textrm{Law}(X_{[0,t]}^\mu,X_{[0,t]}^\nu)\) constitutes a valid coupling between \(\Psi(\mu)_{[0,t]}\) and
	\(\Psi(\nu)_{[0,t]}\), the \(p\)-Wasserstein distance is bounded by:
	\[
		W^p_{p}(\Psi(\mu)_{[0,t]},\Psi(\nu)_{[0,t]}) \leq \mathbb{E} \Vert X^\mu-X^\nu\Vert_{\ast,t}^p.
	\]  
	Using the inequality \((a+b)^p\leq 2^{p-1}(a^p+b^p)\), Hölder's inequality, the BDG inequality \eqref{eq:BDG-p-geq-2},
	and the Lipschitz property of the coefficients, we obtain the estimate:
	\[
		\begin{aligned} 
			\mathbb{E} \Vert X^\mu-X^\nu\Vert_{\ast,t}^p &\leq (2t)^{p-1} \mathbb{E} \int_{0}^{t} \vert b(s,X^\mu,\mu)-b(s,X^\nu,\nu)\vert^p ds \\
			&\quad + C_p 2^{p-1}t^{\frac{p}{2}-1} \mathbb{E} \int_{0}^{t} \vert \sigma(s,X^\mu,\mu)-\sigma(s,X^\nu,\nu) \vert^p ds \\ 
			&\leq C \left(\int_{0}^{t}\mathbb{E}\Vert X^\mu-X^\nu\Vert^p_{\ast,s} ds + \int_{0}^{t}W^p_{p}(\mu_{[0,s]},\nu_{[0,s]}) ds\right),
		\end{aligned}
	\] 
	where \(C > 0\) denotes a generic constant depending only on \(T\) and \(p\), whose exact value may change from line to line.
	
	By Gronwall's inequality, we deduce that
	\[
		W^p_{p}(\Psi(\mu)_{[0,t]},\Psi(\nu)_{[0,t]})\leq C \int_{0}^{t}W^p_{p}(\mu_{[0,s]},\nu_{[0,s]}) ds.
	\]
	Iterating this inequality \(k\) times yields: 
	\[
		W^p_{p}(\Psi^k(\mu),\Psi^k(\nu))\leq \frac{(CT)^k}{k!} W^p_{p}(\mu,\nu).
	\]
	For \(k\) sufficiently large, the factor \(\frac{(CT)^k}{k!}\) is strictly less than 1, making \(\Psi^k\)
	a contraction. This ensures that \(\Psi\) has a unique fixed point \(\mu^*\), and the corresponding process
	\(X^{\mu^*}\) is the unique strong solution to the McKean-Vlasov SDE.   
\end{proof}

As in the case of path-dependent SDEs with additive noise, if the diffusion coefficient \(\sigma\) depends only on time
(and not on the path \(\gamma\) nor on the measure \(\mu\)), we can relax the integrability condition on the initial
condition to allow for \(L^p\) inputs with \(p\in [1,\infty)\). 

\begin{theorem} \label{th:well-posedness-non-Markovian-McKean-Vlasov-additive-noise}
	Suppose that Assumption \ref{assump:well-posedness-non-Markovian-McKean-Vlasov} holds with \(p\in [1,\infty)\), but assume
	that the diffusion coefficient is independent of both the state path and the measure ({\it i.e.}, \(\sigma \equiv \sigma(t)\)).
	Then, for any initial condition \(X_0 \in L^p(\Omega,\mathbb{R}^d)\), there exists a unique strong solution
	\(X \in L^p(\Omega,\mathcal{C}_T^d)\) to the path-dependent McKean-Vlasov SDE \eqref{eq:path-dependent-McKean-Vlasov}.
\end{theorem}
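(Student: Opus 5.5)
The statement to prove is Theorem \ref{th:well-posedness-non-Markovian-McKean-Vlasov-additive-noise}. I would run the fixed-point argument of Theorem \ref{th:well-posedness-non-Markovian-McKean-Vlasov} again. The one change is that every estimate is done pathwise and in $L^p$ with $p\ge 1$, and the BDG inequality is never used.

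\textbf{Step 1: the frozen-measure SDE.} Fix $\mu\in\mathcal{P}_p(\mathcal{C}_T^d)$ and consider
\[
dX^\mu(t)=b(t,X^\mu,\mu)\,dt+\sigma(t)\,dW(t),\qquad X^\mu(0)=X_0 .
\]
The coefficients $b(\cdot,\cdot,\mu)$ and $\sigma(t)$ satisfy Assumption \ref{assump:path-dependent-SDEs}, with a trivial parameter space $E$. Theorem \ref{th:well-posedness-path-dependent-SDEs-additive-noise} therefore gives a unique strong solution $X^\mu\in L^p(\Omega,\mathcal{C}_T^d)$ for every $p\ge1$. The boundedness of $\sigma$ follows from Assumption \ref{assump:well-posedness-non-Markovian-McKean-Vlasov}(ii), since $\sigma$ is independent of the path and of the measure.

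\textbf{Step 2: $\Psi$ maps $\mathcal{P}_p$ into $\mathcal{P}_p$.} Define $\Psi(\mu)=\mathrm{Law}(X^\mu)$. The Lipschitz and boundedness assumptions give the linear growth bound
\[
|b(t,\gamma,\mu)|\le C\bigl(1+\|\gamma\|_{\ast,t}+W_p(\mu,\delta_0)\bigr).
\]
This yields the pathwise estimate
\[
\|X^\mu\|_{\ast,t}\le |X_0|+C\int_0^t\bigl(1+\|X^\mu\|_{\ast,s}+W_p(\mu,\delta_0)\bigr)\,ds+\Bigl\|\int_0^\cdot\sigma\,dW\Bigr\|_{\ast,T}.
\]
The stochastic integral of the deterministic bounded $\sigma$ is Gaussian, so its supremum has moments of every order. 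A pathwise Gr\"onwall argument followed by taking $L^p$ norms shows $\Psi(\mu)\in\mathcal{P}_p$.

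\textbf{Step 3: contraction and conclusion.} This is the key step. Take $\mu,\nu$ and drive both equations with the same $W$ and the same $X_0$. The noise terms cancel, so
\[
\|X^\mu-X^\nu\|_{\ast,t}\le L\int_0^t\bigl(\|X^\mu-X^\nu\|_{\ast,s}+W_p(\mu_{[0,s]},\nu_{[0,s]})\bigr)\,ds
\]
holds $\mathbb{P}$-a.s. Gr\"onwall applied pathwise gives
\[
\|X^\mu-X^\nu\|_{\ast,t}\le Le^{Lt}\int_0^tW_p(\mu_{[0,s]},\nu_{[0,s]})\,ds ,
\]
and the right-hand side is deterministic. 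Hence
\[
W_p(\Psi(\mu)_{[0,t]},\Psi(\nu)_{[0,t]})\le C\int_0^tW_p(\mu_{[0,s]},\nu_{[0,s]})\,ds .
\]
This Volterra-type bound holds for any $p\ge1$, with no convexity needed. Iterating it $k$ times gives the factor $(CT)^k/k!$, so some power $\Psi^k$ is a strict contraction on the complete space $(\mathcal{P}_p(\mathcal{C}_T^d),W_p)$. This produces a unique fixed point $\mu^\ast$, and $X^{\mu^\ast}$ is a strong solution.

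For uniqueness, any other strong solution $X$ has a law $\mu$ that is a fixed point of $\Psi$, so $\mu=\mu^\ast$. Then $X=X^{\mu^\ast}$ by the strong uniqueness in Step 1.

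\textbf{Main obstacle.} The only delicate point I expect is Step 2 for $1\le p<2$. There one must check that $L^p$ integrability of $X_0$ suffices without BDG. The pathwise Gr\"onwall argument together with the Gaussian moments of $\int\sigma\,dW$ handles this.
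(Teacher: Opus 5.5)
Your proposal is correct and follows the route the paper indicates: the fixed-point argument of Theorem \ref{th:well-posedness-non-Markovian-McKean-Vlasov}, with the frozen-measure equation solved by Theorem \ref{th:well-posedness-path-dependent-SDEs-additive-noise}, and with the cancellation of the additive noise in $X^\mu-X^\nu$ removing the need for BDG, so that the Volterra-type bound and the $(CT)^k/k!$ iteration hold for every $p\ge 1$ (your pathwise Gr\"onwall, which gives a deterministic bound, is a clean way to do this). Your Step 2, and the ``main obstacle'' you flag there, are redundant: Theorem \ref{th:well-posedness-path-dependent-SDEs-additive-noise} already gives $X^\mu\in L^p(\Omega,\mathcal{C}_T^d)$, hence $\Psi(\mu)\in\mathcal{P}_p(\mathcal{C}_T^d)$.
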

\bibliographystyle{amsplain} 
\bibliography{biblio}

\end{document}